\theoremstyle{plain}
\newtheorem{thm}{Theorem}[section]
\newtheorem{lem}[thm]{Lemma}
\newtheorem{cor}[thm]{Corollary}
\newtheorem{prop}[thm]{Proposition}
\theoremstyle{definition}
\newtheorem{defn}[thm]{Definition}
\newtheorem{rem}[thm]{Remark}
\mathchardef\semic="303B
\newcommand{\dirac}{{\mathbf D}}
\newcommand{\R}{{\mathbb R}}
\newcommand{\N}{{\mathbb N}}
\newcommand{\C}{{\mathbb C}}
\newcommand{\mH}{{\mathcal H}}
\newcommand{\mL}{{\mathcal L}}
\newcommand{\mA}{{\mathcal A}}
\newcommand{\mE}{{\mathcal E}}
\newcommand{\mD}{{\mathcal D}}
\newcommand{\mR}{{\mathcal R}}
\newcommand{\mT}{{\mathcal T}}
\newcommand{\mS}{{\mathcal S}}
\DeclareMathOperator{\re}{Re}
\newcommand{\nul}{\textsf{N}}
\newcommand{\ran}{\textsf{R}}
\newcommand{\dom}{\textsf{D}}
\newcommand{\clos}[1]{\overline{#1}}
\newcommand{\conj}[1]{\overline{#1}}
\newcommand{\sgn}{\text{{\rm sgn}}}
\newcommand{\barint}{\mbox{$ave \int$}}
\newcommand{\divv}{{\text{{\rm div}}}}
\newcommand{\curl}{{\text{{\rm curl}}}}
\newcommand{\esssup}{\text{{\rm ess sup}}}
\newcommand{\tdd}[2]{\tfrac{\partial #1}{\partial #2}}
\newcommand{\ta}{{\scriptscriptstyle \parallel}}
\newcommand{\no}{{\scriptscriptstyle\perp}}
\newcommand{\pd}{\partial}
\newcommand{\uT}{{\underline T}}
\newcommand{\loc}{\text{{\rm loc}}}
\newcommand{\tN}{\widetilde N_*}
\newcommand{\bx}{{\bf x}}
\newcommand{\by}{{\bf y}}
\newcommand{\bz}{{\bf z}}
\newcommand{\rn}{\mathbb{R}^n}
\newcommand{\reu}{\mathbb{R}^{1+n}_+}
\newcommand{\ree}{\mathbb{R}^{1+n}}
\def\barint_#1{\mathchoice
            {\mathop{\vrule width 6pt
height 3 pt depth -2.5pt
                    \kern -8.8pt
\intop \kern -4pt}\nolimits_{#1}}%
            {\mathop{\vrule width 5pt height
3 pt depth -2.6pt
                    \kern -6.5pt
\intop \kern -4pt}\nolimits_{#1}}%
            {\mathop{\vrule width 5pt height
3 pt depth -2.6pt
                    \kern -6pt
\intop \kern -4pt}\nolimits_{#1}}%
            {\mathop{\vrule width 5pt height
3 pt depth -2.6pt
          \kern -6pt \intop \kern -4pt}\nolimits_{#1}}}
          \def\bariint_#1{\mathchoice
            {\mathop{\vrule width 10pt
height 3 pt depth -2.5pt
                    \kern -12.8pt
\intop \kern -10pt\intop \kern -4pt}\nolimits_{#1}}%
            {\mathop{\vrule width 9pt height
3 pt depth -2.6pt
                    \kern -10.5pt
\intop \kern -10pt\intop \kern -4pt}\nolimits_{#1}}%
            {\mathop{\vrule width 9pt height
3 pt depth -2.6pt
                    \kern -10pt
\intop \kern -10pt\intop \kern -4pt}\nolimits_{#1}}%
            {\mathop{\vrule width 9pt height
3 pt depth -2.6pt
          \kern -10pt \intop \kern -10pt\intop \kern -4pt}
      \nolimits_{  #1}}}
\definecolor{gr}{rgb}   {0.,   0.8,   0. } 
\definecolor{bl}{rgb}   {0.,   0.5,   1. } 
\definecolor{mg}{rgb}   {0.7,  0.,    0.7}
\title[Rellich estimates and solvability]{Boundary layers, Rellich estimates and extrapolation of solvability for elliptic systems}
\author{Pascal Auscher}
\address{Univ. Paris-Sud, laboratoire de Math\'ematiques, UMR 8628, F-91405 {\sc Orsay}; CNRS, F-91405 {\sc Orsay}} 
\email{pascal.auscher@math.u-psud.fr}
\author{Mihalis Mourgoglou}
\address{Univ. Paris-Sud, laboratoire de Math\'ematiques, UMR 8628, F-91405 {\sc Orsay}; CNRS, F-91405 {\sc Orsay}} 
\email{mihalis.mourgoglou@math.u-psud.fr}
\begin{document}

\begin{abstract}
The purpose of this article is to study extrapolation of  solvability for boundary value problems 
of elliptic systems in divergence form on the upper half-space assuming De Giorgi type conditions. We develop a method allowing to treat each boundary value problem independently of the others.  
 We shall base our study on solvability for energy solutions, estimates for  boundary layers, equivalence of certain boundary estimates with interior control so that solvability reduces to a one-sided Rellich inequality. Our  method then amounts to extrapolating this Rellich inequality using atomic Hardy spaces, interpolation and duality. In the way, we {reprove the Regularity-Dirichlet  duality principle between dual systems and extend it} to  $H^1-BMO$. We also exhibit and use a similar  Neumann-Neumann duality  principle.
\end{abstract}

\subjclass[2010]{35J25, 35J57, 35J50, 42B25, 42B30, 42B35}

\keywords{elliptic systems,  energy solutions, Regularity and Neumann problems, Dirichlet problem, boundary layer operators, non-tangential maximal functions, Rellich inequalities, Hardy spaces, bounded mean oscillation, tent spaces}

\maketitle

\tableofcontents

\section{Introduction}

Boundary value problems  for second order elliptic equations have a long history. The breakthroughs of Dahlberg \cite{Da} for  the Laplace equation on Lipschitz domains  {and the boundedness of the corresponding layer potentials by Coifman, McIntosh and Meyer \cite{CMcM}}  opened the door  to a thorough study of such problems, generalizing domains or operators. By flattening  the boundary, one instead looks at equations with measurable coefficients and  considers two types of domains, either the upper-half space as a prototype for unbounded domains or the unit ball as a prototype for a bounded domain. There one can study boundary value problems with different types of data spaces. All of this is well explained in the book by Kenig \cite{Ke}. Solving these boundary value problems can be a difficult task; there is no comprehensive nor unified treatment  of this issue at this time.  Let us just mention that the solution of the Kato conjecture \cite{AHLMcT} and its  developments gave rise to new estimates and  new methods so that progress in the area is rather impressive as of now.

The purpose of this article is to study extrapolation of boundary value problems  
for elliptic systems in divergence form on the upper half-space $\reu$, $1+n\ge 2$. Extrapolation means that, 
assuming the problem can be solved  for  some space $X$ of data, one can push the solvability range to some other spaces.  
For Regularity and Neumann problems, $X$ is an $L^r$ space, $r>1$ and  one extrapolates to $L^p$ for  $1<p<r$ and  $H^p$ data for some range of $p$ below 1. For Dirichlet problems, one starts from $L^q$ for some $q<\infty$ and extrapolates to $L^p$ for $q<p<\infty$, BMO and H\"older spaces up to some exponent. In fact, one can see the Dirichlet problem as a Regularity problem in   spaces of data with regularity exponent -1. One can  also formulate Neumann problems in spaces of data with regularity -1.  

We do not treat here the openness property of extrapolation, that is that solvability at one space of data can be perturbed to nearby spaces in the given scale. This will  be  treated in \cite{AS} using further developments.

These types of extrapolation results are not new, at least for equations, starting from the seminal works of \cite{DaK} for the Laplace equation on Lipschitz domains and \cite{KP} for real symmetric equations. Further  contributions are in \cite{Br} for the Laplace equation looking at $H^p$ data for $p<1$,  in \cite{Di} in the context of the Laplace equation on smooth domains of Riemannian manifolds and in  \cite{DK} for real equations on bounded Lipschitz  domains.  See also some comments in \cite{HKMP2} outlining   
a strategy {using Kalton-Mitrea extrapolation \cite{KM}} when  layer potentials associated to the 
operators are invertible. Of course, we are just mentioning the works related to extrapolation in this subject and not the numerous ones on solvability for second order elliptic operators under various assumptions. In some sense, we are after a sort of extrapolation reminiscent to the  Calder\'on and Zygmund extrapolation for singular integrals because the operators under considerations can be thought of as generalized singular integrals. 

To do so, we introduce a new method which allows to treat each boundary value problem independently of the other ones and  to consider systems and not just equations,
{assuming De Giorgi-Nash type local H\"older regularity, in the  
interior and for reflections across the boundary}.  For  the Neumann problem, this is completely new: {in \cite{KP}, which is the closest
antecedent to our results here, extrapolation of solvability for the Neumann problem 
was linked to that of the Regularity problem}. Our method will clarify the  Regularity-Dirichlet duality principle for solvability {obtained in \cite{HKMP2}},  extend its  range  to $H^1-BMO$ and we will also   formulate {and use} a new duality principle for Neumann problems.  
Also our exponents are explicitly determined by the ones in the De Giorgi conditions.  
{Our strategy can be summarized as follows: try to distinguish as much as possible interior and boundary estimates so  as to use  \textit{a priori} estimates most of the time. To do so, we  have to reverse the order in which we use some tools compared to other works.}

Our divergence operators will be precisely defined in Section \ref{sec:general} and ellipticity
 will be taken in the sense of some G\aa rding inequality. The boundary value problems are treated for operators whose coefficients do not depend on $t$, the transverse  variable to the boundary,  but some results do not need this. For the purpose of the introduction, it is best to assume $t$-independence. 

We shall rely on energy solutions. Indeed,  Regularity and Neumann problems are always well-posed (modulo constants)  in the energy class without any further information. We deviate here from the treatment done in \cite{KR} or \cite{HKMP2} by using the ``natural'' energy space given by the Dirichlet integral $\int |\nabla u|^2$. Even in the unbounded situation of the upper half-space, things turn out to work rather well with this space.  Solvability  of Regularity and Neumann problems   means here that energy solutions satisfy the required inequality respectively: control of $\|\tN(\nabla u)\|_{p}$, the $L^p$ norm of (a modified)  non-tangential maximal function of $\nabla u$, by  $\|\nabla_{x}u|_{t=0}\|_{p}$, where  $\nabla_{x}u|_{t=0}$ is 
the tangential gradient at the boundary  or by  $\|\pd_{\nu_{A}}u|_{t=0}\|_{p}$, where $\pd_{\nu_{A}}u|_{t=0}$ is  the conormal derivative at the boundary. For the boundary estimates, the norm $\|\ \|_{p}$ denotes an $L^p$ norm if $p>1$ and a Hardy $H^p$ norm if $p\le 1$.

 One of the main results here  is the following.  Assuming {interior} De Giorgi type conditions, {there is} 
 an \textit{a priori} equivalence between  $\|\tN(\nabla u)\|_{p}$ and the sum $\|\pd_{\nu_{A}}u|_{t=0}\|_{p}+  \|\nabla_{x}u|_{t=0}\|_{p}$ in a range $1-\varepsilon<p\le 2$ for  energy solutions (and for other types of solutions as well) with $\varepsilon$ specified by our assumptions.\footnote{We shall not attempt  to treat here the range $2<p<2+\varepsilon'$.  In fact,  it can be shown to hold without the De Giorgi condition, at the expense of some further work which shall be presented in \cite{AS}.}   For $p=2$, this was one of the key result in \cite{AA}. The bound from below holds for any weak solution: it was known in the range $1<p<\infty$ from \cite{KP}  and has been proved recently in a range $1-\varepsilon<p \le 1$ in \cite{HMiMo}. The bound from above has been addressed in the range $1<p<2+\varepsilon'$ in \cite{HKMP2} for a class of solutions  $u$ which can be represented by the layer potentials built in \cite{AAAHK} using Green's  representation formula
\begin{equation}
\label{green}
u(t,x)=S_{t}(\pd_{\nu_{A}}u|_{t=0})(x) - D_{t}(u|_{t=0})(x)
\end{equation}
where $S_{t}$ and $D_{t}$ are respectively the single and double layer potentials associated to $\divv A \nabla$ on $\ree$.
We shall prove (Theorem \ref{thm:main}) that the bound from above holds in the range $1-\varepsilon<p\le 2$ for any solution in the energy class and other classes. Here, we use the newly discovered relation by A. Ros\'en \cite{R1} between the layer potentials and the first order formalism of \cite{AAMc}, which  gives $L^2$ boundedness of the double layer potential and {$L^{2} \to \dot W^{1,2}$ boundedness of the single layer potential in full generality.   It allows to use instead the differentiated form of \eqref{green} which actually comes before  in the analysis (and one does not care about the constant of integration at this stage)
\begin{equation}
\label{green'}
\nabla u(t,x)=\nabla S_{t}(\pd_{\nu_{A}}u|_{t=0})(x) - \nabla D_{t}(u|_{t=0})(x)
\end{equation}
   { and one only needs to  have  two \textit{a priori} bounds} for the layer potentials. The first one is $\|\tN(\nabla D_{t}h)\|_{p} \lesssim \|\nabla_{x}h\|_{p}$ that we obtain in the range $1-\varepsilon<p<2$ (again, recall that boundary norms are $H^p$ norms when $p\le 1$). We note this was proved for $1<p<2+\varepsilon'$ for {complex} equations and $1+n\ge 3$ in \cite[Proposition 5.9]{HKMP2} {and there is an interesting comment to make as an illustration of reversing the order in which we use tools. The argument there uses an estimate for what is called {$L$-harmonic} conjugates (such an estimate is a one-sided Rellich inequality in disguise (see below)) and  seems therefore to be limited to $p>1$. 
Instead, our argument  for this inequality does not require such an estimate; it only uses atomic theory and interpolation.  This is made possible because we use \eqref{green'} and not \eqref{green}{; the Rellich inequality is used later}.
  The other needed \textit{a priori} bound}}
 $\|\tN(\nabla S_{t}h)\|_{p} \lesssim \|h\|_{p}$  in the range $1-\varepsilon<p<2+\varepsilon'$ was known from \cite{HMiMo} at least in the equation case and $1+n\ge 3$.  With this in hand, solvability for a given $p$ in this range is  equivalent to a boundary estimate   of Rellich type (again for energy solutions) which is 
\begin{equation}
\label{R}
\|\pd_{\nu_{A}}u|_{t=0}\|_{p} \lesssim \|\nabla_{x}u|_{t=0}\|_{p}
\end{equation}
for the Regularity problem  and 
\begin{equation}
\label{N}
\|\nabla_{x}u|_{t=0}\|_{p} \lesssim \|\pd_{\nu_{A}}u|_{t=0}\|_{p}
\end{equation}   for the Neumann problem.

The outcome of this is that in order to extrapolate solvability, it suffices to extrapolate a one-sided Rellich inequality. This step, therefore, completely happens at the boundary. Basically, in the spirit of the ideas in \cite{DaK} and \cite{KP}, we can use Hardy space atomic theory on the boundary and interpolation. But the difference is that we only  have to prove  \eqref{R} or \eqref{N} with given data  a 2-atom (Section \ref{sec:rellichp<1})   and we do this by showing that the missing data is a molecule (Section \ref{sec:extra}) {without going back to non-tangential maximal estimates}.
Aside from some pointwise estimates on solutions shown in Section \ref{sec:decay} relying on some form of boundary regularity, this step uses, of course, the initial solvability assumption even to get the molecular decay. Note also that harmonic measure techniques are forbidden to us as we work with systems. The way it
works is that we use in fact the dual formulation of the inequalities \eqref{R} or \eqref{N} when $p>1$. We were therefore led to investigate this further (Section \ref{sec:dualp>1}). The dual of \eqref{R}  is an inequality akin to the one needed to solve the adjoint Dirichlet problem in $L^{p'}$.  The dual formulation of \eqref{N} is new. What is also new is that these dual formulations do not require any assumption on the  operator, not even $t$-independence, but the ellipticity, because we use duality brackets and not integrals. {We also use  the integrated layer potential representation \eqref{green}  for solutions of the  dual system: the  
De Giorgi condition comes into play to  show that \eqref{green}  holds whenever $u|_{t=0}\in L^p$  and $\pd_{\nu_{A}}u|_{t=0} \in \dot W^{-1, p}$ for $p>2$.}

As for the Dirichlet problem, we can basically treat it with the duality principle that Regularity solvability with $L^p$ data is equivalent  to Dirichlet solvability for $L^{p'}$ data of the dual system. 
While the Regularity to Dirichlet direction has been known since \cite{KP} for real symmetric equations, the converse is fairly recent for general systems (some partial results for real symmetric equations in Lipschitz domains are in \cite{S})  {and requires to incorporate square functions in the formulation of the Dirichlet problem}. This was proved in full generality in \cite{AR} for $p=2$ and then in \cite{HKMP2} for equations and $1+n\ge 3$ and $p\ne 2$ {(Both articles allow some $t$dependence as well)}. We reprove  and strengthen it even with the hypotheses there and also extend it to $H^1$ for Regularity vs BMO (or VMO) for Dirichlet. The Dirichlet problem is stated only with a square function estimate and no non-tangential maximal control which in fact comes as \textit{a priori} information. We shall use in this part a recent result obtained by one of us together with S. Stahlhut \cite{AS}.

 We only discuss the case of the upper half-space, but of course, the analogous results hold for systems in the lower half-space. Also by change of variable, one can treat the case of special Lipschitz domains with operators that do not depend on the vertical variable. The principal example is the Laplacian for which extrapolation results were proved in the seminal paper  \cite{DaK} and pursued in \cite{Br}.  We also mention that the same strategy can certainly be developed in the unit ball with radially independent coefficients instead, that is,  the framework of \cite{KP}, {using the first order formalism developed in \cite{AR}}.  This would require writing out some details on layer potentials. We leave this to further developments. 

All our estimates in this article depend only on ellipticity constants $\|A\|_{\infty}$ and the largest $\lambda$ in the specified ellipticity inequality,  and on the constants in the De Giorgi condition when assumed.

The second author is supported by the Fondation Math\'ematique Jacques Hadamard.   
The  authors were partially supported by the ANR project ``Harmonic analysis at its boundaries'' ANR-12-BS01-0013-01 and they thank the ICMAT for hospitality during the writing of this article. 
We warmly thank S. Hofmann for  discussions pertaining to this work, for providing us with unpublished material  and letting us use some of it, and for helping us with  historical comments.

\section{General theory and energy solutions}\label{sec:general}
 
If $E(\Omega)$ is a normed space of $\C$-valued functions on a set  $\Omega$ and $F$ a normed space, then $E(\Omega;F)$ is the space of $F$-valued functions  with $\| |f|_{F}\|_{E(\Omega)}<\infty$. 

Denote points in $\ree$ by boldface letter $\bx,\by,\ldots$ and in coordinates in $\R \times \R^n$ by $(t,x)$ etc. We set $\R^{1+n}_+=(0,\infty)\times \R^n$. 
Consider the system of $m$  equations  given by
\begin{equation}  \label{eq:divform}
  \sum_{i,j=0}^n\sum_{\beta= 1}^m \pd_i\Big( A_{i,j}^{\alpha, \beta}(\bx) \pd_j u^{\beta}(\bx)\Big) =0,\qquad \alpha=1,\ldots, m
\end{equation}
in $\R^{1+n}_+$,
where $\pd_0= \tdd{}{t}$ and $\pd_i= \tdd{}{x_{i}}$ if $i=1,\ldots,n$.  For short, we write $\divv A \nabla u=0$ to mean \eqref{eq:divform}, where we always assume that the matrix  \begin{equation}   \label{eq:boundedmatrix}
  A(\bx)=(A_{i,j}^{\alpha,\beta}(\bx))_{i,j=0,\ldots, n}^{\alpha,\beta= 1,\ldots,m}\in L^\infty(\reu;\mL(\C^{m(1+n)})),
\end{equation} is bounded and measurable
 and  satisfies some ellipticity. For systems, we use several forms of ellipticity. One is the G\aa rding inequality
\begin{equation}   \label{eq:garding0}
   {}\int_{\reu} \re (A(\bx)\nabla g(\bx)\cdot \conj{\nabla g(\bx)}) \,  d\bx \ge \lambda 
   \sum_{i=0}^n\sum_{\alpha=1}^m {}\int_{\reu} |\partial_ig^\alpha(\bx)|^2d\bx 
\end{equation}
for all $g\in C^1_{0}(\reu;\C^m)$ ($C^1$ functions with compact support) and some $\lambda>0$, and sometimes one needs the stronger
  G\aa rding inequality
\begin{equation}   \label{eq:garding+}
   {}\int_{\reu} \re (A(\bx)\nabla g(\bx)\cdot  \conj{\nabla g(\bx)}) \,  d\bx \ge \lambda 
   \sum_{i=0}^n\sum_{\alpha=1}^m {}\int_{\reu} |\partial_ig^\alpha(\bx)|^2d\bx 
\end{equation}
for all $g\in  C^1_{0}(\ree;\C^m)$ and some $\lambda>0$.  We have set $$A(\bx)\xi\cdot \conj{\eta}= \sum_{i,j=0}^n\sum_{\alpha,\beta=1}^m A_{i,j}^{\alpha,\beta}(\bx)\xi_j^\beta \,  \conj{\eta_i^\alpha}. $$
Note that the integrals are on the upper-half space. For systems,  an elementary computation shows that  \eqref{eq:garding+} is equivalent to the G\aa rding inequality  \eqref{eq:garding}  on $\ree$ (see below)   for the 
extended matrix $A^\sharp$ to $\ree$ obtained by changing the sign when $t$ changes sign of the coefficients for the mixed $t,x_{i}$ derivatives in \eqref{eq:divform}. 
The scalar case corresponding to equations is when $m=1$. In this  case, the accretivity condition above are  equivalent to the usual pointwise accretivity 
condition 
\begin{equation}
\label{eq:pointwiseaccretivity}
\re (A(\bx)\xi\cdot \conj{\xi})\ge \lambda 
   |\xi|^2,  \ \xi \in \C^{1+n}, \ {a.e. \ on} \ \reu.
\end{equation}
Alternately, scalar can mean a diagonal system in the sense that $A_{i,j}^{\alpha,\beta}=A_{i,j}\delta ^{a,\beta}$ using the Kronecker symbol. 
When $A$ has $t$-independent coefficients,  that  is $A(t,x)=A(x)$,    \eqref{eq:garding+} is implied by 
 the strict accretivity of $A$ on  the subspace $\mH^0$  of $ L^2(\R^n;\C^{m(1+n)})$ defined by $(f_{j}^\alpha)_{j=1,\ldots,n}$ is curl free in $\R^n$ for all $\alpha$,  that is,
for some $\lambda>0$ 
\begin{equation}   \label{eq:accrassumption}
   \int_{\R^n} \re (A(x)f(x)\cdot  \conj{f(x)}) \,  dx\ge \lambda 
   \sum_{i=0}^n\sum_{\alpha=1}^m \int_{\R^n} |f_i^\alpha(x)|^2dx, \ \forall \ f\in \mH_{0}.
\end{equation}
Even when $A$ is $t$-independent, \eqref{eq:pointwiseaccretivity} is  stronger  than \eqref{eq:accrassumption}  when $m\ge 2$ except when $n=1$. See \cite{AAMc} and \cite{AR} for details.  Such conditions are stable under taking adjoint of $A$.

The system \eqref{eq:divform} is always considered in the sense of distributions with weak solutions, that is  $H^1_{loc}(\R^{1+n}_{+};\C^m)$ solutions.  

There is an important space for the theory of energy (or variational) solutions  in $\reu$.  We shall use the homogeneous space  of energy solutions 
$$\mE:=\dot H^{1}(\reu; \C^m),$$ 
which is different from the one used in \cite{KR} and \cite{HKMP2}. Recall that $\dot H^{1}(\reu)$ is   the space of $L^2_{\loc}(\reu)$ functions $u$ with finite energy  ${}\int_{\reu} |\nabla u(\bx)|^2\, d\bx <\infty$. The fact that we assume local square integrability is of particular help (in fact, it suffices to even assume $u$ to be a distribution as $u$ can then be identified with an $L^2_{loc}$ function), even if the ``norm'' is defined modulo constant. Indeed, the proof of Lemma 3.1 in \cite{AMcM} shows that $\dot H^{1}(\reu)$  imbeds into  $C([0,\infty); L^2_{\loc}(\R^n))$,  where $C(\Omega)$ stands for the space of continuous functions on $\Omega$,  (up to identification of measurable functions on null sets) and that the restriction to $\reu$ of $C^\infty_{0}(\ree)$ is dense in $\dot H^{1}(\reu)$. In particular, the trace of $\dot H^{1}(\reu)$, identifying $\pd\reu$ with $\R^n$, is the space of $f\in L^2_{loc}(\R^n)$ such that $f\in \dot H^{1/2}(\R^n)$ and has $C^\infty_{0}(\R^n)$ as dense subspace.  Thus we can interpret boundary equalities also in $L^2_{\loc}$ and measure size only with the ``homogeneous'' norm in $\dot H^{1/2}(\R^n)$.  Recall that $\dot H^{1}_{0}(\reu)$ is the subspace of $\dot H^{1}(\reu)$ consisting of functions with constant trace: it is the closure of $C^\infty_{0}(\reu)$ for  the semi-norm above. Note also that $\dot H^{1}(\reu)$ is  stable (as a set)  under multiplication by $C^\infty_{0}(\ree)$ functions restricted to $\reu$.

 In what follows, we denote by $ \dot H^{s}(\R^n)$ the homogeneous Sobolev space with exponent $s\in \R$ defined as the completion of  $L^2(\R^n)$ for the semi-norm $\|(-\Delta)^{s/2}f\|_{2}$, where $\Delta$ is the self-adjoint Laplace operator on $L^2(\R^n)$. For $s>0$, it is the closure of $C^\infty_{0}(\R^n)$ ($=$ limits of Cauchy sequences for the homogeneous semi-norms) and can be realized  as a subset of $L^2_{loc}(\R^n)$,  and it becomes a Banach space when moding out polynomials of some order. For $s<0$,   it is a space of tempered distributions, identified with the dual of $ \dot H^{-s}(\R^n)$ in the usual  sesquilinear pairing. 
It is convenient to introduce the space $\dot H^{s}_{\nabla}(\R^n; \C^n):= \nabla \dot H^{s+1}(\R^n) = \nabla (-\Delta)^{-1/2} \dot H^s(\R^n)$.  Using the boundedness of the Riesz transforms on $\dot H^s(\R^n)$ for all  $s\in \R$,  it is  the subspace of  the curl free elements in $\dot H^{s}(\R^n;\C^n)$. We will use them for  $s=-1/2$ and $s=1/2$, in which case they are dual spaces for the usual duality  and notice that they both have $\mD_{\nabla}(\R^n;\C^n):= \nabla C^\infty_{0}(\R^n)$ as a dense subspace.

Let us continue with the definition of the conormal derivative and the abstract Green's formula. 

\begin{lem}\label{lemma:abstract Green} 
Let $A(\bx)$ be any bounded measurable matrix in $\reu$. Let $u\in \mE$ such that $\divv A  \nabla u=0$  in $\reu$. Then, there exists a distribution  in $\dot H^{-1/2}(\R^n; \C^m)$, denoted by $\pd_{\nu_{A}}u|_{t=0}$ or $\pd_{\nu_{A}}u_{0}$\footnote{We shall use both notations.} and called the conormal derivative of $u$ at the boundary, such that  for any $\phi\in \mE$ with $\phi_{0}= \phi|_{t=0}$, 
\begin{equation}
\label{eq:conormal}  {}\int_{\reu} A\nabla u \cdot \overline{\nabla \phi} = - \langle \pd_{\nu_{A}}u_{0}, \phi_{0}\rangle.
\end{equation}
 In particular, for  $u,w \in \mE$ with  $\divv A  \nabla u=0$  and $\divv A^*  \nabla w=0$  in $\reu$, one has the abstract Green's formula 
\begin{equation}
\label{eq:green}
\langle u_{0}, \pd_{\nu_{A^*}}w_{0}\rangle = \langle \pd_{\nu_{A}}u_{0}, w_{0}\rangle.
\end{equation}
\end{lem}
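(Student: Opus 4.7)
The plan is to obtain $\pd_{\nu_A}u|_{t=0}$ as the element of $\dot H^{-1/2}(\R^n;\C^m)$ which, under the standard duality pairing with $\dot H^{1/2}(\R^n;\C^m)$, represents the antilinear functional defined by the weak formulation.

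First I would define
\[
\ell_u(\phi) := -\int_{\reu} A\nabla u\cdot\overline{\nabla \phi}, \qquad \phi\in\mE.
\]
Since $A\in L^\infty$ and $\nabla u \in L^2$, the form $\ell_u$ is a continuous antilinear functional on $\mE$ with $|\ell_u(\phi)|\le \|A\|_\infty\|\nabla u\|_2\|\nabla \phi\|_2$. Next, for $\phi\in C^\infty_0(\reu;\C^m)\subset \mE$, the assumption $\divv A\nabla u=0$ in $\mD'(\reu;\C^m)$ gives $\ell_u(\phi)=0$. By the density noted in the excerpt (restriction of $C^\infty_0(\ree)$ to $\reu$ is dense in $\dot H^1(\reu)$, hence $C^\infty_0(\reu)$ is dense in $\dot H^1_0(\reu)$), it follows that $\ell_u$ vanishes on the subspace $\dot H^1_0(\reu;\C^m)$ of functions with zero trace. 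Consequently $\ell_u(\phi)$ depends only on the trace $\phi_0:=\phi|_{t=0}$.

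I would then invoke the trace description from the excerpt: the trace map $\phi\mapsto \phi_0$ sends $\mE$ continuously onto $\dot H^{1/2}(\R^n;\C^m)$ with kernel $\dot H^1_0(\reu;\C^m)$, and admits a bounded right inverse (one may take for instance a Poisson-type extension). Thus $\ell_u$ descends to a bounded antilinear functional $\tilde\ell_u$ on $\dot H^{1/2}(\R^n;\C^m)$, and the duality $(\dot H^{1/2})'=\dot H^{-1/2}$ (recalled in the excerpt) produces a unique $\pd_{\nu_A}u_0\in\dot H^{-1/2}(\R^n;\C^m)$ with $\tilde\ell_u(g)=\langle \pd_{\nu_A}u_0,g\rangle$ for every $g\in\dot H^{1/2}(\R^n;\C^m)$. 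Unwinding the definitions yields \eqref{eq:conormal} for all $\phi\in\mE$.

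For the Green formula \eqref{eq:green}, I would apply \eqref{eq:conormal} twice. Testing the identity for $u$ against $\phi=w$, and the analogous identity for $w$ (built from $A^*$) against $\phi=u$, gives
\[
\int_{\reu} A\nabla u\cdot\overline{\nabla w}=-\langle \pd_{\nu_A}u_0,w_0\rangle,\qquad
\int_{\reu} A^*\nabla w\cdot\overline{\nabla u}=-\langle \pd_{\nu_{A^*}}w_0,u_0\rangle.
\]
The definition $A^*\xi\cdot\overline\eta=\xi\cdot\overline{A\eta}$ shows that the left-hand sides are complex conjugates of one another, so the right-hand sides are too; since the sesquilinear pairing satisfies $\overline{\langle f,g\rangle}=\langle g,f\rangle$, this rearranges to \eqref{eq:green}.

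The only genuine subtlety is the trace/lifting step for the homogeneous space $\mE$ on the half-space, where one must work modulo constants and use the imbedding of $\dot H^1(\reu)$ into $C([0,\infty);L^2_{\loc}(\R^n))$ mentioned in the excerpt to give $\phi_0$ an unambiguous meaning in $L^2_{\loc}(\R^n)\cap \dot H^{1/2}(\R^n;\C^m)$. Once this is in hand the distributional identification of $\pd_{\nu_A}u_0$ and the Green formula are purely formal manipulations.
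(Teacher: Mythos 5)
Your proof is correct and follows essentially the same route as the paper's: show the antilinear functional $\phi\mapsto -\int_{\reu}A\nabla u\cdot\overline{\nabla\phi}$ vanishes on $\dot H^1_0(\reu;\C^m)$ (by the weak equation plus density of $C^\infty_0(\reu;\C^m)$), descend to the trace space $\dot H^{1/2}(\R^n;\C^m)$, and invoke duality. One small caveat: the paper phrases the kernel condition as $\phi$ having \emph{constant} trace rather than zero trace; the two descriptions coincide because $\dot H^{1/2}$ is a space modulo constants, but it is worth being explicit about this since a constant in $\mE$ has nonzero pointwise trace. Your spelling-out of the Green identity (conjugating $\int A^*\nabla w\cdot\overline{\nabla u}=\overline{\int A\nabla u\cdot\overline{\nabla w}}$ and using $\overline{\langle f,g\rangle}=\langle g,f\rangle$) is more detailed than the paper, which simply asserts the formula follows from the definition; the result matches once one recalls the paper's convention that the bracket is written with no fixed order of factors.
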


The brackets are interpreted in the $\dot H^{-1/2}, \dot H^{1/2}$ sesquilinear duality, but, by abuse, in no definite order for the factors so as to make the formula look like  the Green's formula obtained by integration by parts (when feasible).   The conormal derivative  agrees with ${\nu}\cdot (A\nabla u)|_{t=0}$ whenever this makes sense, where $\nu$ is the upward unit vector in the $t$-direction  (hence the inward normal for $\reu$).  This convention for conormal derivatives will be useful later. This explains the negative sign in the defining formula. 

\begin{proof} The definition of the conormal derivative is  a consequence of the facts that \eqref{eq:conormal} is 0 when $\phi \in \mE$ with constant trace (because $C^\infty_{0}(\reu; \C^m)$ is dense in it) and that the trace is bounded from $\mE$ onto $\dot H^{1/2}(\R^n)$. The details are left to the reader. 
The abstract Green's formula  follows immediately from definition of the conormal derivatives. 
\end{proof}

Remark that the theory of energy solutions (that is, solutions of  $\divv A \nabla u=0$ in $\mE$)   done in \cite[Section 3]{AMcM} for $t$-independent systems satisfying  \eqref{eq:accrassumption} extends immediately to $t$-dependent systems satisfying the appropriate G\aa rding inequality allowing to use the Lax-Milgram lemma.  We state the well-posedness results  for convenience. 
Note that by density,    \eqref{eq:garding0} and  \eqref{eq:garding+} extend to all $g$ in  $\dot H^{1}_{0}(\reu; \C^m)$ and $\dot H^{1}(\reu; \C^m)=\mE$ respectively.

\begin{lem}\label{lemmaN} Let  $A(\bx)$ be bounded measurable  with the stronger G\aa rding inequality \eqref{eq:garding+}. Let $g\in  \dot H^{-1/2}(\R^n;\C^m)$. Then, there is an energy solution $u\in \mE$, unique modulo constants in $\C^m$, of the system $\divv A \nabla u=0$ in $\reu$ with $\pd_{\nu_{A}}u|_{t=0}=g$   in $\dot H^{-1/2}(\R^n;\C^m)$.    
\end{lem}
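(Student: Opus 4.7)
The plan is a direct application of the Lax--Milgram lemma on the quotient space $\mE/\C^m$, equipped with the norm $\|u\|=\|\nabla u\|_{L^2(\reu)}$, which is a Hilbert space. I would set up the sesquilinear form
\[
a(u,v):=\int_{\reu} A\nabla u\cdot \overline{\nabla v}
\]
and the antilinear functional
\[
L(v):=-\langle g, v|_{t=0}\rangle_{\dot H^{-1/2},\dot H^{1/2}}
\]
on $\mE$. Both descend to the quotient $\mE/\C^m$: for $a$, because $\nabla$ kills constants; for $L$, because $\dot H^{1/2}(\R^n)$ is realized modulo constants and so $g\in \dot H^{-1/2}(\R^n;\C^m)$ by definition annihilates constants.

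The next step is to verify the Lax--Milgram hypotheses. Boundedness of $a$ is immediate from $\|A\|_\infty<\infty$. Boundedness of $L$ follows from
\[
|L(v)|\le \|g\|_{\dot H^{-1/2}}\,\|v|_{t=0}\|_{\dot H^{1/2}}\lesssim \|g\|_{\dot H^{-1/2}}\,\|\nabla v\|_{L^2(\reu)},
\]
using the fact recalled in the text that the trace map $\mE\to \dot H^{1/2}(\R^n;\C^m)$ is bounded. Coercivity is exactly the point where the stronger G\aa rding inequality \eqref{eq:garding+} is used: since $C^1_0(\ree;\C^m)$ is dense in $\mE$, \eqref{eq:garding+} passes to the limit and yields
\[
\re a(u,u)\ge \lambda \|\nabla u\|_{L^2(\reu)}^2\qquad \text{for all } u\in \mE.
\]
Lax--Milgram then produces a unique $u\in\mE/\C^m$ with $a(u,v)=L(v)$ for every $v\in\mE$. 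This already gives the uniqueness modulo constants claimed in the statement.

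It remains to identify $u$ as an energy solution with the prescribed conormal data. Testing against $v\in C^\infty_0(\reu;\C^m)\subset \mE$ (whose trace vanishes, so that $L(v)=0$) yields $\divv A\nabla u=0$ distributionally in $\reu$. Once we know $u$ is a solution, Lemma \ref{lemma:abstract Green} applies and gives
\[
a(u,v)=-\langle \pd_{\nu_A}u|_{t=0},\,v|_{t=0}\rangle\qquad \text{for every } v\in\mE.
\]
Comparing with $a(u,v)=L(v)=-\langle g,\,v|_{t=0}\rangle$ and using the fact (recalled just before Lemma \ref{lemma:abstract Green}) that the traces of $\mE$-functions exhaust $\dot H^{1/2}(\R^n;\C^m)$, we conclude $\pd_{\nu_A}u|_{t=0}=g$ in $\dot H^{-1/2}(\R^n;\C^m)$.

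The only delicate point in carrying this out is the bookkeeping with constants: one must be consistent that $\mE$, its trace space $\dot H^{1/2}(\R^n;\C^m)$, and the datum space $\dot H^{-1/2}(\R^n;\C^m)$ are all handled modulo, resp. annihilating, constants, so that the bilinear form and the functional genuinely descend to the quotient and Lax--Milgram produces a unique solution there, not just on $\dot H^1_0(\reu;\C^m)$. This is why the \emph{stronger} G\aa rding inequality \eqref{eq:garding+} (rather than \eqref{eq:garding0}) is the right hypothesis.
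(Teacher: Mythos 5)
Your proof is correct and takes exactly the approach the paper indicates in its one-line justification (``This uses the Lax--Milgram lemma in $\mE/\C^m$''); you have simply filled in the standard details. The key points you identify---descent of both $a$ and $L$ to the quotient, coercivity via \eqref{eq:garding+} extended to $\mE$ by density, and identification of the conormal derivative using Lemma \ref{lemma:abstract Green} together with the fact that traces of $\mE$-functions exhaust $\dot H^{1/2}(\R^n;\C^m)$---are precisely the ones the paper leaves implicit.
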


This uses the Lax-Milgram lemma in $\mE/\C^m$. One can define the \emph{Neumann to Dirichlet operator} as the bounded linear operator $$\Gamma_{ND}: \dot H^{-1/2}(\R^n;\C^m) \to \dot H^{-1/2}_{\nabla}(\R^n; (\C^m)^n)$$
in such a way that  $\Gamma_{ND}(\pd_{\nu_{A}}u|_{t=0})=\nabla_{x}u|_{t=0}$, if $u$ is one of the energy solution with given Neumann datum $ \pd_{\nu_{A}}u|_{t=0}$.

\begin{lem}\label{lemma2} Let  $A(\bx)$ be bounded measurable with the G\aa rding inequality \eqref{eq:garding0}. Let $f\in L^2_{\loc}(\R^n;\C^m) \cap \dot H^{1/2}(\R^n;\C^m)$. Then, there is a unique energy solution $u\in \mE$ of the equation $\divv A \nabla u=0$ where $u|_{t=0}=f$ holds  in $\dot H^{1/2}(\R^n;\C^m) \cap L^2_{\loc}(\R^n;\C^m)$.   
\end{lem}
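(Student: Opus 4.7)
The plan is to reduce the inhomogeneous Dirichlet problem to a homogeneous one by lifting the boundary datum, and then to apply the Lax--Milgram lemma.

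First, using that the trace map is a bounded surjection from $\mE = \dot H^{1}(\reu;\C^m)$ onto $\dot H^{1/2}(\R^n;\C^m)$ (recalled before Lemma~\ref{lemmaN}, with the $L^2_{\loc}$ identification built in via the embedding of $\mE$ into $C([0,\infty);L^2_{\loc}(\R^n))$), I pick an extension $F\in \mE$ of $f$ and seek $u$ in the form $u = F + v$, where $v$ lies in the zero-trace subspace
\[
\mE_0 := \{v\in\mE : v|_{t=0}=0\}.
\]
Equipped with the inner product $\int_{\reu}\nabla v\cdot\overline{\nabla\phi}\,d\bx$, this is a genuine Hilbert space (the Dirichlet seminorm is a norm on $\mE_0$ since a zero-trace constant vanishes). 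By construction $C^1_0(\reu;\C^m) \subset \mE_0$ densely, so the G\aa rding inequality \eqref{eq:garding0} propagates to all of $\mE_0$ by density.

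Second, I consider the sesquilinear form $a(v,\phi):=\int_{\reu} A\nabla v\cdot\overline{\nabla\phi}\,d\bx$ on $\mE_0\times\mE_0$. Continuity follows from $\|A\|_\infty$, and \eqref{eq:garding0} gives coercivity $\re a(v,v)\ge \lambda\|\nabla v\|_2^2$. The antilinear functional $\phi\mapsto -a(F,\phi)$ is bounded on $\mE_0$ since $\nabla F\in L^2(\reu)$. Lax--Milgram then produces a unique $v\in\mE_0$ with $a(v,\phi)=-a(F,\phi)$ for every $\phi\in\mE_0$; equivalently, $u:=F+v\in\mE$ solves $\divv A\nabla u=0$ weakly in $\reu$ and satisfies $u|_{t=0}=f$ in $\dot H^{1/2}(\R^n;\C^m)\cap L^2_{\loc}(\R^n;\C^m)$.

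Third, for uniqueness, if $u_1,u_2$ are two such solutions, then $w:=u_1-u_2\in\mE_0$ and $\divv A\nabla w=0$ weakly. Testing against $w$ itself—either directly through Lemma~\ref{lemma:abstract Green}, whose boundary pairing vanishes because $w$ has zero trace, or by approximating $w$ in $\mE_0$ by $C^\infty_0(\reu;\C^m)$ functions—yields $\int_{\reu} A\nabla w\cdot\overline{\nabla w}=0$. Coercivity forces $\nabla w=0$ in $L^2(\reu)$, so $w$ is a.e.\ constant, and the zero-trace condition pins this constant to $0$. The proof is essentially routine Lax--Milgram; the only point requiring care is that the identification $u|_{t=0}=f$ must be read in the space $\dot H^{1/2}(\R^n;\C^m)\cap L^2_{\loc}(\R^n;\C^m)$, which is exactly the range of the trace from $\mE$. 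Note that only the weaker G\aa rding form \eqref{eq:garding0}, involving test functions supported in $\reu$, is needed—in contrast with Lemma~\ref{lemmaN} which required \eqref{eq:garding+}.
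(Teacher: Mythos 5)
Your argument is correct and follows the paper's proof in all essential respects: pick an extension $F\in\mE$ of $f$, solve the homogeneous problem by Lax--Milgram, and set $u=F+v$. The only cosmetic difference is that the paper applies Lax--Milgram in $\dot H^{1}_{0}(\reu;\C^m)$ (constant trace) modulo $\C^m$ and then fixes the additive constant at the end using $f\in L^2_{\loc}$, whereas you absorb that step upfront by working directly in the zero-trace subspace $\mE_0$ (which, as you implicitly use, is isometric to $\dot H^{1}_{0}(\reu;\C^m)/\C^m$ and hence complete for the Dirichlet norm).
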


\begin{proof}  Given an extension $\phi$ of $f$ in $\mE$,
by the Lax-Milgram lemma applied in $\dot H^{1}_{0}(\reu; \C^m)$, there exists, unique modulo $\C^m$, a  solution $w\in \mE$ to $\divv A \nabla w=-\divv A \nabla \phi$ with $w|_{t=0}=0$ with equality in $\dot H^{1/2}(\R^n;\C^m)$. Thus $u=w+\phi$ solves $\divv A \nabla u = 0$ with $u|_{t=0}=f$. Since $f\in L^2_{\loc}(\R^n;\C^m)$, we can fix the constant by imposing the equality  in $L^2_{\loc}(\R^n;\C^m)$. Thus $u$ is uniquely defined. \end{proof}

Similarly, one can define the  \emph{Dirichlet to Neumann operator} as the bounded linear operator $$\Gamma_{DN}:  \dot H^{-1/2}_{\nabla}(\R^n; (\C^m)^n) \to \dot H^{-1/2}(\R^n;\C^m) $$ 
in such a way that  $\Gamma_{DN}(\nabla_{x}u|_{t=0})=\pd_{\nu_{A}}u|_{t=0}$, if $u$ is the energy solution with given Dirichlet datum $u|_{t=0}$ (or alternately, any of the energy solution  with given regularity datum $ \nabla_{x}u|_{t=0}$).

Let us come to some local inequalities. 
We use the notation $B(\bx,r)$ to denote the open ball in $\R^{n+1}$, centred at $\bx$, of radius $r$.
Given such a ball $B=B(\bx,r)$, we let 
$\kappa B$ denote the concentric dilate of $B$ by a factor of $\kappa$.
For $x\in\R^n$, we let $\Delta=\Delta(x,r):= B((0,x),r)\cap (\{0\} \times \R^n)$ denote the 
``surface ball" on $\R^n$ centred at $x$ and with radius $r$ and $B_{+}(x,r)= B((0,x), r)\cap \reu$ the half-ball. 

If
 $A(\bx)$ is bounded measurable with the G\aa rding inequality \eqref{eq:garding0}, any weak solution $u$ in a ball $B=B(\bx,r)$ with $ B \subset \reu$ of  $\divv A \nabla u=0$ enjoys the Caccioppoli inequality for any $0<\alpha<\beta<1$  and some $C$ depending on the ellipticity constants, $n,m$,  $\alpha$ and $\beta$,  
 \begin{equation}
\label{eq:caccio}
{}\int_{\alpha B} |\nabla u|^2 \le Cr^{-2} {}\int_{\beta B}  |u|^2,
\end{equation}
and any weak solution  $u\in W^{1,2}(B_{+};\C^m)=H^1(B_{+};\C^m)$ of  $\divv A \nabla u=0$ on $B_{+}= B_{+}(x,r)$ with $u|_{t=0}=0$ on $\Delta(x,r)$ 
enjoys the boundary Caccioppoli inequality for any $0<\alpha<\beta<1$  and some $C$ depending on the ellipticity constants, $n,m$,  $\alpha$ and $\beta$,  
 \begin{equation}
\label{eq:caccio+}
{}\int_{\alpha B_{+}} |\nabla u|^2 \le Cr^{-2} {}\int_{\beta B_{+}}  |u|^2.
\end{equation}
If $A(\bx)$ satisfies the stronger boundary G\aa rding inequality \eqref{eq:garding+}, then any weak solution $u\in H^1(B_{+};\C^m)$ of  $\divv A \nabla u=0$ on $B_{+}= B_{+}(x,r)$ with $\pd_{\nu_{A}}u|_{t=0}=0$\footnote{It can be defined locally.} on $\Delta(x,r)$ 
enjoys the boundary Caccioppoli inequality \eqref{eq:caccio+}.  The proofs are standard. 

This gives for example the following kind of local boundary estimates. 

\begin{prop}\label{prop:localboundaryreg} Let  $A(\bx)$ be bounded measurable  with the  G\aa rding inequality \eqref{eq:garding0}. Let $u,w \in \mE$ with  $\divv A  \nabla u=0$  and $\divv A^*  \nabla w=0$  in $\reu$. Assume that $u_{0}$ is supported in a surface ball $\Delta_{0}=\Delta(x_{0}, \rho)$ and $w_{0}$ is supported in a surface ball $\Delta=\Delta(x,r)$ with $4\Delta \cap \Delta_{0}=\emptyset.$ Then
\begin{equation}
\label{eq:localboundaryreg}
|\langle \pd_{\nu_{A}}u_{0}, w_{0}\rangle| \le C  r^{-2}\left({}\int_{\Omega_{+}} |u|^2\right)^{1/2}
\left({}\int_{\Omega_{+}} |w|^2\right)^{1/2}
\end{equation}
with $\Omega_{+}= 3B_{+}\setminus 2B_{+}$, $B_{+}= B_{+}(x,r)$. 
\end{prop}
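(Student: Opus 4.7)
The plan is to rewrite $\langle \pd_{\nu_A} u_{0}, w_{0} \rangle$ using two applications of the defining identity \eqref{eq:conormal} of the conormal derivative, so as to localize the integrand to a thin annulus inside $\Omega_{+}$. I will fix a cutoff $\eta \in C^\infty_{0}(\ree)$ with $\eta \equiv 1$ on $2.25 B$, $\supp \eta \subset 2.5 B$ and $|\nabla \eta| \lesssim 1/r$. The disjointness assumption $4\Delta \cap \Delta_{0} = \emptyset$ will be used twice: since $\eta|_{t=0}$ is supported in $2.5\Delta \subset 4\Delta$, it vanishes on $\supp u_{0} \subset \Delta_{0}$, so $(\eta u)|_{t=0}=0$ and $\eta u \in \dot H^{1}_{0}(\reu;\C^m)$; while $\eta|_{t=0}\equiv 1$ on $\Delta \supset \supp w_{0}$, so $(\eta w)|_{t=0}=w_{0}$ and $\eta w$ is an admissible energy extension of $w_{0}$.

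Applying \eqref{eq:conormal} with test function $\eta w$, and the analogous identity for the $A^*$-energy solution $w$ with test function $\eta u$ (whose trace is zero), will produce
$$\langle \pd_{\nu_A} u_{0}, w_{0} \rangle = -\int_{\reu} A \nabla u \cdot \overline{\nabla(\eta w)}, \qquad 0 = \int_{\reu} A \nabla(\eta u) \cdot \overline{\nabla w},$$
the second identity coming from the conjugation relation $\int A\nabla\psi \cdot \overline{\nabla w} = \overline{\int A^*\nabla w \cdot \overline{\nabla\psi}}$. Adding the two, the interior terms $\eta A\nabla u \cdot \overline{\nabla w}$ cancel, leaving the key identity
$$\langle \pd_{\nu_A} u_{0}, w_{0} \rangle = \int_{\reu} A(u\nabla\eta) \cdot \overline{\nabla w} \, - \int_{\reu} A\nabla u \cdot \overline{w\nabla\eta}.$$
Both integrands are supported in the narrow annular region $A':= (2.5B \setminus 2.25B) \cap \reu \subset \Omega_{+}$, so Cauchy--Schwarz together with $\|\nabla\eta\|_{\infty} \lesssim 1/r$ and $\|A\|_{\infty} < \infty$ yield
$$|\langle \pd_{\nu_A} u_{0}, w_{0} \rangle| \lesssim r^{-1}\bigl(\|u\|_{L^2(A')}\|\nabla w\|_{L^2(A')} + \|\nabla u\|_{L^2(A')}\|w\|_{L^2(A')}\bigr).$$

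The final step is to eliminate the gradients on the right-hand side through Caccioppoli, and this is where I expect to have to be most careful. I will cover $A'$ by a finite family of interior balls and boundary half-balls whose concentric dilates still lie inside $\Omega_{+}$, apply the interior Caccioppoli inequality \eqref{eq:caccio} on the former and the boundary Caccioppoli inequality \eqref{eq:caccio+} on the latter, and sum. The boundary version requires vanishing Dirichlet trace on the boundary portion of $\Omega_{+}$, namely $3\Delta \setminus 2\Delta$; this is exactly where the hypothesis $4\Delta \cap \Delta_{0} = \emptyset$ is crucial, since it forces both $u_{0}$ (supported in $\Delta_{0}$, disjoint from $4\Delta \supset 3\Delta$) and $w_{0}$ (supported in $\Delta$, disjoint from $3\Delta \setminus 2\Delta$) to vanish on that set. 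This gives $\|\nabla u\|_{L^2(A')} \lesssim r^{-1}\|u\|_{L^2(\Omega_{+})}$ and the analogous estimate for $w$; inserting these in the displayed inequality above produces the claimed bound $|\langle \pd_{\nu_A} u_{0}, w_{0} \rangle| \lesssim r^{-2}\|u\|_{L^2(\Omega_{+})}\|w\|_{L^2(\Omega_{+})}$.
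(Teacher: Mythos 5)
Your proof is correct and follows essentially the same strategy as the paper's: introduce a cutoff adapted to $B$, use the defining relation for the conormal derivative twice (once for $u$ against $\eta w$, once for $w$ against $\eta u$ whose trace vanishes), cancel the interior $\eta A\nabla u\cdot\overline{\nabla w}$ terms, apply Cauchy--Schwarz on the annular region where $\nabla\eta$ lives, and finish with interior and boundary Caccioppoli (the latter enabled by the vanishing of $u_0$ and $w_0$ on $3\Delta\setminus 2\Delta$ from the separation hypothesis). The only cosmetic difference is that you take $\eta\in C^\infty_0(\ree)$ with support $2.5B$, whereas the paper takes a cutoff $\varphi$ in the $x$-variable alone on the scale $2B$--$3B$; both work, and your choice of a compactly-supported-in-$\ree$ cutoff makes the membership $\eta u,\eta w\in\mE$ immediate from the stability property quoted in Section~\ref{sec:general}.
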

 
 \begin{proof} Fix $\varphi\in C^\infty_{0}(\R^n)$  supported in $3B$, $\varphi=1$ on $2B$ where $B=B((0,x),r)$ with $\|\varphi\|_{\infty}\le 1$ and $\|\nabla \varphi\|_{\infty}\lesssim r^{-1}$. Let $\varphi_{0}$ be its restriction to $\R^n$. 
 Remark that $\varphi w\in \mE$ so its trace $\varphi_{0}w_{0}$ is well-defined in $L^2_{\loc}\cap \dot H^{1/2}$ and $\varphi_{0}w_{0}=w_{0}$ using that $\varphi_{0}=1$ on the support of $w_{0}$.
 Thus $\langle \pd_{\nu_{A}}u_{0}, w_{0}\rangle= \langle  \pd_{\nu_{A}}u_{0}, \varphi_{0}w_{0}\rangle$. Next, 
 $$
\langle \pd_{\nu_{A}}u_{0}, \varphi_{0}w_{0}\rangle= - {}\int A\nabla u\cdot \overline{\nabla (\varphi w)}=  + {}\int  u A\nabla \varphi\cdot \overline{\nabla w} - {}\int   A\nabla u\cdot \overline{w}\nabla \varphi
 $$
 where the last equality uses the fact that $\divv A^*  \nabla w=0$ and that $\varphi u\in \mE$ with $\varphi_{0} u_{0}=0$ so that $-{}\int   A\nabla (\varphi u)\cdot \overline{\nabla w}= \langle \varphi_{0}u_{0}, \pd_{\nu_{A^*}}w_{0}\rangle=0$. We conclude for  both terms by using Cauchy-Schwarz inequality, Caccioppoli and boundary Caccioppoli inequalities, and  that the support of  $\nabla \varphi$ is contained in $\Omega_{+}$.  
\end{proof}

\begin{rem}\label{rem:poincarereg} There are variants for the right hand side. As   $u$ vanishes on $3\Delta\setminus 2\Delta$, one can show
$r^{-1}\left({}\int_{\Omega_{+}} |u|^2\right)^{1/2} \lesssim \left({}\int_{\Omega_{+}} |\nabla u|^2\right)^{1/2}$ by using variants of Poincar\'e's inequality. The similar observation applies to $w$. 
 \end{rem}
 
There is a similar statement  for disjointly supported conormal derivatives. 

\begin{prop}\label{prop:localboundaryneu} Let  $A(\bx)$ be bounded measurable  with the stronger G\aa rding inequality \eqref{eq:garding+}. Let $u,w \in \mE$ with  $\divv A  \nabla u=0$  and $\divv A^*  \nabla w=0$  in $\reu$. Assume that $\pd_{\nu_{A}}u_{0}$ is supported in a surface ball $\Delta_{0}=\Delta(x_{0}, \rho)$ and $\pd_{\nu_{A^*}}w_{0}$ is supported in a surface ball $\Delta=\Delta(x,r)$ with $4\Delta \cap \Delta_{0}=\emptyset.$ Then
\begin{equation}
\label{eq:localboundaryneu}
|\langle u_{0}, \pd_{\nu_{A^*}}w_{0}\rangle| \le C  r^{-2}\left({}\int_{\Omega_{+}} |u|^2\right)^{1/2}
\left({}\int_{\Omega_{+}} |w|^2\right)^{1/2}
\end{equation}
with $\Omega_{+}= 3B_{+}\setminus 2B_{+}$, $B_{+}= B_{+}(x,r)$. 
\end{prop}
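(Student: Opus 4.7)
The plan is to follow the template of Proposition~\ref{prop:localboundaryreg}, with the two roles (Dirichlet trace vs.\ conormal derivative) interchanged, so that both support hypotheses on the conormal derivatives are used simultaneously. I would start by choosing a cutoff $\varphi\in C^\infty_0(\ree)$ supported in $3B$, equal to $1$ on $2B$, with $\|\nabla\varphi\|_\infty\lesssim r^{-1}$. Writing $\varphi_0$ for its boundary restriction, the hypothesis $4\Delta\cap\Delta_0=\emptyset$ together with $\Delta\subset 2\Delta$ ensures $\varphi_0\equiv 1$ on $\supp(\pd_{\nu_{A^*}}w_0)$ and $\varphi_0\equiv 0$ on $\supp(\pd_{\nu_A}u_0)$. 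Since $\dot H^1(\reu)$ is stable under multiplication by $C^\infty_0(\ree)$, both $\varphi u$ and $\varphi w$ belong to $\mE$.

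The next step is to apply the defining formula \eqref{eq:conormal} twice. Using $\varphi w$ as a test function for $u$ and exploiting $\varphi_0\pd_{\nu_A}u_0=0$, one obtains $\int A\nabla u\cdot\overline{\nabla(\varphi w)}=0$; expanding the product rule yields the key identity
\[
\int \varphi\, A\nabla u\cdot\overline{\nabla w} = -\int A\nabla u\cdot\overline{w\nabla\varphi}.
\]
Using $\varphi u$ as a test function for $w$ (with matrix $A^*$), then invoking the adjoint identity $A^*\xi\cdot\overline{\eta}=\xi\cdot\overline{A\eta}$ and conjugating to arrange the sesquilinear pairing in the desired order, and noticing that $\varphi_0 u_0$ can replace $u_0$ when paired against $\pd_{\nu_{A^*}}w_0$ (since $\varphi_0=1$ on its support), produces
\[
\langle u_0,\pd_{\nu_{A^*}}w_0\rangle = -\int A\nabla(\varphi u)\cdot\overline{\nabla w}.
\]
Expanding this, the interior contribution $\int\varphi A\nabla u\cdot\overline{\nabla w}$ reappears and is eliminated via the key identity, leaving
\[
\langle u_0,\pd_{\nu_{A^*}}w_0\rangle = -\int A(u\nabla\varphi)\cdot\overline{\nabla w} + \int A\nabla u\cdot\overline{w\nabla\varphi}.
\]

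Both integrals are now supported in $\supp(\nabla\varphi)\subset 3B_+\setminus 2B_+=\Omega_+$. Cauchy--Schwarz together with $|\nabla\varphi|\lesssim r^{-1}$ produces one factor of $r^{-1}$; the second factor of $r^{-1}$ comes from applying boundary Caccioppoli \eqref{eq:caccio+} to $u$ and $w$ on a finite cover of $\Omega_+$ by half-balls of radius comparable to $r$. On the flat base of each such half-ball, both $\pd_{\nu_A}u_0$ and $\pd_{\nu_{A^*}}w_0$ vanish, since their supports $\Delta_0$ and $\Delta$ lie outside the annulus $3\Delta\setminus 2\Delta$; this is precisely where the stronger G\aa rding inequality \eqref{eq:garding+} assumed here is needed. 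A minor adjustment of the radii of $\varphi$ (slightly shrinking $3B$ and slightly enlarging $2B$) ensures that the Caccioppoli inflation remains inside $3B_+\setminus 2B_+$, giving the claimed estimate.

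The principal obstacle is the careful threading of the sesquilinear $\dot H^{-1/2},\dot H^{1/2}$ pairing through the adjoint manipulation; once that is clear, the argument is structurally parallel to that of Proposition~\ref{prop:localboundaryreg}.
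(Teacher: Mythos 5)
Your proof is correct and takes essentially the same approach as the paper: the paper's own proof simply replaces $u_0$ by $\varphi_0 u_0$ (legitimate since $\varphi_0=1$ on $\supp\pd_{\nu_{A^*}}w_0$) and then says ``conclude exactly as in the previous argument,'' i.e.\ the proof of Proposition~\ref{prop:localboundaryreg} with the roles of traces and conormal derivatives interchanged, which is precisely what you have written out in detail. Your observation that the Neumann-type boundary Caccioppoli inequalities now require the stronger G\aa rding condition \eqref{eq:garding+} correctly identifies why that hypothesis appears here but not in Proposition~\ref{prop:localboundaryreg}.
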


 \begin{proof} Let $\varphi\in C^\infty_{0}(\R^n)$  be as above. 
 Again $\varphi u\in \mE$ so its trace $\varphi_{0}u_{0}$ is well-defined in $L^2_{\loc}\cap \dot H^{1/2}$.
 Thus  $\langle u_{0}, \pd_{\nu_{A^*}}w_{0}\rangle= \langle \varphi_{0}u_{0}, \pd_{\nu_{A^*}}w_{0}\rangle$ using that $\varphi_{0}=1$ on the support of  $\pd_{\nu_{A^*}}w_{0}$.  We conclude exactly as in the previous argument. We skip details.
\end{proof}

\begin{rem} Note that  one can replace $u$ by $u-c$ in this argument as they have the same conormal derivative. Thus one can choose $c$ to our like. For example, if we choose the solution $u$ whose  average  on $\Omega_{+}$ equals 0, then  $r^{-1}\left({}\int_{\Omega_{+}} |u|^2\right)^{1/2} \lesssim \left({}\int_{\Omega_{+}} |\nabla u|^2\right)^{1/2}$ by Poincar\'e's inequality. One can do similarly with $w$. In our applications, we shall need decay estimates for $u$ if $\Delta_{0}$ and $\Delta$ are far apart and some control on $w$. See Theorem \ref{thm:extraneu}.
\end{rem}

\section{Rellich estimates and duality principles for $1< p<\infty$}\label{sec:dualp>1}

We next want to shed a new light on duality principles for global boundary estimates of Rellich type. Recall that we will not assume anything but ellipticity on the coefficients at this point.

For $1<p<\infty$, let $\dot W^{1,p}(\R^n)= \{f\in L^1_{\loc}(\R^n)\, ; \, \nabla f\in L^p(\R^n;\C^n)\}$ (one can show that this is the same space, upon identification, assuming instead $f\in \mD'(\R^n)$) and set $\|f\|_{\dot W^{1,p}}=\|\nabla f\|_{p}$. For $p=2$, this is also $\dot H^1(\R^n)$. Some well-known properties  are summarized here.

\begin{prop}\label{prop:w1p}
\begin{enumerate}
  \item $C^\infty_{0}(\R^n)$ is dense in $\dot W^{1,p}(\R^n)$. 
  \item $\dot W^{-1,p'}(\R^n)$, the dual of $ \dot W^{1,p}(\R^n)$, is the space of distributions $\divv {}{g}$ for some ${}{g} \in L^{p'}(\R^n;\C^n)$ with norm $\inf \|{}{g}\|_{p'}$ taken oven all  choices of ${}{g}$.  
\end{enumerate}
\end{prop}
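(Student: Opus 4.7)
The plan is to prove (1) by a cutoff-and-mollify procedure adapted to the \emph{seminorm} $\|\nabla\cdot\|_p$, and then combine (1) with Hahn--Banach and the $L^p$--$L^{p'}$ duality to obtain (2).

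For (1), I would fix $\chi\in C^\infty_{0}(\R^n)$ with $\chi=1$ on $B(0,1)$ and $\supp\chi\subset B(0,2)$, and set $\chi_R(x)=\chi(x/R)$; this gives $|\nabla\chi_R|\lesssim R^{-1}\mathbf{1}_{A_R}$ with $A_R=B(0,2R)\setminus B(0,R)$. Given $f\in\dot W^{1,p}(\R^n)$, let $c_R$ be the average of $f$ over $A_R$ and set $f_R:=\chi_R(f-c_R)$. The basic identity is
\[
\nabla f - \nabla f_R \;=\; (1-\chi_R)\nabla f \;-\; (f-c_R)\nabla\chi_R.
\]
The first summand tends to $0$ in $L^p$ by dominated convergence, using $\nabla f\in L^p$. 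For the second, Poincar\'e's inequality on the annulus gives $\|f-c_R\|_{L^p(A_R)}\lesssim R\,\|\nabla f\|_{L^p(A_R)}$, whence $\|(f-c_R)\nabla\chi_R\|_p \lesssim \|\nabla f\|_{L^p(A_R)}\to 0$ as $R\to\infty$. Each $f_R$ is compactly supported and lies in $W^{1,p}(\R^n)$, so a standard mollification argument produces $C^\infty_{0}(\R^n)$ approximants whose gradients converge to $\nabla f$ in $L^p$.

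For (2), the gradient defines an isometry $\nabla:\dot W^{1,p}(\R^n)/\C \hookrightarrow L^p(\R^n;\C^n)$ with closed image $X$ (the curl-free fields). A continuous linear functional $T$ on $\dot W^{1,p}(\R^n)$ annihilates constants (they have zero seminorm), hence descends to a continuous functional on $X$ of the same norm. Hahn--Banach extends it to $L^p(\R^n;\C^n)$ with no increase in norm, and $L^p$--$L^{p'}$ duality represents the extension as pairing against some $g\in L^{p'}(\R^n;\C^n)$. Unravelling,
\[
T(f) \;=\; -\int_{\R^n} g\cdot\overline{\nabla f}\,dx \;=\; \langle \divv g,\,f\rangle,
\]
identifying $T$ with the distribution $\divv g$. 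Conversely, H\"older shows every such $\divv g$ defines a continuous functional with norm at most $\|g\|_{p'}$. Since two representatives of the same $T$ differ by a divergence-free vector field (which pairs trivially against gradients), Hahn--Banach minimality yields $\|T\|=\inf\|g\|_{p'}$.

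I expect no real obstacle. The one delicate point to keep in mind is that ``dense'' and ``dual'' are understood relative to the seminorm $\|\nabla\cdot\|_p$, equivalently on the quotient $\dot W^{1,p}/\C$; part (1) is what legitimises the distributional identification in part (2), because it guarantees that any bounded $T$ on $\dot W^{1,p}(\R^n)$ is completely determined by its action on $C^\infty_{0}(\R^n)$, where the identification $T=\divv g$ is unambiguous.
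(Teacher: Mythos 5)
The paper states Proposition~\ref{prop:w1p} as a pair of well-known facts and gives no proof (it only remarks on the role of the $L^1_{\loc}$ requirement), so there is no in-paper argument to compare against. Your proof is the standard one and is essentially correct: for (1), cut off against $\chi_R$, recenter by the annular average $c_R$, control the commutator term $(f-c_R)\nabla\chi_R$ by Poincar\'e on the annulus, and mollify; for (2), observe that a continuous functional on the seminormed space $\dot W^{1,p}$ kills constants, identify $\dot W^{1,p}/\C$ isometrically with the curl-free subspace of $L^p(\R^n;\C^n)$ via $\nabla$, extend by Hahn--Banach, and read off a representative $g\in L^{p'}$ from $L^p$--$L^{p'}$ duality; H\"older gives the reverse inequality so the infimum is attained and equals $\|T\|$. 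Part (1) is exactly what licenses the identification $T=\divv g$ in the distributional sense, as you note.

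One genuine gap worth flagging: the Poincar\'e step in (1) uses that the annulus $A_R=B(0,2R)\setminus B(0,R)$ is connected, which holds for $n\ge 2$ but fails for $n=1$ — and $n=1$ is in scope here since the paper allows $1+n=2$ throughout. For $n=1$ the single constant $c_R$ (the average over the two disjoint intervals $(R,2R)\cup(-2R,-R)$) does not satisfy $\|f-c_R\|_{L^p(A_R)}\lesssim R\|f'\|_{L^p(A_R)}$: take $f$ with $f'$ supported near the origin and $\int f'\neq 0$, so that $f$ is approximately two different constants on the two components of $A_R$ while $\|f'\|_{L^p(A_R)}=0$. The fix for $n=1$ is immediate but requires a separate remark: $f\mapsto f'$ identifies $\dot W^{1,p}(\R)/\C$ with $L^p(\R)$, and the image of $C^\infty_0(\R)$ under $d/dx$ is $\{h\in C^\infty_0(\R):\int h=0\}$, which is dense in $L^p(\R)$ precisely because $1<p<\infty$ (subtract $(\int h)\,\lambda^{-1}\phi(\cdot/\lambda)$ with $\int\phi=1$ and $\lambda\to\infty$). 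As written, your argument silently assumes $n\ge 2$.
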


We note the importance of the $L^1_{\loc}$ requirement to get the density. The following well-known lemma will be useful. 

\begin{lem}\label{lemma:inf}
For $ f\in L^1_{\loc}(\R^n)$, $\|\nabla f\|_{\dot W^{-1,p'}} \sim \inf\{ \|f+c\|_{p'}\, ; \, c\in \C\}.$
\end{lem}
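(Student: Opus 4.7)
This is a Ne\v{c}as-type characterisation of $L^{p'}$ modulo constants via the divergence-structure description of $\dot W^{-1,p'}$ from Proposition~\ref{prop:w1p}(2). I would prove the two inequalities separately.

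For $\|\nabla f\|_{\dot W^{-1,p'}} \lesssim \inf_c \|f+c\|_{p'}$, take $c\in\C$ with $f+c\in L^{p'}$ and write each component as $\pd_i f = \pd_i(f+c) = \divv\bigl((f+c)\vec e_i\bigr)$, where $\vec e_i$ is the $i$-th standard basis vector. Since $(f+c)\vec e_i \in L^{p'}(\R^n;\C^n)$ has norm $\|f+c\|_{p'}$, Proposition~\ref{prop:w1p}(2) yields $\|\pd_i f\|_{\dot W^{-1,p'}} \le \|f+c\|_{p'}$ for each $i$; summing in $i$ and taking the infimum over $c$ gives the bound.

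The reverse inequality is the substantive direction; the idea is to recover $f$ modulo a constant from $\nabla f$ by testing against solutions of the divergence equation. Given $\varphi\in C^\infty_0(\R^n)$ with $\int\varphi=0$, set $\vec\psi := -\nabla(-\Delta)^{-1}\varphi$, so that $\divv\vec\psi=\varphi$ and $\nabla\vec\psi$ is a matrix of double Riesz transforms of $\varphi$. By $L^p$-boundedness of the Riesz transforms for $1<p<\infty$, $\|\vec\psi\|_{\dot W^{1,p}}\lesssim\|\varphi\|_p$; after a standard mollification and cut-off argument, integration by parts yields
\[
\int_{\R^n} f\,\varphi\,dx = -\langle \nabla f,\vec\psi\rangle,
\]
and the right side is bounded by $\|\nabla f\|_{\dot W^{-1,p'}}\|\vec\psi\|_{\dot W^{1,p}}\lesssim \|\nabla f\|_{\dot W^{-1,p'}}\|\varphi\|_p$ using the duality in Proposition~\ref{prop:w1p}(2). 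Thus $\varphi\mapsto\int f\varphi$ defines a bounded linear functional on the mean-zero subspace $V_0:=\{\varphi\in C^\infty_0(\R^n):\int\varphi=0\}$ equipped with the $L^p$-norm; by Hahn--Banach it extends to a bounded functional on $L^p(\R^n)$, represented by some $\tilde f\in L^{p'}$ with $\|\tilde f\|_{p'}\lesssim\|\nabla f\|_{\dot W^{-1,p'}}$. The distribution $f-\tilde f\in L^1_{\loc}$ then annihilates $V_0$ and hence equals a constant $-c$ a.e., so that $f+c=\tilde f\in L^{p'}$ with the required norm control.

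The main difficulty is in this reverse direction: extending the functional $\varphi\mapsto\int f\varphi$ from $V_0$ to $L^p$ without assuming a priori that $f\in L^{p'}$, and identifying $f$ with $\tilde f$ up to a \emph{single} additive constant---the latter using only $f\in L^1_{\loc}$ together with the characterisation of the annihilator of mean-zero test functions in $\mD'(\R^n)$ as constants.
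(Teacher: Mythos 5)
The paper offers no proof of this lemma (it is asserted as ``well known''), so I assess your proposal on its own merits. Your easy direction is correct. In the substantive direction, the overall architecture --- solve the divergence equation with an $L^p\to\dot W^{1,p}$ bound, conclude that $\varphi\mapsto\int f\varphi$ is bounded on mean-zero test functions in the $L^p$ norm, Hahn--Banach, then identify $f-\tilde f$ as a constant --- is the right one, but your specific choice $\vec\psi=-\nabla(-\Delta)^{-1}\varphi$ leaves a genuine gap at the step ``after a standard mollification and cut-off argument, integration by parts yields $\int f\varphi=-\langle\nabla f,\vec\psi\rangle$.'' This $\vec\psi$ is not compactly supported; it only decays like $|x|^{-n}$. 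If you replace $\vec\psi$ by $\chi_R\vec\psi$, the exact identity is $\langle\nabla f,\chi_R\vec\psi\rangle=-\int f\chi_R\varphi-\int f\,\nabla\chi_R\cdot\vec\psi$, and the cut-off error is bounded (up to constants) by $R^{-n-1}\int_{R\le|x|\le2R}|f|$. Since $f$ is only assumed to lie in $L^1_{\loc}$, there is no a priori growth control that makes this vanish; the growth estimate you would need (roughly $\int_{B_{2R}}|f|\lesssim R^{n/p}$ modulo a constant) is essentially equivalent to the conclusion of the lemma, so the argument as written is circular. The difficulty is not a smoothness issue that mollification fixes; it is a lack of decay of the chosen $\vec\psi$.

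The standard repair is to use a compactly supported right inverse of $\divv$: for $\varphi\in C^\infty_0$ supported in a ball $B$ with $\int\varphi=0$, the Ne\v{c}as/Bogovskii theorem provides $\vec\phi\in C^\infty_0(B;\C^n)$ with $\divv\vec\phi=\varphi$ and the scale-invariant bound $\|\nabla\vec\phi\|_p\lesssim\|\varphi\|_p$. Then $\int f\varphi=-\langle\nabla f,\vec\phi\rangle$ is immediate from the definition of the distributional gradient (no cut-off is needed since $\vec\phi$ is a test function), the $\dot W^{-1,p'}\times\dot W^{1,p}$ duality gives $|\int f\varphi|\lesssim\|\nabla f\|_{\dot W^{-1,p'}}\|\varphi\|_p$, and the rest of your argument (Hahn--Banach extension to $L^p$, then $f-\tilde f$ annihilates all mean-zero test functions hence is a constant) goes through exactly as you wrote it. Note that the paper itself invokes Ne\v{c}as's theorem for exactly this purpose in Section~\ref{sec:rellichp<1} (just before Definition~\ref{defn:atom}), which is a strong signal that a compactly supported solution of the divergence equation --- not the Riesz-potential solution --- is the intended tool here.
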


 The left hand side is the norm in $\dot W^{-1,p'}(\R^n  ;  \C^n)$. In other words, the left hand side is finite if and only if there exists one (and only one since constants are not in $L^{p'}(\R^n)$) $c\in \C$ such that $f+c\in L^{p'}(\R^n)$. 

As we identify $\pd\reu$ with $\R^n$, we use here the subscript  0 to indicate the restriction to the boundary. Thus $\nabla u_{0}$ is short notation for $\nabla_{x}u_{0}$.

\begin{thm}\label{thm:boundaryreg}  Let  $A(\bx)$ be a bounded measurable matrix with the  G\aa rding inequality \eqref{eq:garding0}.  Let $1<p<\infty$. The following are equivalent.  
\begin{enumerate}
  \item There exists $C_{p}<\infty$ such that for any $u\in \mE$ solution of $\divv A  \nabla u=0$, $\|\pd_{\nu_{A}}u_{0}\|_{p}\le C_{p}\|\nabla u_{0}\|_{p}$.
  
  \item  There exists $C_{p'}<\infty$ such that for any $w\in \mE$ solution of $\divv A^*  \nabla w=0$, $\|\pd_{\nu_{A^*}}w_{0}\|_{\dot W^{-1,p'}} \le C_{p'}\|\nabla w_{0}\|_{\dot W^{-1,p'}}$.
   
\end{enumerate}
\end{thm}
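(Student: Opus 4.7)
The plan is to prove the equivalence by duality, with the abstract Green's formula \eqref{eq:green} as the central tool. The identification $\dot W^{-1,p'}(\R^n) = (\dot W^{1,p}(\R^n))^*$ from Proposition \ref{prop:w1p}, together with the refinement $\|\nabla f\|_{\dot W^{-1,p'}} \sim \inf_{c\in\C}\|f+c\|_{p'}$ from Lemma \ref{lemma:inf}, will convert both sides of each implication into pairings that can be compared through Green's formula.

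For (1) $\Rightarrow$ (2), I would fix $w\in\mE$ with $\divv A^*\nabla w=0$ and test $\partial_{\nu_{A^*}}w_0$ against $\phi\in C_0^\infty(\R^n;\C^m)$ (dense in $\dot W^{1,p}$). By Lemma \ref{lemma2} applied to $A$ (which satisfies \eqref{eq:garding0}, as does $A^*$), there is an energy solution $u\in\mE$ of $\divv A\nabla u=0$ with $u_0=\phi$. Green's formula then gives
\[
\langle\phi,\partial_{\nu_{A^*}}w_0\rangle = \langle u_0,\partial_{\nu_{A^*}}w_0\rangle = \langle\partial_{\nu_A}u_0,w_0\rangle.
\]
By hypothesis (1), $\partial_{\nu_A}u_0\in L^p$ with norm $\le C_p\|\nabla\phi\|_p$. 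Since constants lie in the kernel of every element of $\dot H^{-1/2}$ (they are identified with $0$ in $\dot H^{1/2}$), the pairing $\langle\partial_{\nu_A}u_0,w_0\rangle$ is unchanged if $w_0$ is replaced by $w_0+c$ for any $c\in\C^m$. Choosing $c$ so that $\|w_0+c\|_{p'}$ is near-optimal in Lemma \ref{lemma:inf}, and identifying the $\dot H^{-1/2}$-$\dot H^{1/2}$ pairing with the $L^p$-$L^{p'}$ integral pairing (valid since both agree on the dense common subspace $C_0^\infty$), H\"older gives
\[
|\langle\phi,\partial_{\nu_{A^*}}w_0\rangle|\le \|\partial_{\nu_A}u_0\|_p\,\|w_0+c\|_{p'}\lesssim C_p\|\nabla\phi\|_p\|\nabla w_0\|_{\dot W^{-1,p'}},
\]
yielding (2).

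For (2) $\Rightarrow$ (1), I would argue symmetrically, testing $\partial_{\nu_A}u_0$ against $\psi\in C_0^\infty(\R^n;\C^m)$ (dense in $L^{p'}$). Solve Dirichlet for $A^*$ with datum $\psi$ via Lemma \ref{lemma2}, obtaining $w\in\mE$ with $\divv A^*\nabla w=0$ and $w_0=\psi$. Green's formula gives $\langle\partial_{\nu_A}u_0,\psi\rangle=\langle u_0,\partial_{\nu_{A^*}}w_0\rangle$. By hypothesis (2) together with $\|\nabla\psi\|_{\dot W^{-1,p'}}\le\|\psi\|_{p'}$, we have $\partial_{\nu_{A^*}}w_0=\divv g$ for some $g\in L^{p'}$ with $\|g\|_{p'}\lesssim C_{p'}\|\psi\|_{p'}$. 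Then $\langle u_0,\divv g\rangle=-\langle\nabla u_0,g\rangle$, and H\"older closes the estimate: $|\langle\partial_{\nu_A}u_0,\psi\rangle|\le\|\nabla u_0\|_p\|g\|_{p'}\lesssim C_{p'}\|\nabla u_0\|_p\|\psi\|_{p'}$, which by density gives (1) with explicit norm.

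The only delicate point is the compatibility of the two sesquilinear pairings (the intrinsic $\dot H^{-1/2}$-$\dot H^{1/2}$ duality used to define the conormal derivative, and the $L^p$-$L^{p'}$ integral pairing available under the Rellich hypotheses) together with the benign role of additive constants. This is handled by observing that on the common dense subspace $C_0^\infty(\R^n;\C^m)$ both interpretations reduce to the same definitional formula $-\int_{\reu}A\nabla u\cdot\overline{\nabla\phi}$ from Lemma \ref{lemma:abstract Green}, and then extending by the appropriate continuity; the constants disappear because $\nabla(\text{const})=0$, so the defining integral is insensitive to the choice of extension and of additive constant on the boundary. Notably, no structural hypothesis beyond \eqref{eq:garding0} is required, in line with the remark preceding the theorem.
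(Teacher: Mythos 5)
Your proof is correct and follows essentially the same route as the paper's: in each direction you test the target conormal derivative (or tangential gradient) against a smooth compactly supported function, solve the appropriate energy Dirichlet problem to produce a dual companion solution, apply the abstract Green's formula from Lemma~\ref{lemma:abstract Green}, and close with Lemma~\ref{lemma:inf}, Proposition~\ref{prop:w1p}, and H\"older after reinterpreting the $\dot H^{-1/2}$--$\dot H^{1/2}$ bracket in the relevant $L^p$--$L^{p'}$ or $\dot W^{1,p}$--$\dot W^{-1,p'}$ duality. Your closing paragraph on pairing compatibility makes explicit something the paper leaves tacit, but the underlying argument is the same.
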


\begin{thm}\label{thm:boundaryneu}  Let  $A(\bx)$ be a bounded measurable matrix with the stronger G\aa rding inequality \eqref{eq:garding+}. Let $1<p<\infty$. The following are equivalent.  
 \begin{enumerate}
  \item There exists $C_{p}<\infty$ such that for any $u\in \mE$ solution of $\divv A  \nabla u=0$, $\|\nabla u_{0}\|_{p} \le C_{p}\|\pd_{\nu_{A}}u_{0}\|_{p}$.
  
  \item  There exists $C_{p'}<\infty$ such that for any $w\in \mE$ solution of $\divv A^*  \nabla w=0$, $\|\nabla w_{0}\|_{\dot W^{-1,p'}} \le C_{p'}\|\pd_{\nu_{A^*}}w_{0}\|_{\dot W^{-1,p'}}$.
   \end{enumerate}
   
\end{thm}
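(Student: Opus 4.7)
The plan is to exploit the abstract Green's formula \eqref{eq:green} as a bridge between energy solutions for $A$ and for $A^*$, combined with Lemma \ref{lemmaN} which, since \eqref{eq:garding+} is stable under adjunction, solves the Neumann problem for both systems given any datum in $\dot H^{-1/2}$. I rely on Lemma \ref{lemma:inf} and Proposition \ref{prop:w1p}(2) to rewrite
\[
\|\nabla w_0\|_{\dot W^{-1,p'}} \sim \inf_{c\in\C^m}\|w_0+c\|_{p'}, \qquad \|\pd_{\nu_{A^*}}w_0\|_{\dot W^{-1,p'}} = \inf\bigl\{\|k\|_{p'}\colon k\in L^{p'},\ \divv k=\pd_{\nu_{A^*}}w_0\bigr\},
\]
after which $L^p$-duality lets one test the first quantity against mean-zero $h\in C_0^\infty$ with $\|h\|_p\le 1$.

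For $(2)\Rightarrow(1)$, I would estimate the distribution $\nabla u_0$ by its action on test vector fields $g\in C_0^\infty(\R^n;(\C^m)^n)$ with $\|g\|_{p'}\le 1$. Apply Lemma \ref{lemmaN} to $A^*$ with Neumann datum $\divv g\in \dot H^{-1/2}$ to produce $w\in\mE$ satisfying $\pd_{\nu_{A^*}}w_0=\divv g$; then $\|\pd_{\nu_{A^*}}w_0\|_{\dot W^{-1,p'}}\le \|g\|_{p'}\le 1$, so hypothesis (2) and Lemma \ref{lemma:inf} deliver $c\in\C^m$ with $\|w_0+c\|_{p'}\lesssim C_{p'}$. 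Distributional integration by parts, the construction of $w$, Green's formula, and the fact that $\pd_{\nu_A}u_0\in\dot H^{-1/2}$ annihilates constants chain together as
\[
-\langle\nabla u_0,g\rangle = \langle u_0,\divv g\rangle = \langle u_0,\pd_{\nu_{A^*}}w_0\rangle = \langle\pd_{\nu_A}u_0,w_0+c\rangle.
\]
When $\pd_{\nu_A}u_0\in L^p$ (otherwise (1) is vacuous), this last pairing agrees with $\int\pd_{\nu_A}u_0\cdot\overline{(w_0+c)}$ and is bounded by $\|\pd_{\nu_A}u_0\|_p\|w_0+c\|_{p'}\lesssim C_{p'}\|\pd_{\nu_A}u_0\|_p$. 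Supping over $g$ gives (1).

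The converse is mirror-symmetric. Given $w$ with $\pd_{\nu_{A^*}}w_0=\divv k$ for some $k\in L^{p'}$, test $w_0$ against $h\in C_0^\infty(\R^n;\C^m)$ of zero mean and $\|h\|_p\le 1$; Lemma \ref{lemmaN} for $A$ provides $u\in\mE$ with $\pd_{\nu_A}u_0=h\in L^p$, hence $\|\nabla u_0\|_p\le C_p$ by (1). Green's formula then yields
\[
\langle w_0,h\rangle = \overline{\langle \pd_{\nu_A}u_0,w_0\rangle} = \overline{\langle u_0,\pd_{\nu_{A^*}}w_0\rangle} = \overline{\langle u_0,\divv k\rangle} = -\overline{\langle\nabla u_0,k\rangle},
\]
so $|\langle w_0,h\rangle|\le C_p\|k\|_{p'}$; optimizing over $k$ and supping over $h$, Lemma \ref{lemma:inf} produces (2).

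The main delicacy is not analytic but bookkeeping: Green's formula is stated in the $\dot H^{1/2}$-$\dot H^{-1/2}$ pairing, whereas the target inequalities involve $L^p$ and $\dot W^{\pm 1,p'}$ norms. Reconciling the two rests on two observations: first, $\dot H^{-1/2}$-elements annihilate additive constants, so the chosen representative of $w_0$ (or of $u_0$) is immaterial; second, whenever one factor of a pairing lies in both the Hilbertian space and the $L^p$-type space, a $C_0^\infty$-approximation argument identifies both dualities with the single distributional pairing. Because we never use the integrated form of Green's formula, no $t$-independence or pointwise accretivity enters---only the strong G\aa rding inequality \eqref{eq:garding+} is required.
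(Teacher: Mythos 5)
Your proof is correct and follows the same strategy as the paper: use Lemma \ref{lemmaN} to manufacture an auxiliary solution for the dual system with a smooth Neumann datum built from the test function, apply the abstract Green's formula to transfer the pairing to the other side, invoke the hypothesis to control the auxiliary boundary data, and reinterpret the bracket in the appropriate $L^p$/$L^{p'}$ or $\dot W^{1,p}$/$\dot W^{-1,p'}$ duality. The only cosmetic variation is in the direction $(1)\Rightarrow(2)$: the paper tests $\nabla w_0$ against vector fields $g\in C_0^\infty$ with $\|\nabla g\|_p\le 1$ and keeps the $\dot W^{-1,p'}$/$\dot W^{1,p}$ pairing intact, whereas you pass through Lemma \ref{lemma:inf} to rewrite $\|\nabla w_0\|_{\dot W^{-1,p'}}$ as $\inf_c\|w_0+c\|_{p'}$ and test $w_0$ directly against mean-zero scalar $h\in C_0^\infty$ with $\|h\|_p\le 1$, then unwrap $\pd_{\nu_{A^*}}w_0=\divv k$ to land in the $L^p$/$L^{p'}$ pairing; these are equivalent parametrizations of the same dual norm (via Riesz transforms one passes between $g$ with $\|\nabla g\|_p\lesssim1$ and mean-zero $h$ with $\|h\|_p\lesssim1$), so the two write-ups amount to the same argument.
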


Some remarks are necessary. The tangential gradient and conormal derivative at the boundary of  an energy solution are distributions in $\R^n$ (in $\dot H^{-1/2}$). Thus, finiteness of any of the norms above means that the distribution is identified with an element in the considered space  which is also  embedded in the space of distributions.  Theorem \ref{thm:boundaryreg} concerns boundary inequalities needed for solving the regularity problem for $\divv A  \nabla$ in $L^p$ and the Dirichlet problem for $\divv A^*  \nabla w=0$ in $L^{p'}$, or rather a regularity problem in $\dot W^{-1,p'}$. For $p=2$, this is akin to a result of \cite{AR}.  It  can be compared with Theorem 3.1 of \cite{HKMP2}, stated only for $t$-independent equations with De Giorgi condition and a restriction on $p$. In contrast,  our result here is independent of any kind of interior control on solutions besides the energy estimate and this is why it holds for any $p$.  The energy class is used here as an existence and uniqueness class. Any other such class would do a similar job. Theorem \ref{thm:boundaryneu} is new and relates the Neumann problem for $\divv A  \nabla$ in $L^p$ to a  Neumann problem  for $\divv A^*  \nabla$ in $\dot W^{-1,p'}$, which has not been studied up to our knowledge. A related statement appears in \cite{R2} for $p=2$.

\begin{proof}[Proof of Theorem \ref{thm:boundaryreg}] Assume (1) and let $w\in \mE$ be a solution of $\divv A^*  \nabla w=0$.  Assume also $\|\nabla w_{0}\|_{\dot W^{-1,p'}}<\infty$ otherwise there is nothing to prove. By lemma \ref{lemma:inf} and the fact that  for any $c\in \C^m$, $w+c$ is also a solution with same conormal derivative as $w$, we may assume 
$\|w_{0}\|_{p'}<\infty$. By Proposition \ref{prop:w1p}, it is enough to estimate $\langle \pd_{\nu_{A^*}}w_{0}, g\rangle$ for any $g\in C_{0}^\infty(\R^n\, ;  \, \C^m)$ with $\|\nabla g\|_{p}\le 1$. Let $u\in \mE$ be the solution of $\divv A  \nabla u=0$ with $u_{0}=g$ (Lemma \ref{lemma2}). By Lemma \ref{lemma:abstract Green},    $\langle \pd_{\nu_{A^*}}w_{0}, g\rangle= \langle w_{0},  \pd_{\nu_{A}}u_{0}\rangle$. Now $w_{0}\in L^{p'}$ and by (1), $\|\pd_{\nu_{A}}u_{0}\|_{p}\le  C_{p}\|\nabla g\|_{p}\le C_{p}$. Hence,  reinterpreting the last bracket in the usual $L^{p'}, L^p$ duality and using H\"older's inequality,  we obtain
$$
|\langle \pd_{\nu_{A^*}}w_{0}, g\rangle| \le \|w_{0}\|_{p'}\|\pd_{\nu_{A}}u_{0}\|_{p}\le  C_{p} \|w_{0}\|_{p'}
$$
 and we conclude for (2).

Conversely assume (2) and let $u\in \mE$ solution of $\divv A  \nabla u=0$. Assume also $u_{0}\in \dot W^{1,p}$ and $\|\nabla u_{0}\|_{p}<\infty$ otherwise there is nothing to prove. It is enough to
estimate $\langle \pd_{\nu_{A}}u_{0}, g\rangle$ for any $g\in C_{0}^\infty(\R^n\, ; \,  \C^m)$ with $\| g\|_{p'}\le 1$. Let $w\in \mE$ be the solution of $\divv A^*  \nabla w=0$ with $w_{0}=g$ (Lemma \ref{lemma2}).  By Lemma \ref{lemma:abstract Green}, $\langle \pd_{\nu_{A}}u_{0}, g\rangle= \langle u_{0},  \pd_{\nu_{A^*}}w_{0}\rangle$. Now $u_{0}\in \dot W^{1,p}$ and,  using (2) and  Lemma \ref{lemma:inf},
$\|\pd_{\nu_{A^*}}w_{0}\|_{\dot W^{-1,p'}}  \le C_{p'} \|\nabla g\|_{\dot W^{-1,p'}} \lesssim \| g\|_{{p'}} \le 1$. Thus reinterpreting the last bracket in the $\dot W^{1,p}, \dot W^{-1,p'}$ duality, we obtain
$$
|\langle \pd_{\nu_{A}}u_{0}, g\rangle|  \le \|u_{0}\|_{\dot W^{1,p}}\|\pd_{\nu_{A^*}}w_{0}\|_{\dot W^{-1,p'}}  \lesssim \|\nabla u_{0}\|_{p}$$
and we conclude for (1) by density.
\end{proof}

\begin{proof}[Proof of Theorem \ref{thm:boundaryneu}]  Assume (1) and let $w\in \mE$ be a solution of $\divv A^*  \nabla w=0$. Assume also $\|\pd_{\nu_{A^*}}w_{0}\|_{\dot W^{-1,p'}}<\infty$ otherwise there is nothing to prove. By Proposition \ref{prop:w1p} (in a vector-valued form), 
it is enough to estimate $\langle \nabla w_{0}, {}{g}\rangle$ for any ${}{g}\in C_{0}^\infty(\R^n\, ; \, (\C^m)^n)$ with  $\|{}{g}\|_{\dot W^{1,p}}=  \|\nabla {}{g}\|_{p}\le 1$. 
Let $u\in \mE$ be a solution of $\divv A  \nabla u=0$ with $\pd_{\nu_{A}}u_{0}= - \divv {}{g}$ (Lemma \ref{lemmaN}). By Lemma \ref{lemma:abstract Green}, 
$\langle \nabla w_{0}, {}{g}\rangle = \langle  w_{0}, - \divv{}{g}\rangle = 
\langle \pd_{\nu_{A^*}}w_{0}, u_{0}\rangle$. By (1),  $\|u_{0}\|_{\dot W^{1,p}} \le C_{p}\| \pd_{\nu_{A}}u_{0}\|_{p} \le C_{p}\|\divv {}{g}\|_{p}\lesssim 1$. Hence,  reinterpreting the last bracket in the $\dot W^{-1,p'}, \dot W^{1,p}$ duality we obtain
$$
|\langle \nabla w_{0}, {}{g}\rangle| \le \|\pd_{\nu_{A^*}}w_{0}\|_{\dot W^{-1,p'}}\|u_{0}\|_{\dot W^{1,p}} \lesssim  \|\pd_{\nu_{A^*}}w_{0}\|_{\dot W^{-1,p'}}$$ and we conclude for (2). 

Conversely, assume (2) and let $u\in \mE$ solution of $\divv A  \nabla u=0$. Assume also $\|\pd_{\nu_{A}}u_{0}\|_{p}<\infty$ otherwise there is nothing to prove.  It is enough to
estimate $\langle \nabla u_{0}, {}{g}\rangle$  for any ${}{g}\in C_{0}^\infty(\R^n\, ; \, (\C^m)^n)$ with $\| {}{g}\|_{p'}\le 1$. Let $w\in \mE$ be a solution of $\divv A^*  \nabla w=0$ with $\pd_{\nu_{A^*}}w_{0} = - \divv {}{g}$ (Lemma \ref{lemmaN}).
By (2), any such $w$ satisfies $\|\nabla w_{0}\|_{\dot W^{-1,p'}} \le C_{p'}\|\pd_{\nu_{A^*}}w_{0}\|_{\dot W^{-1,p'}} \le   
C_{p'}\|\divv {}{g}\|_{\dot W^{-1,p'}} \le C_{p'}\|{}{g}\|_{p'}\le C_{p'}$. By Lemma \ref{lemma:inf}, there exists $c\in \C^m$ such that $w_{0}+c \in L^{p'}$ with $
\|w_{0}+c\|_{p'}\lesssim \|\nabla w_{0}\|_{\dot W^{-1,p'}}$. Since $w+c$ is also a solution of the same problem, we may select $w$ by imposing $w_{0}\in L^{p'}$ which we do.    By Lemma \ref{lemma:abstract Green}, $\langle \nabla u_{0}, {}{g}\rangle = \langle  u_{0}, -\divv {}{g}\rangle=\langle  \pd_{\nu_{A}} u_{0}, w_{0}\rangle$.   As $w_{0}\in L^{p'}$ and  $\|\pd_{\nu_{A}}u_{0}\|_{p}<\infty$, it follows by reinterpreting the last bracket in the $L^p, L^{p'}$ duality that 
$$
|\langle \nabla u_{0}, {}{g}\rangle| \le C_{p'}\|\pd_{\nu_{A}} u_{0}\|_{p} \|w_{0}\|_{{p'}} \lesssim \|\pd_{\nu_{A}} u_{0}\|_{p} 
$$
and we conclude for (1). 
\end{proof}

A consequence of the proofs is the following self-improvement  of each of the 4 boundary inequalities in the above statements.

 We say  that an energy solution of $\divv A \nabla u=0$ has smooth Dirichlet data if  $u_{0}\in C_{0}^\infty(\R^n;\C^m)$ and has smooth Neumann data whenever $\pd_{\nu_{A}}u_{0}\in C_{0}^\infty(\R^n;\C^m)$ (necessarily with mean value 0).
 
\begin{thm}\label{thm:boundaryregimprov}  Let  $A(\bx)$ be a bounded measurable matrix with the  G\aa rding inequality \eqref{eq:garding0}.  Let $1<p<\infty$.  The following holds.  
\begin{enumerate}
  \item[(i)] If there exists $C_{p}<\infty$ such that for any energy solution $u$ of $\divv A  \nabla u=0$ with smooth Dirichlet data, one has $\|\pd_{\nu_{A}}u_{0}\|_{p}\le C_{p}\|\nabla u_{0}\|_{p}$, then this holds for any energy solution $u$ of $\divv A  \nabla u=0$, possibly with a different constant.
  
  \item[(ii)] If there exists $C_{p}<\infty$ such that for any energy  solution of $\divv A  \nabla u=0$ with smooth Dirichlet data one has $\|\pd_{\nu_{A}}u_{0}\|_{\dot W^{-1,p}} \le C_{p}\|\nabla u_{0}\|_{\dot W^{-1,p}}$, then this holds for any energy solution $u$ of $\divv A  \nabla u=0$, possibly with a different constant.
   \end{enumerate}
\end{thm}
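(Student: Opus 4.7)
The plan is to revisit the proof of Theorem \ref{thm:boundaryreg} with an eye on which solutions are actually used. A careful reading shows that in the implication $(1)\Rightarrow(2)$, inequality (1) is invoked only on the auxiliary energy solution $u$ produced by Lemma \ref{lemma2} with $u_{0}=g\in C_{0}^{\infty}(\R^{n};\C^{m})$; symmetrically, the proof of $(2)\Rightarrow(1)$ uses (2) only on the auxiliary $w$ with $w_{0}=g\in C_{0}^{\infty}(\R^{n};\C^{m})$. Consequently, both directions of Theorem \ref{thm:boundaryreg} admit a \emph{refined} form: restricting the hypothesized inequality to energy solutions with smooth Dirichlet data already produces the full dual inequality.

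\textbf{Proof of (i).} Assume the hypothesis of (i), namely that $\|\pd_{\nu_{A}}u_{0}\|_{p}\le C_{p}\|\nabla u_{0}\|_{p}$ for every energy solution $u$ of $\divv A\nabla u=0$ with $u_{0}\in C_{0}^{\infty}(\R^{n};\C^{m})$. The refined $(1)\Rightarrow(2)$ direction noted above yields the full statement (2) of Theorem \ref{thm:boundaryreg}, that is, $\|\pd_{\nu_{A^{*}}}w_{0}\|_{\dot W^{-1,p'}}\le C_{p'}\|\nabla w_{0}\|_{\dot W^{-1,p'}}$ for \emph{every} energy solution $w$ of $\divv A^{*}\nabla w=0$. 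The (unrestricted) direction $(2)\Rightarrow(1)$ of Theorem \ref{thm:boundaryreg} then delivers the full Rellich inequality of (1) for every energy solution of $\divv A\nabla u=0$.

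\textbf{Proof of (ii).} Apply Theorem \ref{thm:boundaryreg} with $A$ replaced by $A^{*}$ and $p$ by $p'$: statement (1) of the substituted theorem reads $\|\pd_{\nu_{A^{*}}}u_{0}\|_{p'}\le C_{p'}\|\nabla u_{0}\|_{p'}$ for $u$ solution of $\divv A^{*}\nabla u=0$, while statement (2) becomes precisely the inequality appearing in (ii). The refined direction $(2)\Rightarrow(1)$ of the substituted theorem shows that the smooth-data hypothesis of (ii) implies the full version of the substituted statement (1); the (unrestricted) direction $(1)\Rightarrow(2)$ of the substituted theorem then produces the desired conclusion of (ii). No substantial obstacle arises once one keeps track of which matrix ($A$ or $A^{*}$) and which exponent ($p$ or $p'$) appears at each step.
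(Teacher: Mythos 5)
Your proposal is correct and follows essentially the same route as the paper: you observe that each implication in Theorem \ref{thm:boundaryreg} invokes its hypothesis only on auxiliary energy solutions whose boundary datum is the smooth test function $g$, which yields a "refined" version of each implication; chaining the refined direction with the unrestricted converse (in the appropriate order, and with $A^*$, $p'$ in place of $A$, $p$ for part (ii)) gives the conclusion. This matches the paper's own (terser) argument exactly.
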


\begin{proof} For (i), we remark that to prove (1) implies (2) in Theorem \ref{thm:boundaryreg}, we use  (1)  with smooth data. Thus the assumption of (i) implies (2) in Theorem \ref{thm:boundaryreg} and we conclude using the converse (2) implies (1) in the same theorem. 
The proof of (ii) is similar starting from (2)  for $A$ and $p$ instead of  $A^*$ and $p'$ in Theorem \ref{thm:boundaryreg}.
\end{proof}

For Neumann problems we have, 

\begin{thm}\label{thm:boundaryneuimprov}  Let  $A(\bx)$ be a bounded measurable matrix with the stronger  G\aa rding inequality \eqref{eq:garding+}.  Let $1<p<\infty$. The following holds.  
\begin{enumerate}
  \item[(i)] If there exists $C_{p}<\infty$ such that for any energy solution $u$ of $\divv A  \nabla u=0$, with smooth  Neumann data  one has $\|\nabla u_{0}\|_{p} \le C_{p}\|\pd_{\nu_{A}}u_{0}\|_{p}$, then this holds for any energy solution $u$ of $\divv A  \nabla u=0$, possibly with a different constant.
  
  \item[(ii)]  If there exists $C_{p}<\infty$ such that for any energy  solution of $\divv A  \nabla u=0$ with smooth Neumann data one has $\|\nabla u_{0}\|_{\dot W^{-1,p}} \le C_{p'}\|\pd_{\nu_{A}}u_{0}\|_{\dot W^{-1,p}}$, then this holds for any energy solution $u$ of $\divv A  \nabla u=0$, possibly with a different constant.
   \end{enumerate}
\end{thm}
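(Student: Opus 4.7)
The plan is to mimic the argument used for Theorem \ref{thm:boundaryregimprov}, but starting from Theorem \ref{thm:boundaryneu} instead of Theorem \ref{thm:boundaryreg}. The key observation, which I would verify by re-reading the proof of Theorem \ref{thm:boundaryneu}, is that in each of the two implications (1)$\Rightarrow$(2) and (2)$\Rightarrow$(1) in that theorem the hypothesis is invoked only on auxiliary energy solutions whose conormal derivative at the boundary is of the form $-\divv g$ with $g\in C_{0}^\infty(\R^n;(\C^m)^n)$. In particular, the hypothesis is only ever used for energy solutions with \emph{smooth Neumann data} in the sense of the above definition.

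For part (i), the hypothesis is exactly statement (1) of Theorem \ref{thm:boundaryneu} restricted to smooth Neumann data for the matrix $A$ and exponent $p$. Since the proof of (1)$\Rightarrow$(2) in Theorem \ref{thm:boundaryneu} only uses (1) with smooth Neumann data, the hypothesis of (i) already yields the full statement (2) of Theorem \ref{thm:boundaryneu}, namely $\|\nabla w_{0}\|_{\dot W^{-1,p'}}\le C_{p'}\|\pd_{\nu_{A^*}}w_{0}\|_{\dot W^{-1,p'}}$ for every energy solution $w$ of $\divv A^*\nabla w=0$. Applying now the converse direction (2)$\Rightarrow$(1) of Theorem \ref{thm:boundaryneu}, we recover (1) for \emph{all} energy solutions of $\divv A\nabla u=0$, which is the desired conclusion.

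For part (ii), I would apply Theorem \ref{thm:boundaryneu} with $A$ replaced by $A^*$ and $p$ replaced by $p'$. Then the roles of (1) and (2) in Theorem \ref{thm:boundaryneu} become, respectively, an $L^{p'}$ estimate for solutions of $\divv A^*\nabla=0$, and the $\dot W^{-1,p}$ estimate for solutions of $\divv A\nabla=0$ that appears in the hypothesis of (ii). The hypothesis is therefore statement (2) of that rephrased Theorem \ref{thm:boundaryneu}, restricted to energy solutions with smooth Neumann data. Once again, the proof of the direction (2)$\Rightarrow$(1) of Theorem \ref{thm:boundaryneu} uses the assumption only through auxiliary energy solutions with smooth Neumann data, so the hypothesis of (ii) already forces (1) in the rephrased Theorem \ref{thm:boundaryneu} for every energy solution of $\divv A^*\nabla u=0$. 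Applying next the direction (1)$\Rightarrow$(2) of the rephrased Theorem \ref{thm:boundaryneu} yields the $\dot W^{-1,p}$ estimate for every energy solution of $\divv A\nabla u=0$, which concludes the proof of (ii).

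The only subtle point to double-check is the bookkeeping in the rephrased Theorem \ref{thm:boundaryneu}, since the stronger G\aa rding inequality \eqref{eq:garding+} is required both for $A$ and for $A^*$; this is automatic because \eqref{eq:garding+} is stable under taking adjoints, as noted in Section \ref{sec:general}. Beyond that, the argument is a purely formal unwinding of the duality principle we already proved, and does not require revisiting the Lax-Milgram constructions underlying Lemmas \ref{lemmaN} and \ref{lemma2}.
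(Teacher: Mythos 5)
Your proposal is correct and follows exactly the route the paper sketches: the paper's own (terse) proof is precisely the observation that in both implications of Theorem \ref{thm:boundaryneu} the hypothesis is invoked only on solutions whose Neumann datum is $-\divv g$ with $g\in C_0^\infty$, i.e., smooth Neumann data, and then one runs the back-and-forth through the duality. Your bookkeeping of the $A\leftrightarrow A^*$, $p\leftrightarrow p'$ swap for part (ii) and the remark about stability of the G\aa rding condition under adjoints supply the very details the paper leaves to the reader.
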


The proof is similar noting that we use smooth data of the form $-\divv {}{g}$ in the arguments. Details are left to the reader. 

\section{Rellich estimates: the case $\frac{n}{n+1}<p\le 1$}\label{sec:rellichp<1}

Here, the duality equivalence is a subtle issue for $p<1$ but  remains for $p=1$. We prove this first. Then we consider the problem of extension from estimates on atoms to global estimates.

Let $H^p(\R^n)$  denote the real Hardy space if $\frac{n}{n+1}<p\le 1$.  We have that $H^p(\R^n)$ are distributions spaces and, in this range, $C^\infty_{0}(\R^n)$ functions with mean value 0 form a dense subspace. For $\frac{n}{n+1}<p\le 1$, let $\dot H^{1,p}(\R^n)=\{f\in \mS'(\R^n) ; \pd_{x_{i}} f \in H^p(\R^n), i=1, \ldots, n\}$ with norm 
$\|f\|_{\dot H^{1,p}(\R^n)}= \|\nabla f \|_{H^p(\R^n; \C^n)}$. This  is the homogeneous Hardy-Sobolev space which has been studied in many places (\cite{Str}, \cite{Mi}, \cite{ART}, \cite{BB}, \cite{BG}, \cite{KS}, \cite{LMc}... ). In particular,  elements in these spaces are known to be locally integrable functions and $ C_{0}^\infty(\R^n)$ is a dense subspace. 

Let us turn to recalling duality. For all of them, we use the standard hermitian duality on functions, extended appropriately.  Recall that if $\alpha=n(1/p-1)\in [0,1)$, the dual 
 of $H^p(\R^n)$ is identified with $\dot \Lambda^0(\R^n):=BMO(\R^n)$ is $p=1$ and with the homogeneous H\"older space $\dot \Lambda^\alpha(\R^n)$ of those continuous functions such that $|u(x)-u(y)|\le C|x-y|^\alpha$ for all $x,y\in \R^n$, the smallest $C$ defining the semi-norm. These spaces can also be seen within $\mD_{0}'(\R^n)$, the space of distributions modulo constants, in which they are Banach. Recall also that $H^1(\R^n)$ is the dual space of VMO$(\R^n)$ (sometimes called CMO), the closure of $C_{0}^\infty(\R^n)$ in BMO$(\R^n)$. 
 The dual of $\dot H^{1,p}(\R^n)$ is identified with $\dot\Lambda^{\alpha-1}(\R^n)$ defined  as the 
  space of distributions $\divv f$, $f\in \dot\Lambda^{\alpha-1}(\R^n)$, equipped with the quotient norm.

Let us call $X=H^p(\R^n;\C^d)$ with $d=m$ or $mn$ indifferently. Let $Y=\dot \Lambda^\alpha$ be the dual space and $ Y^{-1}=\dot \Lambda^{\alpha-1}$.

First we complete Theorems \ref{thm:boundaryreg} and \ref{thm:boundaryneu}  by the following results.

\begin{thm}\label{thm:boundaryreg2}  Let  $A(\bx)$ be a bounded measurable matrix with the  G\aa rding inequality \eqref{eq:garding0}.  Let $\frac{n}{n+1}<p \le 1$, $0\le \alpha=n( \frac{1}{p}-1)<1$, and $X$ and $Y^{-1}$ be the corresponding boundary spaces. Then (1) implies (2), where  
\begin{enumerate}
  \item There exists $C_{X}<\infty$ such that for any $u\in \mE$ solution of $\divv A  \nabla u=0$, $\|\pd_{\nu_{A}}u_{0}\|_{X}\le C_{X}\|\nabla u_{0}\|_{X}$.
  
  \item  There exists $C_{Y^{-1}}<\infty$ such that for any $w\in \mE$ solution of $\divv A^*  \nabla w=0$, $\|\pd_{\nu_{A^*}}w_{0}\|_{Y^{-1}} \le C_{Y^{-1}}\|\nabla w_{0}\|_{Y^{-1}}$.
   \end{enumerate}
   The converse holds in the case $p=1$. 
\end{thm}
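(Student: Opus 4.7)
The plan is to mimic the duality argument of Theorem \ref{thm:boundaryreg}, simply replacing the $L^p$-$L^{p'}$ and $\dot W^{1,p}$-$\dot W^{-1,p'}$ pairings by the $H^p$-$\dot\Lambda^\alpha$ and $\dot H^{1,p}$-$\dot\Lambda^{\alpha-1}$ dualities. The abstract Green's formula \eqref{eq:green} will again reduce the problem to reinterpreting one of the sesquilinear brackets $\langle w_{0}, \pd_{\nu_{A}}u_{0}\rangle$ or $\langle u_{0}, \pd_{\nu_{A^*}}w_{0}\rangle$ (originally defined via the $\dot H^{-1/2}$-$\dot H^{1/2}$ pairing) as the desired boundary duality pairing in the Hardy--H\"older scale. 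A Hardy--H\"older analogue of Lemma \ref{lemma:inf}, namely $\|\nabla f\|_{\dot\Lambda^{\alpha-1}}\sim\inf_{c\in\C^m}\|f+c\|_{\dot\Lambda^\alpha}$, is standard and will be used to lift a $\dot\Lambda^{\alpha-1}$ bound on $\nabla w_{0}$ to a $\dot\Lambda^\alpha$ bound on $w_{0}$ modulo constants (which is harmless since adding a constant to $w$ preserves all the relevant quantities).

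For $(1)\Rightarrow(2)$, take $w\in\mE$ solving $\divv A^*\nabla w=0$ with $\|\nabla w_{0}\|_{\dot\Lambda^{\alpha-1}}<\infty$, and adjust by a constant so that $w_{0}\in\dot\Lambda^\alpha$ with comparable norm. Since $C_{0}^\infty(\R^n;\C^m)$ is dense in $\dot H^{1,p}(\R^n;\C^m)$, it suffices to bound $|\langle\pd_{\nu_{A^*}}w_{0},g\rangle|$ for $g\in C_{0}^\infty$ with $\|\nabla g\|_{H^p}\le 1$. Lemma \ref{lemma2} furnishes an energy solution $u$ with $u_{0}=g$; formula \eqref{eq:green} gives $\langle\pd_{\nu_{A^*}}w_{0},g\rangle=\langle w_{0},\pd_{\nu_{A}}u_{0}\rangle$. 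Assumption (1) yields $\|\pd_{\nu_{A}}u_{0}\|_{H^p}\le C_X$, and reinterpreting the last bracket in the $\dot\Lambda^\alpha$-$H^p$ duality gives $|\langle w_{0},\pd_{\nu_{A}}u_{0}\rangle|\lesssim\|w_{0}\|_{\dot\Lambda^\alpha}\|\pd_{\nu_{A}}u_{0}\|_{H^p}\lesssim\|\nabla w_{0}\|_{\dot\Lambda^{\alpha-1}}$. For the converse at $p=1$, given $u\in\mE$ solving $\divv A\nabla u=0$ with $\nabla u_{0}\in H^1$, use $(\mathrm{VMO})^*=H^1$ together with the density of $C_{0}^\infty$ in $\mathrm{VMO}$ to obtain $\|\pd_{\nu_{A}}u_{0}\|_{H^1}\sim\sup\{|\langle\pd_{\nu_{A}}u_{0},g\rangle|:g\in C_{0}^\infty,\ \|g\|_{\mathrm{BMO}}\le 1\}$. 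For such $g$, Lemma \ref{lemma2} provides $w$ with $w_{0}=g$, and \eqref{eq:green} gives $\langle\pd_{\nu_{A}}u_{0},g\rangle=\langle u_{0},\pd_{\nu_{A^*}}w_{0}\rangle$. Writing $\pd_i g=\divv(g\,e_i)$ yields $\|\nabla g\|_{\dot\Lambda^{-1}}\lesssim\|g\|_{\mathrm{BMO}}\le 1$, so (2) forces $\|\pd_{\nu_{A^*}}w_{0}\|_{\dot\Lambda^{-1}}\lesssim 1$; the $\dot H^{1,1}$-$\dot\Lambda^{-1}$ duality concludes $|\langle u_{0},\pd_{\nu_{A^*}}w_{0}\rangle|\lesssim\|u_{0}\|_{\dot H^{1,1}}\lesssim\|\nabla u_{0}\|_{H^1}$.

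The main technical point to verify carefully is the rigorous identification of the $\dot H^{-1/2}$-$\dot H^{1/2}$ bracket defining the conormal derivative with the $H^p$-$\dot\Lambda^\alpha$ or $\dot H^{1,p}$-$\dot\Lambda^{\alpha-1}$ pairing in each instance; this follows from the fact that both pairings agree with the standard $L^2$ integral on smooth compactly supported test functions and extend continuously by density in both scales. The reason this scheme does \emph{not} yield the converse for $p<1$ is a genuine quasi-Banach duality obstruction: while $(H^p)^*=\dot\Lambda^\alpha$, the space $H^p$ is not a natural dual of any space that both contains $C_{0}^\infty$ densely and has norm controlled by $\|\cdot\|_{\dot\Lambda^\alpha}$, so the sup characterization of the $H^p$ norm that was crucial at $p=1$ (through the predual $\mathrm{VMO}$) is no longer available to drive the argument.
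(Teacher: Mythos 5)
Your proof is correct and follows exactly the approach the paper itself intends: it transfers the duality argument of Theorem \ref{thm:boundaryreg} to the Hardy--H\"older scale, uses the $H^1$--VMO duality with density of $C_0^\infty$ in VMO for the converse at $p=1$, and invokes the Hardy--H\"older analogue of Lemma \ref{lemma:inf} (i.e.\ $\|\nabla f\|_{\dot\Lambda^{\alpha-1}}\sim\inf_c\|f+c\|_{\dot\Lambda^\alpha}$), all of which is precisely what the paper's one-paragraph ``mutatis mutandis'' proof asks the reader to supply. Your closing remark correctly pinpoints the quasi-Banach duality obstruction that blocks the converse for $p<1$.
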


\begin{thm}\label{thm:boundaryneu2}  Let  $A(\bx)$ be a bounded measurable matrix with the stronger G\aa rding inequality \eqref{eq:garding+}. 
Let $\frac{n}{n+1}<p \le 1$, $0\le \alpha=n( \frac{1}{p}-1)<1$, and $X$ and $Y^{-1}$ be the corresponding boundary spaces. Then (1) implies (2), where 

\begin{enumerate}
  \item There exists $C_{X}<\infty$ such that for any $u\in \mE$ solution of $\divv A  \nabla u=0$, $\|\nabla u_{0}\|_{X} \le C_{X}\|\pd_{\nu_{A}}u_{0}\|_{X}$.
  
  \item  There exists $C_{Y^{-1}}<\infty$ such that for any $w\in \mE$ solution of $\divv A^*  \nabla w=0$, $\|\nabla w_{0}\|_{Y^{-1}} \le C_{Y^{-1}}\|\pd_{\nu_{A^*}}w_{0}\|_{Y^{-1}}$.
   \end{enumerate}
   The converse holds if $p=1$.
\end{thm}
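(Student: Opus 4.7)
The plan is to mirror the proof of Theorem \ref{thm:boundaryneu}, replacing the $L^p$--$L^{p'}$ and $\dot W^{1,p}$--$\dot W^{-1,p'}$ dualities by $H^p$--$\dot\Lambda^\alpha$ and $\dot H^{1,p}$--$\dot\Lambda^{\alpha-1}$. The abstract Green identity (Lemma \ref{lemma:abstract Green}) and the Neumann solvability for $\dot H^{-1/2}$ data (Lemma \ref{lemmaN}) will again do the heavy lifting.

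For the direction (1) $\Rightarrow$ (2), take $w\in\mE$ with $\divv A^*\nabla w=0$ and $\|\pd_{\nu_{A^*}}w_0\|_{\dot\Lambda^{\alpha-1}}<\infty$; by $C_0^\infty$-density in $\dot H^{1,p}$, it is enough to bound $\langle \nabla w_0,g\rangle$ for $g\in C_0^\infty(\R^n;(\C^m)^n)$ with $\|\nabla g\|_{H^p}\le 1$. Apply Lemma \ref{lemmaN} to the mean-zero datum $-\divv g\in C_0^\infty\subset\dot H^{-1/2}$ to produce $u\in\mE$ with $\divv A\nabla u=0$ and $\pd_{\nu_A}u_0=-\divv g$, and use Green to rewrite
\[
\langle\nabla w_0,g\rangle=\langle w_0,-\divv g\rangle=\langle\pd_{\nu_{A^*}}w_0,u_0\rangle.
\]
Hypothesis (1) together with the trivial bound $\|\divv g\|_{H^p}\lesssim\|\nabla g\|_{H^p}$ yields $\|u_0\|_{\dot H^{1,p}}=\|\nabla u_0\|_{H^p}\lesssim 1$, and reinterpreting the final bracket in the $\dot\Lambda^{\alpha-1}$--$\dot H^{1,p}$ duality gives (2).

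For the converse at $p=1$, I will reverse the roles. Given $u\in\mE$ with $\|\pd_{\nu_A}u_0\|_{H^1}<\infty$, test $\nabla u_0$ against $g\in C_0^\infty(\R^n;(\C^m)^n)$ with $\|g\|_{\mathrm{BMO}}\le 1$. Solving $\pd_{\nu_{A^*}}w_0=-\divv g$ via Lemma \ref{lemmaN}, hypothesis (2) together with $\|\divv g\|_{\dot\Lambda^{-1}}\le\|g\|_{\mathrm{BMO}}$ gives $\|\nabla w_0\|_{\dot\Lambda^{-1}}\lesssim 1$; the $\mathrm{BMO}$ analogue of Lemma \ref{lemma:inf}, namely $\inf_{c}\|f+c\|_{\mathrm{BMO}}\sim\|\nabla f\|_{\dot\Lambda^{-1}}$, then lets me fix the additive constant so that $w_0\in \mathrm{BMO}$ with $\|w_0\|_{\mathrm{BMO}}\lesssim 1$. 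Green and $H^1$--$\mathrm{BMO}$ duality deliver
\[
|\langle\nabla u_0,g\rangle|=|\langle\pd_{\nu_A}u_0,w_0\rangle|\lesssim\|\pd_{\nu_A}u_0\|_{H^1},
\]
after which the identification $H^1=(\mathrm{VMO})^*$ upgrades this uniform functional bound to the desired $H^1$ estimate on $\nabla u_0$.

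The main obstacles are twofold. First, the abstract Green brackets are \emph{a priori} $\dot H^{-1/2}$--$\dot H^{1/2}$ pairings, and it must be justified that they agree with the $\dot H^{1,p}$--$\dot\Lambda^{\alpha-1}$ (resp.\ $H^1$--$\mathrm{BMO}$) pairings; this follows from $C_0^\infty$-density and the fact that all these pairings restrict to the $L^2$ one on Schwartz test functions. Second, the $p=1$ converse requires the $\mathrm{BMO}$ version of Lemma \ref{lemma:inf} to pin down the integration constant of $w_0$. The restriction of the converse to $p=1$ is intrinsic: for $p<1$, $H^p$ is only quasi-Banach and lacks a proper predual, so a uniform bound on $\langle \nabla u_0,g\rangle$ against smooth test functions cannot be converted back into an $H^p$ bound for $\nabla u_0$.
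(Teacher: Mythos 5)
Your proof is correct and follows exactly the route the paper indicates (it merely says the proofs are \emph{mutatis mutandis} the same as for $1<p<\infty$, with $C_0^\infty$ mean-zero test functions in $H^p$, and that the converse at $p=1$ uses $H^1=(\mathrm{VMO})^*$ together with $\|\nabla f\|_{\mathrm{BMO}^{-1}}\sim\|f\|_{\mathrm{BMO}}$). The structure of your argument --- transfer through the abstract Green identity and Lemma \ref{lemmaN}, then reinterpret the bracket in the appropriate duality --- is precisely the intended adaptation of the proof of Theorem \ref{thm:boundaryneu}. One cosmetic remark: in the $p=1$ converse the infimum over additive constants in $\inf_c\|f+c\|_{\mathrm{BMO}}$ is redundant since the $\mathrm{BMO}$ seminorm is already constant-invariant; the pairing $\langle\pd_{\nu_A}u_0,w_0\rangle$ is well-defined modulo constants because $\pd_{\nu_A}u_0\in H^1$ annihilates them, which is the actual reason no representative needs to be chosen. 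Your closing explanation of why the converse fails for $p<1$ (lack of a Banach predual for the quasi-Banach $H^p$) is consistent with the paper's restriction.
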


The proofs are \textit{mutatis mutandi} the same as when $1<p<\infty$ using $C^\infty_{0}$ functions  with mean value 0 as test functions in $H^p$. The converses at $p=1$ use the fact that $H^1$ is the dual space of $VMO$ in which test functions are dense and also that $\|\nabla f\|_{BMO^{-1}}\sim \|f\|_{BMO}$ for $f\in L^1_{loc}$. Details are left to the reader. 

We now turn to the extension problem. Recall that a 2-atom for  $H^p(\R^n)$ is a  function $a\in L^2(\R^n)$ such that 
 \begin{enumerate}
\item the support of $a$ is contained in a ball $\Delta(x_{0},r)$, 
  \item  $\|a\|_{2}\le r^{-n(1/p-1/2)}$,
  \item $\int a =0$.
\end{enumerate} 
A  2-atom for $H^p(\R^n)$ is smooth if it is $C_{0}^{\infty}(\R^n)$.  Set $\mD_{0}(\R^n)$ the subspace of  $C^\infty_{0}(\R^n)$ of functions with mean 0.  For our purpose here, observe that 
2-atoms for $H^p(\R^n)$ are elements of $\dot H^{-1/2}(\R^n)$. In fact, if $a$ is such a function, a classical result of  Ne\u{c}as \cite{N} asserts that there exists a function $b\in W^{1,2}(\R^n;\C^n)$  (inhomogeneous Sobolev space)   with support in the ball supporting $a$ such that $a=\divv b$ on $\R^n$. Thus, if $f\in \dot H^{1/2}(\R^n)$, $\langle a,f \rangle =  -\langle b,\nabla f \rangle$ and we remark that  by interpolation $\|b\|_{\dot H^{1/2}(\R^n,\C^n)}\le C(\|b\|_{2}\|\nabla b\|_{2})^{1/2} <\infty$, while $\nabla f \in  \dot H^{-1/2}(\R^n;\C^n)$.

 Let 
$$H^p_{\nabla}(\R^n;\C^n)= \{{g}\in H^p(\R^n; \C^n); \curl\,  {g}=0\} = \{\nabla f; f\in \dot H^{1,p}(\R^n)\}$$ 
and $\mD_{\nabla}(\R^n;\C^n):=\nabla (C^\infty_{0}(\R^n))$. It is easy to see using $\dot H^{1,p}$ spaces that $\mD_{\nabla}(\R^n;\C^n)$  is dense in $H^p_{\nabla}(\R^n;\C^n)$.
As for the duality, one can see that the dual (for the same duality as the other spaces) of $H^p_{\nabla}(\R^n;\C^n)$ is  
$\dot \Lambda^\alpha_{\nabla}(\R^n;\C^n)$ identified as the subspace of $\dot \Lambda^\alpha(\R^n;\C^n)$ with curl free elements.  The identification is easy.  For the duality, if  $\mR=\nabla (-\Delta)^{-1/2}$ is the array of Riesz transforms, then the self-adjoint operator $\mR\mR^*$ extends to  a bounded projection from $H^p(\R^n;\C^n)$ onto $H^p_{\nabla}(\R^n;\C^n)$  and similarly from $\dot \Lambda^\alpha(\R^n;\C^n)$ onto $\dot \Lambda^\alpha_{\nabla}(\R^n;\C^n)$. From here, the duality for the ranges of the projection follows from that of the source spaces.  

For $H^p_{\nabla}(\R^n; \C^n)$,  the 2-atoms in \cite{LMc} for differential forms on $\R^n$, identifying $\nabla$ with the exterior derivative on functions, suit our needs. It was done for $p=1$   there (Definition 6.1),   but careful inspection shows it extends to $\frac{n}{n+1}<p \le 1$ with the following definition. 

\begin{defn}\label{defn:atom} Let $\frac{n}{n+1}<p \le 1$. A $2$-atom for $H^p_{\nabla}(\R^n;\C^n)$  is a  function $a\in L^2(\R^n;\C^n)$ such that 
 \begin{enumerate}
 \item there exists $b\in L^2(\R^n)$ such that $a=\nabla b$ in $\mD'(\R^n)$,
\item the supports of $a$ and $b$ are contained in a ball $\Delta(x_{0},r)$, 
  \item  $\|a\|_{2}\le r^{-n(1/p-1/2)}$,
  \item  $\|b\|_{2}\le r^{1-n(1/p-1/2)}$.
\end{enumerate} 
Note that 2-atoms for $H^p_{\nabla}(\R^n;\C^n)$ are in particular 2-atoms for  $H^p(\R^n;\C^n)$ since they satisfy $\int a=0$.
A  $2$-atom for $H^p_{\nabla}(\R^n;\C^n)$ is smooth when $b\in C^\infty(\R^n)$. 
\end{defn}

It is easily seen from the definition that $2$-atoms for $H^p_{\nabla}(\R^n;\C^n)$ belong to the space  
$ \dot H^{-1/2}_{\nabla}(\R^n;\C^n)$.  We shall require the following result. 

\begin{prop} 
\begin{enumerate}
  \item \subitem Let $T$ be a linear operator defined on $\mD_{0}(\R^n)$ such that 
$\sup \|Ta\|_{H^p_{\nabla}(\R^n;\C^n)} <\infty$, where the supremum is taken over all smooth 2-atoms for  $H^p(\R^n)$. Then $T$ has a bounded extension from $H^p(\R^n)$ into $H^p_{\nabla}(\R^n;\C^n)$.
 \subitem Suppose, in addition, that $T$ was originally a bounded linear operator from $\dot H^{-1/2}(\R^n)$ into $ \dot H^{-1/2}_{\nabla}(\R^n;\C^n)$. Then $T$ and the above extension coincide on $\dot H^{-1/2}(\R^n) \cap H^p(\R^n)$. 
   \item \subitem Let $T$ be a linear operator defined on $\mD_{\nabla}(\R^n;\C^n)$ such that 
$\sup \|Ta\|_{H^p(\R^n)} <\infty$, where the supremum is taken over all smooth 2-atoms for  $H^p_{\nabla}(\R^n;\C^n)$. Then $T$ has a bounded extension from $H^p_{\nabla}(\R^n;\C^n)$ into $H^p(\R^n)$.
  \subitem Suppose, in addition, that $T$ was originally a bounded linear operator from  $
  \dot H^{-1/2}_{\nabla}(\R^n;\C^n)$ into $\dot H^{-1/2}(\R^n)$. Then $T$ and the above extension coincide on $\dot H^{-1/2}_{\nabla}(\R^n;\C^n) \cap H^p_{\nabla}(\R^n;\C^n)$. 
\end{enumerate}
\end{prop}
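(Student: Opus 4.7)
The plan is to prove the required bounded extension by combining a finite smooth atomic decomposition of test functions with the $p$-subadditivity of the quasi-norms $\|\cdot\|_{H^p}^p$ and $\|\cdot\|_{H^p_{\nabla}}^p$ valid for $p\le 1$. I spell out (1); (2) is strictly parallel.

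For any $\phi\in\mD_{0}(\R^n)$, results of Meda-Sj\"ogren-Vallarino and Grafakos-Liu-Yang type, namely the equivalence of the $H^p$ quasi-norm with the \emph{finite} atomic quasi-norm on dense subspaces, yield a decomposition $\phi=\sum_{j=1}^N\lambda_j a_j$ with smooth $2$-atoms $a_j$ for $H^p(\R^n)$ and $\sum_{j=1}^N|\lambda_j|^p\le C\|\phi\|_{H^p}^p$. Each $a_j$ lies in $\mD_{0}$ (being $C_{0}^\infty$ with vanishing integral), and by linearity of $T$ on $\mD_{0}$ one has $T\phi=\sum_{j=1}^N\lambda_j Ta_j$, so the $p$-subadditivity together with the uniform atomic bound $\|Ta_j\|_{H^p_{\nabla}}\le C$ gives
\[
\|T\phi\|_{H^p_{\nabla}}^p\le C^p\sum_{j=1}^N|\lambda_j|^p\lesssim\|\phi\|_{H^p}^p.
\]
Density of $\mD_{0}$ in $H^p$ then delivers the unique bounded extension $\tilde T:H^p(\R^n)\to H^p_{\nabla}(\R^n;\C^n)$ agreeing with $T$ on $\mD_{0}$. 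For the second bullet, assume additionally that $T$ is continuous from $\dot H^{-1/2}(\R^n)$ into $\dot H^{-1/2}_{\nabla}(\R^n;\C^n)$, and let $\phi\in\dot H^{-1/2}\cap H^p$. Approximate $\phi$ by $\phi_k\in\mD_{0}$ converging to $\phi$ in both $H^p$ and $\dot H^{-1/2}$ (by mollification combined with a smooth cutoff, using that smooth $2$-atoms themselves lie in $\dot H^{-1/2}$ via the Ne\v{c}as argument cited in the text). Then $\tilde T\phi_k=T\phi_k$ by the coincidence on $\mD_{0}$; on one hand $\tilde T\phi_k\to\tilde T\phi$ in $H^p_{\nabla}$ by the boundedness just established, and on the other hand $T\phi_k\to T\phi$ in $\dot H^{-1/2}_{\nabla}$ by the assumed continuity. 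Since both target spaces inject into the common distribution space $\mD'(\R^n;\C^n)$, the two limits must coincide, giving $T\phi=\tilde T\phi$.

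Part (2) is identical after the obvious substitutions: smooth $2$-atoms for $H^p_{\nabla}(\R^n;\C^n)$ have the form $a=\nabla b$ with $b\in C_{0}^\infty(\R^n)$ by Definition \ref{defn:atom}, and therefore belong to $\mD_{\nabla}(\R^n;\C^n)$; a finite smooth atomic decomposition in $H^p_{\nabla}$ can be produced by applying the self-adjoint Riesz projection $\mR\mR^*$ to such a decomposition in $H^p(\R^n;\C^n)$; density of $\mD_{\nabla}$ in $H^p_{\nabla}$ was already noted earlier in the paper; and the $p$-subadditivity needed at the target is now that of $\|\cdot\|_{H^p}^p$. The main obstacle throughout is securing the \emph{finite} smooth atomic decomposition with sharp constants in $H^p$ — a form of the Meyer-Taibleson-Weiss phenomenon — since without such a finite decomposition one would be forced to justify termwise convergence of $T$ on an infinite atomic sum, a step not available from the mere linearity of $T$ on $\mD_{0}$ in part (1a); once this finite decomposition is secured, everything else reduces to quasi-norm bookkeeping and a standard density argument.
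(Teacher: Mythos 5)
Your proof takes a genuinely different route from the paper's, but it contains two concrete gaps.

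For the first bullet of (1), appealing to a finite smooth atomic decomposition in the spirit of Meda--Sj\"ogren--Vallarino and Grafakos--Liu--Yang, together with $p$-subadditivity, is a legitimate alternative to the paper's direct citation of Yang--Zhou's Theorem 1.1; the two approaches are close cousins, both resting on a Calder\'on reproducing formula with nice kernels, and for $\frac{n}{n+1}<p\le 1$ the finite smooth decomposition on $\mD_0(\R^n)$ is indeed available. So this step is acceptable, though you should state explicitly which result delivers a \emph{finite} decomposition with \emph{smooth} $2$-atoms for $p<1$.

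The first real gap is in (1b). You approximate $\phi\in\dot H^{-1/2}(\R^n)\cap H^p(\R^n)$ by $\phi_k\in\mD_0(\R^n)$ converging \emph{simultaneously} in $H^p$ and in $\dot H^{-1/2}$, justified only by ``mollification combined with a smooth cutoff.'' Mollification does not produce compact support, and the subsequent spatial cutoff destroys exactly the cancellation that $H^p$-membership requires: multiplying by a cutoff $\chi_N$ changes the mean, and the mean-zero correction that you must then subtract is concentrated at a distance $\sim N$ from the bulk, so its $H^p$ contribution is not automatically $o(1)$. Simultaneous density of $\mD_0$ in $\dot H^{-1/2}\cap H^p$ is plausible but not a one-line fact, and it is precisely what the paper's duality argument (adapted from Proposition 4.2 of Meda--Sj\"ogren--Vallarino) is designed to circumvent: one first proves $T^*f=\tilde T^*f$ modulo constants for $f\in\dot H^{1/2}_\nabla(\R^n;\C^n)\cap\dot\Lambda^\alpha_\nabla(\R^n;\C^n)$ by testing against $\mD_0$, and then uses this identity against $f\in\mD_\nabla(\R^n;\C^n)$ to conclude $\tilde Tg=Tg$ for $g\in\dot H^{-1/2}\cap H^p$ directly, never needing a sequence converging to $g$ in two incompatible topologies at once.

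The second and more serious gap is in (2a), where you assert that a finite smooth atomic decomposition in $H^p_\nabla(\R^n;\C^n)$ can be produced by applying $\mR\mR^*$ to a finite smooth atomic decomposition of $g$ in $H^p(\R^n;\C^n)$. This fails: while $\mR\mR^*g=g$ gives $g=\sum_j\lambda_j\,\mR\mR^*a_j$, the functions $\mR\mR^*a_j$ are \emph{not compactly supported} (the Riesz transforms spread support to all of $\R^n$), so they are not $2$-atoms for $H^p_\nabla$ in the sense of Definition \ref{defn:atom}, which requires both $a=\nabla b$ \emph{and} $b$ to be supported in a ball. At best $\mR\mR^*a_j$ is a molecule, and converting a finite molecular decomposition into a finite smooth atomic one with controlled constants is exactly the nontrivial content you are trying to bypass. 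The paper instead observes that the Lou--McIntosh construction of $H^p_\nabla$ atoms is itself carried out via a Calder\'on reproducing formula with compactly supported convolution kernels carrying the extra gradient constraint, so the Yang--Zhou mechanism applies verbatim in the gradient setting; you need a substitute for that step, and the Riesz projection does not supply one.
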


Of course the statement applies with $\C^m$-valued functions instead of $\C$-valued functions. 
 
 \begin{proof} The first part of (1) is a special case of Theorem 1.1 in \cite{YZ}. For the second part, 
 we adapt a classical procedure found, for example as Proposition 4.2 of \cite{MSV}, which is also reminiscent of the method of proof of Theorems \ref{thm:boundaryneuimprov} and \ref{thm:boundaryregimprov}.   Call $\tilde T$ the extension defined above. 
 First if $f\in \dot H^{1/2}_{\nabla}(\R^n;\C^n) \cap \dot \Lambda^\alpha_{\nabla}(\R^n;\C^n)$ and $g\in \mD_{0}(\R^n)$,
 $$
 \langle g, T^*f  \rangle =  \langle  Tg,f  \rangle=  \langle \tilde Tg ,  f\rangle=  \langle g , \tilde T ^*f\rangle.
 $$
 The first two brackets are interpreted in the  $\dot H^{-1/2}, \dot H^{1/2}$ duality,  then we use that $Tg=\tilde Tg$ as $g$ can be seen as a multiple of a 2-atom for $H^p(\R^n)$. This allows us to reinterpret the last two brackets in the $H^p, \dot \Lambda^\alpha$ duality. We conclude that $T^*f=\tilde T ^*f$ in $\mD_{0}'(\R^n)$, hence they both belong to $\dot H^{1/2}_{\nabla}(\R^n;\C^n) \cap \dot \Lambda^\alpha_{\nabla}(\R^n;\C^n)$ and differ by a constant. Next, let  $f\in \mD_{\nabla}(\R^n; \C^n)$ (contained in   both $\dot H^{1/2}_{\nabla}(\R^n;\C^n), \dot \Lambda^\alpha_{\nabla}(\R^n;\C^n)$ and dense in the first) and $g\in\dot H^{-1/2}(\R^n) \cap H^p(\R^n)$. Then
 $$
\langle  Tg,f  \rangle= \langle g, T^*f  \rangle =  \langle g , \tilde T ^*f\rangle =    \langle \tilde Tg ,  f\rangle.
 $$
 Here, the first two brackets are interpreted in the $\dot H^{-1/2}, \dot H^{1/2}$  duality. The second can be reinterpreted in the $H^p, \dot \Lambda^\alpha$ duality. In the second equality, we then use $T^*f=\tilde T ^*f$ up to a constant, which is annihilated. In particular, we obtain that 
 $|\langle \tilde Tg ,  f\rangle| \le \|Tg\|_{\dot H^{-1/2}} \|f\|_{\dot H^{1/2}}$. Thus, $\tilde Tg \in \dot H^{-1/2}_{\nabla}(\R^n;\C^n)$  and we conclude that $\tilde T g= Tg$.
  
 The proof of  (2)   is the same, once we make the following observation. The proof of Theorem 1.1 in \cite{YZ} depends only on having a Calder\'on reproducing formula with smooth and compactly supported convolution kernels and the characterisation of the Hardy space by the Lusin functional based on the kernels involved. Now, the atomic decomposition of \cite{LMc} is exactly obtained via the same strategy   with further algebraic constraints on the kernels to obtain the gradient form of the 2-atoms. Thus the analysis in \cite{YZ} applies and their Theorem 1.1 extends to our situation.  This provides us with the extension. The second part of the argument  is \textit{mutatis mutandi} the same. 
\end{proof}

We can now state the results we are after.

\begin{thm}\label{thm:boundaryregimprovhardy}  Let  $A(\bx)$ be a bounded measurable matrix with the  G\aa rding inequality \eqref{eq:garding0}.  Let $\frac{n}{n+1}<p \le 1$.   
 If $\sup \|\pd_{\nu_{A}}u_{0}\|_{H^p(\R^n;\C^m)}\le C_{p}$ taken over all energy solutions $u$ of $\divv A  \nabla u=0$ with (smooth) 2-atoms for $H^p_{\nabla}(\R^n;(\C^m)^n)$ as regularity data, then $\|\pd_{\nu_{A}}u_{0}\|_{H^p(\R^n;\C^m)}\le C_{p}\|\nabla u_{0}\|_{H^p_{\nabla}(\R^n;(\C^m)^n)}$ for any energy solution $u$ of $\divv A  \nabla u=0$, possibly with a different constant.
  \end{thm}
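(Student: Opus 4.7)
The plan is to recognize the statement as a direct application of part (2) of the extension Proposition just above. I would define a linear operator $T$ on $\mD_{\nabla}(\R^n; (\C^m)^n)$ as follows: for $g = \nabla b$ with $b \in C^\infty_{0}(\R^n; \C^m)$, Lemma \ref{lemma2} produces a unique energy solution $u \in \mE$ of $\divv A \nabla u = 0$ with $u|_{t=0} = b$; set $Tg := \pd_{\nu_{A}}u_{0}$. This is single-valued because adding a constant to the Dirichlet datum leaves the conormal derivative unchanged, and it is precisely the restriction to $\mD_{\nabla}$ of the bounded Dirichlet-to-Neumann map $\Gamma_{DN}: \dot H^{-1/2}_{\nabla}(\R^n; (\C^m)^n) \to \dot H^{-1/2}(\R^n; \C^m)$ defined after Lemma \ref{lemma2}.

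Next I would translate the hypothesis into an atomic estimate for $T$. A smooth 2-atom $a = \nabla b$ for $H^p_{\nabla}(\R^n; (\C^m)^n)$ has $b \in C^\infty_{0}$, hence $a \in \mD_{\nabla}$; the hypothesis applied to the energy solution $u$ with $u|_{t=0} = b$ reads $\|Ta\|_{H^p(\R^n;\C^m)} = \|\pd_{\nu_{A}} u_{0}\|_{H^p(\R^n;\C^m)} \le C_{p}$, uniformly over such atoms. Part (2) of the preceding Proposition then provides a bounded extension $\tilde T: H^p_{\nabla}(\R^n; (\C^m)^n) \to H^p(\R^n; \C^m)$ which, crucially by the ``in addition'' clause applied to $T = \Gamma_{DN}$, agrees with $\Gamma_{DN}$ on $\dot H^{-1/2}_{\nabla} \cap H^p_{\nabla}$.

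To conclude, I would take any energy solution $u$ with $\|\nabla u_{0}\|_{H^p_{\nabla}}$ finite (otherwise the asserted inequality is vacuous). Since $u \in \mE$ forces $\nabla u_{0} \in \dot H^{-1/2}_{\nabla}$, we have $\nabla u_{0} \in \dot H^{-1/2}_{\nabla} \cap H^p_{\nabla}$, so $\pd_{\nu_{A}} u_{0} = \Gamma_{DN}(\nabla u_{0}) = \tilde T(\nabla u_{0})$, and the desired global bound follows at once from the boundedness of $\tilde T$. The main obstacle is essentially the bookkeeping needed to verify that the atomic extension produced by the Proposition coincides with the conormal derivative of the energy solution for any admissible datum --- but this is exactly what the ``additional'' clause in part (2) of the Proposition is designed to provide, so no further analytic input is needed beyond what has been established in the preceding sections.
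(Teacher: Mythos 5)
Your proof is correct and takes essentially the same route as the paper. The paper's own proof is the one-line remark that the result ``follows on applying (2) of the above proposition to the Dirichlet to Neumann operator $\Gamma_{DN}$''; your proposal simply spells out the details of that application, including the key use of the ``in addition'' clause to ensure the atomic extension agrees with $\Gamma_{DN}$ on $\dot H^{-1/2}_{\nabla}\cap H^p_{\nabla}$.
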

  
  \begin{thm}\label{thm:boundaryneuimprovhardy}  Let  $A(\bx)$ be a bounded measurable matrix with the stronger  G\aa rding inequality \eqref{eq:garding+}.  Let $\frac{n}{n+1}<p \le 1$.  
 If $\sup \|\nabla u_{0}\|_{H^p_{\nabla}(\R^n;(\C^m)^n)}\le C_{p}$ taken over all energy solutions $u$ of $\divv A  \nabla u=0$ with (smooth) 2-atoms for $H^p(\R^n;\C^m)$ as Neumann data, then $\|\nabla u_{0}\|_{H^p_{\nabla}(\R^n;(\C^m)^n)} \le C_{p} \|\pd_{\nu_{A}}u_{0}\|_{H^p(\R^n;\C^m)} $ for any energy solution $u$ of $\divv A  \nabla u=0$, possibly with a different constant.
  \end{thm}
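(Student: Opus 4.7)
The plan is to mirror the strategy used for Theorem \ref{thm:boundaryregimprovhardy}, swapping the roles of Neumann and Regularity data, and applying part (1) of the atomic extension proposition (rather than part (2)).

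First, I would introduce the linear operator $T$ defined on $\mD_{0}(\R^{n};\C^{m})$ by $T(f)=\nabla u_{0}$, where $u\in\mE$ is an energy solution to $\divv A\nabla u=0$ with $\pd_{\nu_{A}}u_{0}=f$. Existence of $u$ (modulo an irrelevant constant) is guaranteed by Lemma \ref{lemmaN} because $\mD_{0}(\R^{n};\C^{m})\subset\dot H^{-1/2}(\R^{n};\C^{m})$, and the ambiguity in $u$ does not affect $\nabla u_{0}$, so $T$ is well-defined and linear. In fact $T$ is just the restriction of the Neumann-to-Dirichlet map $\Gamma_{ND}$ to $\mD_{0}$, and is bounded from $\dot H^{-1/2}(\R^{n};\C^{m})$ into $\dot H^{-1/2}_{\nabla}(\R^{n};(\C^{m})^{n})$.

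Next, note that any smooth $2$-atom $a$ for $H^{p}(\R^{n};\C^{m})$ lies in $\mD_{0}(\R^{n};\C^{m})$, and the hypothesis of the theorem says precisely
\[
\sup\,\|Ta\|_{H^{p}_{\nabla}(\R^{n};(\C^{m})^{n})}\,\le\, C_{p},
\]
the supremum being taken over such atoms. This is exactly the situation of part (1) of the proposition on atomic extensions. Applying it yields a bounded extension $\tilde T\colon H^{p}(\R^{n};\C^{m})\to H^{p}_{\nabla}(\R^{n};(\C^{m})^{n})$, and the second half of that proposition guarantees that $\tilde T$ agrees with $\Gamma_{ND}$ on the intersection $\dot H^{-1/2}(\R^{n};\C^{m})\cap H^{p}(\R^{n};\C^{m})$.

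Finally, for any energy solution $u\in\mE$ of $\divv A\nabla u=0$, the datum $\pd_{\nu_{A}}u_{0}$ sits in $\dot H^{-1/2}(\R^{n};\C^{m})$ by construction, and we may assume $\pd_{\nu_{A}}u_{0}\in H^{p}(\R^{n};\C^{m})$ (else there is nothing to prove). The consistency just recalled gives
\[
\nabla u_{0}\;=\;\Gamma_{ND}(\pd_{\nu_{A}}u_{0})\;=\;\tilde T(\pd_{\nu_{A}}u_{0})
\]
in $\dot H^{-1/2}_{\nabla}(\R^{n};(\C^{m})^{n})$, and the boundedness of $\tilde T$ furnishes the desired $H^{p}_{\nabla}$ estimate. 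The only delicate point — already built into the proposition — is the identification between the abstract extension $\tilde T f$ and the actual tangential gradient $\nabla u_{0}$ when $f=\pd_{\nu_{A}}u_{0}$; this identification is what makes the atomic bound transfer to a genuine statement about solutions rather than a purely abstract operator norm bound.
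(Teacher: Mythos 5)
Your proof is correct and follows exactly the route the paper takes: apply part (1) of the atomic extension proposition to the Neumann-to-Dirichlet operator $\Gamma_{ND}$, using that $\Gamma_{ND}$ is \textit{a priori} bounded from $\dot H^{-1/2}(\R^n;\C^m)$ to $\dot H^{-1/2}_{\nabla}(\R^n;(\C^m)^n)$, and then invoke the consistency clause to transfer the extension's boundedness to arbitrary energy solutions. The paper states this in one line; you have simply written out the same argument in full.
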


The proof of the first theorem follows on applying (2) of the above proposition to the Dirichlet to Neumann operator $\Gamma_{DN}$ and of the second on applying (1)  of the above proposition to the Neumann to Dirichlet operator $\Gamma_{ND}$.

\section{Fundamental solutions}

Assuming the De Giorgi condition for the operators $\divv A \nabla $ and $\divv A^* \nabla $ in $\ree$, these operators have fundamental solutions which have the expected estimates. It is convenient to  state the relevant statements and references. We use the notation of section 2 for points and balls in $\ree$. 

Consider an elliptic  system $\divv A \nabla$  in  $\R^{1+n}$, with bounded measurable matrix $A(\bx)$ depending  on all variables. Ellipticity is taken in the sense the G\aa rding inequality
\begin{equation}   \label{eq:garding}
   \int_{\R^{1+n}} \re (A(\bx)\nabla g(\bx)\cdot \conj{\nabla g(\bx)}) \,  d\bx \ge \lambda 
   \sum_{i=0}^n\sum_{\alpha=1}^m \int_{\R^{1+n}} |\partial_ig^\alpha(\bx)|^2d\bx, 
\end{equation}
for all $g \in \dot H^{1}(\ree; \C^m)$ and some $\lambda>0$.
We say that $\divv A \nabla$   satisfies the De Giorgi condition if
\begin{equation}  \label{eq:DGN}
  \int_{B(\bx, r)} |\nabla u |^2 \lesssim (r/R)^{n-1+2\mu}\int_{B(\bx, R)} |\nabla u |^2
   \end{equation}
holds for all weak solutions $u$ to $\divv A\nabla u=0$ in $B(\bx, 2R)\subset\R^{1+n}$ and all  $\bx \in \ree$ and $0<r<R$, for some $\mu\in (0,1]$.
It is known  that \eqref{eq:DGN} is equivalent to the H\"older estimate of Nash
\begin{equation}  \label{eq:Morrey}
   \esssup_{\by, \bz\in B(\bx, R), \by\ne \bz} \frac {|u(\by)-u(\bz)|}{|\by-\bz|^\alpha} \lesssim
    R^{-\alpha-(1+n)/2}\left( \int_{B(\bx, 2R)} |u|^2 \right)^{1/2} 
\end{equation}
 whenever $u$ is a weak solution to $\divv A\nabla u=0$ in $B(\bx, 3R)\subset\R^{1+n}$, for any  $\bx\in \ree$ and $0<r<R$,  for some $\alpha\in (0,1]$.  Furthermore, the upper bounds of $\mu$'s in \eqref{eq:DGN}  and $\alpha$'s in \eqref{eq:Morrey} are equal, which we set $\mu_{DG}^A$ and call the De Giorgi exponent of $\divv A \nabla$.

De Giorgi's theorem \cite{DeG} states that \eqref{eq:DGN}, or equivalently \eqref{eq:Morrey} of Nash \cite{Na}, holds for all divergence form equations ($m=1$) $\divv A \nabla u=0$ when $A$ is real.   It also holds for any system if dimension $1+n=2$ \cite{Mor}. \cite{AAAHK}, Section 11, shows it is also the case in dimension $1+n=3$ (the argument presented for equations, works for our systems as it relies  on Meyers'  \cite{Me} and Caccioppoli estimates which holds for such systems) when, in addition, $A$ has $t$-independent coefficients. Finally, in  \cite{A}, it is shown that \eqref{eq:DGN} is a stable property under $L^\infty$ perturbations of $A$ (again, this is shown for equations but it holds for our systems).

Estimates \eqref{eq:DGN} and \eqref{eq:Morrey} also imply the {\em Moser local boundedness estimate} \cite{Mo}
\begin{equation}  \label{eq:Mos}
   \esssup_{\by\in B(\bx,R)} |u(\by)| \lesssim
    R^{-(1+n)/2}\left( \int_{B(\bx, 2R)} |u|^2 \right)^{1/2} 
\end{equation}
whenever $\divv A \nabla u=0$ in $B(\bx, 3R)\subset\R^{1+n}$ for all  $\bx\in \ree$ and $0<R<\infty$.
We refer to \cite[Sec. 2]{HK} for details.

\begin{prop}\label{prop:fund} Let $n+1\ge 2$ and assume that  $\divv A\nabla $ and $\divv A^*\nabla $ satisfy the De Giorgi condition  or equivalently, the Nash local regularity condition. Then  $\divv A\nabla $ and $\divv A^*\nabla $ 
have a fundamental solution $\Gamma^{A}( \bx; \by)=\Gamma^A_{\by}(\bx) \in W^{1,1}_{loc}(\R^{1+n};\mL(\C^m))$ at pole $\by\in \R^{1+n}$ and  $\Gamma^{A^*}( \by; \bx)=\Gamma^{A^*}_{\bx}(\by) \in W^{1,1}_{loc}(\R^{1+n};\mL(\C^m))$ at pole $\bx \in \R^{1+n}$ $($ie,  $\divv_{\bx} A(\bx) \nabla_{\bx} \Gamma^A_{\by}(\bx)=  \delta _{\by}(\bx)$ and $\divv_{\by} A^*(\by) \nabla_{\by} \Gamma^{A*}_{\bx}(\by)=  \delta _{\bx}(\by))$ with for some $0<\mu<\inf (\mu_{DG}^A, \mu_{DG}^{A^*})$,
\begin{equation}
\label{eq:gammasize}
|\Gamma^A( \bx; \by)| \lesssim |\bx -\by|^{1-n}, \  \mathrm{if}\ 1+n\ge 3 , \ \mathrm{and}\  \lesssim 1+ |\ln |\bx - \by||  \  \mathrm{if}\ 1+ n=2,
\end{equation}
\begin{equation}
\label{eq:gammaholder}
|\Gamma^A( \bx; \by)-\Gamma^A( \bx; \by') | \lesssim \left( \frac{|\by-\by'|}{|\bx -\by|}\right)^\mu|\bx -\by|^{1-n}, \  \mathrm{if}\ |\by-\by'| \le  |\bx -\by|/2,
\end{equation}
 and 
 \begin{equation}
\label{eq:gammagrad}
{}\int_{B(\bz,\rho)} |\nabla_{\by}\Gamma^{A^*}( \by; \bx)|^2\, d\by \le C \frac{\rho^{n-1+2\mu}}{|\bx-\bz|^{2n-2+2\mu}} \  \mathrm{if}\ \rho>0  \ \mathrm{and}\   |\bx-\bz|\ge 2\rho,
\end{equation}
and symmetrically by exchanging the roles of $\Gamma^A$ and $\Gamma^{A^*}$. 
\end{prop}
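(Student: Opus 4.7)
This is by now a standard construction of a fundamental solution for elliptic systems under De Giorgi/Nash/Moser conditions; I would follow the scheme of Hofmann--Kim (cf.\ \cite{HK}), adapted to the whole space $\R^{1+n}$.

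I would first, for each fixed pole $\by\in\R^{1+n}$, approximate the Dirac mass by $\delta_{\by}^{\epsilon}:=\epsilon^{-(1+n)}\chi_{B(\by,\epsilon)}\otimes I_{\C^m}$ and, in dimension $1+n\ge 3$, use the Lax--Milgram lemma on $\dot H^{1}(\R^{1+n};\C^m)$ with the global G\aa rding inequality \eqref{eq:garding} to produce $\Gamma^{A,\epsilon}_{\by}\in\dot H^{1}$ solving $\divv_{\bx}A\nabla_{\bx}\Gamma^{A,\epsilon}_{\by}=\delta_{\by}^{\epsilon}$. In dimension $1+n=2$ one works modulo constants in a weighted energy space. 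The analogous construction with $A^{*}$ gives $\Gamma^{A^{*},\epsilon}_{\bx}$. A Green-type duality argument (testing the equation for $\Gamma^{A,\epsilon}_{\by}$ against $\Gamma^{A^{*},\epsilon'}_{\bx}$ and sending $\epsilon,\epsilon'\to 0$) gives the compatibility $\Gamma^{A}(\bx;\by)=\Gamma^{A^{*}}(\by;\bx)^{*}$ which will be used repeatedly.

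The size bound \eqref{eq:gammasize} comes next. On $B(\bx,|\bx-\by|/4)$, for $\epsilon$ small, $\Gamma^{A,\epsilon}_{\by}$ is a weak solution of the homogeneous system, so the Moser bound \eqref{eq:Mos} together with a standard dyadic annulus $L^{2}$ estimate (using Caccioppoli and the energy identity) yields $|\Gamma^{A,\epsilon}(\bx;\by)|\lesssim|\bx-\by|^{1-n}$ uniformly in $\epsilon$, with the logarithmic correction in dimension two. Diagonal extraction and uniqueness (modulo constants in 2D) then produce $\Gamma^{A}$ with the claimed bounds. The H\"older estimate \eqref{eq:gammaholder} is obtained by writing $\Gamma^{A}(\bx;\by)=\Gamma^{A^{*}}(\by;\bx)^{*}$ and applying the Nash form \eqref{eq:Morrey} of De Giorgi for $\divv A^{*}\nabla$ in the variable $\by$ on $B(\by,|\bx-\by|/2)$, where the right-hand side is controlled via the size estimate on the slightly larger ball.

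Finally, for \eqref{eq:gammagrad}, I would set $R:=|\bx-\bz|/2\ge \rho$ and apply \eqref{eq:DGN} for $\divv A^{*}\nabla$ to the solution $\by\mapsto\Gamma^{A^{*}}(\by;\bx)$ on $B(\bz,R)$ (which avoids the pole $\bx$), giving
\[
\int_{B(\bz,\rho)}|\nabla_{\by}\Gamma^{A^{*}}(\by;\bx)|^{2}\,d\by\lesssim(\rho/R)^{n-1+2\mu}\int_{B(\bz,R)}|\nabla_{\by}\Gamma^{A^{*}}(\by;\bx)|^{2}\,d\by.
\]
Caccioppoli \eqref{eq:caccio} together with the already proven $L^{\infty}$ bound $|\Gamma^{A^{*}}(\by;\bx)|\lesssim|\bx-\bz|^{1-n}$ on $B(\bz,2R)$ controls the right-hand integral by $R^{n-1}|\bx-\bz|^{-2(n-1)}\sim|\bx-\bz|^{-(n-1)}$, and inserting this gives precisely $\rho^{n-1+2\mu}|\bx-\bz|^{-(2n-2+2\mu)}$.

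The delicate step is the initial Lax--Milgram construction in dimension $1+n=2$, where $\dot H^{1}(\R^{2})$ fails to embed into $L^{2}_{\mathrm{loc}}$ after quotienting constants and the fundamental solution is only logarithmic: one must set up the solvability in a weighted space and carefully track the additive normalization. In dimensions $1+n\ge 3$ everything else is a routine combination of Lax--Milgram, Moser/De Giorgi/Nash, and Caccioppoli already available in the literature, so no essentially new difficulty arises.
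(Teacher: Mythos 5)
The paper's own ``proof'' is a one-line citation to Ros\'en~\cite{R1}, Theorem~1.2, with remarks pointing to the earlier Hofmann--Kim construction~\cite{HK} (for $1+n\ge 3$, noting that examination shows only the G\aa rding inequality \eqref{eq:garding} is needed, not pointwise accretivity) and, for $1+n=2$, to~\cite{AMcT}, \cite{DoK}, \cite{CDoK} and Barton~\cite{B}. Your proposal reconstructs the Hofmann--Kim scheme from scratch---mollified Dirac, Lax--Milgram on $\dot H^1$, Moser/Caccioppoli/De Giorgi for the pointwise and H\"older bounds, and symmetry $\Gamma^A(\bx;\by)=\Gamma^{A^*}(\by;\bx)^*$ for transferring regularity between variables---which is a valid and essentially the same underlying route as the one the paper acknowledges works; the paper simply outsources the details. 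Ros\'en's proof in~\cite{R1} is technically different in flavour (it exploits the first-order formalism and functional calculus rather than mollified Lax--Milgram), but the De Giorgi condition is the only input in both approaches, and the statements \eqref{eq:gammasize}--\eqref{eq:gammagrad} are precisely what either construction yields. Two small points worth tightening in your sketch: in the derivation of \eqref{eq:gammagrad}, taking $R=|\bx-\bz|/2$ places the pole $\bx$ on $\partial B(\bz,2R)$, so the $L^\infty$ bound you invoke on $B(\bz,2R)$ is not available uniformly up to that sphere; you should use a strictly smaller factor (say $R=|\bx-\bz|/3$) or restrict the Caccioppoli and $L^\infty$ bounds to $B(\bz,\beta\cdot 2R)$ with $\beta<1$, after which the power count $(\rho/R)^{n-1+2\mu}R^{1-n}=\rho^{n-1+2\mu}|\bx-\bz|^{-(2n-2+2\mu)}$ goes through exactly as you computed (this matches the remark following the proposition, which attributes the gain $\mu$ to De Giorgi applied with $B(\bz,\rho)\subset B(\bz,|\bz-\bx|/2)$). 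Second, the dyadic-annulus step you invoke to pass from the energy identity to $|\Gamma^{A,\epsilon}(\bx;\by)|\lesssim|\bx-\by|^{1-n}$ is the real technical heart of Hofmann--Kim and deserves more than an allusion; but since you are reproducing a known construction this is acceptable at the level of a proof sketch. For $1+n=2$ you correctly flag the normalization/weighted-space issue, consistent with the paper's remark that Barton's adaptation of~\cite{AMcT} is what handles it.
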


 \begin{proof}  This result is in \cite{R1}, Theorem 1.2.  
Note that this result is stated slightly differently there but all what is used is the De Giorgi condition.
 Note also that the estimate \eqref{eq:gammaholder} is stated with an extra multiplicative log factor when $1+n=2$, but the proof there does give what we state. 
\end{proof}
 
\begin{rem}
1) If  $\rho \sim |\bx-\bz|/2$,  then the right hand side of  \eqref{eq:gammagrad}, $|\bx-\bz|^{1-n}$, is obtained during the construction. The gain $\mu$ comes from use of the  De Giorgi condition \eqref{eq:DGN} with the balls $B(\bz, \rho) \subset B(\bz, |\bz-\bx|/2)$.

2) Assume $1+n\ge 3$.  There is a previous construction  in  \cite{HK}  under the stronger pointwise ellipticity assumption on $A$. But examination shows that only \eqref{eq:garding} is required. More estimates are obtained there. These are the only ones we need here. 
In particular,  uniqueness of the fundamental solution is proved together with the symmetry relation $\Gamma^{A^*}( \by; \bx)= {\Gamma^A( \bx; \by)}^*$,  where  the latter is the hermitian adjoint of $ 
 {\Gamma^A( \bx; \by)}$ as an $m\times m $ matrix.  
 
 3) Assume $1+n=2$. The first construction for complex coefficients is in \cite{AMcT} for scalar operators ($m=1$).  An analogous estimate
was  obtained in \cite{DoK}, Theorem 2.21, for systems but was only carried out explicitly assuming strong ellipticity. See also \cite{CDoK}.
   \cite[Chapter 4]{B} used the construction in \cite{AMcT} and showed uniqueness and also that it is possible to choose the constant of integration in such a way  the symmetry relation holds. This construction extends \textit{mutatis mutandi} to systems and does give the above estimates, with possible exception of uniqueness as the argument relies on properties of harmonic functions.  
    
  \end{rem}

\section{Decay estimates for  energy solutions }\label{sec:decay}

In this section, we consider without mention  systems with  $A(\bx)$  bounded, measurable,  non necessarily $t$-independent, with the stronger G\aa rding inequality \eqref{eq:garding+} and we assume that the reflected matrix $A^\sharp$ and its adjoint satisfy the De Giorgi condition on $\ree$.\footnote{This is a way of saying that $A$ and its adjoint satisfy both interior and boundary De Giorgi condition on the upper half-space. Some variants in the hypotheses are certainly possible here.} The number $\mu>0$ in the statements below will be any number less than the De Giorgi exponents for $A^\sharp$ and its adjoint. 

This situation covers  dimension $1+n=2$ or dimensions $1+n\ge 3$ with $A$  close in $L^\infty$ to a real and scalar matrix (for systems, scalar means diagonal). In particular we cover the case of real equations. In this respect, our first result extends  Lemma 4.9 of \cite{HKMP1}.

\begin{lem}\label{lemma1}       Let $x_0\in \R^n$, $r>0$, and set $\bx_{0}:=(0,x_{0})$, 
$B:= {B(\bx_0,r)}$, $\Delta:=\Delta(x_{0},r)$. 
Suppose that $w \in L_{\loc}^2(\reu\setminus \overline{B}; \C^m)$ with $\nabla w \in L^2(\reu\setminus \overline{B}; (\C^m)^{1+n})$
is a weak solution of $\divv A \nabla w=0$ in $\reu\setminus \overline{B}$, and that $w|_{\rn\setminus \overline{\Delta}} \equiv 0$. 
Then $w$ is (identified to) a bounded and continuous function on $\overline{\reu}\setminus 3B$,
and  for some  constants $C$ and $\mu >0$, depending only upon the assumption on $A$, 
$$|w(\bx)|\leq C  
\frac{r^{\frac{n+1}{2}+\mu-2}}{|\bx-\bx_0|^{n-1+\mu}} \, \left({}\int_{\Omega_{+}} |w|^2\right)^{1/2},\qquad |\bx-\bx_0|\geq 3r\, .$$
Here, $\Omega_{+}=3B_{+}\setminus 2B_{+}$ and $B_{+}= \reu \cap B$.
In particular, $w\to 0$ at infinity. 
\end{lem}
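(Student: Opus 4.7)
The strategy is to reduce the problem to an equation on $\ree$ by reflection across $\rn$, then use the fundamental solution of the reflected operator to represent $w$, with the $\mu$-decay extracted from the De Giorgi-type estimates \eqref{eq:gammaholder}--\eqref{eq:gammagrad}.

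First define $\wt w(t,x):=w(t,x)$ for $t\geq 0$ and $\wt w(t,x):=-w(-t,x)$ for $t<0$. The reflected matrix $A^\sharp$ is constructed precisely so that this odd extension satisfies $\divv A^\sharp\nabla \wt w=0$ weakly in $\ree\setminus \overline B$: a direct computation gives the equation on the lower half-space, and the hypothesis $w|_{\rn\setminus \overline\Delta}\equiv 0$ makes both the trace and the conormal flux match across $\rn\setminus \overline\Delta$. Moreover, $\|\wt w\|_{L^2(E)}^2=2\|w\|_{L^2(E\cap \reu)}^2$ on $\rn$-symmetric sets $E$, and $\nabla \wt w\in L^2(\ree\setminus \overline B)$. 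Now take a cutoff $\chi\in C^\infty(\ree)$ with $\chi\equiv 0$ on $2B$, $\chi\equiv 1$ outside $3B$ and $|\nabla\chi|\lesssim r^{-1}$, and set $v:=\chi\wt w$, extended by $0$ on $\overline B$; then $v\in \dot H^1(\ree)$ and $\divv A^\sharp \nabla v=F$ in the distribution sense on $\ree$, with
$$F:=\divv(\wt w\, A^\sharp\nabla\chi)+\nabla\chi\cdot A^\sharp\nabla \wt w$$
compactly supported in $\Omega:=3B\setminus 2B$. By Proposition \ref{prop:fund} and the uniqueness of energy solutions on $\ree$ provided by \eqref{eq:garding}, $v(\bx)=\int \Gamma^{A^\sharp}(\bx,\by)F(\by)\, d\by$ for $\bx\notin \Omega$; in dimension $1+n\geq 3$ the constant of integration is $0$ since $|\Gamma^{A^\sharp}(\bx,\by)|\lesssim|\bx-\by|^{1-n}\to 0$ at infinity and $v$ vanishes on $\rn\setminus\overline\Delta$ for $|x|$ large. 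For $\bx$ with $R:=|\bx-\bx_0|\geq 3r$ one has $v(\bx)=w(\bx)$, and integrating by parts on the divergence piece yields
$$w(\bx)=-\int_\Omega \nabla_\by\Gamma^{A^\sharp}(\bx,\by)\cdot \wt w\, A^\sharp\nabla\chi\, d\by + \int_\Omega \Gamma^{A^\sharp}(\bx,\by)\,\nabla\chi\cdot A^\sharp\nabla \wt w\, d\by.$$

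The first integral is bounded by Cauchy-Schwarz together with \eqref{eq:gammagrad} applied with $\bz=\bx_0$ and $\rho\sim r$, giving $\|\nabla_\by \Gamma^{A^\sharp}(\bx,\cdot)\|_{L^2(\Omega)}\lesssim r^{(n-1)/2+\mu}R^{-(n-1+\mu)}$ when $R\geq 6r$; the dyadic range $3r\leq R<6r$ is handled by Caccioppoli applied directly to $\Gamma^{A^\sharp}$. Combined with $\|\wt w\, A^\sharp\nabla\chi\|_{L^2(\Omega)}\lesssim r^{-1}\|w\|_{L^2(\Omega_+)}$, this produces exactly the target bound. For the second integral, the naive size estimate $|\Gamma^{A^\sharp}|\lesssim R^{1-n}$ is short by a factor of $(r/R)^\mu$; to recover it, decompose $\Gamma^{A^\sharp}(\bx,\by)=\bigl[\Gamma^{A^\sharp}(\bx,\by)-\Gamma^{A^\sharp}(\bx,\bx_0)\bigr]+\Gamma^{A^\sharp}(\bx,\bx_0)$. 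The difference contributes via the H\"older estimate \eqref{eq:gammaholder} with the same decay $(r/R)^\mu R^{1-n}$, while the constant term $\Gamma^{A^\sharp}(\bx,\bx_0)\int_\Omega \nabla\chi\cdot A^\sharp\nabla \wt w\, d\by$ is killed because the weak formulation of $\divv A^\sharp\nabla \wt w=0$ in $\ree\setminus\overline B$ applies to a compactly supported cutoff agreeing with $\chi$ on $\Omega$, after a limiting argument sending the auxiliary cutoff to infinity. The main obstacle is precisely this last cancellation, in tandem with pinning down the constant of integration in the fundamental-solution representation: both require controlling the far-field contributions, using the $L^2$ decay of $\nabla \wt w$ on large annuli and the vanishing of $w$ along $\rn\setminus\overline\Delta$ rather than an a priori pointwise decay of $\wt w$. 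Boundedness and continuity of $w$ on $\overline{\reu}\setminus 3B$, and the fact that $w\to 0$ at infinity, then follow from the resulting quantitative estimate.
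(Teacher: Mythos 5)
Your overall strategy is the same as the paper's: reflect across $\{t=0\}$ using the odd extension so that $\divv A^\sharp\nabla\wt w = 0$ across $\rn\setminus\overline\Delta$, cut off to land in $\dot H^1(\ree)$, represent by the fundamental solution of $\divv A^\sharp\nabla$, and harvest the $\mu$-gain from \eqref{eq:gammaholder}--\eqref{eq:gammagrad}. The paper cuts off on $\reu$ first and then reflects $v$, $f$, $\mathbf g$, whereas you reflect first and then cut off on $\ree$; these are interchangeable. There are, however, two places where your write-up has a genuine gap.

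First, the cancellation $\int_\Omega\nabla\chi\cdot A^\sharp\nabla\wt w = 0$ cannot be obtained by ``sending an auxiliary cutoff to infinity'' in the weak formulation. If $\psi_R$ is a far-field cutoff and you test against $\chi\psi_R$, the extra term is $\int_{B(0,2R)\setminus B(0,R)} A^\sharp\nabla\wt w\cdot\chi\overline{\nabla\psi_R}$; Cauchy--Schwarz gives a bound of order $R^{-1}\|\nabla\wt w\|_{L^2(B(0,2R)\setminus B(0,R))}\cdot R^{(n+1)/2}\sim\varepsilon_R R^{(n-1)/2}$, which does not tend to zero unless you already have quantitative $L^2$ decay of $\nabla\wt w$ on dyadic annuli (circular, since that is close to what you are proving). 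The cancellation that you need is actually free and much simpler: choose $\chi$ even in $t$ (e.g.\ a function of $|\bx-\bx_0|$). Then a short computation using the definition of $A^\sharp$ shows that $\nabla\chi\cdot A^\sharp\nabla\wt w$ is the odd-in-$t$ extension of $\nabla\chi\cdot A\nabla w$ from $\reu$ to $\ree$, hence it has zero mean. This is precisely the mechanism in the paper, where the fact that $f^\sharp$ is the odd extension of $f$ gives $\int f^\sharp = 0$ automatically and also allows the integral to be rewritten as $\int_\reu(\Gamma(\bx,\by)-\Gamma(\bx,\by^\sharp))f(\by)\,d\by$ so that \eqref{eq:gammaholder} applies. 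Your subtraction of $\Gamma(\bx,\bx_0)$ is an equally valid way to exploit the mean-zero property, but you must justify the mean-zero property by oddness, not by a limiting argument.

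Second, you pin down the constant of integration only in dimension $1+n\ge 3$, via decay of $\Gamma^{A^\sharp}$ at infinity. This leaves out the case $1+n = 2$, which the lemma explicitly covers (in fact that is the case where the De Giorgi hypothesis is automatic, so it is the most important one). The paper handles all dimensions by a cleaner argument: $\dot H^1(\ree)/\C^m$ is a uniqueness class for $\divv A^\sharp\nabla$, the function $v$ is odd with respect to $N(t,x) = (-t,x)$, and each of the two fundamental-solution integrals is checked to be odd as well; an odd function equal to a constant is zero, so the constants vanish. You could salvage your version by noting that once $\int F = 0$ is established (by oddness as above), the integral $\int\Gamma(\bx,\by)F(\by)\,d\by$ is well-defined independently of the additive normalisation of $\Gamma$, decays as $\bx\to\infty$ in every dimension $1+n\ge 2$ by \eqref{eq:gammaholder}, and therefore the constant vanishes. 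But as written the 2D case is missing.

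A minor technical point: your cutoff has $\nabla\chi$ supported in the full closed annulus $3B\setminus 2B$, but the Caccioppoli and boundary Caccioppoli inequalities needed to bound $\|\nabla\chi\cdot A^\sharp\nabla\wt w\|_{L^2}$ by $r^{-2}(\int_{\Omega_+}|w|^2)^{1/2}$ require a little room inside $\Omega_+=3B_+\setminus 2B_+$. You should take $\nabla\chi$ supported in a strictly smaller concentric annulus (the paper uses $(14/5)B\setminus(11/5)B$) so that the enlarged balls in Caccioppoli stay inside $\Omega_+$.
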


\begin{proof}  Let us drop the dependence on $m$ in the notation to simplify the exposition.  First, the assumption $w \in L_{\loc}^2(\reu\setminus \overline{B})$ with $\nabla w \in L^2(\reu\setminus \overline{B})$ implies that $w \in C([0,\infty); L^2_{\loc}(\rn \setminus \overline{\Delta}))$. See the argument in \cite{AMcM}. In particular, the equation $w|_{\rn\setminus \overline{ \Delta}} \equiv 0$ holds in $L^2_{\loc}$. Set $v=w\chi$ where $\chi$ is a smooth real-valued function supported on $\R^{1+n}\setminus (11/5)B$, which is $1$ on $\R^{1+n}\setminus (14/5)B$ with $ \|\chi\|_{\infty}\le 1$ and  $\|\nabla \chi\|_{\infty}\le C/r$. One has that $v\in \dot H^1(\reu)$, $v|_{\rn} \equiv 0$ holds in $L^2_{\loc}$ and $\divv A \nabla v=f +\divv \mathbf{g}$ weakly in $\reu$, with $f= A \nabla \chi. \nabla w$  and $\mathbf{g}= A \nabla \chi w.$ Note that 
$$\|\mathbf{g}\|_{2} \lesssim  r^{-1} \left({}\int_{\Omega_{+}} |w|^2\right)^{1/2},$$ the implicit constant depending on the $L^\infty$ bound for $A$ and dimension.
Also
$$\|f\|_{2}\le r^{-1} \left({}\int_{\reu \cap ((14/5)B\setminus (11/5)B)} |\nabla w|^2\right)^{1/2} \lesssim 
r^{-2} \left({}\int_{\Omega_{+}} |w|^2\right)^{1/2},$$
 where the last inequality  uses boundary and interior Caccioppoli inequalities.
 
 One can represent $v$ using the method of reflection. Let $v^\sharp, f^\sharp$ be the odd extensions of $v,f$ and  ${\bf g}^\sharp$ is the extension of $\mathbf{g}$ defined by $\mathbf{g}^\sharp (\by)= N\mathbf{g}(N\by)$, with $N(t,y)=(t,y)^\sharp= (-t,y)$. Remark that since $f^\sharp \in L^2$, with support in $3B$ and mean value condition $\int f^\sharp=0$, then $f^\sharp \in \dot H^{-1}(\ree)$ with $\|f^\sharp\|_{ \dot H^{-1}(\ree)} \lesssim r\|f^\sharp\|_{2}.$ Thus $v^\sharp \in \dot H^1(\ree)$ with $\divv A^\sharp \nabla  v^\sharp= f^\sharp + \divv \mathbf{g}^\sharp$.  As $\dot H^1(\ree)$  is a uniqueness class modulo constants for this equation (since we have \eqref{eq:garding} for $A^\sharp$), it follows that $v^\sharp$ is the unique odd (with respect to $N$) element in $ \dot H^1(\ree)$ solving this equation.  If  $v^\sharp_{1}$ is the unique odd solution obtained from $f^\sharp$ and $v^\sharp_{2}$ is the unique odd solution obtained from $-\divv g^\sharp$, one has $v^\sharp=v^\sharp _{1}-v^\sharp_{2}$ (in $L^2_{loc}$). 
Now using the fundamental solution $\Gamma^{A^\sharp}$,   $v^\sharp_{1}(\bx)$ and $v^\sharp_{2}(\bx)$ have the respective   integral representations  for $\bx\in \ree$ away from the supports of $f^\sharp$ and $g^\sharp$, 
$$
v^\sharp_{1}(\bx)= 
 {}\int_{\ree} \Gamma^{A^\sharp}(\bx;\by)f^\sharp (\by)\, d\by,
$$
$$
v^\sharp_{2}(\bx)=
 {}\int_{\ree} (\nabla_{\by}\Gamma^{A^\sharp})(\bx;\by) \mathbf{g}^\sharp(\by)\, d\by.$$
One can check that changing $\bx$ to $\bx^\sharp$ change the signs of both integrals. That is, both integrals are odd with respect to $N$. It follows that $v^\sharp_{1}$ and $v^\sharp_{2}$ agree with these integrals in $L^2_{loc}$ away of the supports of $f^\sharp$ and $g^\sharp$. 
  Next,  restricting to $\bx \in \reu$, still away from the supports of $f$ and $g$, we can rewrite the integrals   as
 $$v^\sharp_{1}(\bx)  ={}\int_{\reu} (\Gamma^{A^\sharp}(\bx;\by) - \Gamma^{A^\sharp}(\bx;\by^\sharp))f(\by)\, d\by,
  $$
  $$
v^\sharp_{2}(\bx) = {}\int_{\reu} ((\nabla_{\by}\Gamma^{A^\sharp})(\bx;\by) - (\nabla_{\by}\Gamma^{A^\sharp})(\bx,\by^\sharp))\mathbf{g}(\by)\, d\by.
$$
We have shown that $v$ is the difference of these 2 integrals in $L^2_{loc} $ away from the supports of $f$ and $g$. As
 they have the desired pointwise bounds using Proposition \ref{prop:fund} applied with $A^\sharp$,  the conclusion follows from the fact that $v=w$ on the range where these pointwise inequalities hold.
\end{proof}

\begin{lem}\label{lemma3}  Let $f\in L^2(\R^n;\C^m) \cap \dot H^{1/2}(\R^n;\C^m)$ with compact support in the surface ball $\Delta={\Delta(x_{0},r)}$. Then the solution of $\divv A \nabla u=0$ where $u|_{t=0}=f$ given by Lemma \ref{lemma2} is locally H\"older continuous on $\reu$, continuous up the boundary away from $B(\bx_{0},3r)$, tends to 0 at $\infty$ and,  has the estimate for some $C,\mu>0$, 
$$|u(\bx)|\leq C   
\frac{r^{\frac{n+1}{2}+\mu-1}}{|\bx-\bx_0|^{n-1+\mu}} \, \|f\|_{ \dot H^{1/2}(\R^n;\C^m)}, \qquad |\bx-\bx_0| \ge 3r\, .$$
\end{lem}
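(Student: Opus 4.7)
The plan is to reduce to Lemma \ref{lemma1} applied to $w=u$ on $\reu\setminus \overline{B}$ with $B=B(\bx_{0},r)$, and then convert the $L^2$ norm on $\Omega_{+}$ that appears in that lemma into the $\dot H^{1/2}$ norm of $f$ via an energy estimate.

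First I would verify the hypotheses of Lemma \ref{lemma1}. Since $u\in \mE$, we have $\nabla u\in L^2(\reu;(\C^m)^{1+n})$, so in particular $\nabla u\in L^2(\reu\setminus \overline{B})$; and $u\in L^2_{\loc}(\reu)$ by the discussion of $\dot H^{1}(\reu)$ in Section \ref{sec:general}. The boundary condition $u|_{\R^n\setminus\overline{\Delta}}\equiv 0$ holds in $L^2_{\loc}$ because the trace equals $f$ which is supported in $\Delta$. Thus Lemma \ref{lemma1} applies and gives, for $|\bx-\bx_{0}|\geq 3r$,
\[
|u(\bx)|\leq C\frac{r^{\frac{n+1}{2}+\mu-2}}{|\bx-\bx_{0}|^{n-1+\mu}}\left({}\int_{\Omega_{+}}|u|^2\right)^{1/2},
\]
where $\Omega_{+}=3B_{+}\setminus 2B_{+}$.

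Next I would bound the remaining $L^2$ factor. Since $f$ is supported in $\Delta=\Delta(x_{0},r)$, the trace of $u$ vanishes on $3\Delta\setminus 2\Delta$, and a Poincar\'e-type inequality on the half-annulus $\Omega_{+}$ (as in Remark \ref{rem:poincarereg}) yields
\[
r^{-1}\left({}\int_{\Omega_{+}}|u|^2\right)^{1/2}\lesssim \left({}\int_{\Omega_{+}}|\nabla u|^2\right)^{1/2}\leq \|\nabla u\|_{L^2(\reu)}.
\]
On the other hand, the Lax--Milgram construction of the energy solution in Lemma \ref{lemma2}, together with the fact that $f$ admits an extension to $\mE$ controlled by $\|f\|_{\dot H^{1/2}}$ (via the boundedness of the trace from $\mE$ onto $\dot H^{1/2}(\R^n;\C^m)$), gives $\|\nabla u\|_{L^2(\reu)}\lesssim \|f\|_{\dot H^{1/2}(\R^n;\C^m)}$. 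Combining these two bounds with the previous display produces the claimed pointwise estimate.

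Finally, I would address the regularity statements. Interior local H\"older continuity of $u$ follows from the De Giorgi/Nash estimate \eqref{eq:Morrey} applied on balls contained in $\reu$. Continuity up to the boundary away from $3B$ is obtained by odd reflection across $\{t=0\}$: since $u|_{t=0}\equiv 0$ on $\R^n\setminus \overline{\Delta}$, the odd extension $u^\sharp$ is an energy solution of $\divv A^\sharp\nabla u^\sharp=0$ in a neighborhood in $\ree$ of any boundary point outside $\overline{\Delta}$, and the standing assumption that $A^\sharp$ satisfies the De Giorgi condition yields local H\"older continuity of $u^\sharp$, hence of $u$ up to the boundary on $\R^n\setminus\overline{\Delta}$. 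Convergence to $0$ at infinity is then immediate from the decay estimate just proved. The main technical point is simply the matching of the constants and exponents to convert Lemma \ref{lemma1} plus Poincar\'e plus the energy bound into the stated inequality; there is no further obstacle, as all the heavy lifting (fundamental solution, reflection, decay) is already built into Lemma \ref{lemma1}.
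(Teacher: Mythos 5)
Your proposal is correct and follows essentially the same route as the paper's (very terse) proof: apply Lemma \ref{lemma1}, replace the $L^2$ norm of $u$ on $\Omega_+$ by $r\|\nabla u\|_{L^2(\Omega_+)}$ via Remark \ref{rem:poincarereg}, and control the latter by $r\|f\|_{\dot H^{1/2}}$ using the energy bound from the Lax--Milgram construction. The additional verifications you include (hypotheses of Lemma \ref{lemma1}, interior H\"older continuity via \eqref{eq:Morrey}, boundary continuity via odd reflection) are sound but largely redundant, since Lemma \ref{lemma1} already asserts boundedness and continuity on $\overline{\reu}\setminus 3B$ and decay at infinity; the paper leaves them implicit.
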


\begin{proof} 
By Remark \ref{rem:poincarereg}, one can change $\left({}\int_{\Omega_{+}} |u|^2\right)^{1/2}$ by 
$r\left({}\int_{\Omega_{+}} |\nabla u|^2\right)^{1/2}$ in the right hand side of the estimate of Lemma \ref{lemma1}. The latter  is controlled by $r\|f\|_{ \dot H^{1/2}(\R^n;\C^m)}$ by the existence theory for energy solutions. 
 \end{proof}

Here $3r$ is for convenience of the statements and can be changed  to $(1+\varepsilon)r$ for any $\varepsilon>0$. 

It is worth relating the above results to solutions constructed by harmonic measure, even if we do not use this estimate. 

\begin{lem}\label{lem:harmmeasure} Assume $1+n\ge 2$ and  $A(\bx)$ has scalar and  real (not necessarily $t$-independent) coefficients  and ellipticity is taken in the usual pointwise sense. Then for all Lispchitz functions $f$ with bounded support in a surface ball $\Delta(x_{0},r)$, the solution $u$ with boundary data $f$ given by harmonic measure for $\divv A \nabla$ is an energy solution.  Hence it agrees with the solution given in Lemma \ref{lemma3}. In particular, it has further the estimate for any $\rho>r$
\begin{equation}
\label{eq:sizeharmonic }
|u(\bx)|\leq C   
\frac{r^{n+\mu}}{|\bx-\bx_0|^{n-1+\mu}} \, \|\nabla f\|_{\infty},\qquad |\bx-\bx_0| \ge \rho,
\end{equation}
so that $u \to 0$ at infinity.
\end{lem}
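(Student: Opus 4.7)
The plan has three steps. First, I will check that the energy solution $u$ from Lemma~\ref{lemma3} with boundary data $f$ is continuous on $\overline{\reu}$ with pointwise boundary value $f$, is bounded, and tends to zero at infinity. Second, I will identify this $u$ with the harmonic-measure solution $u_{hm}$ by a maximum-principle uniqueness argument. Third, I will sharpen the Lemma~\ref{lemma3} decay estimate by bounding $\|f\|_{\dot H^{1/2}}$ in terms of $\|\nabla f\|_\infty$ and the radius $r$, and complete the remaining range $\rho \le |\bx-\bx_0| < 3r$ via the maximum principle.

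For the continuity and boundedness, I would write $u = v + F$ where $F$ is a compactly supported Lipschitz extension of $f$ to $\ree$ and $v \in \dot H^1_{0}(\reu)$ solves $\divv A\nabla v = -\divv A\nabla F$ with zero boundary trace. Since $A$ is real scalar and pointwise elliptic, the reflected matrix $A^\sharp$ on $\ree$ is again real scalar and pointwise elliptic, so boundary De~Giorgi/Nash regularity (obtained via odd reflection across $\{t=0\}$) gives H\"older continuity of $v$ on $\overline{\reu}$ with $v|_{t=0}=0$ pointwise, whence $u = v + F$ is continuous up to the boundary with $u|_{t=0}=f$. Moser's local boundedness combined with the maximum principle for real scalar divergence-form equations gives $|u| \le \|f\|_\infty$, and Lemma~\ref{lemma3} itself provides the decay $u(\bx) \to 0$ at infinity.

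To identify $u$ with $u_{hm}$, I observe that for a real scalar divergence-form operator the harmonic-measure solution $u_{hm}(\bx) = \int f \, d\omega^{\bx}$ is bounded, continuous on $\overline{\reu}$, agrees with $f$ on the boundary (all boundary points of $\reu$ are regular), and vanishes at infinity. The difference $u - u_{hm}$ is then a bounded weak solution, continuous on $\overline{\reu}$, with zero boundary values and vanishing at infinity; the maximum principle applied on exhausting half-balls $B(0,R) \cap \reu$ (on whose spherical part $|u-u_{hm}|$ is at most $\varepsilon$ for $R$ large) forces $u \equiv u_{hm}$. In particular $u_{hm}$ is an energy solution and the conclusion of Lemma~\ref{lemma3} applies to it.

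For the refined estimate I split on $|\bx - \bx_0|$. On $|\bx - \bx_0| \ge 3r$, Lemma~\ref{lemma3} combined with the interpolation $\|f\|_{\dot H^{1/2}} \lesssim \|\nabla f\|_2^{1/2} \|f\|_2^{1/2} \lesssim r^{n/2+1/2}\|\nabla f\|_\infty$ (using $|\supp f| \lesssim r^n$ and $\|f\|_\infty \lesssim r\|\nabla f\|_\infty$) produces exactly $r^{n+\mu}/|\bx - \bx_0|^{n-1+\mu}\|\nabla f\|_\infty$. On $\rho \le |\bx - \bx_0| < 3r$, the maximum principle gives $|u(\bx)| \le \|f\|_\infty \lesssim r\|\nabla f\|_\infty$, which matches the desired bound up to a dimensional constant since $|\bx - \bx_0| \sim r$ on that annulus. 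The main obstacle I expect is the identification step: the energy solution's trace agrees with $f$ a priori only in $\dot H^{1/2}$, and one must upgrade this to pointwise continuous agreement to invoke the maximum principle against $u_{hm}$; this is precisely where boundary De~Giorgi/Nash regularity for real scalar equations, combined with the Lipschitz regularity of $f$, does the work.
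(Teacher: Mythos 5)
Your proposal runs the argument in the opposite direction from the paper's: you start with the energy solution from Lemma~\ref{lemma3}, upgrade its boundary regularity, and then identify it with the harmonic-measure solution $u_{hm}$ by a comparison/maximum principle; the paper instead constructs $u_{hm}$ directly as the monotone increasing limit of the classical solutions $u_R$ on the truncated domains $\Omega_R = \reu \cap B(\bx_0,R)$, observes that each $u_R = \phi_R + F$ has a uniform energy bound (independent of $R$, by Lax--Milgram), extracts a weak limit to conclude that $u_{hm}\in\dot H^1(\reu)$, and then invokes uniqueness from Lemma~\ref{lemma2}. The paper's route is efficient precisely because, on the half-space, ``harmonic measure'' is itself defined through this exhaustion, and the uniform energy bound comes for free.

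There is a genuine gap in your identification step. To run the maximum principle on the exhausting half-balls you assert that $u - u_{hm}$ vanishes at infinity, which requires $u_{hm}\to 0$ at infinity. But that decay is not a free property of ``the harmonic-measure solution'' on an unbounded domain --- it is essentially the qualitative content of the very estimate \eqref{eq:sizeharmonic } you are proving. Equivalently, you need $\omega^\bx(\Delta(x_0,2r))\to 0$ as $|\bx|\to\infty$, which is a nontrivial fact (it follows from boundary H\"older estimates / Bourgain-type localization, not from boundedness alone). You have also tacitly assumed that $\omega^\bx$ exists on $\reu$ as a well-behaved finite measure for each $\bx$, which the paper does not assume but constructs. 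To save your argument without circularity you could replace the exhaustion-by-half-balls step by a Liouville argument: once $u - u_{hm}$ is shown to be a \emph{bounded} solution continuous up to the boundary with zero boundary values, its odd reflection is a bounded weak solution of $\divv A^\sharp\nabla$ on all of $\ree$; since $A^\sharp$ is real scalar elliptic it is a De Giorgi operator, so the odd reflection is constant, hence identically zero. That removes the need for decay of $u_{hm}$, requiring only boundedness (which you do justify).

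The concluding quantitative step is correct and matches what the paper leaves to the reader: $\|f\|_{\dot H^{1/2}}\lesssim(\|\nabla f\|_2\|f\|_2)^{1/2}\lesssim r^{(n+1)/2}\|\nabla f\|_\infty$, and combined with Lemma~\ref{lemma3} this gives the exponent $r^{n+\mu}$; the near-region $\rho\le|\bx-\bx_0|<3r$ is covered by the maximum-principle bound $|u|\le\|f\|_\infty\lesssim r\|\nabla f\|_\infty$.
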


\begin{proof}  First notice that writing $f=f_{+}-f_{-}$, the positive and negative parts both satisfy  the same assumptions as $f$. Hence we may assume $f\ge 0$.

Let $R>2r$ and  $\Omega_{R}=\reu\cap B({\bx_{0}}, R)$. Now let us recall the construction of the solution given by harmonic measure on $\reu$ taken from granted the construction on bounded domains (See \cite{Ke}).    Let $\omega_{R}^\bx$ be the harmonic measure for $\divv A  \nabla $ on $\Omega_{R}$ at pole $\bx$.  Hence $\bx\mapsto u_{R}(\bx)=\int_{\partial\Omega_{R}} f \, d\omega_{R}^\bx $ is the unique continuous function on $\overline{\Omega_{R}}$, solution  of the classical Dirichlet problem  $\divv A \nabla u_{R}=0$ with $u|_{\partial \Omega_{R}}=f$, where we have naturally extended $f$ by 0 on $\pd\Omega_{R}\cap \reu$.  It is also an energy solution on $\Omega_{R}$, 
${}\int_{\Omega_{R}}|\nabla u_{R}(\bx) |^2\, d\bx  $  bounded by a uniform constant. Indeed, it is constructed as $u_{R}=\phi_{R}+F$ where $F$ is a fixed Lipschitz extension of $f$ and $\phi_{R}$ solves $\divv A\nabla \phi_{R} = -\divv A \nabla F$ with $\phi_{R}\in W^{1,2}_{0}(\Omega_{R})$, so that the constant in the energy inequality depend on the Lipschitz norm of $f$ and the ellipticity constants of $A$.   

 Using the maximum principle of Stampacchia and the positivity of $f$,  we have
$0\le u_{R}\le u_{R'}\le \sup_{\R^n }f$ in $\overline{\Omega_{R}}$ when $R<R'$.   Thus for any $\bx\in \overline\reu$,  $u_{R}(\bx)$  converges to a finite number $u(\bx)$ as $R\to \infty$ (with $u(0,\cdot)=f$ on $\R^n$ since $u_{R}(0,\cdot)=f$ on $\Delta(x_{0},R)$ ). Already, this and the  density of  the space of compactly supported Lipschitz continuous functions on $\R^n$ into the space of  compactly supported continuous functions  imply that $\omega_{R}^\bx|_{\R^n}$ converges weakly to a finite positive measure on $\R^n$, denoted  $\omega^\bx$, and that $u(\bx)= \int_{\R^n} f\, d\omega^\bx$. 
Also, by Harnack's principle and Ascoli's theorem, $u_{R}$ (naturally extended by 0 outside $\overline{\Omega_{R}}$) converges locally uniformly to $u$ on $\reu$.  Next, this extension of $ u_{R}$ is an element of $\dot H^{1}(\reu)$, form a bounded family in that  space. It easily follows that $u$  is an energy solution of $\divv A \nabla u=0$ in $\reu$ by a weak limit argument 
 with $u|_{t=0}=f$. By uniqueness in Lemma \ref{lemma2}, $u$ is the only one. The rest of the proof is left to the reader. 
\end{proof}

We turn to decay estimates useful for Neumann solutions.

\begin{lem}\label{lem:sizeNeumann}    Let $u\in \mE$ be an  energy solution of $\divv A \nabla u=0$ 
whose conormal derivative at the boundary is further integrable and supported in some boundary ball $\Delta(x_{0},r)$. After a suitable choice of the constant of integration, we have
$$
|u(\bx)| \le C\frac{r^\mu }{{|\bx-\bx_{0}|}^{n-1+\mu}} \|\pd_{\nu_{A}}u|_{t=0}\|_{1}
$$
whenever $|\bx-\bx_{0}|\ge 2r$ for some $C$ depending on the assumptions on $A$, with $\bx_{0}=(0,x_{0})$. In particular, $u\to 0$ at infinity in any direction.
\end{lem}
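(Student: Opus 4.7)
The plan is to represent $u$ globally via the fundamental solution of the reflected operator $\divv A^\sharp \nabla$ on $\ree$, and then exploit the H\"older regularity \eqref{eq:gammaholder} of this fundamental solution together with a mean-zero condition on $g := \pd_{\nu_A}u|_{t=0}$.

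First I would establish the compatibility $\int_{\R^n} g\, dy = 0$: the abstract Green's formula \eqref{eq:conormal} applied with $\phi\equiv 1 \in \mE$ (which is a legitimate element since the $\mE$-norm is only a semi-norm) gives $0 = -\langle g, 1\rangle$, and this is $\int g$ since $g\in L^1$. Next, I would extend $u$ by even reflection, setting $u^\sharp(t,x):=u(|t|,x)$ on $\ree$. A direct computation (splitting the test integral $-\int_\ree A^\sharp \nabla u^\sharp \cdot \overline{\nabla \phi}$ into the two half-spaces and substituting $t\mapsto -t$ below) produces $u^\sharp \in \dot H^1(\ree;\C^m)$ with
$$\divv A^\sharp \nabla u^\sharp = 2\, g\otimes \delta_{t=0}\qquad\text{in }\mD'(\ree;\C^m),$$
the sign-flipped mixed coefficients of $A^\sharp$ being precisely what makes the reflected flux consistent.

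Using the fundamental solution $\Gamma^{A^\sharp}$ from Proposition \ref{prop:fund}, I set
$$v(\bx):=2\int_{\R^n}\Gamma^{A^\sharp}(\bx;(0,y))\, g(y)\, dy.$$
Using $g\in L^1$ with compact support in $\Delta$ together with \eqref{eq:gammasize}--\eqref{eq:gammagrad}, one verifies $v\in L^2_{loc}(\ree)$ with $\nabla v\in L^2(\ree)$ (at least for $1+n\ge 3$), and that $v$ solves $\divv A^\sharp \nabla v = 2g\otimes\delta_{t=0}$. Uniqueness modulo constants of $\dot H^1(\ree;\C^m)$-solutions to $\divv A^\sharp \nabla w=0$ (coming from the G\aa rding inequality \eqref{eq:garding} for $A^\sharp$) then yields $u^\sharp = v + C_0$ for some $C_0\in\C^m$. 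Fixing the constant of integration to absorb $C_0$ and using $\int g = 0$ to subtract $\Gamma^{A^\sharp}(\bx;\bx_0)\int g = 0$, one obtains
$$u(\bx) = 2\int_\Delta \bigl[\Gamma^{A^\sharp}(\bx;(0,y)) - \Gamma^{A^\sharp}(\bx;\bx_0)\bigr]\, g(y)\, dy.$$
For $|\bx-\bx_0|\geq 3r$ and $y\in \Delta$, the comparison $|(0,y)-\bx_0|\le r \le \tfrac12|\bx-(0,y)|$ triggers \eqref{eq:gammaholder}, giving
$$|u(\bx)|\lesssim \int_\Delta \frac{r^\mu}{|\bx-\bx_0|^{n-1+\mu}}|g(y)|\, dy \lesssim \frac{r^\mu}{|\bx-\bx_0|^{n-1+\mu}}\|g\|_1,$$
while the intermediate range $2r\le|\bx-\bx_0|<3r$ is absorbed into the constant via the direct pointwise bound \eqref{eq:gammasize}, since there $r^\mu/|\bx-\bx_0|^{n-1+\mu}\sim|\bx-\bx_0|^{1-n}$.

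The main obstacle is the rigorous justification of $u^\sharp = v + C_0$ in dimension $1+n=2$, where the merely logarithmic growth in \eqref{eq:gammasize} prevents $\nabla v$ from being globally $L^2$ on $\ree$. In that case one must bypass the direct $\dot H^1(\ree)$-uniqueness: for instance, one can run a localization/exhaustion argument showing that $u^\sharp - v$ is a weak solution of $\divv A^\sharp \nabla = 0$ on $\ree$ with gradient in $L^2$ on annuli far from $\Delta$, and then apply the interior H\"older regularity \eqref{eq:Morrey} on an exhaustion of $\ree$ to conclude that $u^\sharp - v$ is a constant.
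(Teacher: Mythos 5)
Your argument follows the paper's own proof almost step for step: even reflection to obtain $u^\sharp\in\dot H^1(\ree;\C^m)$ solving $\divv A^\sharp\nabla u^\sharp = 2g\otimes\delta_{t=0}$ in $\ree$, identification of $u^\sharp$ up to a constant with the integral $2\int\Gamma^{A^\sharp}(\cdot\,;(0,y))g(y)\,dy$ (the paper phrases this via the Lax--Milgram inverse ${L^\sharp}^{-1}$ applied to $\alpha\,\delta|_{t=0}\in\dot H^{-1}(\ree)$ rather than by checking $v\in\dot H^1(\ree)$ directly), and then the cancellation $\int g=0$ to upgrade \eqref{eq:gammasize} to \eqref{eq:gammaholder}. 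Your caution about the case $1+n=2$ is reasonable and is not discussed in the paper; a short route to $\nabla v\in L^2(\ree)$ in that case is to use $\int g=0$ and \eqref{eq:gammaholder} to obtain pointwise decay of $v$ off $B(\bx_0,2r)$, and then Caccioppoli on dyadic annuli.

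The step that needs repair is your derivation of $\int g=0$. Testing \eqref{eq:conormal} against $\phi\equiv 1$ does give $\langle g,1\rangle = 0$, but this is vacuous: the constant $1$ has vanishing $\dot H^{1/2}$-seminorm, so every $g\in\dot H^{-1/2}$ annihilates it irrespective of $\int g$. The asserted identity $\langle g,1\rangle=\int g$ ``since $g\in L^1$'' is exactly the compatibility you would have to prove, and it would require approximating the constant $1$ by $C^\infty_0(\R^n)$ functions simultaneously uniformly on $\supp g$ and in $\dot H^{1/2}$-seminorm, which is possible only for $n=1$. For $n\ge 2$ the conclusion is in fact false in general: the energy solution of $\Delta u=0$ in $\reu$ given in Fourier by $\hat u(t,\xi)=-|\xi|^{-1}\widehat{1_{B(0,1)}}(\xi)\,e^{-t|\xi|}$ lies in $\dot H^1(\reu)$ precisely because $\int_{|\xi|<1}|\xi|^{-1}\,d\xi<\infty$ when $n\ge 2$, yet its Neumann datum $1_{B(0,1)}$ has nonzero mean. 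The paper's own terse assertion ``we have necessarily $\int\alpha=0$'' shares this difficulty and is best read as an additional hypothesis on $\alpha$; this is harmless, since in the only application of the lemma (the proof of Theorem \ref{thm:extraneu}) $\alpha$ is a $2$-atom for $H^p(\R^n;\C^m)$ and thus has mean zero by definition, and for $n=1$ the mean-zero property does follow from $\alpha\in L^1\cap\dot H^{-1/2}(\R)$ by the Fourier characterisation.
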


\begin{proof}
Let $\alpha=\pd_{\nu_{A}}u|_{t=0}$.  We assumed $u$ belongs to the energy class, so it is determined up to a constant. We shall select one in a moment. Using the reflection principle, we see that the even extension of $u$ across the boundary is a solution of the equation 
 $${}\int_{\R^{1+n}} A^\sharp \nabla u^\sharp \cdot \overline{\nabla \phi}\, d\bx=- 2\int_{\R^{n}} \alpha(x)    \overline{\phi(0,x)}\, dx = -2 \langle \alpha \delta |_{t=0}, \phi \rangle$$ for all $\phi \in C^1_{0}(\R^{1+n}; \C^m)$ where $A^\sharp $ is the reflected matrix of $A$ (the $-$ sign in the formula comes from our convention for $\partial_{\nu_{A}}$).  Observe that the bracket is the $\dot H^{-1}(\ree; \C^m)$, $\dot H^1(\ree; \C^m)$ duality by seeing  $\alpha \delta |_{t=0}  \in \dot H^{-1}(\ree; \C^m)$ from trace theory.  Let $L^\sharp=\divv A^\sharp \nabla$ on $\R^{1+n}$.  By invertibility of $L^\sharp$  we have $u^\sharp=-2 {L^\sharp}^{-1}(\alpha \delta |_{t=0})$ in $\dot H^{1}(\ree; \C^m) $,   which means that the two agree up to a constant. Under the assumption of the lemma,  Proposition \ref{prop:fund} applies to $A^\sharp$  and let $\Gamma^{A^\sharp} $ be the fundamental  solution of $ \divv A^\sharp \nabla$. 
Using the fact that  $\alpha \in L^1$ with support in the surface ball $\Delta(x_{0},r)$, we have up to a constant for $|\bx-\bx_{0}|\ge 2r$,
$$
2{L^\sharp}^{-1}(\alpha \delta |_{t=0})(\bx)= 2\int_{\R^n} \Gamma^{A^\sharp}(\bx; 0,y) \alpha(y)\, dy   
$$
as the integral converges from the size condition \eqref{eq:gammasize}. We now choose the constant of integration so as $u^\sharp(\bx)$ agrees with this integral when $|\bx-\bx_{0}|\ge 2r$. As $\alpha$ is the conormal derivative of $u$ and is integrable, we  have necessarily $\int \alpha=0$. Thus
$$
u^\sharp(\bx)=-2 \int_{\R^n} (\Gamma^{A^\sharp}(\bx; 0,y) - \Gamma^{A^\sharp}(\bx; 0,x_{0}))\alpha(y)\, dy.
$$
 Then \eqref{eq:gammaholder} readily gives the desired estimate using, in addition, the support of $\alpha$.
\end{proof}

\section{Short review of the first order formalism}\label{sec:firstorder}

In this section, we assume that the matrix $A(\bx)$ is bounded, measurable, $t$-independent (\textit{i.e.}, $A(\bx)=A(x)$ when $\bx = (t,x)$)  and satisfies the accretivity assumption \eqref{eq:accrassumption} on $\R^n$.  It is convenient to write $A$ in a  $2 \times 2$ block form. Identifying $\C^{(1+n)m}=(\C^m)^{1+n}=\C^m \times (\C^m)^n$, $A(x)$ takes the form of a $2\times 2$ matrix
$$
  A(x)= \begin{bmatrix} a(x) & b(x)\\ c(x) &d(x) \end{bmatrix},
$$
where $a(x)\in \mL(\C^m)$, etc...
Call $\mA$ the set of such $2 \times 2$ block matrices $A$. 

Following \cite{AAMc} and \cite{AA}, one can characterize weak solutions  $u$ to the divergence form equation (\ref{eq:divform}), 
by replacing $u$ by its conormal gradient $\nabla_{A}u$ as the unknown function.
More precisely (\ref{eq:divform}) for $u$ is replaced by  \eqref{eq:firstorderODE} for 
$$
  F(t,x) =\nabla_A u(t,x)=  \begin{bmatrix} \pd_{\nu_A}u(t,x)\\ \nabla_x u(t,x) \end{bmatrix},$$ 
and
$\pd_{\nu_A}u(t,x):= (A\nabla_{t,x}u)_\no$ denotes the upward conormal derivative of $u$, that is the first component of $A\nabla_{t,x}u$, consistently with earlier notation. 
Here we use the notation $v= \begin{bmatrix} v_{\no}\\ v_{\ta}\end{bmatrix}$ for vectors  in $(\C^m)^{1+n}$ and $v_{\no}\in \C^m$ is called the scalar part and $v_{\ta}\in (\C^m)^n$ the tangential part of $v$. For example, $\pd_{t}u=(\nabla_{t,x }u)_{\no}$ and $\nabla_{x} u = (\nabla_{t,x }u)_{\ta}$.

We remark that there is the pointwise comparison $|\nabla u |\sim |\nabla_{A}u|$.

\begin{prop}  \label{prop:divformasODE}
  The pointwise transformation 
   \begin{equation}
\label{eq:hat}
A\mapsto \hat A:=  \begin{bmatrix} 1 & 0  \\ 
    c & d \end{bmatrix}\begin{bmatrix} a & b  \\ 
    0 & 1 \end{bmatrix}^{-1}=  \begin{bmatrix} a^{-1} & -a^{-1} b  \\ 
    ca^{-1} & d-ca^{-1}b \end{bmatrix}
\end{equation}
is a self-inverse bijective transformation of the set of  matrices in $\mA$.

For a pair of coefficient matrices $A= \hat B$ and $B= \hat A$, 
the pointwise map $\nabla_{t,x}u\mapsto F= \nabla_{A}u$ gives 
a one-one correspondence, with inverse $F \mapsto \nabla_{t,x}u= \begin{bmatrix} (B F)_\no \\ F_\ta  \end{bmatrix}$,
between   gradients of weak solutions $u\in  H^1_{\loc}(\R^{1+n}_{+};\C^m)$ to (\ref{eq:divform})
and   solutions $F\in L^2_{\loc}(\R^{1+n}_{+}; (\C^m)^{1+n})$  of the generalized Cauchy--Riemann equations
\begin{equation}  \label{eq:firstorderODE}
  \pd_t F+  \begin{bmatrix} 0 & \divv_{x} \\ 
     -\nabla_{x} & 0 \end{bmatrix} B F=0  \quad \mathrm{and} \quad \curl_{x}F_{\ta}=0, 
\end{equation}
where the derivatives are taken in the $\R^{1+n}_+$ distributional sense.\end{prop}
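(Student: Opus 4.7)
The plan is to verify both statements by direct algebraic manipulation, after first recording that the accretivity hypothesis \eqref{eq:accrassumption} guarantees the pointwise invertibility of the $(1,1)$ block $a$ of any $A\in\mA$ with $\|a^{-1}\|_\infty \lesssim \lambda^{-1}$ (this is standard, obtained by testing the quadratic form on vectors of the form $(f_\no,0)$, which lie in $\mH_0$). Hence $\hat A$ in \eqref{eq:hat} is well-defined as a bounded measurable matrix. For the self-inverse property, one can either compute $\hat{\hat A}$ directly from the block formula, or, more cleanly, observe that the defining identity of $\hat A$ is
$$
  \begin{bmatrix} \xi_\no \\ (\hat A\xi)_\ta \end{bmatrix}
  \;=\; A \begin{bmatrix} (\hat A\xi)_\no \\ \xi_\ta \end{bmatrix}, \qquad \xi\in \C^{m(1+n)},
$$
which is manifestly symmetric under the exchange $A\leftrightarrow \hat A$. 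Accretivity is preserved (on the relevant subspace) so $\hat A\in\mA$; the inverse formula shows $\hat{\hat A}=A$.

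For the second part I would treat the two directions separately. Set $B=\hat A$. In the forward direction, given $u\in H^1_{\loc}(\reu;\C^m)$ solving \eqref{eq:divform}, set $F=\nabla_A u$, so $F_\no=\pd_{\nu_A}u = (A\nabla_{t,x}u)_\no$ and $F_\ta=\nabla_x u$. Using the displayed identity above with $\xi=F$, one reads off $(BF)_\no=\pd_t u$ and $(BF)_\ta=(A\nabla_{t,x}u)_\ta$. The divergence equation $\pd_t(A\nabla_{t,x}u)_\no+\divv_x(A\nabla_{t,x}u)_\ta=0$ thus becomes
$$
  \pd_t F_\no + \divv_x (BF)_\ta = 0,
$$
while the commutation of mixed partials yields
$$
  \pd_t F_\ta = \pd_t\nabla_x u = \nabla_x \pd_t u = \nabla_x (BF)_\no,
$$
and $\curl_x F_\ta = \curl_x \nabla_x u = 0$, so $F$ satisfies \eqref{eq:firstorderODE}.

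Conversely, suppose $F\in L^2_{\loc}(\reu;(\C^m)^{1+n})$ solves \eqref{eq:firstorderODE} with $\curl_x F_\ta=0$. The curl-free condition on each time slice (in the distributional sense on $\reu$, made rigorous by slicing via Fubini on a.e.\ $t$) together with the second component $\pd_t F_\ta = \nabla_x(BF)_\no$ of the system shows that the $(1+n)$-dimensional covector $((BF)_\no,F_\ta)$ is closed, hence locally the gradient $\nabla_{t,x}u$ of some $u\in H^1_{\loc}(\reu;\C^m)$, unique modulo constants. Reading back the identity displayed in the first paragraph with $\xi=F$ gives $A\nabla_{t,x}u = \begin{bmatrix} F_\no \\ (BF)_\ta\end{bmatrix}$, so in particular $\pd_{\nu_A}u = F_\no$, i.e.\ $F=\nabla_A u$; and the first component of \eqref{eq:firstorderODE} becomes the distributional divergence equation \eqref{eq:divform}.

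The main obstacle I anticipate is the rigorous passage from the distributional identity $\pd_t F_\ta = \nabla_x(BF)_\no$ on $\reu$ combined with $\curl_x F_\ta=0$ to the existence of a potential $u\in H^1_{\loc}$ with $\nabla_{t,x}u=((BF)_\no,F_\ta)$; the algebraic identifications are routine, but one must check that $F\in L^2_{\loc}$ (and $BF\in L^2_{\loc}$, via boundedness of $B$) suffices to apply a Poincar\'e-type lemma on simply connected subdomains, and that the regularities match so that $u$ is a genuine weak solution of \eqref{eq:divform}. The pointwise comparison $|\nabla u|\sim |\nabla_A u|=|F|$ is then immediate from the block formula for $B$ and the bound $\|a^{-1}\|_\infty\lesssim \lambda^{-1}$.
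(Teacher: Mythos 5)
Your proposal is correct. Note, though, that the paper does not actually supply a proof of Proposition~\ref{prop:divformasODE}; it simply writes ``This originates from \cite{AAMc} and is proved in this generality in \cite{AA}.'' Your argument is precisely the one carried out in those references: pointwise accretivity and hence invertibility of the $\no\no$-block $a$ from testing \eqref{eq:accrassumption} on vectors of the form $(f_\no,0)\in\mH_0$, the algebraic identity defining $\hat A$, the forward direction by reading off $(BF)_\no=\pd_t u$ and $(BF)_\ta=(A\nabla u)_\ta$, and the backward direction via the Poincar\'e lemma for $L^2_{\loc}$ curl-free fields (where you combine $\curl_x F_\ta=0$ with the second row of \eqref{eq:firstorderODE} to obtain that $((BF)_\no, F_\ta)$ is curl-free as a $(1+n)$-dimensional covector --- this is actually a shade cleaner than your ``slice-by-slice'' phrasing). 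The observation that the defining relation $\bigl[\begin{smallmatrix}\xi_\no\\(\hat A\xi)_\ta\end{smallmatrix}\bigr]=A\bigl[\begin{smallmatrix}(\hat A\xi)_\no\\\xi_\ta\end{smallmatrix}\bigr]$ is symmetric in $A\leftrightarrow\hat A$ is a nice way to see $\hat{\hat A}=A$ without computation. Two small points worth spelling out: (i) that $\hat A\in\mA$ requires checking that \eqref{eq:accrassumption} is inherited --- the pointwise identity $\re(\hat A\xi\cdot\bar\xi)=\re(A\psi\cdot\bar\psi)$ with $\psi=\bigl[\begin{smallmatrix}(\hat A\xi)_\no\\\xi_\ta\end{smallmatrix}\bigr]$, the fact that $\xi\mapsto\psi$ is a bijection of $\mH_0$, and the pointwise bound $|\xi|\lesssim|\psi|$ together do the job; (ii) the Poincar\'e lemma you worry about is routine (mollify to reduce to the smooth case on balls, then pass to the limit), so there is no genuine gap.
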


This originates from \cite{AAMc} and is proved in this generality in \cite{AA}.   
Denote by  $\dirac $  the self-adjoint operator on $\mH= L^2(\R^n; (\C^m)^{1+n})$ defined by $$
  \dirac:= 
    \begin{bmatrix} 0 & \divv_{x} \\ 
     -\nabla_{x} & 0 \end{bmatrix}\quad \mathrm{with}\quad \dom(\dirac) = \begin{bmatrix}  \dom(\nabla)\\ \dom (\divv)\end{bmatrix}. 
$$
 The closure of the range of $\dirac$ is the set of $F\in \mH$ such that $\curl_{x}F_{\ta}=0$, that is $\clos{\ran(\dirac)}=\mH^0$.  It is shown in \cite{AKMc} that the operators $\dirac B$ and  $B\dirac $ with respective domains $B^{-1}\dom(\dirac)$  and $\dom(\dirac)$  are bisectorial operators with bounded holomorphic functional calculi on the closure of their range $\mH^0$ and $B\mH^0$ respectively. Observe the similarity relation
 \begin{equation}
\label{eq:similar}
B(\dirac B)= (B\dirac) B \quad \mathrm{on}\ \dom(\dirac B)
\end{equation}
that allows to transfer functional properties between $\dirac B$ and $B\dirac$. 
In particular, if $\sgn(z)=1$ for $\re z>0$ and $-1$ for $\re z<0$, the operators $\sgn(\dirac B)$ and $\sgn(B\dirac)$ are well-defined bounded involutions on $\mH^0$ and $B\mH^0$ respectively. 
 One defines the spectral spaces $\mH^{0, \pm}_{\dirac B}= \nul(\sgn(\dirac B)\mp I)$ and 
 $\mH^{0, \pm}_{B\dirac }= \nul(\sgn(B\dirac )\mp I)$. They topologically split $\mH^0$ and $B\mH^0$ respectively. The restriction of $\dirac B$  to the invariant space $ \mH^{0, +}_{\dirac B}$ is sectorial of type less than $\pi/2$, hence it generates an analytic semi-group $e^{-t\dirac B}$, $t\ge 0$, on it. Similarly, the restriction of $B\dirac $  to the invariant space $ \mH^{0, +}_{B\dirac }$ is sectorial of type less than $\pi/2$, hence it generates an  analytic semi-group $e^{-tB\dirac }$, $t\ge 0$, on $ \mH^{0, +}_{B\dirac }$.

 \begin{thm} \label{thm:AAM-AA1} Let $u\in H^1_{loc}(\R^{1+n}_{+};\C^m)$. The function
 $u$ is  a weak solution of $\divv A \nabla u=0$ with 
  $ \|\tN(\nabla u)\|_{2}<\infty$  if and only if there exists $F_{0} \in \mH^{0,+}_{\dirac B}$ such that  
$\nabla_{A}u=e^{-t \dirac B} F_{0}$.  Moreover, $F_{0}$ is unique  and $\|F_{0}\|_{2}\approx 
 \|\tN(\nabla u)\|_{2}$. We set $\nabla_{A}u|_{t=0}:=F_{0}$.
 \end{thm}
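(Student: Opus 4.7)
The plan is to exploit Proposition \ref{prop:divformasODE} to replace the second-order equation $\divv A\nabla u=0$ by the first-order system \eqref{eq:firstorderODE} for $F=\nabla_{A}u$, and then use the bounded holomorphic functional calculus of $\dirac B$ on $\mH^{0}=\clos{\ran\dirac}$ together with its consequences for the semigroup $e^{-t\dirac B}$.

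\emph{Sufficiency.} Given $F_{0}\in\mH^{0,+}_{\dirac B}$, set $F(t,\cdot):=e^{-t\dirac B}F_{0}$. Since $\mH^{0,+}_{\dirac B}\subset \mH^{0}$, we have $\curl_{x}F_{\ta}(t,\cdot)=0$, and by the definition of the semigroup on the spectral subspace the $L^{2}$-valued function $F$ solves $\pd_{t}F+\dirac B F=0$ classically for $t>0$. Hence by Proposition \ref{prop:divformasODE} there exists $u\in H^{1}_{\loc}(\reu;\C^{m})$, unique up to a constant, with $\nabla_{A}u=F$ and $\divv A\nabla u=0$. The required non-tangential bound $\|\tN(\nabla u)\|_{2}\lesssim \|F_{0}\|_{2}$ follows from the pointwise equivalence $|\nabla u|\sim|\nabla_{A}u|=|F|$ together with the $L^{2}$ non-tangential estimate for the semigroup generated by $\dirac B$, which is in turn a consequence of McIntosh's theorem (quadratic estimates for the bisectorial $\dirac B$ on $\mH^{0}$) and standard off-diagonal decay for its resolvents.

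\emph{Necessity and uniqueness.} Conversely, assume $\|\tN(\nabla u)\|_{2}<\infty$ and set $F:=\nabla_{A}u\in L^{2}_{\loc}(\reu;(\C^{m})^{1+n})$, which by Proposition \ref{prop:divformasODE} satisfies \eqref{eq:firstorderODE}. The non-tangential control gives a uniform-in-$t$ bound $\sup_{t>0}\|F(t,\cdot)\|_{2}\lesssim \|\tN(\nabla u)\|_{2}$ via Fubini on Whitney averages. Using the equation $\pd_{t}F=-\dirac B F$ and the uniform $L^{2}$ bound, one shows that $F(t,\cdot)$ is weakly continuous into $\mH$ at $t=0$, yielding a trace $F_{0}\in\mH^{0}$ (the curl-free condition passes to the limit). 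Decomposing $F_{0}=F_{0}^{+}+F_{0}^{-}$ along $\mH^{0}=\mH^{0,+}_{\dirac B}\oplus\mH^{0,-}_{\dirac B}$, one has $F(t,\cdot)=e^{-t\dirac B}F_{0}^{+}+e^{t(-\dirac B)|_{-}}F_{0}^{-}$; the second term is exponentially growing as $t\to\infty$ in any spectral sense, and the uniform-in-$t$ $L^{2}$ bound combined with the resolvent structure forces $F_{0}^{-}=0$. Hence $F=e^{-t\dirac B}F_{0}$ with $F_{0}=F_{0}^{+}\in \mH^{0,+}_{\dirac B}$; uniqueness of $F_{0}$ is immediate from the injectivity of the semigroup on its spectral subspace. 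Finally, the bound $\|F_{0}\|_{2}\lesssim \|\tN(\nabla u)\|_{2}$ comes from the trace estimate, and the reverse was obtained in the first half, giving $\|F_{0}\|_{2}\sim\|\tN(\nabla u)\|_{2}$.

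The main obstacle is the $L^{2}$ non-tangential estimate $\|\tN(e^{-t\dirac B}F_{0})\|_{*}\lesssim \|F_{0}\|_{2}$: it requires quadratic estimates for $\dirac B$ (equivalent to the Kato square-root problem in this framework) and an averaging argument converting pointwise Whitney control into an $L^{2}$ bound on the boundary. A secondary technical point is the justification of a well-defined trace $F_{0}\in \mH$ at $t=0$ from the $L^{2}_{\loc}$ information on $F$; this is handled by combining the first-order equation with uniform energy bounds on horizontal slices, as in \cite{AAMc,AA}.
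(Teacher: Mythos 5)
The paper offers no proof for this theorem; it simply attributes the ``if'' direction to \cite{AAMc} and the ``only if'' direction to Theorem 8.2 of \cite{AA}. Your outline correctly identifies the governing mechanism: reduce via Proposition~\ref{prop:divformasODE} to the first-order system \eqref{eq:firstorderODE} and exploit the bounded $H^\infty$-calculus of $\dirac B$ on $\mH^0$. Your sufficiency sketch is on target — the nontrivial input is precisely the $L^2$ non-tangential maximal estimate for $e^{-t\dirac B}$ on $\mH^{0,+}_{\dirac B}$, which does rest on the quadratic estimates for $\dirac B$ (equivalent to Kato) plus off-diagonal bounds, exactly as in \cite{AAMc}.

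The necessity outline, however, contains a genuine gap. You assert, after defining the weak trace $F_0$, that ``one has $F(t,\cdot)=e^{-t\dirac B}F_{0}^{+}+e^{t(-\dirac B)|_{-}}F_{0}^{-}$''; but this solution formula is precisely what must be proved, not a formal consequence of the ODE. The abstract Cauchy problem $\pd_t F + \dirac B F = 0$ with a \emph{bisectorial} generator does not come with a unique solution formula, since the $\mH^{0,-}$ component evolves the wrong way: for it, the forward-bounded flow is $e^{\tau\dirac B}|_{\mH^{0,-}}$, and the solution is determined by its large-time values, not by a value at $t=0$. Deriving the semigroup representation from the Whitney-average control $\|\tN(\nabla_A u)\|_2<\infty$ is the bulk of \cite[Thm.~8.2]{AA}, and it passes through a Duhamel/singular-integral representation, careful Fatou limits, and further quadratic-estimate and Caccioppoli-type inputs that your two sentences do not capture. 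In addition, the statement that the minus part is ``exponentially growing as $t\to\infty$ in any spectral sense'' is inaccurate: $\dirac B$ is bisectorial with spectrum accumulating at the origin, so there is no spectral gap and no uniform exponential growth rate. The correct mechanism is the strong decay $e^{\tau\dirac B}|_{\mH^{0,-}}\to 0$ as $\tau\to\infty$ together with the relation $F^-(s)=e^{\tau\dirac B}F^-(s+\tau)$ and a compactness/weak-limit argument; pinning this down (and even defining the slice values $F(t,\cdot)\in L^2$ from the purely Whitney-averaged hypothesis — your ``Fubini on Whitney averages'' needs a Caccioppoli step) is where the real work lies. As written, the necessity direction assumes its conclusion.
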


The if part was obtained in \cite{AAMc} and the only if part in \cite[Theorems 8.2]{AA}.
Here $ \tN(g)$ is the  Kenig-Pipher modified non-tangential function
where  $$
  \tN(g)(x):= \sup_{t>0}  t^{-(1+n)/2} \|g\|_{L_2(W(t,x))}, \qquad x\in \R^n,
$$
with $W(t,x):= (c_0^{-1}t,c_0t)\times \Delta(x,c_1t)$, for some fixed constants $c_0>1$, $c_1>0$. 
A remark is that the same proof shows when coefficients are $t$-independent  that for the equivalence to hold one could replace $ \|\tN(\nabla u)\|_{2}$ by 
$\sup_{t>0} (\frac 1 t \int_{t}^{2t}\|\nabla_{t,x}u\|_{2}^2\,  ds)^{1/2}$ or  the stronger $\sup_{t>0 } \|\nabla_{t,x}u\|_{2}$ or even the square function $({}\int_{\R^{1+n}_{+}}t|\pd_{t}\nabla_{t,x}u|^2 dt dx)^{1/2} $, so that in the end all these quantities are \textit{a priori} equivalent for weak solutions.

Let us pursue further the discussion by extending this to Sobolev spaces with negative order.   Say that $u\in \mE_{s}$ with $s<0$ if  ${}\int_{\R^{1+n}_{+}}t^{-2s-1}|\nabla_{t,x}u|^2 dt dx<\infty$ while  $u\in \mE_{0}$ if  
$\|\tN (\nabla u)\|_{2}<\infty$.  With this notation $\mE_{-1/2}=\mE$.
\begin{prop}\label{prop:ext} Let  $s\in [-1,0)$. 
\begin{enumerate}
  \item The operator $\dirac B|_{\clos{\ran({\dirac})}}$ can be extended to a bi-sectorial operator on the homogeneous Sobolev space $\dot\mH^{s}$ which is the closure of $\clos{\ran({\dirac}})= \dot\mH^0$ for the homogeneous Sobolev norm $\|(-\Delta)^{-s/2}f\|_{2}$. This operator, which we keep writing $\dirac B$ for simplicity, has a bounded holomorphic functional calculus on $\dot\mH^{s}$. In particular, the operators $\sgn(\dirac B)\mp I$ are well-defined projections on  $\dot\mH^{s}$ and their ranges $\dot\mH^{s, \pm}_{\dirac B}$ form a splitting of $\dot\mH^{s}$.  
  \item Let $u\in H^1_{loc}(\R^{1+n}_{+})$. 
Then $u$ is a weak solution of $\divv A \nabla u=0$ in $\reu$ with $u\in \mE_{s}$  if and only if there exists $ F_{0} \in \dot \mH^{s, +}_{\dirac B}$  such that  $\nabla_{A}u=e^{-t\dirac B}   F_{0}$. Moreover, $F_{0}$ is unique and $$\|F_{0}\|_{\dot\mH^{s}}\approx \left({}\int_{\R^{1+n}_{+}}t^{-2s-1}|\nabla_{t,x}u|^2 dt dx\right)^{1/2}.$$ We set $\nabla_{A}u|_{t=0}:=F_{0}$.
\end{enumerate}
\end{prop}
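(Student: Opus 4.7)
The plan is to extrapolate Theorem \ref{thm:AAM-AA1} (the $L^2$-based theory of \cite{AKMc,AAMc}) down the Sobolev scale using that $\dirac B$ has a bounded $H^\infty$ calculus on $\dot\mH^0=\clos{\ran(\dirac)}$ together with its associated McIntosh quadratic estimates. The two parts are coupled: the spectral splitting of part (1) provides the space $\dot\mH^{s,+}_{\dirac B}$ in which $F_0$ lives, and the square function identity used in part (2) is essentially the quadratic estimate characterising $\dot\mH^s$ norms.

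For part (1), the key step is to characterise the Sobolev norm by a quadratic estimate adapted to $\dirac B$. Since $\dirac^2=-\Delta$ on $\dot\mH^0$, the spectral theorem for the self-adjoint operator $\dirac$ yields
$$
\|(-\Delta)^{-s/2}f\|_2^2\sim\int_0^\infty t^{-2s}\|(t\dirac)(I+(t\dirac)^2)^{-1}f\|_2^2\,\frac{dt}{t},
$$
and one wants the same equivalence with $\dirac B$ in place of $\dirac$. The standard McIntosh lemma reduces this to the boundedness of a Schur-type operator $f\mapsto\int_0^\infty \eta(t\dirac B)\psi(t\dirac)^*f\,\frac{dt}{t}$ for suitable auxiliary holomorphic symbols $\eta,\psi$, which follows from the $L^2$ calculus of $\dirac B$ of \cite{AKMc} applied to an absolutely-convergent kernel. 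Once the Sobolev-adapted quadratic estimate is in hand, completion extends the resolvents and $\sgn(\dirac B)$ to bounded operators on $\dot\mH^s$, and a standard argument yields the bounded $H^\infty$ calculus and the topological splitting $\dot\mH^s=\dot\mH^{s,+}_{\dirac B}\oplus\dot\mH^{s,-}_{\dirac B}$.

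For part (2), I proceed in two directions. Given $F_0\in\dot\mH^{s,+}_{\dirac B}$, part (1) defines $F(t):=e^{-t\dirac B}F_0$ for $t>0$ (via the bounded holomorphic symbol $e^{-z}$ on the sector containing the positive spectral part). Then $F\in L^2_{\loc}(\reu;(\C^m)^{1+n})$ satisfies the generalised Cauchy--Riemann system of Proposition \ref{prop:divformasODE}, so $F=\nabla_A u$ for some weak solution $u$ of $\divv A\nabla u=0$, and the pointwise equivalence $|\nabla_{t,x}u|\sim|F|$ combined with the quadratic estimate of part (1) applied with symbol $z\mapsto e^{-z}$ gives
$$
\int_0^\infty t^{-2s-1}\|\nabla_{t,x}u\|_2^2\,dt\sim\int_0^\infty t^{-2s-1}\|e^{-t\dirac B}F_0\|_2^2\,dt\sim\|F_0\|_{\dot\mH^s}^2.
$$
Conversely, if $u\in\mE_s$ solves the equation, then $F=\nabla_Au$ lies in the weighted space with $\int_0^\infty t^{-2s-1}\|F(t)\|_2^2\,dt<\infty$ and satisfies $\pd_tF+\dirac BF=0$ by Proposition \ref{prop:divformasODE}. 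A weak-limit / Calder\'on reproducing argument using the finite square function norm identifies a boundary trace $F_0\in\dot\mH^s$, and since the weighted integral forces decay of $F(t)$ as $t\to\infty$, the trace cannot have a $\dot\mH^{s,-}_{\dirac B}$ component; thus $F_0\in\dot\mH^{s,+}_{\dirac B}$ and $F(t)=e^{-t\dirac B}F_0$ by uniqueness of the Cauchy problem for the semi-group on the positive spectral subspace. Uniqueness of $F_0$ is automatic from the injectivity of the semi-group.

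The hard part is part (1): extending the $H^\infty$ calculus of $\dirac B$ from $\dot\mH^0$ to the completions $\dot\mH^s$ with the correct topological splitting. The mechanism is the Hardy-spaces-adapted-to-$\dirac B$ machinery, but one must be careful because for $s<0$ the space $\dot\mH^s$ is not a subspace of $L^2$, so density and consistency checks are required when defining the resolvents and the projection $\sgn(\dirac B)$ there, and when identifying the boundary trace in the converse direction of part (2). This is precisely the material the authors attribute to \cite{AS}, of which the statement can be viewed as a packaging into the form needed for the solvability arguments of the present paper.
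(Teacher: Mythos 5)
The paper does not actually prove this proposition; its proof is a one-line citation to Proposition 4.5, Corollary 4.5, and Proposition 4.7 of \cite{AMcM} (for item (1) and item (2) at $s=-1,\,-1/2$) together with \cite{R2} (for the remaining $s$). Your sketch is a reasonable reconstruction of the mechanism actually used in those sources: extrapolating the $L^2$ bounded $H^\infty$ calculus of $\dirac B$ on $\clos{\ran(\dirac)}$ to the $\dot\mH^s$ scale via an adapted quadratic estimate (using $\dirac^2=-\Delta$ and a McIntosh/Schur argument), then feeding the resulting spectral splitting into the Cauchy--Riemann correspondence of Proposition \ref{prop:divformasODE}. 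So methodologically you are not taking a different route; you are unpacking the route that the citations encapsulate. Two small issues: first, you misattribute the Sobolev extension to \cite{AS} at the end, whereas the paper attributes it to \cite{AMcM} and \cite{R2} (\cite{AS} is credited elsewhere for other \emph{a priori} estimates). Second, in the converse direction of part (2), the claim that ``the weighted integral forces decay of $F(t)$ as $t\to\infty$'' is too glib: for $s\in(-1/2,0)$ the weight $t^{-2s-1}$ itself tends to $0$ at infinity, so finiteness of $\int_0^\infty t^{-2s-1}\|F(t)\|_2^2\,dt$ does not directly control $F(t)$ at infinity. The argument in \cite{AMcM} instead splits $F$ into its $\pm$ spectral components, exploits the two decoupled semigroup ODEs, and uses Calder\'on reproducing formulas / limit arguments in the Sobolev topology to kill the $-$ component; your sketch should be more careful there. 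These are presentational rather than fatal gaps, but they would need to be filled to have a self-contained proof.
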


Here $ \Delta$ is the self-adjoint Laplacian acting componentwise on $L^2(\R^n; (\C^m)^{1+n})$. It agrees with $-D^2$ on $\clos{\ran({\dirac}})$. 

\begin{proof}  
Item (1) is in Proposition 4.5 of \cite{AMcM} where $\dirac B|_{\clos{\ran({\dirac})}}$ is called $\uT$ there. Item 2 for $s=-1$ is Corollary 4.5 of  \cite{AMcM}, for $s=-1/2$ is Proposition 4.7 of \cite{AMcM}. The other cases are treated in \cite{R2}. 
\end{proof}

\begin{rem}
We have introduced a notion of conormal gradient at the boundary $\nabla_{A}u|_{t=0}$ for solutions in $\mE_{s}$. Strictly speaking this notion depends on $s$ as well and in particular for $s=-1/2$, we recover the notions already defined for energy solutions.  What allows us not to distinguish $s$ in the notation is that it is a consistent notion for two different values of $s$. More precisely, if $u\in \mE_{s}\cap \mE_{s'}$ with $s,s'\in [-1,0]$, then the convergence of $\nabla_{A}u(t,\cdot)$ as $t\to 0$ is both in $\dot \mH^s$ and $\dot\mH^{s'}$, hence the limits agree in the space of distributions. 
\end{rem}

\section{Boundary layer operators}

In this section, we assume that $A$ is  bounded measurable $t$-independent matrix  which is strictly accretive on $\mH^0$, that is satisfying \eqref{eq:accrassumption}.

It has been proved recently in \cite{R1} using the functional calculi for $\dirac B$ and  $B\dirac$    that  the classical single and double layer operators for $\divv A \nabla$, $\nabla S_{t}$ and $D_{t}$, can be defined    as $L^2$  bounded operators, uniformly with respect to $t>0$, with limits at $t=0$. 
More precisely,  for $t>0$, define $\nabla_{A}\mS_{t}$ and $\mD_{t}$ for $h\in L^2(\R^n;\C^m)$ by
\begin{equation}
\label{eq:gradst}
(\nabla_{A}\mS_{t}h)(x):=\bigg(e^{-t\dirac B}X_{+}(\dirac B) \begin{bmatrix} h \\ 0\end{bmatrix}\bigg)(x)
\end{equation}
and 
\begin{equation}
\label{eq:dt}
(\mD_{t}h)(x):=- \bigg(e^{-tB\dirac} X_{+}(B\dirac) \begin{bmatrix} h \\ 0\end{bmatrix}\bigg)_{\no}(x),
\end{equation}
where $X_{+}(z)=1$ if $\re z>0$ and 0 if $\re z <0$ so that $X_{+}(z)=\frac 1 2( \sgn(z)+1).$ Remark that at this general level, there is an abuse of language as the operator $\mS_{t}$ is not defined (although it will in $\dot H^1(\R^n;\C^m)$), only $\nabla_{A}\mS_{t}$ is.  It follows from the bounded holomorphic functional calculus for $\dirac B$ and $B\dirac$ that the right hand sides  are $L^2$-bounded operators and have strong  limits when $t\to 0$. 

\begin{lem}
 Whenever $h \in W^{1,2}(\R^n;\C^m)$,
\begin{equation}
\label{eq:graddt}
\nabla_{A}\mD_{t}h= -e^{-t\dirac B} X_{+}(\dirac B) \begin{bmatrix} 0 \\ \nabla h \end{bmatrix}.
\end{equation}
\end{lem}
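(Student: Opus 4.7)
The plan is to verify the identity componentwise by reducing everything to the intertwining relation $\dirac\cdot B\dirac = \dirac B\cdot \dirac$ through the functional calculi. Set
\[
G(t):=-e^{-tB\dirac}X_{+}(B\dirac)\begin{bmatrix} h \\ 0\end{bmatrix},
\]
so that by \eqref{eq:dt} one has $\mD_{t}h=G_{\no}(t)$, and let $H(t):=-e^{-t\dirac B}X_{+}(\dirac B)\begin{bmatrix} 0 \\ \nabla h\end{bmatrix}$ denote the right-hand side of the claimed identity.

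The first step is to establish the key relation $H=-\dirac G$. The algebraic identity $\dirac(B\dirac)=(\dirac B)\dirac$ on the natural domain, combined with the bounded holomorphic functional calculi of $\dirac B$ on $\mH^{0}$ and of $B\dirac$ on $B\mH^{0}$, yields the intertwining
\[
\dirac\, f(B\dirac) = f(\dirac B)\,\dirac
\]
for every bounded holomorphic $f$ on a bisector containing the spectra, in particular for $f(z)=e^{-tz}X_{+}(z)$. The assumption $h\in W^{1,2}(\R^n;\C^m)$ ensures that $\begin{bmatrix} h \\ 0\end{bmatrix}\in\dom(\dirac)$ with $\dirac\begin{bmatrix} h \\ 0\end{bmatrix}=\begin{bmatrix} 0 \\ -\nabla h\end{bmatrix}\in\mH^{0}$, so applying $\dirac$ to $G$ gives
\[
\dirac G = -e^{-t\dirac B}X_{+}(\dirac B)\begin{bmatrix} 0 \\ -\nabla h\end{bmatrix} = -H.
\]

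The tangential part of the identity is now immediate from the definition of $\dirac$: since $(\dirac G)_{\ta}=-\nabla_{x}G_{\no}$,
\[
H_{\ta} = -(\dirac G)_{\ta} = \nabla_{x}G_{\no} = \nabla_{x}\mD_{t}h = (\nabla_{A}\mD_{t}h)_{\ta}.
\]
For the normal part, the semigroup form of $G$ yields the first-order equation $\pd_{t}G+B\dirac G=0$, which combined with $\dirac G=-H$ gives
\[
\pd_{t}\mD_{t}h = \pd_{t}G_{\no} = -(B\dirac G)_{\no} = (BH)_{\no}.
\]
Writing $A$ in the $2\times 2$ block form, the formula \eqref{eq:hat} for $B=\hat A$ gives $(BH)_{\no}=a^{-1}H_{\no}-a^{-1}bH_{\ta}$, whence
\[
(\nabla_{A}\mD_{t}h)_{\no} = a\,\pd_{t}\mD_{t}h + b\,\nabla_{x}\mD_{t}h = a(a^{-1}H_{\no}-a^{-1}bH_{\ta}) + bH_{\ta} = H_{\no}.
\]
Combined with the tangential identity this proves $\nabla_{A}\mD_{t}h=H$.

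The main technical point is the functional-calculus intertwining: one must check that $f(\dirac B)\dirac = \dirac f(B\dirac)$ holds on $\dom(\dirac)$ for $f(z)=e^{-tz}X_{+}(z)$, which follows from the resolvent identity $(zI-\dirac B)^{-1}\dirac \subset \dirac (zI-B\dirac)^{-1}$ and a standard Cauchy integral argument in the $H^{\infty}$ calculus. The hypothesis $h\in W^{1,2}$ enters precisely to place $\begin{bmatrix} h\\ 0\end{bmatrix}$ in $\dom(\dirac)$ so that this intertwining can be invoked; everything else is algebra on the block structure of $A$ and $B=\hat A$.
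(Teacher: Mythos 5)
Your proof is correct and takes essentially the same route as the paper: both apply $\dirac$ to the double-layer vector, use the intertwining $\dirac\, f(B\dirac)=f(\dirac B)\dirac$ for $f(z)=e^{-tz}X_+(z)$, and compute $\dirac\begin{bmatrix}h\\0\end{bmatrix}=\begin{bmatrix}0\\-\nabla h\end{bmatrix}$. The only difference is that the paper cites ``the calculations in [AA]'' for the identity $\nabla_A(\Psi_\no)=-\dirac\Psi$ (with $\Psi=e^{-tB\dirac}X_+(B\dirac)[h;0]$), whereas you rederive it explicitly via the semigroup ODE $\pd_tG+B\dirac G=0$ and the block structure of $B=\hat A$, which is a more self-contained but otherwise identical argument.
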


\begin{proof}
From the calculations in   \cite{AA}, we have
\begin{align*}
\label{}
\nabla_{A}\bigg(e^{-tB\dirac} X_{+}(B\dirac) \begin{bmatrix} h \\ 0\end{bmatrix}\bigg)_{\no}    & 
= -\dirac e^{-tB\dirac} X_{+}(B\dirac) \begin{bmatrix} h \\ 0\end{bmatrix}  \\
    &   = -\dirac e^{-tB\dirac} X_{+}(B\dirac) \begin{bmatrix} h \\ 0\end{bmatrix}
    \\
    &
    = -e^{-t\dirac B} X_{+}( \dirac B)\dirac\begin{bmatrix} h \\ 0\end{bmatrix}
    \\
    & = + e^{-t\dirac B} X_{+}(\dirac B) \begin{bmatrix} 0 \\ \nabla h \end{bmatrix}.
    \end{align*}
\end{proof}

The right-hand side in \eqref{eq:graddt} makes sense for any distribution $h$ such that $\nabla h\in L^2(\R^n; (\C^m)^n),$  that is, $h\in \dot W^{1,2}(\R^n;\C^m)$.

\begin{lem}[Boundary layer representation] \label{lem:BLP} Assume that $u\in \mE_{0}$, i.e.,  $\|\tN(\nabla u)\|_{2}<\infty$. Then 
$$
 \nabla_{A}u(t,\cdot)=  \nabla_{A}\mS_{t}(\pd_{\nu_{A}}u|_{t=0}) -  \nabla_{A}\mD_{t}(u|_{t=0})
 $$
where $\nabla_{A}\mD_{t}(u|_{t=0})$ is interpreted as the right hand side of  \eqref{eq:graddt}.
The equality holds in $\mE_{0}\cap C([0,+\infty); \dot\mH^{0, +}_{\dirac B})$.
\end{lem}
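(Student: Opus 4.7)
The plan is to apply Theorem \ref{thm:AAM-AA1} directly and then split the boundary data $\nabla_A u|_{t=0}$ according to the scalar/tangential decomposition so that each piece matches, respectively, the formula \eqref{eq:gradst} for $\nabla_{A}\mS_t$ and the formula \eqref{eq:graddt} for $\nabla_{A}\mD_t$. This is essentially a direct algebraic manipulation once the semigroup representation is in hand; the content of the lemma is that the abstract semigroup identity coincides with the layer potential formula.

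First, since $\|\tN(\nabla u)\|_2<\infty$, Theorem \ref{thm:AAM-AA1} furnishes a unique $F_0\in\mH^{0,+}_{\dirac B}$ such that $\nabla_A u(t,\cdot)=e^{-t\dirac B}F_0$, with $F_0=:\nabla_A u|_{t=0}$. Writing $F_0$ in block form and splitting,
$$F_{0}=\begin{bmatrix}\pd_{\nu_{A}}u|_{t=0}\\ \nabla_{x}u|_{t=0}\end{bmatrix}=\begin{bmatrix}\pd_{\nu_{A}}u|_{t=0}\\ 0\end{bmatrix}+\begin{bmatrix}0\\ \nabla_{x}u|_{t=0}\end{bmatrix}.$$
Because $F_0\in\mH^{0,+}_{\dirac B}$, the projection $X_{+}(\dirac B)$ acts as the identity on $F_0$, so
$$\nabla_{A}u(t,\cdot)=e^{-t\dirac B}X_{+}(\dirac B)\begin{bmatrix}\pd_{\nu_{A}}u|_{t=0}\\ 0\end{bmatrix}+e^{-t\dirac B}X_{+}(\dirac B)\begin{bmatrix}0\\ \nabla_{x}u|_{t=0}\end{bmatrix}.$$
By \eqref{eq:gradst} the first summand is exactly $\nabla_{A}\mS_{t}(\pd_{\nu_{A}}u|_{t=0})$, and by \eqref{eq:graddt}, applied to $h=u|_{t=0}$ (which appears only through $\nabla h=\nabla_x u|_{t=0}$, already known to lie in $L^2$ because $F_0\in L^2$), the second summand equals $-\nabla_{A}\mD_{t}(u|_{t=0})$. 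This gives the claimed identity.

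For the regularity statement, the norm equivalence $\|F_0\|_2\sim \|\tN(\nabla u)\|_2$ in Theorem \ref{thm:AAM-AA1} already gives membership in $\mE_0$. Continuity up to $t=0$ in $\dot{\mH}^{0,+}_{\dirac B}$ is a consequence of the fact, recalled in Section \ref{sec:firstorder}, that $e^{-t\dirac B}$ is an analytic semigroup on the invariant subspace $\mH^{0,+}_{\dirac B}$; strong continuity at $t=0$ then yields $\nabla_A u(t,\cdot)\to F_0$ in that space, and analyticity ensures continuity for $t>0$.

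The only real point to watch is the interpretation of $\nabla_{A}\mD_{t}(u|_{t=0})$: it is defined purely via \eqref{eq:graddt} and requires no independent existence of $u|_{t=0}$ as an element of any specific boundary space, only that $\nabla_x u|_{t=0}$ makes sense in $L^2$, which is automatic from $F_0\in\mH^{0,+}_{\dirac B}\subset L^2$. Once this interpretive point is fixed, the proof is a one-line consequence of the semigroup representation combined with the block-matrix splitting and the definitions \eqref{eq:gradst}, \eqref{eq:graddt}.
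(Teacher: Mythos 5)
Your proof is correct and follows essentially the same route as the paper's: invoke Theorem \ref{thm:AAM-AA1} to get the semigroup representation $\nabla_A u = e^{-t\dirac B}F_0$ with $F_0\in\mH^{0,+}_{\dirac B}$, use that $X_+(\dirac B)$ is the identity on this spectral subspace, split $F_0$ into its scalar and tangential parts, and identify the two terms with \eqref{eq:gradst} and \eqref{eq:graddt}. Your added remarks on the regularity of $t\mapsto\nabla_A u(t,\cdot)$ and on the interpretation of $\nabla_A\mD_t(u|_{t=0})$ through $\nabla_x u|_{t=0}$ alone are accurate and merely make explicit what the paper leaves implicit.
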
  

\begin{proof}
Using Theorem \ref{thm:AAM-AA1}, if $\|\tN(\nabla u)\|_{2}<\infty$ then 
$\nabla_{A}u=e^{-t \dirac B} F_{0}$, with  $F_{0} \in \mH^{0,+}_{\dirac B}$ and $F_{0}=\nabla_{A}u|_{t=0}$. As $X_{+}(\dirac B)$ is a projection on $ \mH^{0,+}_{\dirac B}$, we have $X_{+}(\dirac B)F=F$  when $F\in \mH^{0,+}_{\dirac B}$, so that
\begin{align*}
\label{}
 \nabla_{A}u   & = e^{-t \dirac B} \begin{bmatrix} \pd_{\nu_{A}}u|_{t=0}\\ \nabla_{x}u|_{t=0}\end{bmatrix}   \\
    &  =
e^{-t \dirac B}X_{+}(\dirac B) \begin{bmatrix} \pd_{\nu_{A}}u|_{t=0}\\ \nabla_{x}u|_{t=0}\end{bmatrix}
\\
&
= 
e^{-t \dirac B}X_{+}(\dirac B) \begin{bmatrix} \pd_{\nu_{A}}u|_{t=0}\\ 0\end{bmatrix} + e^{-t \dirac B}X_{+}(\dirac B) \begin{bmatrix} 0\\ \nabla_{x}u|_{t=0}
\end{bmatrix}
\\
&
 =\nabla_{A}\mS_{t}(\pd_{\nu_{A}}u|_{t=0}) -  \nabla_{A}\mD_{t}(u|_{t=0}).
\end{align*}
\end{proof}

\begin{rem} If one can make sense of both $S_{t}(\pd_{\nu_{A}}u|_{t=0})$ and $\mD_{t}(u|_{t=0})$ as distributions and fixing the constants of integration, one has the representation
$$
u=\mS_{t}(\pd_{\nu_{A}}u|_{t=0}) -  \mD_{t}(u|_{t=0}).\footnote{Since we have upward convention for conormal derivatives and fundamental solutions for $\divv A \nabla$ (usually it is taken for $-\divv A \nabla$), we obtain the same sign rule as in the usual Green's formula, due to the cancellation of two minus signs. Had we been working in the lower-half space though,  the upward normal is the outward normal and the sign rule would be opposite.}
$$
This,  of course,  is the classical formula obtained from Green's theorem if one can write 
$\mS_{t}$ and $\mD_{t}$ in integral form using the fundamental solution of $L^*$.  We come to this below. 
\end{rem}

\begin{cor}[Generalized boundary layer representation] \label{lem:GBLP} Let $s\in [-1,0]$, and  $u \in \mE_{s}$ be a weak solution of $Lu=0$ in $\reu$.  Then 
$$
 \nabla_{A}u(t,\cdot)=  \nabla_{A}\mS_{t}(\pd_{\nu_{A}}u|_{t=0}) -  \nabla_{A}\mD_{t}(u|_{t=0})
 $$
where $\nabla_{A}\mD_{t}(u|_{t=0})$ is interpreted as the right hand side of  \eqref{eq:graddt} with $\nabla h \in \dot H^{s}_{\nabla} (\R^n; (\C^{m})^n)$ and 
$\nabla_{A}u|_{t=0}=F_{0} $ given by Theorem \ref{thm:AAM-AA1} for $s=0$ and Proposition \ref{prop:ext} for $s<0$.  The equality holds in  $\mE_{s} \cap C([0,+\infty); \dot\mH^{s, +}_{\dirac B})$.  \end{cor}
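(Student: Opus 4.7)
The plan is to mimic the proof of Lemma \ref{lem:BLP}, using the extended functional calculus of Proposition \ref{prop:ext} in place of Theorem \ref{thm:AAM-AA1}. For $s=0$ this is already Lemma \ref{lem:BLP}, so I only need to handle $s\in[-1,0)$. By Proposition \ref{prop:ext}(2), since $u\in\mE_s$ is a weak solution of $\divv A\nabla u=0$, there is a unique $F_0\in\dot\mH^{s,+}_{\dirac B}$ with $\nabla_A u=e^{-t\dirac B}F_0$, and by definition we set $\nabla_A u|_{t=0}:=F_0$. Decompose $F_0$ according to scalar and tangential parts as
$$F_0=\begin{bmatrix}\pd_{\nu_A}u|_{t=0}\\ \nabla_x u|_{t=0}\end{bmatrix},$$
where the components live in $\dot H^s(\R^n;\C^m)$ and $\dot H^s_\nabla(\R^n;(\C^m)^n)$ respectively (note that the tangential component is automatically curl-free because $F_0\in\dot\mH^{s}$, which is the closure of $\clos{\ran(\dirac)}$).

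Next I would use that, by Proposition \ref{prop:ext}(1), $X_+(\dirac B)=\tfrac{1}{2}(\sgn(\dirac B)+I)$ is a well-defined bounded projection on $\dot\mH^s$ with range $\dot\mH^{s,+}_{\dirac B}$. Hence $X_+(\dirac B)F_0=F_0$, and we may split
$$\nabla_A u=e^{-t\dirac B}X_+(\dirac B)\begin{bmatrix}\pd_{\nu_A}u|_{t=0}\\ 0\end{bmatrix}+e^{-t\dirac B}X_+(\dirac B)\begin{bmatrix}0\\ \nabla_x u|_{t=0}\end{bmatrix}.$$
By the definition \eqref{eq:gradst} (now interpreted via the extended functional calculus on $\dot\mH^s$), the first term is exactly $\nabla_A\mS_t(\pd_{\nu_A}u|_{t=0})$. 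By formula \eqref{eq:graddt}, which is taken as the \emph{definition} of $\nabla_A\mD_t h$ when $\nabla h\in\dot H^s_\nabla(\R^n;(\C^m)^n)$, the second term equals $-\nabla_A\mD_t(u|_{t=0})$.

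Finally, I would address the space of convergence. Since the restriction of $\dirac B$ to its invariant subspace $\dot\mH^{s,+}_{\dirac B}$ is sectorial of angle less than $\pi/2$ (this is inherited from the functional calculus extended in Proposition \ref{prop:ext}(1)), the semigroup $t\mapsto e^{-t\dirac B}F_0$ is continuous on $[0,\infty)$ into $\dot\mH^{s,+}_{\dirac B}$, which gives membership in $C([0,\infty);\dot\mH^{s,+}_{\dirac B})$; the fact that the equality holds in $\mE_s$ is then part of Proposition \ref{prop:ext}(2). The only delicate point, and essentially the only thing to check beyond the algebra of the splitting, is that the formula \eqref{eq:graddt} for $\nabla_A\mD_t$ indeed extends to data with $\nabla h\in\dot H^s_\nabla$: but this is a direct consequence of the $\dot\mH^s$-boundedness of $X_+(\dirac B)$ and of the semigroup $e^{-t\dirac B}$ established in Proposition \ref{prop:ext}(1), so there is no real obstacle. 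The main conceptual point is simply to recognize that all three ingredients (projection, semigroup, and the identification of traces $\pd_{\nu_A}u|_{t=0}$ and $\nabla_x u|_{t=0}$) are consistent when passing from $\mH^0$ to $\dot\mH^s$.
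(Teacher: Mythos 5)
Your proposal is correct and follows exactly the route the paper takes: the paper's proof simply says that for $s=0$ one invokes Lemma \ref{lem:BLP}, and for $s<0$ one extends the definitions of $\nabla_A\mS_t$ and $\nabla_A\mD_t$ via the functional calculus on $\dot\mH^s$ from Proposition \ref{prop:ext} and repeats the same computation. You have merely written out the "the proof is now the same" step explicitly (splitting $F_0=X_+(\dirac B)F_0$ into scalar and tangential parts and identifying each term), which matches the paper's intent.
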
 

\begin{proof} For $s=0$, this is Lemma \ref{lem:BLP}. For $s<0$, using the extension of functional calculus of $\dirac B|_{\clos{\ran({\dirac})}}$  on $\dot\mH^s$ in Proposition \ref{prop:ext}, one defines $\nabla_{A}\mS_{t}$ on the scalar Sobolev space $\dot H^{s}(\R^n;\C^m)$ and $\nabla_{A}\mD_{t}$ by \eqref{eq:graddt} with $\nabla h \in \dot H^{s}_{\nabla} (\R^n; (\C^{m})^n)$. The proof is now the same as for $s=0$. 
\end{proof}

\begin{rem}
Let $Lu=0$ with $u\in \mE_{s}$ and $s<0$.  We know that  the semigroup equation $\nabla_{A}u(t,\cdot)=e^{-t\dirac B}\, \nabla_{A}u(0,\cdot)$ holds  in $C([0,+\infty); \dot\mH^{s, +}_{\dirac B})$. Thus for all $ \varepsilon>0$ 	and $t>0$,  $\nabla_{A}u(t+ \varepsilon,\cdot)=e^{-t\dirac B} \nabla_{A}u(\varepsilon,\cdot)$.  From ${}\int_{\R^{1+n}_{+}}t^{-2s-1}|\nabla_{t,x}u|^2 dt dx<\infty$, for almost every $\varepsilon>0$, $\nabla_{A}u(\varepsilon,\cdot) \in L^2(\R^n; (\C^m)^{1+n})$, hence  to $\nabla_{A}u(\varepsilon,\cdot) \in \clos{\ran({\dirac})}$ as any $L^2$-conormal gradient. Set $u_{\varepsilon}(t,x)= u(t+\varepsilon,x)$. By Theorem \ref{thm:AAM-AA1}, the semigroup equation implies that  $u_{\varepsilon}\in \mE_{0}$, that is $\|\tN(\nabla_{A}u_{\varepsilon})\|_{2}<\infty$  and $\nabla_{A}u_{\varepsilon}\in C ([0,\infty); \dot\mH^{0, +}_{\dirac B})$. An easy argument shows that this must hold for all $\varepsilon>0$. In particular, the generalized boundary layer representation in the statement above holds in $C ((0,\infty); L^2(\R^n, (\C^m)^{1+n})$ (not at the boundary $t=0$) as well, and even in $C^\infty ((0,\infty); L^2(\R^n, (\C^m)^{1+n})$ by semigroup theory. Thus there is instantaneous regularisation of solutions in the upper half-space.   
\end{rem}
 
 \begin{prop}[\cite{R1}]\label{prop:rosen}  Let $A$ be as in the beginning of this section. Assume further that  $\divv A\nabla $ and $\divv A^*\nabla $ satisfy the De Giorgi condition  or equivalently, the Nash local regularity condition. Let $\Gamma^A$ and $\Gamma^{A^*}$ be the fundamental solutions constructed in Proposition \ref{prop:fund}.
\begin{enumerate}
  \item[i)] For $t\in \R$, $t\ne 0$, let  $D_{t}$ be the operator given by the double layer integral (when it converges for suitable $h$)
  \begin{align}
\label{eq:DLint} D_{t}h(x)&= \int_{\R^n} \langle h(y), \pd_{\nu_{A^*}}\Gamma^{A^*}( s,y; t,x)_{|s=0}\rangle \, dy \\
\nonumber &=  \int_{\R^n}  \langle h(y), (A^*(y)\nabla_{s,y}\Gamma^{A^*}(0,y; t,x))_{\no}\rangle \, dy, \ t>0, x\in \R^n.
 \end{align} 
 Here, $\langle \  , \ \rangle$ stands for the canonical complex inner product on $\C^m$.
 Then, the abstract operator $\mD_{t}$ agrees with the usual double layer potential in the sense that one has for $h\in L^2(\R^n;\C^m)$ with compact support and $t>0$, $\mD_{t}h=D_{t}h$ and thus showing that $D_{t}$ extends to a  bounded map on $L^2(\R^n;\C^m)$, uniformly in $t>0$, with strong limit as $t\to 0$. 
  
  \item[ii)] For $t\in \R$, $t\ne 0$, let $S_{t}$ be the operator defined by the single layer integral (when it converges for suitable $h$)
   \begin{equation}
\label{eq:SLint}
   S_{t}h(x)=\int_{\R^n}\Gamma^A(t,x; 0,y) h(y)\, dy.
\end{equation} 
   Then for $h\in L^2(\R^n;\C^m)$ with compact support and $t>0$, $\nabla_{A}\mS_{t}h=\nabla_{A}S_{t}h$, 
 thus allowing to define $\mS_{t}h(x)$ by the single layer integral  $S_{t}h(x)$ and showing that $S_{t}$  extends to a bounded map, uniformly in $t$, from $L^2(\R^n;\C^m)$ into $\dot W^{1,2}(\R^n;\C^m)$, with strong limit as $t\to 0$. 
\end{enumerate}  \end{prop}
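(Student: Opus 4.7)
My approach is to treat (i) and (ii) in parallel by constructing candidate solutions via the classical integral formulas on \emph{both} half-spaces simultaneously, and then matching their boundary data with the abstract formulas \eqref{eq:gradst}, \eqref{eq:graddt} through jump relations and the uniqueness in Theorem \ref{thm:AAM-AA1}. Fix $h \in L^2(\R^n;\C^m)$ with compact support. Using the size and gradient bounds \eqref{eq:gammasize}--\eqref{eq:gammagrad} for the fundamental solutions from Proposition \ref{prop:fund}, I would first check that $v(t,x):=S_t h(x)$ and $u(t,x):=-D_t h(x)$ are well-defined weak solutions of $\divv A\nabla \cdot = 0$ on each of $\reu$ and $\R^{1+n}_-$, with finite modified non-tangential maximal functions $\tN(\nabla v),\tN(\nabla u)\in L^2(\R^n)$. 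The bound for $\tN(\nabla v)$ is the one of \cite{HMiMo}, and the bound for $\tN(\nabla u)$ is obtained by writing $\nabla u$ as a singular integral against $\nabla_{y}\Gamma^{A^*}$ and using \eqref{eq:gammagrad}.

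\textbf{Key steps.} Writing $v_\pm$ and $u_\pm$ for the restrictions to $\{\pm t>0\}$, Theorem \ref{thm:AAM-AA1} applied to $\reu$ (respectively its analogue on $\R^{1+n}_-$ via reflection of the $t$ variable) produces unique boundary data $F_0^\pm, G_0^\pm \in \mH^{0,\pm}_{\dirac B}$ with $\nabla_A v_\pm(t,\cdot)=e^{-t\dirac B}F_0^\pm$ and $\nabla_A u_\pm(t,\cdot)=e^{-t\dirac B}G_0^\pm$. I then establish jump relations at $\{t=0\}$: for the single layer, tangential continuity $\nabla_x v|_{t=0^+}=\nabla_x v|_{t=0^-}$ is read off from the H\"older estimate \eqref{eq:gammaholder} in the pole variable, while $(A\nabla v)_\no$ jumps by $h$, extracted from the global distributional identity $\divv A\nabla v = h\otimes\delta_{t=0}$ on $\R^{1+n}$ by testing against $C_0^\infty(\R^{1+n};\C^m)$ and integrating by parts in each half-space; for the double layer, $u$ itself jumps by $h$ (this is the classical jump for the double layer, whose proof in this setting reduces to the single-layer continuity applied to $\Gamma^{A^*}$) while $(A\nabla u)_\no$ is continuous across $\{t=0\}$. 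Hence $F_0^+-F_0^- = \begin{bmatrix} h\\ 0\end{bmatrix}$ and $G_0^+-G_0^- = \begin{bmatrix} 0\\ \nabla h\end{bmatrix}$. Because $X_+(\dirac B)$ is the identity on $\mH^{0,+}_{\dirac B}$ and annihilates $\mH^{0,-}_{\dirac B}$, applying it yields $F_0^+ = X_+(\dirac B)\begin{bmatrix} h\\ 0\end{bmatrix}$ and $G_0^+ = X_+(\dirac B)\begin{bmatrix} 0\\ \nabla h\end{bmatrix}$, which match exactly \eqref{eq:gradst} and \eqref{eq:graddt}. Reconstructing via the semigroup gives $\nabla_A S_t h = \nabla_A \mS_t h$ and $\nabla_A(-D_t h) = -\nabla_A \mD_t h$ on $\reu$; comparing tangential components and using decay of both $D_t h$ and $\mD_t h$ as $|x|\to\infty$ (for fixed $t>0$, $h$ being compactly supported and $\mD_t h\in L^2$) upgrades the latter to the scalar identity $\mD_t h = D_t h$. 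The $L^2$-boundedness in $t$ and existence of strong limits at $t=0$ are then inherited from the abstract operators, and density extends the identification to all $h\in L^2(\R^n;\C^m)$.

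\textbf{Main obstacle.} The serious technical point is to justify the jump relations rigorously in this rough setting, where $\Gamma^A$ is merely H\"older continuous away from the diagonal and coefficients are only bounded measurable. Tangential continuity of $v$ across $\{t=0\}$ requires splitting $h=h_{\mathrm{near}}+h_{\mathrm{far}}$ relative to each boundary point $x_0$, handling $h_{\mathrm{far}}$ by direct application of \eqref{eq:gammaholder} in the pole variable, and treating $h_{\mathrm{near}}$ by a smoothing/truncation argument combined with the gradient bound \eqref{eq:gammagrad}; the global distributional identity $\divv A\nabla v = h\otimes\delta_{t=0}$ itself demands care with the convergence of the double integral and uses the defining property of $\Gamma^A$ as the fundamental solution on $\R^{1+n}$. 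A secondary (but necessary) point is the \emph{a priori} bound $\|\tN(\nabla u)\|_2 \lesssim \|h\|_2$ for the double layer, which is less standard than the single-layer bound; one option is to bootstrap it from the bound on $\nabla D_t h$ in $L^2(\reu,t\,dt\,dx)$ coming from the abstract functional calculus on a dense subspace, once the scalar parts of $F_0^+$ and $G_0^+$ have been matched.
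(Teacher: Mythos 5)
The paper does not prove this proposition: it is attributed to \cite{R1}, where Ros\'en's identification is carried out. Your jump-relation-plus-uniqueness strategy is a legitimate alternative to Ros\'en's argument (which proceeds more directly by a Green's formula/duality computation against the fundamental solution and avoids jump relations entirely). The structural skeleton is right: restrictions $F_0^{\pm}$, $G_0^{\pm}$ to the two spectral subspaces, the jump of the normal component for the single layer and of the function for the double layer, and the identity $X_+(\dirac B)F_0^+ = F_0^+$, $X_+(\dirac B)F_0^- = 0$ to extract the claimed formulas.

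However, there is a genuine gap that goes to the heart of what this proposition is for. To invoke Theorem \ref{thm:AAM-AA1} on each half-space you must first know that $\tN(\nabla S_t h)$ and $\tN(\nabla D_t h)$ lie in $L^2(\R^n)$. You claim the double-layer bound can be "obtained by writing $\nabla u$ as a singular integral against $\nabla_y\Gamma^{A^*}$ and using \eqref{eq:gammagrad}," but the pointwise decay and H\"older estimates in Proposition \ref{prop:fund} do not give $L^2$ boundedness of that singular integral — that is precisely the difficult Coifman--McIntosh--Meyer-type estimate which, historically, required $T(1)/T(b)$ machinery (\cite{AAAHK}, \cite{HKMP2}) and which Ros\'en's result is designed to bypass. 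Invoking \cite{HMiMo} for the single layer undercuts the same point. Your closing suggestion to "bootstrap" the bound "once the scalar parts of $F_0^+$ and $G_0^+$ have been matched" is circular: the matching itself requires Theorem \ref{thm:AAM-AA1}, which already presupposes finiteness of the non-tangential maximal function. To make the approach rigorous you would have to restrict to a smooth dense class $h\in C_0^\infty$ for which finiteness (not uniform boundedness) of $\tN(\nabla D_t h)$ can be established by classical potential theory using the H\"older regularity of the kernel and Caccioppoli, carry out the identification there, and only at the end read off uniform $L^2\to L^2$ boundedness from the abstract side and extend by density. As written, the direction of the logic is inverted, and that is the step that would fail.
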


We next recall estimates on the layer potentials.  

\begin{lem} Let $A$ be as in the beginning of this section and assume that  $\divv A\nabla $ and $\divv A^*\nabla $ satisfy the De Giorgi condition  or equivalently, the Nash local regularity condition. Then
\begin{enumerate}
  \item The single layer operator $S_{t}$ maps $L^p(\R^n; \C^m)$ to $\dot W^{1,p}(\R^n; \C^m)$ for $1<p\le 2$ uniformly in $t>0$,  and converges when $t\to 0$  for the weak operator topology. 
  \item The single layer operator $S_{t}$ maps $\dot W^{-1,p}(\R^n; \C^m)$ to $L^p(\R^n; \C^m)$ for $2\le p<\infty$ uniformly in $t>0$,  and converges  when $t\to 0$  for the weak operator topology. 
  \item The double layer operator is bounded on $L^p(\R^n; \C^m)$ for $2\le p<\infty$, uniformly in $t>0$,  and converges when $t\to 0$  for the weak operator topology. 
\end{enumerate}

\end{lem}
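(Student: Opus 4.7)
The strategy is to upgrade the $L^2$ bounds of Proposition~\ref{prop:rosen} to the stated $L^p$ bounds by showing that the integral kernels written in \eqref{eq:SLint} and \eqref{eq:DLint} are of Calder\'on--Zygmund type with constants uniform in $t>0$, using the De Giorgi/Nash regularity of both $\divv A\nabla$ and $\divv A^*\nabla$ together with the fundamental solution estimates of Proposition~\ref{prop:fund}. Once uniform $L^p$ boundedness is in hand, the weak operator convergence as $t\to 0$ will follow from the strong $L^2$ convergence already supplied by Proposition~\ref{prop:rosen} and a density argument on $L^2\cap L^p$. I focus below on the uniform $L^p$ boundedness.

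For part (1), I would analyze the kernel $K_t(x,y)=\nabla_x\Gamma^A(t,x;0,y)$ of $\nabla_x S_t$. Interior Caccioppoli applied to the weak solution $\bz\mapsto\Gamma^A(\bz;0,y)$ away from the pole, combined with the pointwise size bound \eqref{eq:gammasize}, yields $L^2$-averages of $K_t$ on balls of radius $\sim |x-y|+t$ of order $(|x-y|+t)^{-n}$; similarly, the H\"older estimate \eqref{eq:gammaholder} combined with Caccioppoli delivers the averaged $y$-regularity $(|y-y'|/(|x-y|+t))^\mu$, which is a H\"ormander-type Calder\'on--Zygmund kernel condition. Combined with the $L^2$-boundedness of Proposition~\ref{prop:rosen}, the standard Calder\'on--Zygmund decomposition then produces weak-$(1,1)$ for $\nabla_x S_t$ and hence $L^p$-boundedness for $1<p\le 2$ uniformly in $t>0$.

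Part (2) I would deduce from part (1) by duality. The symmetry $\Gamma^A(\bx;\by)^*=\Gamma^{A^*}(\by;\bx)$ of Proposition~\ref{prop:fund}, combined with Fubini and $t$-translation invariance, identifies the $L^2$-adjoint of $S_t^A$ (after the natural reflection exchanging the two half-spaces) with $S_t^{A^*}$. Since the De Giorgi/Nash hypothesis is symmetric in $A$ and $A^*$, part (1) applied to $A^*$ provides the bound $S_t^{A^*}\colon L^{p'}\to\dot W^{1,p'}$ for $1<p'\le 2$; dualizing this bound, via the characterisation of $\dot W^{-1,p}$ as the dual of $\dot W^{1,p'}$ in Proposition~\ref{prop:w1p}, yields exactly $S_t^A\colon\dot W^{-1,p}(\R^n;\C^m)\to L^p(\R^n;\C^m)$ for $2\le p<\infty$.

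For part (3), the same Calder\'on--Zygmund program applies to $D_t$, but it is now regularity in the variable $x$ that is relevant, because in \eqref{eq:DLint} the variable $x$ sits as the \emph{pole} of $\Gamma^{A^*}$. This $x$-regularity is precisely what \eqref{eq:gammaholder} provides (H\"older dependence on the pole), while the size of the kernel is supplied as in part (1) via Caccioppoli and \eqref{eq:gammasize}. These estimates show that the adjoint $D_t^*$ is a standard Calder\'on--Zygmund operator and hence bounded on $L^{p'}$ for $1<p'\le 2$; equivalently, $D_t$ is bounded on $L^p$ for $2\le p<\infty$. The principal technical point throughout is that all of the required kernel estimates must be phrased in an averaged sense compatible with Proposition~\ref{prop:fund}, since no pointwise second-order bounds on $\Gamma^A$ are available under our hypotheses; this will be the main source of bookkeeping but no essential new difficulty.
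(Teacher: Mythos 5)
The paper does not in fact give a self-contained proof of parts (1) and (3): it simply cites \cite{HMiMo} (noting the arguments there, based on De Giorgi, adapt to the present setting) and obtains (2) by the same duality you use, via the symmetry $\Gamma^{A^*}(\by;\bx)=\Gamma^A(\bx;\by)^*$. So your duality reduction of (2) to (1) agrees with the paper, while your Calder\'on--Zygmund route to (1) and (3) is a genuinely different argument from the citation; if it worked as sketched, it would be a reasonable alternative.

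There is, however, a real gap in your sketch which you characterize as "bookkeeping" but which is the crux. The averaged H\"ormander condition you need is for the kernel $K_t(x,y)=\nabla_x\Gamma^A(t,x;0,y)$ viewed as a kernel on $\R^n\times\R^n$ at \emph{fixed} $t$: what must be controlled is the $L^2(dx)$ average over the $n$-dimensional annulus $\{x:R<|x-y|<2R\}$ sitting in the slice $\{t\}\times\R^n$. What interior Caccioppoli combined with \eqref{eq:gammasize} and \eqref{eq:gammaholder} delivers is instead an $L^2$ average over a $(1+n)$-dimensional ball $B((t,x),cR)\subset\ree$. These are not the same quantity, and since (as you correctly note) De Giorgi--Nash gives no pointwise bound on $\nabla\Gamma^A$, and since the tangential gradient of a solution is not itself a solution, Moser's estimate cannot upgrade the solid average to a slice bound. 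The passage from a solid average to a slice average is exactly where the $t$-independence of $A$ enters, via Proposition 2.1 of \cite{AAAHK} (a Caccioppoli-type trace inequality, valid for $t$-independent coefficients, which bounds the slice integral of $|\nabla u|^2$ at a fixed height by the solid integral over a thin strip around it); this is precisely how the paper handles the same issue in the proof of Theorem \ref{thm:main2}. Your sketch nowhere uses $t$-independence, yet without it the slice estimate at small $t$ would not follow. The same remark applies to part (3), where the kernel regularity in the pole variable must again be converted from a solid estimate to a slice estimate at $s=0$. Once this step is inserted, the averaged H\"ormander bound, Cauchy--Schwarz over dyadic annuli, and the resulting weak-$(1,1)$ and interpolation go through uniformly in $t$ as you describe, and the weak operator convergence follows from the $L^2$ limits of Proposition \ref{prop:rosen} plus density.
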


\begin{proof} The proof of (1) and (3) is given  for equations and $1+n\ge 3$ in \cite{HMiMo}, but the arguments using the De Giorgi conditions are applicable here. We skip details.   
(2) is the dual statement of (1) as  the adjoint of  the single layer for $A$  is the single layer for $A^*$ and we use De Giorgi condition  for both. 
\end{proof}

The next result was observed in a special case as part of the proof of Theorem 5.35 in \cite{HKMP2}. It receives a much simpler proof here. 

\begin{cor}\label{cor:BL} Let $A$ be as in the beginning of this section and assume that  $\divv A\nabla $ and $\divv A^*\nabla $ satisfy the De Giorgi condition  or equivalently, the Nash local regularity condition. Let $u$ be an energy solution to $\divv A \nabla u=0$ in $\reu$. Assume that for some $2\le p <\infty$,  $u|_{t=0} \in L^p(\R^n; \C^m)$ and $\pd_{\nu_{A}}u|_{t=0}\in \dot W^{-1,p}(\R^n; \C^m)$. Then the abstract boundary layer representation
$$
u(t,x)=\mS_{t}(\pd_{\nu_{A}}u|_{t=0})(x) -  \mD_{t}(u|_{t=0})(x)
$$
holds for all $t\ge 0$ in $L^2_{loc}(\R^n;\C^m)$.  In particular, $\sup_{t\ge 0}\|u(t,\cdot)\|_{L^p(\R^n;\C^m)}<\infty$.
\end{cor}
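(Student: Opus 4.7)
The plan is to build the candidate $v(t,x) := \mS_t(\pd_{\nu_{A}}u|_{t=0})(x) - \mD_t(u|_{t=0})(x)$ for $t \ge 0$, show that $u - v$ is a constant via the gradient-level representation, and pin that constant to zero using the $L^p$ hypotheses at the boundary. First, the lemma immediately preceding the statement makes $v$ well defined: since $\pd_{\nu_{A}}u|_{t=0}\in \dot W^{-1,p}(\R^n;\C^m)$ and $u|_{t=0}\in L^p(\R^n;\C^m)$ with $2\le p<\infty$, the uniform bounds $\mS_t:\dot W^{-1,p}\to L^p$ and $\mD_t:L^p\to L^p$ give $\sup_{t\ge 0}\|v(t,\cdot)\|_{p}<\infty$ with weak operator limits at $t=0$. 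In particular $v\in L^p_{\loc}(\reu)\subset L^1_{\loc}(\reu)$, which is the right regularity for the distributional argument below.

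Next I would apply the generalized boundary layer representation of Corollary \ref{lem:GBLP} with $s=-1/2$, which is legitimate since $\mE_{-1/2}=\mE$ and $u\in \mE$ by hypothesis. This gives
$$
\nabla_{A}u(t,\cdot)\;=\;\nabla_{A}\mS_{t}(\pd_{\nu_{A}}u|_{t=0})-\nabla_{A}\mD_{t}(u|_{t=0})\;=\;\nabla_{A}v(t,\cdot),
$$
the second equality holding by definition of $v$, since differentiation passes through the layer formulas. By the pointwise bijection $\nabla_{t,x}\leftrightarrow \nabla_{A}$ of Proposition \ref{prop:divformasODE}, the vanishing of $\nabla_{A}(u-v)$ forces $\nabla_{t,x}(u-v)=0$ as distributions in $\reu$. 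Since $u-v\in L^1_{\loc}(\reu)$ and $\reu$ is connected, we conclude $u-v=c$ for some constant $c\in\C^m$.

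It remains to show $c=0$. By the energy-class trace theory recalled in Section \ref{sec:general}, $u\in C([0,\infty);L^2_{\loc}(\R^n;\C^m))$, so $u(t,\cdot)\to u|_{t=0}$ in $L^2_{\loc}$ as $t\to 0^+$. On the other hand $v(t,\cdot)\to v(0,\cdot)$ weakly in $L^p$ by the weak-operator convergence of $\mS_t$ and $\mD_t$ at the boundary. Passing to the limit in the (distributional) identity $u(t,\cdot)-v(t,\cdot)=c$ yields $u|_{t=0}-v(0,\cdot)=c$ as distributions. Both terms on the left lie in $L^p(\R^n;\C^m)$ (the first by hypothesis, the second by the setup), so $c$ is a constant function in $L^p$; since $p<\infty$ this forces $c=0$. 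Thus $u=v$ as $L^p$ functions, in particular in $L^2_{\loc}$, for every $t\ge 0$, and the uniform $L^p$ bound on $u$ follows from the one already established for $v$.

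The main obstacle is really the interface between two scales: the gradient equality lives in $\dot\mH^{-1/2}$ (where only $\nabla u$, not $u$ itself, is controlled), whereas the desired equality lives in $L^p$. The $L^p$-hypothesis at the boundary is used precisely to kill the integration constant arising in the passage from the gradient formula to the function formula, and nothing beyond weak convergence of the layer operators at $t=0$ is required to let the limit argument go through.
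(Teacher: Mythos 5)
Your proof is correct and takes essentially the same route as the paper's: invoke the generalized boundary layer representation (Corollary \ref{lem:GBLP}) at the conormal-gradient level, integrate to get $u - v = c$ for a $t$-independent constant, and then pass to the limit $t\to 0$ and use that both $u|_{t=0}$ and $v(0,\cdot)$ lie in $L^p(\R^n;\C^m)$ with $p<\infty$ to force $c=0$. You make explicit a step the paper compresses into one sentence (passing from the gradient identity to the function identity up to a constant, via the pointwise bijection of Proposition \ref{prop:divformasODE}), but the structure and the role of the $L^p$ hypotheses are identical.
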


\begin{proof} By Corollary \ref{lem:GBLP},  the equality holds   up to a constant, that is
$$
u(t,x)=\mS_{t}(\pd_{\nu_{A}}u|_{t=0})(x) -  \mD_{t}(u|_{t=0})(x) + c, \quad t>0, 
$$ 
  in $L^2_{\loc}(\reu; \C^m)$,  but also in $ L^2_{loc}(\R^n; \C^m)$ for each $t>0$ as $p\ge 2$ and the right hand side belongs to $L^p(\R^n; \C^m) + \C^m$  by the previous lemma and the left hand side is in $L^2_{\loc}(\R^n;\C^m)$. One can pass to the limit in $t\to 0$, after testing against a $C^\infty_{0}(\R^n;\C^m)$ function. For the right hand side, we use the previous lemma and for the left hand side, this is because $t\to u(t,\cdot)$ is continuous at $0$ in $L^2_{\loc}(\R^n;\C^m)$ as $u$ is an energy solution. One  obtains $u|_{t=0}(x)=\mS_{0}(\pd_{\nu_{A}}u|_{t=0})(x) -  \mD_{0^+}(u|_{t=0})(x) + c$.  As all the functions belong to $L^p(\R^n;\C^m)$, we conclude that $c=0$. 
\end{proof}

\begin{rem} The same statement holds for solutions in the classes $\mE_{s}$ for all $s\in [-1,0]$.  For $s>-1$,  $\mE_{s}$ can be shown to imbed into $C([0,\infty); L^2_{loc}(\R^n;\C^m))$, so the proof is the same.  For $s=-1$, it follows from  \cite{AA} that any solution of $\divv A \nabla u=0$ in the class $\mE_{-1}$, that is with the square function bound $\iint t|\nabla u|^2\, dtdx <\infty$,  belongs in fact to $ C([0,\infty); L^2(\R^n;\C^m))+\C^m \subset C([0,\infty); L^2_{loc}(\R^n;\C^m))$.  This is enough to finish the argument. 
 It can be shown that the boundary layer representation also holds in the space of continuous functions valued in $L^p(\R^n;\C^m)$ equipped with the weak topology. 
\end{rem}

\section{Interior non-tangential maximal estimates}

We prove here the following \textit{a priori} inequality.\footnote{This inequality will be proved in larger generality in \cite{AS} with a different argument.} See the introduction for history and differences in approach for this result. 

\begin{thm}\label{thm:main} Let $\divv A \nabla $ be a uniformly  elliptic system with $A(x)$ measurable, bounded, $t$-independent, complex coefficients on $\R^{1+n}$ with the strict G\aa rding inequality     on $\mH^0$, namely \eqref{eq:accrassumption}.
Assume that $\divv A \nabla $ and $\divv A^* \nabla $ satisfy the De Giorgi condition and call $0<\mu_{DG}$ the exponent that works for both. Then   for   all $\frac{n}{n+\mu_{DG}}<p\le 2$ and for  any weak solution of $Lu=0$ on the upper half-space $\R^{1+n}_{+}, 1+n\ge 2$, in any of the classes $\mE_{s}$, $-1\le s\le 0$, we have
\begin{equation}\label{eq:main}
\|\tN (\nabla u)\|_{p}\lesssim \|\pd_{\nu_A}u|_{t=0}\|_{H^p(\R^n; \C^m)}+\|\nabla_{x}u|_{t=0}\|_{H^p(\R^n; (\C^m)^n)},
\end{equation}
where $H^p(\R^n)$  denotes the real Hardy space if $p\le 1$ and $L^p(\R^n)$ for $p>1$.  \end{thm}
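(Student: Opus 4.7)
The plan is to use the boundary layer representation of Corollary \ref{lem:GBLP}: for any solution $u\in \mE_{s}$, $-1\le s\le 0$,
$$
\nabla_{A} u(t,\cdot) = \nabla_{A}\mS_{t}(\pd_{\nu_{A}}u|_{t=0}) - \nabla_{A}\mD_{t}(u|_{t=0}).
$$
Since $|\nabla u|\sim |\nabla_A u|$ pointwise, this reduces the theorem to the two \emph{a priori} estimates
\begin{equation*}
\|\tN(\nabla \mS_{t}h)\|_{p} \lesssim \|h\|_{H^p(\R^n;\C^m)}, \qquad \|\tN(\nabla \mD_{t}h)\|_{p} \lesssim \|\nabla_x h\|_{H^p(\R^n;(\C^m)^n)},
\end{equation*}
for $\tfrac{n}{n+\mu_{DG}}<p\le 2$. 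The single-layer bound is already available: at $p=2$ it is immediate from Theorem \ref{thm:AAM-AA1} via \eqref{eq:gradst}, and its extension to the stated range is \cite{HMiMo} under the De Giorgi hypothesis.

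For the double-layer bound, at $p=2$ the identity \eqref{eq:graddt} gives $\nabla_A\mD_t h = -e^{-t\dirac B}X_+(\dirac B)\begin{bmatrix}0\\ \nabla h\end{bmatrix}$, whence the $L^2$ part of Theorem \ref{thm:AAM-AA1} yields $\|\tN(\nabla\mD_t h)\|_2\lesssim \|\nabla h\|_2$. For an exponent $p_0\in(\tfrac{n}{n+\mu_{DG}},1]$, I would establish the bound on smooth $2$-atoms $a=\nabla b$ for $H^{p_0}_\nabla(\R^n;(\C^m)^n)$ in the sense of Definition \ref{defn:atom}. With $\supp b\subset\Delta(x_0,r)$ and $\bx_0:=(0,x_0)$, split $\R^n=\Delta(x_0,8r)\sqcup(\R^n\setminus\Delta(x_0,8r))$. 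On the local piece, H\"older combined with the $p=2$ bound gives
$$
\int_{\Delta(x_0,8r)}\tN(\nabla\mD_t b)^{p_0} \lesssim r^{n(1-p_0/2)}\|\nabla b\|_2^{p_0} \lesssim 1,
$$
using $\|\nabla b\|_2=\|a\|_2\le r^{-n(1/p_0-1/2)}$. On the far piece, I would use the integral formula of Proposition \ref{prop:rosen}(i), apply Cauchy--Schwarz together with the $L^2$-gradient bound \eqref{eq:gammagrad} for $\nabla_y\Gamma^{A^*}$, and run a Caccioppoli estimate on the $(t,x)$-side (noting that, by the symmetry relation, $(t,x)\mapsto \Gamma^{A^*}(0,y;t,x)$ is a weak solution of a divergence-form system away from $(0,y)$). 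For any $\mu<\mu_{DG}$ this yields the pointwise bound
$$
\tN(\nabla\mD_t b)(x)\lesssim \frac{r^{n+\mu-n/p_0}}{|x-x_0|^{n+\mu}},\qquad |x-x_0|\ge 8r,
$$
whose $L^{p_0}$-norm is $O(1)$ precisely when $p_0>\tfrac{n}{n+\mu}$, covering the full claimed range as $\mu\uparrow\mu_{DG}$.

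Passing from atoms to all of $H^{p_0}_\nabla$ uses the sublinearity of $\tN$ and the linearity of $\nabla\mD_t$: if $\nabla h=\sum_j\lambda_j a_j$ is the atomic decomposition available via the machinery of \cite{LMc,YZ} recalled in Section \ref{sec:rellichp<1}, then
$$
\|\tN(\nabla\mD_t h)\|_{p_0}^{p_0}\le \sum_j|\lambda_j|^{p_0}\|\tN(\nabla\mD_t b_j)\|_{p_0}^{p_0}\lesssim \|\nabla h\|_{H^{p_0}_\nabla}^{p_0}.
$$
Real interpolation between this endpoint and $p=2$ then fills in the intermediate range $p_0<p<2$. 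The principal technical obstacle is the sharp far-field decay: extracting an exponent $\mu$ all the way up to $\mu_{DG}$ rather than a smaller universal exponent requires transferring the boundary De Giorgi estimate from the $y$-variable into the $(t,x)$-variable (combining the gradient bound \eqref{eq:gammagrad}, the H\"older estimate \eqref{eq:gammaholder} and Caccioppoli, in the spirit of the reflection argument of Lemma \ref{lemma1}); together with ensuring the consistency of the abstract functional-calculus definition of $\nabla_A\mD_t$ with the integral formula on the class of smooth atomic data used here (secured by Proposition \ref{prop:rosen} and Corollary \ref{cor:BL}), this is where the bulk of the technical work lies.
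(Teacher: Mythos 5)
Your high-level strategy is exactly the paper's: reduce \eqref{eq:main} via the generalized boundary layer representation (Corollary \ref{lem:GBLP}) to the two \emph{a priori} bounds for $\nabla_A\mS_t$ and $\nabla_A\mD_t$ (= Theorem \ref{thm:main2}), then prove the double-layer bound on $2$-atoms for $H^p_\nabla$ with a local/far-field split, and interpolate. The local piece (H\"older plus the $L^2$ bound) and the molecular interpolation step (the content of Lemma \ref{lem:hpgrad}) are handled the same way. The gap is in the far-field decay.

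You propose to get a pointwise bound $\tN(\nabla\mD_tb)(x)\lesssim r^{n+\mu-n/p_0}|x-x_0|^{-n-\mu}$ from "Cauchy--Schwarz + \eqref{eq:gammagrad} + Caccioppoli in $(t,x)$." But Caccioppoli alone does not yield this. Writing $u(t,x)=D_tb(x)$, interior Caccioppoli gives $t^{-(1+n)}\int_{W(t,x)}|\nabla u|^2\lesssim t^{-2}\sup_{\tilde W(t,x)}|u-c|^2$; when $t\gtrsim|x-x_0|$ this combined with the size estimate $|u(t,x)|\lesssim|(t,x)|^{1-n-\mu_0}$ gives the decay, but when $t\ll|x-x_0|$ the factor $t^{-2}$ blows up and $u$ does \emph{not} vanish at $t=0$ (the double layer has a nonzero continuous trace $D_{0^+}b$ away from $\operatorname{supp}b$); even subtracting a constant and using De Giorgi--Nash H\"older continuity only gains $(t/|x-x_0|)^{2\alpha}$ which does not cancel $t^{-2}$ unless $\alpha=1$. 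The paper fills exactly this hole with Kenig--Pipher's Lemma 8.10 (\cite{KP}): for $t$-independent coefficients,
$$
\sup_{t<2^k}\, t^{-(1+n)}{}\int_{W(t,x)}|\nabla_Au|^2 \ \lesssim\ \sup_{t\le(1+c_0)2^k,\,x\in\widetilde C_k}|\pd_t u|^2 + M\bigl(|\nabla_xu(0,\cdot)|^q\mathbf{1}_{\widetilde C_k}\bigr)(x)^{2/q},
$$
and then treats the two terms separately: $\pd_t u$ is again a solution (here $t$-independence is essential) with one extra order of decay, handled by Moser plus Caccioppoli plus \eqref{eq:size}; and the tangential boundary gradient term is handled by the Hardy--Littlewood maximal theorem plus Proposition~2.1 of \cite{AAAHK} plus Caccioppoli. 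Your proposal has no analogue of this split and, as written, the crucial "small $t$" regime is not controlled. Finally, for the single layer you defer to \cite{HMiMo}; the paper instead gives a short direct atomic proof so that the result covers systems and $1+n\ge 2$, not only equations in dimension $\ge 3$ — a minor but real difference in scope.
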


Recall that for $h\in L^2(\R^n;\C^m)$ and $t>0$, 
\begin{equation*}
\nabla_{A}\mS_{t}h=e^{-t\dirac B}X_{+}(\dirac B) \begin{bmatrix} h \\ 0\end{bmatrix}
\end{equation*}
and  for $h\in W^{1,2}(\R^n;\C^m)$ and $t>0$, 
\begin{equation*}
\nabla_{A} \mD_{t}h=- e^{-t\dirac B} X_{+}(\dirac B) \begin{bmatrix} 0 \\ \nabla h\end{bmatrix}.
\end{equation*}
Remark that $\nabla$ means here the tangential gradient $\nabla_{x}$, while $\nabla_{A}$ still means the full conormal gradient. Recall also that the size of the full conormal gradient is pointwise comparable to that of the full gradient  $\nabla_{t,x}$.

It is convenient to set $H^p_{\nabla}(\R^n;(\C^m)^n)=\nabla \dot W^{1,p}(\R^n;\C^m)$ (or again, those $L^p$ curl-free functions) for $p>1$ (for $p\le1$ it was defined in Section \ref{sec:rellichp<1}) and to define the operator $V_t$ on $H^2_{\nabla} (\R^n;(\C^m)^n)$ by $$V_t{g}= -e^{-t\dirac B} X_{+}(\dirac B) \begin{bmatrix} 0 \\ g\end{bmatrix}$$ for any $g\in \dot H^{2}_{\nabla}(\R^n;(\C^m)^n)$.  

Theorem  \ref{thm:main} follows immediately from the next   \textit{a priori} boundedness result,  together with Proposition \ref{lem:GBLP}.

\begin{thm}\label{thm:main2} Let $L$ be as in Theorem \ref{thm:main}. Then    for $\frac{n}{n+\mu_{DG}}<p\le 2$,
\begin{equation}
\label{eq:st}
\|\tN (\nabla_{A} \mS_{t}h)\|_{p}\lesssim \|h\|_{H^p(\R^n;\C^m)}
\end{equation}
\begin{equation}
\label{eq:delta}
\|\tN (V_t{g})\|_{p}\lesssim \|{g}\|_{H^p_{\nabla}(\R^n;(\C^m)^n)}
\end{equation}
This means that there is a  linear extension of the map $h\mapsto (\nabla_{A} \mS_{t}h)_{t>0} $ defined on $L^2(\R^n;\C^m)\cap  H^p(\R^n;\C^m)$  to $H^p(\R^n;\C^m)$ with such an estimate,    and of the map ${g}\mapsto (V_t{g})_{t>0}$  from  $H^2_{\nabla}(\R^n;(\C^m)^n)  \cap H^p_{\nabla}(\R^n;(\C^m)^n)$ to all of $H^p_{\nabla}(\R^n;(\C^m)^n)$ with such an estimate. \end{thm}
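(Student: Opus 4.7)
The plan is to handle the $L^2$ endpoint via the first-order formalism, to treat the Hardy range $\tfrac{n}{n+\mu_{DG}}<p\le 1$ by atomic decomposition, and to interpolate for $1<p<2$. At $p=2$: Theorem~\ref{thm:AAM-AA1} applied to the solutions $u$ with $\nabla_A u|_{t=0}=X_{+}(\dirac B)\begin{bmatrix}h\\0\end{bmatrix}$, respectively $\nabla_A u|_{t=0}=X_{+}(\dirac B)\begin{bmatrix}0\\g\end{bmatrix}$, immediately yields $\|\tN(\nabla_A\mS h)\|_2\lesssim\|h\|_2$ and $\|\tN(Vg)\|_2\lesssim\|g\|_2$, since $X_{+}(\dirac B)$ is $L^2$-bounded. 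For $\tfrac{n}{n+\mu_{DG}}<p\le 1$, by the extension principles behind Theorems~\ref{thm:boundaryregimprovhardy} and~\ref{thm:boundaryneuimprovhardy}, it suffices to prove a uniform bound when the input is a smooth 2-atom.

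Fix such an atom $a$, supported in $\Delta=\Delta(x_0,r)$, and put $u=\mS a$ in \eqref{eq:st} and $u=\mD b$ with $a=\nabla b$ in \eqref{eq:delta}. Split $\R^n=4\Delta\sqcup\bigsqcup_{j\ge 2}C_j$ with $C_j:=2^{j+1}\Delta\setminus 2^j\Delta$. On $4\Delta$, H\"older and the $L^2$ endpoint give
$$\|\tN(\nabla u)\|_{L^p(4\Delta)}\le|4\Delta|^{1/p-1/2}\|\tN(\nabla u)\|_2\lesssim r^{n(1/p-1/2)}\|a\|_2\lesssim 1.$$
On each annulus $C_j$, the plan is to derive the pointwise bound
$$\tN(\nabla u)(x)\lesssim r^{n(1-1/p)+\mu}(2^j r)^{-(n+\mu)},\qquad x\in C_j,$$
from which $\|\tN(\nabla u)\|^p_{L^p(C_j)}\lesssim 2^{-j(p(n+\mu)-n)}$; this is summable in $j$ precisely when $p>\tfrac{n}{n+\mu}$, matching the range in the statement.

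The far-field estimate is obtained by interior Caccioppoli applied to $u$ on a ball of radius $\sim 2^j r$ about a point of $C_j$ (on which $u$ solves $\divv A\nabla u=0$), reducing it to a pointwise bound on $u$ itself with an extra factor $(2^j r)^{-1}$. For the single layer, the cancellation $\int a=0$ together with the H\"older estimate~\eqref{eq:gammaholder} gives $|\mS a(s,y)|\lesssim r^\mu(2^j r)^{1-n-\mu}\|a\|_1$ with $\|a\|_1\lesssim r^{n(1-1/p)}$, from which the required bound follows after Caccioppoli. For the double layer, one extends $b$ to $\psi(s,y)=b(y)\phi(s/r)$ with a smooth cutoff $\phi$, so $\supp\psi\subset(0,r)\times\Delta$ and $\|\nabla\psi\|_2\lesssim r^{-1/2}\|b\|_2+r^{1/2}\|a\|_2\lesssim r^{1/2-n(1/p-1/2)}$; then, since $\Gamma^{A^*}(\cdot,\cdot;t,x)$ solves $\divv A^*\nabla=0$ on a neighbourhood of $\supp\psi$ (as $(t,x)$ lies in the far field), integration by parts rewrites $\mD b(t,x)$ as $-\int_{\reu}A^*\nabla\Gamma^{A^*}\cdot\overline{\nabla\psi}$, and Cauchy--Schwarz combined with the $L^2$ gradient estimate~\eqref{eq:gammagrad} yields $|\mD b(s,y)|\lesssim r^{n(1-1/p)+\mu}(2^j r)^{-(n-1+\mu)}$, from which the pointwise bound above again follows by Caccioppoli. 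For $1<p<2$, real interpolation on the Hardy-space scale between the atomic $H^{p_0}$ bound (any $p_0\in(\tfrac{n}{n+\mu_{DG}},1)$) and the $L^2$ bound, applied to the sublinear maps $h\mapsto\tN(\nabla_A\mS h)$ and $g\mapsto\tN(Vg)$, yields the remaining range. The main obstacle is the double-layer estimate: since the kernel $\pd_{\nu_{A^*}}\Gamma^{A^*}|_{s=0}$ admits only interior $L^2$ control via~\eqref{eq:gammagrad}, the variational device of extending $b$ into $\reu$ and converting the boundary evaluation into an interior integral is essential, and it is this step where the De Giorgi hypothesis on the reflected operators enters decisively.
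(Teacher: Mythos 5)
Your overall architecture — $L^2$ endpoint from the first-order formalism, atomic decomposition for $p\le 1$, interpolation for $1<p<2$, with a local estimate via Hölder plus $L^2$, and a far-field estimate based on a decay bound for $u$ away from the atom — matches the paper's proof, and your decay bounds for $u$ itself (via the cancellation of $a$ and \eqref{eq:gammaholder} for the single layer, and a variational/interior argument with \eqref{eq:gammagrad} for the double layer, akin to the paper's use of Proposition 2.1 of \cite{AAAHK}) are essentially correct in spirit and exponent. But there is a genuine gap at the crucial step: passing from a pointwise bound on $u$ to the pointwise bound on $\tN(\nabla u)$ in the far field.

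You claim that ``interior Caccioppoli applied to $u$ on a ball of radius $\sim 2^jr$ \ldots reduc[es] it to a pointwise bound on $u$ itself with an extra factor $(2^jr)^{-1}$.'' But $\tN(\nabla u)(x)=\sup_{t>0}\big(\barint_{W(t,x)}|\nabla u|^2\big)^{1/2}$, and the supremum runs over all $t>0$, including $t\ll 2^jr$. Caccioppoli applied at scale $t$ gives $\barint_{W(t,x)}|\nabla u|^2\lesssim t^{-2}\sup|u|^2$, which blows up as $t\to 0$; applying it instead at the large scale $2^jr$ controls $\int_{B((0,x),2^{j-2}r)}|\nabla u|^2$ but not the $L^2$ \emph{average} over the thin Whitney box $W(t,x)$ when $t$ is small, since $|W(t,x)|\sim t^{1+n}\to 0$ while the numerator does not. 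A pointwise bound on $u$ alone, even combined with Caccioppoli, does not give a pointwise bound on $\nabla u$ (De Giorgi regularity yields Hölder continuity of $u$, not of $\nabla u$), and so cannot control $\tN(\nabla u)$ near the boundary. This is precisely where the paper's proof cannot avoid the Kenig--Pipher lemma (\cite[Lemma~8.10]{KP}): for $t<2^k$ it bounds $\sup_{t<2^k}\barint_{W(t,x)}|\nabla_Au|^2$ by $\sup|\pd_tu|^2$ on a nearby region plus a Hardy--Littlewood maximal function of the boundary trace $|\nabla_xu(0,\cdot)|$. The first term is then handled by Moser and Caccioppoli, using that $\pd_t u$ is again a solution because $A$ is $t$-independent, and the second by the trace estimate of \cite[Proposition~2.1]{AAAHK}, also requiring $t$-independence, plus the Hardy--Littlewood theorem. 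This machinery is where $t$-independence enters the non-tangential estimate, and it is entirely absent from your proposal. A smaller point: the atomic-to-global extension for the maps $h\mapsto(\nabla_A\mS_t h)_{t>0}$ and $g\mapsto(V_tg)_{t>0}$ should be justified by the tent-space criterion in Lemma~\ref{lem:hpgrad} (or its scalar analogue from \cite{MSV,YZ}), not by Theorems~\ref{thm:boundaryregimprovhardy}--\ref{thm:boundaryneuimprovhardy}, which extend boundary Rellich inequalities and are not formulated for $N^p_2$-valued operators.
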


In particular, this yields 
\begin{equation}\label{eq35}
\|\tN (\nabla_{A}\mD_{t}h)\|_{p}\lesssim \|\nabla h\|_{H^p_{\nabla}(\R^n;(\C^m)^n)}
\end{equation}
whenever $h\in L^2(\R^n;\C^m)$ and $\nabla h\in L^2(\R^n;(\C^m)^n)$ as well (in fact more general $h$ can be used provided one makes sense of the various objects).

We use the notation $N^p_{2}(\R^n)$ to denote the (quasi-)Banach space of all $L^2_{loc}(\R^{1+n}_{+})$ functions such that $\|\tN (f)\|_{p}<\infty$, $0<p<\infty$. These spaces are further studied in \cite{HR}. See also \cite{Hu} for a more systematic approach.

For this purpose, we  use again the 2-atoms for $H^p_{\nabla}(\R^n; \C^n)$ but in a slightly different way.

\begin{lem}\label{lem:hpgrad} Let $\frac{n}{n+1}<p \le 1$. 
\begin{enumerate}
  \item  $H^p_{\nabla}(\R^n;\C^n)$ has the following atomic characterization: Let $g\in \mD'(\R^n;\C^n)$. Then $g\in H^p_{\nabla}(\R^n;\C^n)$ if and only if $g=\sum \lambda_{j} a_{j}$ in $\mD'(\R^n;\C^n)$ with $\sum |\lambda_{j}|^p<\infty$ and $a_{j}$ are $2$-atoms for $H^p_{\nabla}(\R^n;\C^n)$. 
 Moreover, $\|g\|_{H^p_{\nabla}(\R^n;\C^n)}\sim \inf \|(\lambda_{j})\|_{\ell^p}$ with the infimum taken over all such decompositions.
  \item  $H^p_{\nabla}(\R^n;\C^n) \cap H^2_{\nabla}(\R^n;\C^n)$  is the subspace of $H^p_{\nabla}(\R^n;\C^n)$ of those $g$ having an atomic decomposition with $\|g\|_{H^p_{\nabla}(\R^n;\C^n)} \sim \|(\lambda_{j})\|_{\ell^p}$ and  which converges also in $H^2_{\nabla}(\R^n;\C^n)$. It is dense in $H^p_{\nabla}(\R^n;\C^n)$. 
  \item A bounded linear operator $T:  H^2_{\nabla}(\R^n;\C^n) \to N^2_{2}(\R^n)$ with $\sup \|\tN (Ta)\|_{{p}}<\infty$, where the supremum is taken over all $2$-atoms for $H^p_{\nabla}(\R^n;\C^n)$, extends to a bounded map from  $H^q_{\nabla}(\R^n;\C^n)$ to $N^q_{2}(\R^n)$, for $p\le q \le 2$. 
  \item A bounded linear operator $T:  H^r_{\nabla}(\R^n;\C^n) \to N^r_{2}(\R^n)$ for some $1<r<2$ with $\sup \|\tN (Ta)\|_{p}<\infty$, where the supremum is taken over all $2$-atoms for $H^p_{\nabla}(\R^n;\C^n)$, extends to a bounded map from  $H^q_{\nabla}(\R^n;\C^n)$ to $N^q_{2}(\R^n)$, for $p\le q \le r$.
 
\end{enumerate}
All this extends straightforwardly to $H^p_{\nabla}(\R^n;(\C^m)^n)$ spaces.
\end{lem}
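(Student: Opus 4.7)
The plan is to reduce all four parts to a single atomic-extension-plus-interpolation scheme, leaning on the Hardy-Sobolev atomic theory already cited.

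For part (1), I would invoke the construction of \cite{LMc}: there the case $p=1$ is produced by feeding a Calder\'on reproducing formula with compactly supported, curl-form convolution kernels into the tent-space atomic decomposition, which yields $2$-atoms in the gradient form of Definition \ref{defn:atom}. The same argument runs verbatim in the range $\frac{n}{n+1}<p\le 1$, once the underlying tent-space decomposition is replaced by its $p$-analogue (this only affects the exponent of summability of the coefficients). The equivalence $\|g\|_{H^p_\nabla}\sim \inf\|(\lambda_j)\|_{\ell^p}$ then comes from the two-sided boundedness of the Riesz projection $\mathcal{R}\mathcal{R}^*$ between $H^p(\R^n;\C^n)$ and $H^p_\nabla(\R^n;\C^n)$ that is already recorded in the paper.

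For part (2), I would observe that every $2$-atom $a$ for $H^p_\nabla$ lies in $L^2(\R^n;\C^n)$ and is itself in $H^2_\nabla(\R^n;\C^n)$. Hence the partial sums $g_N=\sum_{j\le N}\lambda_j a_j$ of an atomic decomposition of $g$ belong to $H^p_\nabla\cap H^2_\nabla$ and converge to $g$ in $H^p_\nabla$ by the $\ell^p$ summability of the coefficients, giving density. For $g\in H^p_\nabla\cap H^2_\nabla$, I would upgrade the decomposition to one that converges in $H^2_\nabla$ as well by applying the reproducing formula directly in $L^2$; the same coefficients $(\lambda_j)$ control both norms because the atoms are normalised using only the size/support conditions from Definition \ref{defn:atom}.

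For parts (3) and (4), I first define $T$ on the dense subspace $H^p_\nabla\cap H^2_\nabla$ (where it already makes sense) and then extend. Writing $\sigma:=\min(p,1)$ and using that $N^p_2(\R^n)$ is a $\sigma$-Banach space, the hypothesis $\sup_a\|\tN(Ta)\|_p<\infty$ gives
\[
\|\tN(Tg)\|_p^{\sigma}\le \sum_j |\lambda_j|^{\sigma}\|\tN(Ta_j)\|_p^{\sigma}\lesssim \|g\|_{H^p_\nabla}^{\sigma}
\]
for every decomposition produced by part (2); combined with density this extends $T$ to a bounded operator $H^p_\nabla\to N^p_2$. The intermediate range $p<q<2$ (respectively $p<q<r$ in part (4)) is then obtained by interpolating this new endpoint with the given $H^2_\nabla\to N^2_2$ (respectively $H^r_\nabla\to N^r_2$) bound, using that $(H^p_\nabla,H^2_\nabla)_\theta=H^q_\nabla$ with $1/q=(1-\theta)/p+\theta/2$ (a consequence of the known interpolation identity for $H^p$ together with the Riesz projection) and the parallel identity $(N^p_2,N^2_2)_\theta=N^q_2$ from \cite{HR}, \cite{Hu}.

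The main obstacle I anticipate is the well-definedness of the extension together with the interpolation identity for the $N^q_2$ scale: different atomic decompositions must yield the same $Tg$ (this is settled by agreement on the dense subspace where $T$ was already known), and the modified non-tangential spaces $N^p_2$ are not classical tent spaces, so the interpolation step requires the explicit identifications proved in the cited works. Once these are in hand, everything else is bookkeeping.
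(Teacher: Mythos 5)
Your overall scheme matches the paper's, but there is a real gap in part (2) and two smaller issues worth flagging.

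\medskip

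\textbf{Part (2) --- the quantitative claim is false.} You assert that ``the same coefficients $(\lambda_j)$ control both norms because the atoms are normalised using only the size/support conditions.'' That is not so: a $2$-atom for $H^p_\nabla$ on a ball of radius $r$ is normalised by $\|a\|_2\le r^{-n(1/p-1/2)}$, so its $H^2_\nabla$-norm (which equals $\|a\|_2$) is radius-dependent and blows up as $r\to 0$ when $p<2$. Thus the $\ell^p$-norm of the coefficients says nothing about the $H^2_\nabla$-norm of the sum. What is actually needed, and what the paper uses, is a purely \emph{qualitative} statement: for $g\in H^p_\nabla\cap H^2_\nabla$, one can choose a $T^p_2$-atomic decomposition whose image under the retract map also converges in $T^2_2$, i.e.\ in $H^2_\nabla$. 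The paper reduces this via the retract diagram to the tent-space fact that $T^p_2\cap T^2_2$ coincides with the set of $T^p_2$-elements admitting a $T^p_2$-atomic decomposition converging also in $T^2_2$, which it cites from \cite[Theorem 4.9, step 3]{AMcR} (extended to $p<1$) and \cite[Proposition 3.1]{JY}. Your reproducing-formula heuristic is on the right track but by itself does not yield the conclusion; you must either invoke these references or reproduce their argument.

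\medskip

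\textbf{Parts (3)/(4) --- interpolation route.} You interpolate the scales $H^q_\nabla$ and $N^q_2$ separately and cite \cite{HR}, \cite{Hu} for the $N^q_2$-scale. The paper instead interpolates the \emph{sublinear operator} $g\mapsto \tN(Tg)$ between $H^p_\nabla\to L^p(\R^n)$ and $H^2_\nabla\to L^2(\R^n)$; since the target is a genuine function on $\R^n$, only classical interpolation for $L^q$ is needed on the target side, while the source interpolation comes from the retract onto tent spaces. This sidesteps the question of whether $N^q_2$ is an interpolation scale in the precise sense you need; \cite{HR} and \cite{Hu} are focused on duality and Whitney-averaged weighted tent spaces, and you would want to verify they give the exact identity $(N^p_2,N^2_2)_\theta=N^q_2$. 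Your approach could work but imports an extra piece of machinery; the paper's is more economical.

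\medskip

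\textbf{Part (4) --- where is $T$ defined?} Here $T$ is only given bounded on $H^r_\nabla$, not on $H^2_\nabla$. When you pick $g\in H^p_\nabla\cap H^2_\nabla$ with an atomic decomposition converging in both $H^p_\nabla$ and $H^2_\nabla$, you must justify that $Tg=\sum\lambda_j Ta_j$. The paper does this by noting that convergence in $H^p_\nabla$ and in $H^2_\nabla$ forces, by interpolation, convergence in $H^r_\nabla$ (since $p<r<2$), so the identity follows from boundedness at exponent $r$. Your proposal folds (4) into (3) and does not address this; the extension step in (4) needs that extra observation.
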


\begin{proof} The proof of (1)  is done in \cite{LMc} when $p=1$. As already mentioned, the method in \cite{LMc} is to construct a Calder\'on reproducing formula  that allows to see that  $H^p_{\nabla}(\R^n)$  is a retract of  the tent space $T^p_{2}$ of \cite{CMS} : $H^p_{\nabla}(\R^n;\C^n)$ is isomorphic to  closed and complemented subspace of $T^p_{2}$ and one can use the atomic decomposition of $T^p_{2}$ which, given the particular form of the retract mappings in \cite{LMc}, gives (1). 
Their method extends  to the range $\frac{n}{n+1}<p \le 1$ without difficulty. We skip details.

The proof of (2) is as follows.  The retract mappings are of Littlewood-Paley type with smooth and compactly supported convolution kernels with mean 0 so they work simultaneously and boundedly for all $\frac{n}{n+1}<p <\infty$. Denoting by $S$ the mapping from $T^p_{2}$ to $H^p_{\nabla}(\R^n;\C^n)$ of the retract diagram, we have $S(T^p_{2}\cap T^2_{2})= H^p_{\nabla}(\R^n;\C^n) \cap H^2_{\nabla}(\R^n;\C^n)$. Thus, it suffices to show that $T^p_{2}\cap T^2_{2}$ is the subspace of $T^p_{2}$ of those elements having a $T^p_{2}$ atomic decomposition that converges also in $T^2_{2}$.  This fact is implicit in the proof of \cite[Theorem 4.9, step 3]{AMcR} for $p=1$ and the very same argument applies when $p<1$. Again we skip details.  A different and explicit method is in \cite{JY}, Proposition 3.1. The density follows from the density of $T^p_{2}\cap T^2_{2}$ in $T^p_{2}$. 

The proof of (3) is now simple using (2).  To prove the boundedness at $q=p$, 
choose an atomic decomposition $\sum \lambda_{j}   a_{j}$ for $g\in H^p_{\nabla}(\R^n;\C^n) \cap H^2_{\nabla}(\R^n;\C^n)$ that  converges also in $H^2_{\nabla}(\R^n;\C^n)$. Then this convergence and boundedness of $T$ imply $Tg= \sum \lambda_{j}Ta_{j}$ and it follows that 
$\|Tg\|_{N^p_{2}(\R^n)} \lesssim  \|g\|_{H^p_{\nabla}(\R^n;\C^n)}$ using $\sup \|\tN (Ta)\|_{p}<\infty$. It remains to extend by density.  
The boundedness when $p<q<2$ follows by interpolation. The spaces $H^p_{\nabla}(\R^n;\C^n)$ for $\frac{n}{n+1}<p<\infty$ interpolate by the retract property and the interpolation property of the tent spaces (\cite{CMS}, \cite{Be} and \cite{CV} for $p<1$). The result follows by using real interpolation for the sublinear operator $g\mapsto \tN (Tg)$.

We finish with the proof of (4). Remark that $2$-atoms for $H^p_{\nabla}(\R^n;\C^n)$ are elements of $H^r_{\nabla}(\R^n;\C^n)$ as $r<2$, so the statement is meaningful. It is enough to prove the boundedness at $q=p$ as interpolation takes care of the other values of $q$.   Choose an atomic decomposition $\sum \lambda_{j}   a_{j}$ for $g\in H^p_{\nabla}(\R^n;\C^n) \cap H^2_{\nabla}(\R^n;\C^n)$ that  converges also in $H^2_{\nabla}(\R^n;\C^n)$.  Of course, one has also the convergence in $H^p_{\nabla}(\R^n;\C^n)$. Interpolation implies that it converges also in $H^r_{\nabla}(\R^n;\C^n)$. Thus $Tg= \sum \lambda_{j}Ta_{j}$ by boundedness of $T$ at exponent $r$ and it follows that 
$\|Tg\|_{N^p_{2}(\R^n)} \lesssim  \|g\|_{H^p_{\nabla}(\R^n;\C^n)}$ using $\sup \|\tN (Ta)\|_{p}<\infty$. It remains to extend by density. \end{proof}

\begin{proof}[Proof of Theorem \ref{thm:main2}]  We prove \eqref{eq:delta}.  By lemma \ref{lem:hpgrad} it is enough to prove the bound for $2$-atoms for $H^p_{\nabla}(\R^n;(\C^m)^n)$ when $\frac{n}{n+\mu}=p>p_{0}=\frac{n}{n+\mu_{0}}$ with $0<\mu_{0}<\mu_{DG}$. Fix such a $p$.  The argument follows a method of Kenig-Pipher \cite{KP}. 
Let  $a=\nabla b$ be a $2$-atom for  $H^p_{\nabla}(\R^n;(\C^m)^n)$, with $a,b$  supported in a surface ball $\Delta(x_{0},r)$. 
We note that in this case $V_t a=  \nabla_{A} \mD_{t}b$ as both $a,b$ are $L^2$ functions. 
As  our techniques are scale invariant, we assume  that $x_{0}=0$ and $r=1$ to simplify the exposition. We let $\Delta_{k}=\Delta(0,2^k)$ and $C_{k}=\Delta_{k+1}\setminus \Delta_{k}$ for $k\in \N$.  We have
\begin{align*}
\|\tN (\nabla_{A} \mD_{t}b)\|_{L^p(\Delta_{2})} &\le |\Delta_{2}|^{1/p-1/2}  \|\tN (\nabla_{A} \mD_{t}b)\|_{L^2(\Delta_{2})}
\\ &\le C \|\nabla b\|_{2} |\Delta_{2}|^{1/p-1/2} \le C 4^{n(1/p-1/2)}.
\end{align*}

It remains to show $\|\tN (\nabla_{A} \mD_{t}b)\|_{L^2(C_{k})} \le C2^{-k(n/2 +\mu_{0})}=C2^{-k(n/p_{0}-n/2)}$  when $k\ge 2$,  which implies $\|\tN (\nabla_{A} \mD_{t}b)\|_{L^p(C_{k})}^p \le C2^{-k(n/p_{0}-n/p)p}$. Indeed, summing all these estimates for $k\ge 1$, yields $\|\tN (\nabla_{A} \mD_{t}b)\|_{p}^p\lesssim 1$. 

Set $u(t,x)=D_{t}b(x)$ be the solution of $\divv A \nabla$ for $(t,x) \in \R^{1+n}$ away from the support of $b$ (identifying $\R^n$ with $\{0\}\times \R^n$)   given by the double layer integral  in \eqref{eq:DLint} (which will be shown to converge  under the De Giorgi  assumption on $\divv A \nabla$ and its adjoint). Under these assumptions,  we know that  $u(t,x)=\mD_{t}b(x)$ for $t>0$  (Proposition \ref{prop:rosen}) where $\mD_{t}$ is the abstract double layer operator.  
We claim that  
\begin{equation}
\label{eq:size}
|u(t,x)|\lesssim |(t,x)|^{-n+1-\mu_{0}}, \ |(t,x)|\ge 2. 
\end{equation}
Indeed, using \eqref{eq:DLint},   Proposition 2.1 in \cite{AAAHK} for the solution $(s,y) \mapsto \Gamma^{A^*}( s,y; t,x)$ for 
$L^*$ in $(-2,2)\times \Delta(0,2)$, as $\divv A^*\nabla $ is $t$-independent (this results extends \textit{mutatis mutandi} to systems), and then \eqref{eq:gammagrad}, we have
\begin{align*}
\label{}
   |u(t,x)| &  \le \|A^*\|_{\infty}\|b\|_{2}\left(\int_{\Delta(0,1)} |(\nabla_{s,y}\Gamma^{A^*})( 0,y; t,x)|^2\, dy\right)^{1/2} \\
    &  \lesssim \left( \int_{\Delta({0},1)}\int_{-1}^1 |(\nabla_{s,y}\Gamma^{A^*})( s,y; t,x)|^2\, dsdy\right)^{1/2}
    \\
    &
    \lesssim   |(t,x)|^{-n+1-\mu_{0}}.
\end{align*}
Remark that a similar strategy gives (bad but finite) pointwise bounds for $D_{t}b(x)$ for $(t,x)$ not in the support of $b$, showing that $u$ is well-defined. As we shall see, \eqref{eq:size} is all we need to run the Kenig-Pipher method.  

Fix $k\ge 2$, $x\in C_{k}$. We estimate $ t^{-(1+n)}{}\int_{W(t,x)} |\nabla u|^2$. If $t\ge 2^k$, by  Caccioppoli inequality and \eqref{eq:size}
$$ 
 t^{-(1+n)}{}\int_{W(t,x)} |\nabla_{A} u|^2 \le C t^{-(3+n)}{}\int_{\widetilde W(t,x)} |u|^2 \lesssim 2^{-2k(n+\mu_{0})},
$$ 
with $\widetilde W(t,x)$ a slightly enlarged version of $W(t,x)$.  

It remains to consider the case $t<2^k$.  The  argument of \cite{KP}, Lemma 8.10, p.~494, yields the estimate  for some $C$ depending only on ellipticity and dimension, 
$$ 
\sup_{t<2^k} t^{-(1+n)}{}\int_{W(t,x)} |\nabla_{A} u|^2 \le C \sup_{t\le (1+c_{0})2^k, x\in \widetilde C_{k}} |\pd_{t}u(t,x)|^2 + C M(|\nabla_{x}u(0,\cdot)|^q 1_{\widetilde C_{k}})(x)^{2/q},
 $$  
  for some $q<2$ (coming from usage of Poincar\'e inequalities in $\R^n$), where $M$ is the Hardy-Littlewood maximal operator and $\widetilde C_{k}$ is the union of all $\Delta(x,c_{1}t)$ for $x\in C_{k}$ and $t<2^k$. Thus if $c_{1}$ in the definition of $W(t,x)$ is chosen small to start with, $\widetilde C_{k}$ is an annulus  at distance proportional to $2^k$ from the support of $b$ of the form $c_{3}2^k \le |x|\le c_{4}2^k$. 
  As $A$ is $t$-independent, we have that $\pd_{t}u$ is also a solution. Moser's local estimate \eqref{eq:Mos}, Caccioppoli inequality and \eqref{eq:size} imply that 
   \begin{align*}
\label{}
    \sup_{t\le (1+c_{0})2^k, x\in \widetilde C_{k}} |\pd_{t}u(t,x)|^2 &
    \lesssim  2^{-k(n+1)}\int_{\check C_{k}}\int_{-(1+c_{0})2^{k+1/2}}^{(1+c_{0})2^{k+1/2}} |\pd_{t}u(t,x)|^2\, dtdx
    \\ 
    &\lesssim 2^{-k(n+3)} \int_{\check C_{k}}\int_{-(1+c_{0})2^{k+1}}^{(1+c_{0})2^{k+1}} |u(t,x)|^2\, dtdx 
    \\
    & \lesssim    2^{-2k(n+\mu_{0})},
      \end{align*}
 where  $\widetilde C_{k} \subsetneq \check C_{k} \subsetneq \hat C_{k}$ are annuli of the form $|x|\sim 2^k$. 
  
  For the last term, we have by the Hardy-Littlewood theorem,
 $$\| M(|\nabla_{x}u(0,\cdot)|^q 1_{\widetilde C_{k}})(x)^{2/q}\|_{L^2(C_{k})}^2 \le C \int_{\widetilde C_{k}} |\nabla_{x}u(0,\cdot)|^2\, dx,$$
 and since $u$ is a weak solution of $L$ away from the support of $b$ and $A$ has $t$-independent coefficients, we have by Proposition 2.1 in \cite{AAAHK} and then Caccioppoli inequality
 \begin{align*}
 \int_{\widetilde C_{k}} |\nabla_{x}u(0,x)|^2\, dx &
 \lesssim 2^{-k}\int_{\widetilde C_{k}}\int_{-2^k}^{2^k} |\nabla_{t,x}u(t,x)|^2\, dtdx 
 \\
 &
 \le 2^{-3k}\int_{\check C_{k}}\int_{-2^{k+1}}^{2^{k+1}} |u(t,x)|^2\, dtdx 
 \\
 &
 \lesssim 2^{-k(n+2\mu_{0})},
  \end{align*}
  where $\check C_{k}$ is again a slightly larger version of $\widetilde C_{k}$.
  
  Gathering all the estimates we have obtained that $\|\tN (\nabla_{A} \mD_{t}b)\|_{L^2(C_{k})} \le C2^{-k(n/2 +\mu_{0})}$ as desired. 
  
  Let us present the proof for the single layer.  By \cite{MSV}, Theorem 4.1,  if $p=1$, and \cite{YZ}  if $p \le 1$,  and interpolation,  it is enough to prove the bound for $2$-atoms for $H^p(\R^n;\C^m)$ for $p$ as above.  

Let  $a$ be a $2$-atom for  $H^p(\R^n;\C^m)$, with $a$  supported in a surface ball $\Delta(x_{0},r)$.  
Again, we assume  that $x_{0}=0$ and $r=1$ to simplify the exposition. We let $\Delta_{k}=\Delta(0,2^k)$ and $C_{k}=\Delta_{k+1}\setminus \Delta_{k}$ for $k\in \N$.  We have
\begin{align*}
\|\tN (\nabla_{A} \mS_{t}a)\|_{L^p(\Delta_{2})} &\le |\Delta_{2}|^{1/p-1/2}  \|\tN (\nabla_{A} \mS_{t}a)\|_{L^2(B_{2})}
\\ &\le C \|a\|_{2} |\Delta_{2}|^{1/p-1/2} \le C 4^{n(1/p-1/2)}.
\end{align*}

As above, it is  enough to show   for $u(t,x)=S_{t}a(x)$, which is  is a weak solution of $\divv A \nabla$ for $(t,x) \in \R^{1+n}$ away from the support of $a$,  the estimate
\begin{equation}
\label{eq:size2}
|u(t,x)|\lesssim |(t,x)|^{-n+1-\mu_{0}},  \ |(t,x)|\ge 2.
\end{equation}
Indeed, we know from Proposition \ref{prop:rosen} that $\nabla_{A}\mS_{t}a=\nabla_{A}S_{t}a$ for $t>0$. By the mean value of $a$ we can write 
$$
u(t,x)= \int_{\Delta(0,1)} (\Gamma^A( t,x; 0,y) - \Gamma^A( t,x; 0,0)) a(y)\, dy$$ and conclude using \eqref{eq:gammaholder}.
  \end{proof}

\section{Extrapolation of solvability for regularity and Neumann problems}\label{sec:extra}

We are now ready to attack the extrapolation for solvability by gathering all pieces of information obtained so far. 

Let $\divv A \nabla $ be a uniformly  complex elliptic system with $A(\bx)$ measurable, bounded, $t$-independent on $\R^{1+n}$ with the strict G\aa rding inequality     on $\mH^0$, namely \eqref{eq:accrassumption}.

In addition, assume that $\divv A \nabla $ and its adjoint satisfy the De Giorgi condition. Consider the reflected matrix $A^\sharp$. It is no longer with $t$-independent coefficients. However, it is easy to see that it does satisfy the G\aa rding inequality  \eqref{eq:garding} on $\ree$. We also assume that the second order system with matrix $ A^\sharp $ and its adjoint satisfy the De Giorgi condition.  We call $\mu_{DG}\in (0,1]$ the best exponent that works for all 4 operators. 

With these conditions, all results in prior sections apply. Again, this situation covers   dimension $1+n=2$   or dimensions $1+n\ge 3$ with $A$  close in $L^\infty$ to a real and scalar matrix.

Here is a fact we are going to use. Let $\frac{n}{n+1}<p\le 2$. It is shown in \cite{HMiMo} (Lemma 6.1) that any  weak solution $u$ of $\divv A \nabla u=0$
with $\|\tN(\nabla_{A}u)\|_{p}<\infty$ admits a conormal gradient at the boundary in $H^p(\R^n; (\C^m)^{1+n})$  with $\|\nabla_{A}u|_{t=0}\|_{H^p
} 
\lesssim \|\tN(\nabla_{A}u)\|_{p}$ and  that $\nabla_{A}u(t,\cdot)$ converges in the sense of distributions to $\nabla_{A}u|_{t=0}$ as $t\to 0$.  The implicit constant depends only on the $L^\infty$ bound for $A$ and dimension.   In particular, if $u$ is also an energy solution, then the two notions of conormal gradients at the boundary must coincide from the convergence in the sense of distributions.

As mentioned in the introduction,  we shall restrict our attention to solvability exponents not exceeding 2. See \cite{HKMP2} for the Regularity problem for exponents exceeding 2.    See also the forthcoming \cite{AS}. 
 
\subsection{Regularity problem}

Slightly modifying  the original approach of \cite{KP}, we say that the Regularity problem $ (R^p_{A})$ is solvable if  there exists $C_{p}<\infty$ such that
for any $f\in H^p_{\nabla}(\R^n; (\C^m)^n)\cap \dot H^{-1/2}_{\nabla}(\R^n; (\C^m)^n)$ 
the energy solution $u$ of $\divv A \nabla u=0$ with regularity data $\nabla_{x}u|_{t=0}=f $ satisfies 
$$
\|\tN(\nabla_{A}u)\|_{p}\le C_{p} \|f\|_{H^p_{\nabla}(\R^n; (\C^m)^n)}.
$$
There is a difference between solvability and well-posedness in the class where $\|\tN(\nabla_{A}u)\|_{p}<\infty$ given the data $f$. See the discussion in \cite{HKMP2} about uniqueness. Axelsson \cite{Ax} also showed by an explicit example for a real equation in dimension $1+n=2$ that there might be solutions not in the energy class, even for very smooth data, while the energy solution does not satisfy this bound.  

Solvability implies well-posedness of the following restricted problem: given $f\in H^p_{\nabla}(\R^n; (\C^m)^n)$, there exists a unique solution $u$ of $\divv A \nabla u=0$ with $
\|\tN(\nabla_{A}u)\|_{p}\le C_{p} \|f\|_{H^p_{\nabla}(\R^n; (\C^m)^n)}$,
$\nabla_{x}u|_{t=0}=f$ and such that there exists a sequence of energy solutions $u_{k}$ with 
$\|\tN(\nabla_{A}u- \nabla_{A} u_{k})\|_{p} \to 0$. The constant $C_{p}$ is the one specified by solvability assumption. This follows from density of $H^p_{\nabla}(\R^n; (\C^m)^n)\cap \dot H^{-1/2}_{\Delta}(\R^n; (\C^m)^n)$ in $H^p_{\nabla}(\R^n; (\C^m)^n)$. This fact, which is just a reformulation of the  extension by continuity for linear maps, is left to the reader. 

\begin{thm}\label{thm:extrareg} Assume that $A$ is as specified at the beginning of the section with $\mu_{DG}\in (0,1]$.  Let $p_{DG}= \frac{n}{n+\mu_{DG}}$.  Let $1<r\le 2$. Assume that  the Regularity problem $(R^r_{A})$ is solvable. Then the Regularity problem $(R^p_{A})$ is solvable for $p_{DG}<p<r$.\end{thm}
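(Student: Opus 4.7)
The strategy is to reduce solvability of $(R^p_A)$ to a one-sided Rellich inequality on the boundary and then extrapolate that inequality from $L^r$ down to $H^p$. By Theorem \ref{thm:main}, for every $p\in(p_{DG},2]$ and every energy solution $u$ one has $\|\tN(\nabla u)\|_p\lesssim \|\pd_{\nu_A}u_0\|_{H^p}+\|\nabla_x u_0\|_{H^p}$, so $(R^p_A)$ will follow once we prove
\[
\|\pd_{\nu_A}u_0\|_{H^p(\R^n;\C^m)}\lesssim \|\nabla_x u_0\|_{H^p_{\nabla}(\R^n;(\C^m)^n)}\qquad(\star_p)
\]
for energy solutions. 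The hypothesis $(R^r_A)$ combined with the reverse bound $\|\pd_{\nu_A}u_0\|_r\lesssim \|\tN(\nabla u)\|_r$ from \cite{KP} gives $(\star_r)$, so the task reduces to extrapolating $(\star_r)$ to $(\star_p)$ for all $p_{DG}<p<r$.

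For $p_{DG}<p\le 1$, Theorem \ref{thm:boundaryregimprovhardy} reduces $(\star_p)$ to its validity on energy solutions whose regularity data is a smooth $2$-atom $a=\nabla b$ for $H^p_{\nabla}$ supported in $\Delta_0=\Delta(x_0,R)$. Fixing the constant so that $u_0=b$, I will show that $\mu:=\pd_{\nu_A}u_0$ is a bounded multiple of an $H^p$-molecule centered at $(x_0,R)$. Cancellation follows from pairing \eqref{eq:conormal} with a standard truncation of the constant $1$; local control $\|\mu\|_{L^r(2\Delta_0)}\lesssim R^{-n(1/p-1/r)}$ is immediate from $(\star_r)$ and the atom bound $\|a\|_{L^r}\lesssim R^{-n(1/p-1/r)}$. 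The core step is the annular decay on $C_k:=\Delta(x_0,2^{k+1}R)\setminus\Delta(x_0,2^kR)$ for $k\ge 2$. Since $u_0\equiv 0$ on $C_k$, Lemma \ref{lemma3} together with the interpolation $\|b\|_{\dot H^{1/2}}\le C\|b\|_2^{1/2}\|\nabla b\|_2^{1/2}$ and the atom bounds on $b$ yield, for any $\mu<\mu_{DG}$,
\[
|u(\bx)|\lesssim \frac{R^{n+\mu-n/p}}{|\bx-\bx_0|^{n-1+\mu}},\qquad |\bx-\bx_0|\ge 3R.
\]
The De Giorgi condition for $A^{\sharp}$ allows us to odd-reflect $u$ across $\R^n$ near $C_k$; boundary Caccioppoli on balls $B_+(x,2^{k-3}R)$ with $x\in C_k$ then gives bulk $L^2$-bounds on $\nabla u$, and a slicing-in-$t$ argument (justified by the boundary regularity that this reflection provides) produces
\[
\|\mu\|_{L^2(C_k)}\lesssim R^{n/2-n/p}\,2^{-k(n/2+\mu)}.
\]
This is the decay of an $H^p$-molecule precisely when $n/2+\mu>n(1/p-1/2)$, i.e.\ $p>n/(n+\mu)$, and choosing $\mu$ close enough to $\mu_{DG}$ covers the full range $p>p_{DG}$. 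Assembling the three ingredients yields $(\star_p)$ for $p\le 1$.

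For $1<p<r$, the Dirichlet-to-Neumann map $\Gamma_{DN}:\nabla_x u_0\mapsto \pd_{\nu_A}u_0$ is by now known to extend boundedly from $H^{p_0}_{\nabla}$ into $H^{p_0}$ for some fixed $p_0\in(p_{DG},1)$ (by the previous step) and from $L^r_{\nabla}$ into $L^r$ (from $(\star_r)$). Since $H^q_{\nabla}$ and $L^q_{\nabla}$ are complemented subspaces of $H^q$ and $L^q$ respectively, via the Riesz projection $\mR\mR^*$, (complex or real) interpolation transfers these endpoint bounds to $L^p_{\nabla}\to L^p$ for every $p_0<p<r$, giving $(\star_p)$ throughout the required range and completing the extrapolation.

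The main obstacle will be the annular decay estimate in the atomic step: it requires carefully chaining the pointwise decay from Lemma \ref{lemma3}, boundary Caccioppoli (valid because $u_0\equiv 0$ on $C_k$), and the boundary regularity provided by the De Giorgi condition for $A^{\sharp}$ up to the trace, with a careful book-keeping of exponents that is precisely what pins down the sharp range $p>p_{DG}$.
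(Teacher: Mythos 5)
Your overall scaffolding matches the paper: reduce $(R^p_A)$ to the one-sided Rellich inequality $(\star_p)$ via Theorem \ref{thm:main}, handle $p_{DG}<p\le 1$ by showing the conormal derivative $\alpha=\pd_{\nu_A}u_0$ of an energy solution with a 2-atom $a=\nabla b$ as regularity data is an $H^p$-molecule, and cover $1<p<r$ by interpolating the Dirichlet-to-Neumann map. The local estimate via $(\star_r)$ and the interpolation step are essentially the paper's. But the core step --- the annular decay of $\alpha$ on $C_k$ --- is where your route diverges from the paper's, and it has a genuine gap.

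You propose to bound $\|\alpha\|_{L^2(C_k)}$ directly by odd-reflecting $u$ across the boundary, applying boundary Caccioppoli, and then ``slicing in $t$'' to read off the trace $\alpha(x)=\pd_{\nu_A}u(0,x)$ in $L^2(C_k)$. This step is not justified. The De Giorgi/Nash condition for $A^{\sharp}$ gives H\"older continuity of $u$ up to the boundary (and of the reflected $u^\sharp$), but says nothing about continuity of $\nabla u$ in $t$, so there is no reason the bulk $L^2$-bound on $\nabla u$ obtained from Caccioppoli transfers to an $L^2$-bound on the boundary trace of the conormal derivative; the natural trace of an $\dot H^1$ gradient is $\dot H^{-1/2}$, not $L^2$. (In the upper half-space one can use $t$-independence to make $\pd_t u$ a solution, but the reflected coefficients $A^\sharp$ are $t$-dependent, so $\pd_t u^\sharp$ does not solve the reflected equation and the interior Moser trick is unavailable across $t=0$.) Notice also that your annular decay never invokes the hypothesis $(R^r_A)$ at all --- it is used only for the local piece on $2\Delta_0$ --- whereas the conclusion should certainly depend on the solvability hypothesis far from $\Delta_0$ as well; the paper explicitly remarks that the solvability assumption is needed ``even to get the molecular decay.''

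The paper avoids this trap by a duality argument that never requires any pointwise or local $L^2$ control of $\alpha$ directly. To get the decay on a ball $\Delta\subset C_k$, it tests $\alpha$ against $g\in C_0^\infty(\Delta)$, $\|g\|_{r'}\le 1$, lets $w$ be the adjoint energy solution with $w_0=g$, and uses Lemma \ref{lemma:abstract Green} to rewrite $\langle\alpha, g\rangle=\langle u_0, \pd_{\nu_{A^*}}w_0\rangle$. The solvability $(R^r_A)$ enters through Theorem \ref{thm:boundaryreg} to give $\|\pd_{\nu_{A^*}}w_0\|_{\dot W^{-1,r'}}\lesssim 1$, and then Corollary \ref{cor:BL} yields $\sup_t\|w(t,\cdot)\|_{r'}\lesssim 1$. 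The local-boundary estimate Proposition \ref{prop:localboundaryreg} then reduces $|\langle\alpha,g\rangle|$ to interior $L^2$ integrals of $u$ and $w$ over the Whitney annulus $\Omega_+$, which are controlled by the pointwise decay of $u$ from Lemma \ref{lemma3} and H\"older's inequality for $w$. All the boundary quantities are treated a priori (as distributions); no boundary $L^2$ trace of $\nabla u$ is ever required. To salvage your proof you would need to replace the slicing-in-$t$ step with this dual pairing, bringing in the adjoint solution $w$ and the $\dot W^{-1,r'}$ bound for its conormal derivative that $(R^r_A)$ supplies.
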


\begin{cor}\label{cor:postivereg} This theorem applies to the following situations for $A$ in addition to having the assumption at the beginning of the section ($t$-independence, ellipticity and De Giorgi conditions): \begin{enumerate}
  \item  $A$ is  constant  plus $t$-independent $L^\infty$ perturbation
  \item $A$  is hermitian plus $t$-independent $L^\infty$ perturbation
  \item $A$ is block upper-triangular plus $t$-independent $L^\infty$ perturbation
  \item $A$ real (non necessarily symmetric) and scalar plus $t$-independent $L^\infty$ perturbation
\end{enumerate}
\end{cor}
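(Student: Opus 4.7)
The plan is to verify, in each of the four cases, the two standing hypotheses of Theorem~\ref{thm:extrareg}: first, that the De Giorgi condition holds for the four operators $\divv A\nabla$, $\divv A^*\nabla$, $\divv A^\sharp\nabla$ and $\divv (A^\sharp)^*\nabla$ on $\ree$; second, that the Regularity problem $(R^r_A)$ is solvable for some $r\in(1,2]$. Once both are in place, Theorem~\ref{thm:extrareg} gives $(R^p_A)$ for all $p_{DG}<p<r$ immediately.

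For the De Giorgi conditions, I would first observe that in dimensions $1+n=2$ or $1+n=3$ nothing extra is needed: as recalled in Section~5, De Giorgi/Nash holds for any $t$-independent bounded measurable elliptic system in these dimensions, and the reflected matrix $A^\sharp$ is $t$-independent on each half-space, which is the regime in which the argument of \cite{AAAHK} applies. In higher dimensions I would handle each unperturbed case separately -- constant coefficients via Schauder theory; hermitian coefficients via the classical De Giorgi-Nash theory for such systems; block upper-triangular via reduction to its diagonal blocks, which are themselves in one of the preceding classes; and real scalar via De Giorgi's original theorem -- and then invoke the $L^\infty$-stability of the De Giorgi condition (following \cite{A}) to pass to small $t$-independent perturbations. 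In each case $A^\sharp$ inherits the relevant structural property (up to harmless sign flips on the mixed $(0,i)$ coefficients), so the same arguments apply to the reflected operators and their adjoints.

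For the $L^r$ solvability input, I would appeal to known results: when $A$ is constant, hermitian, or block upper-triangular plus small $L^\infty$ perturbation, solvability of $(R^2_A)$ follows from the Kato/functional calculus program (\cite{AAMc}, \cite{AAAHK}, \cite{AR}), via the invertibility of the relevant boundary map built from $\dirac B$ in $L^2$, together with openness of invertibility in $L^2$ to accommodate the perturbation; when $A$ is real scalar (possibly non-symmetric), solvability of $(R^r_A)$ for some $r\in(1,2]$ is the main result of \cite{HKMP1}, and the quantitative perturbation technology built there delivers the same with small $t$-independent $L^\infty$ perturbation, possibly with a slightly worse $r$.

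The step that needs the most care is case~(4): the exponent $r$ coming from \cite{HKMP1} is in general strictly below $2$, so one must record its explicit value to conclude that it lies in $(1,2]$ and that the output range $(p_{DG}, r)$ is non-empty. One must also ensure that the perturbation is taken small enough to preserve solvability and the De Giorgi condition simultaneously, i.e., the same $L^\infty$ threshold controls both inputs of Theorem~\ref{thm:extrareg}; this is routine but is the only point where the two ingredients interact.
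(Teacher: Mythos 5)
Your overall strategy -- verify the two hypotheses of Theorem~\ref{thm:extrareg} and then invoke it -- is the same as the paper's, but two points need correction.

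First, you spend most of your argument establishing the De Giorgi conditions for the four operators, but the statement of the corollary explicitly lists De Giorgi as one of the standing hypotheses (``in addition to having the assumption at the beginning of the section''), so nothing there needs to be proved: the only genuine task is to verify that $(R^r_A)$ is solvable for some $1<r\le2$. Moreover, part of your verification is actually false: for systems in dimension $1+n\ge4$, hermitian (or real symmetric, or block upper-triangular) coefficients do \emph{not} in general satisfy De Giorgi--Nash, which is precisely why the corollary keeps De Giorgi as a separate hypothesis instead of deducing it from the structural form of $A$. The paper's only remark on this point is that \cite{A} gives $L^\infty$-stability of the De Giorgi condition, so that if the unperturbed matrix satisfies it then so does the perturbed one; it is not deducing De Giorgi from constant/hermitian/block/real-scalar structure.

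Second, for item (4) your reference is off: \cite{HKMP1} treats the Dirichlet problem for real non-symmetric operators, not the Regularity problem; the paper's source for $(R^r_A)$ when $1+n\ge3$ is \cite{HKMP2}, which handles the Regularity problem for complex coefficients (and so already accommodates the $t$-independent $L^\infty$ perturbation). You also omit the two-dimensional case, where the paper instead invokes \cite{KKPT}, \cite{KR}, and \cite{B} (the latter also giving $(R^1_A)$). For items (1)--(3) your account is essentially the paper's: $(R^2_A)$ for constant or hermitian $A$, and its stability under $t$-independent $L^\infty$ perturbation, come from \cite{AAMc}; the block upper-triangular case comes from \cite{AMcM} with the block diagonal subcase a consequence of \cite{AHLMcT}.
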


\begin{proof} 
We know from \cite{A} that De Giorgi assumption is stable under $L^\infty$ perturbations. It suffices to show that $(R^r_{A})$ is solvable in the four items for some $1<r\le 2$. 
 From \cite{AAMc}, we also know that $(R^2_{A})$ is stable under $t$-independent $L^\infty$ perturbation of $A$ and is verified for $A$ constant or hermitian (for real symmetric scalar $A$, this was done in \cite{KP}), while \cite{AMcM} proves $(R^2_{A})$ for $A$  block upper-triangular (the block diagonal case is a direct consequence of \cite{AHLMcT}). Hence, the first three items  satisfy $(R^2_{A})$. The fourth item is shown on combining \cite{KKPT, KR} and \cite{B} (who also shows $(R^1_{A})$) if $n+1=2$ and \cite{HKMP2} for $n+1\ge 3$. 
\end{proof}

\begin{rem}
The block upper-triangular case can be slightly relaxed. Instead of the lower coefficient $c$ to be 0, we may only assume $\divv c=0$. See \cite[Remark 6.7]{AMcM}.\end{rem}

\begin{lem}\label{lem:equiv} Let $p_{DG}<p\le 2$. Then $(R^p_{A})$ is solvable if and only if there exists $C_{p}<\infty$ such that for any $u\in \mE$ solution of $\divv A  \nabla u=0$, \begin{equation}
\label{eq:1}
\|\pd_{\nu_{A}}u|_{t=0}\|_{H^p(\R^n; \C^m)}\le C_{p}\|\nabla_{x} u|_{t=0}\|_{H^p_{\nabla}(\R^n; (\C^m)^n)}.
\end{equation}
\end{lem}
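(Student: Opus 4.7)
The plan is to combine, almost mechanically, four ingredients that are now in hand: (a) the forward \textit{a priori} bound of Theorem \ref{thm:main}; (b) the reverse \textit{a priori} bound $\|\nabla_{A}u|_{t=0}\|_{H^p}\lesssim \|\tN(\nabla_{A}u)\|_{p}$ from \cite{HMiMo} recalled at the start of this section; (c) Lemma \ref{lemma2} for the existence of an energy solution with prescribed Dirichlet data; and (d) the pointwise comparison $|\nabla u|\sim|\nabla_{A}u|$. Once one checks that the two a priori notions of boundary conormal derivative agree for an energy solution, both implications are one-liners.

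For the direction ``$(R^p_{A})$ solvable implies \eqref{eq:1}'', I take any $u\in \mE$ with $\divv A\nabla u=0$ and assume the right-hand side of \eqref{eq:1} is finite (otherwise there is nothing to prove), so that $f:=\nabla_{x}u|_{t=0}\in H^p_{\nabla}(\R^n;(\C^m)^n)$. Since $u\in\mE$ the trace also lies in $\dot H^{1/2}$, whence $f\in \dot H^{-1/2}_{\nabla}(\R^n;(\C^m)^n)$, so $f$ is admissible regularity data for the problem $(R^p_{A})$, and $u$ is the energy solution associated with $f$. Solvability then gives $\|\tN(\nabla_{A}u)\|_{p}\le C_{p}\|f\|_{H^p_{\nabla}}$, and applying the \cite{HMiMo} bound together with the pointwise inequality $|\pd_{\nu_{A}}u|_{t=0}|\le|\nabla_{A}u|_{t=0}|$ yields \eqref{eq:1}.

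For the converse, assume \eqref{eq:1} and let $f\in H^p_{\nabla}\cap \dot H^{-1/2}_{\nabla}$. By definition of $\dot H^{-1/2}_{\nabla}$ there is $g\in \dot H^{1/2}(\R^n;\C^m)$ with $\nabla g=f$; realizing $g$ as an $L^2_{\loc}$ function, Lemma \ref{lemma2} produces an energy solution $u$ of $\divv A\nabla u=0$ with $u|_{t=0}=g$, and consequently $\nabla_{x}u|_{t=0}=f$. Applying the hypothesis \eqref{eq:1} to $u$ gives
$$\|\pd_{\nu_{A}}u|_{t=0}\|_{H^p(\R^n;\C^m)}\lesssim \|f\|_{H^p_{\nabla}(\R^n;(\C^m)^n)}.$$
Since $\mE=\mE_{-1/2}$, Theorem \ref{thm:main} applies to $u$ and combines with the previous display to produce
$$\|\tN(\nabla u)\|_{p}\lesssim \|\pd_{\nu_{A}}u|_{t=0}\|_{H^p}+\|f\|_{H^p_{\nabla}}\lesssim \|f\|_{H^p_{\nabla}}.$$
The pointwise equivalence $|\nabla u|\sim |\nabla_{A}u|$ then turns this into $\|\tN(\nabla_{A}u)\|_{p}\lesssim \|f\|_{H^p_{\nabla}}$, which is exactly the solvability bound for $(R^p_{A})$.

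There is no real obstacle beyond book-keeping: all the hard analytic work sits in Theorem \ref{thm:main} and the \cite{HMiMo} bound. The single delicate point to flag is the consistency of the two notions of $\pd_{\nu_{A}}u|_{t=0}$ used above, namely the abstract $\dot H^{-1/2}$-trace of Lemma \ref{lemma:abstract Green} and the $H^p$-limit from \cite{HMiMo} valid when $\|\tN(\nabla_{A}u)\|_{p}<\infty$; the paragraph preceding this subsection ensures that for $u\in\mE$ these two boundary objects coincide as distributional limits, so \eqref{eq:1} is unambiguous.
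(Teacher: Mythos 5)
Your proof is correct and follows essentially the same route as the paper's: the forward direction uses the solvability hypothesis together with the \cite{HMiMo} bound $\|\nabla_{A}u|_{t=0}\|_{H^p}\lesssim\|\tN(\nabla_{A}u)\|_{p}$, and the converse feeds \eqref{eq:1} into Theorem~\ref{thm:main}. The paper's argument is a two-sentence version of the same reasoning; you have merely spelled out the bookkeeping (the admissibility of $f=\nabla_x u|_{t=0}$ as data, the realization of $g$ in $L^2_{\loc}$ for Lemma~\ref{lemma2}, and the consistency of the two notions of conormal derivative that the paper flags in the paragraph preceding the subsection).
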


\begin{proof} Let $u$ be the energy solution with regularity data $f=\nabla_{x} u|_{t=0}$. Let $\alpha=\pd_{\nu_{A}}u|_{t=0}$.  The solvability of $(R^p_{A})$, together with 
$\|\nabla_{A}u|_{t=0}\|_{H^p(\R^n;(\C^m)^{1+n})} \lesssim \|\tN(\nabla_{A}u)\|_{p}$, implies the desired inequality. Conversely, assuming this estimate for any energy solution, we can use Theorem \ref{thm:main}  to conclude that $
\|\tN(\nabla_{A}u)\|_{p}\lesssim C_{p} \|\nabla_{x} u|_{t=0}\|_{H^p_{\nabla}(\R^n; (\C^m)^n)}$, hence $(R^p_{A})$ is solvable.
\end{proof}

\begin{proof}[Proof of Theorem \ref{thm:extrareg}]  
Let us begin with the case $p_{DG}<p\le 1$. By Lemma \ref{lem:equiv} and Theorem \ref{thm:boundaryregimprovhardy}, it suffices to show that if
  $a$  is a 2-atom for $H^p_{\nabla}(\R^n; (\C^m)^n)$, then we obtain a uniform estimate $\|\alpha\|_{H^p(\R^n;\C^m)}\le C$ where $\alpha$ is the conormal derivative of the energy solution $u$ produced by the Dirichlet datum $b$ with $a=\nabla b$ as in the definition of 2-atoms for $H^p_{\nabla}(\R^n;\C^m)$. By scale invariance of our assumptions, we assume   that  $a$ and $b$ are supported in the surface ball $\Delta(0,1)$.   We shall show that $\alpha$ is a $r$-molecule for $H^p(\R^n;\C^m)$ (see below) with bound independent of $a$.  Hence there is a constant $C$ independent of $a$ such that $\|\alpha\|_{H^p(\R^n;\C^m)}\le C$ as desired.  
 
We now prove the $r$-molecule property for $\alpha$. As in the proof   of Theorem \ref{thm:main2}, 
 set  $\Delta_{k}=\Delta(0,2^k)$ and $C_{k}=\Delta_{k+1}\setminus \Delta_{k}$ for $k\in \N$.
 It suffices to show that $\|\alpha\|_{L^r(\Delta _{2})} \lesssim 1$ and that $\|\alpha\|_{L^r(C_{k})} \lesssim 2^{-k\mu} 2^{-nk/r'}$ for some $0<\mu<\mu_{DG}$ with $p>\frac{n}{n+\mu}$ with $r'$ the conjugate exponent to $r$. Indeed, 
 $2^{-k\mu} 2^{-nk/r'}= 2^{-k\varepsilon} 2^{-nk(1/p-1/r)}$ for $\varepsilon=-\mu+ n(1/p-1)>0$ which is  the right decay for being in the Hardy space $H^p$.  The local estimate $\|\alpha\|_{L^r(\Delta _{2})} \lesssim 1$ follows from the global bound $\|\alpha\|_{r}\lesssim \|a\|_{r} \lesssim \|a\|_{2} \lesssim 1$ (here we use that the support of $a$ is contained in  $\Delta_{0}=\Delta(0,1)$). 
The main task is therefore to obtain the decay on $C_{k}$. Note that $C_{k}$ can be covered by boundedly (in $k$) many surface balls $\Delta$ with radius proportional to $2^k$ and with distance to $ \Delta_{0}$ proportional to $2^k$ and with $4\Delta \cap \Delta_{0}=\emptyset$. Thus it is enough to work on one of those. Let $g\in C_{0}^\infty(\Delta; \C^m)$ with $\|g\|_{r'} \le 1$. It suffices to estimate $\langle \alpha, g\rangle$. Let $w$ be the energy solution of $\divv A^*\nabla w=0$ on $\reu$ with $w|_{t=0}=g$ (Lemma \ref{lemma2}). Using Theorem \ref{thm:boundaryreg}, we deduce from $(R^r_{A})$ that
$$
\|\pd_{\nu_{A^*}}w|_{t=0}\|_{\dot W^{-1,r'}} \le C_{p'}\|\nabla g\|_{\dot W^{-1,r'}} \lesssim \|g\|_{r'}\le 1.$$
We deduce from the  representation of Corollary \ref{cor:BL} that $\sup_{t>0} \|w(t,\cdot)\|_{r'}\lesssim 1$. 
We now invoke \eqref{eq:localboundaryreg}, which tells
$$
|\langle \alpha, g \rangle| \le C  2^{-2k}\left({}\int_{\Omega_{+}} |u|^2\right)^{1/2}
\left({}\int_{\Omega_{+}} |w|^2\right)^{1/2},
$$
with $\Omega_{+}$  contained in a box $[0,c2^k]\times 3\Delta$. H\"older's inequality using $r'\ge 2$ yields
$$
\left({}\int_{\Omega_{+}} |w|^2\right)^{1/2} \le |\Omega_{+}|^{1/2-1/r'}  \left(\int_{0}^{c2^k} \|w(t,\cdot)\|^{r'}\right)^{1/r'} \lesssim 2^{k/r'} 2^{(1+n)k(1/2-1/r')}.
$$
Now for $u$ we use the decay estimate from Lemma \ref{lemma3} together with the observation that $ \|b\|_{\dot H^{1/2}}\lesssim (\|b\|_{2}\|\nabla b\|_{2})^{1/2}\lesssim 1$, to get $|u|\le 2^{-k(n-1+\mu)}$ on $\Omega_{+}$.  Working out the powers of $2^k$ we obtain the desired bound for $|\langle \alpha, g \rangle|$.

We now continue with the case $1<p<r$. Another way to reformulate Lemma \ref{lem:equiv}
is to say that the  Dirichlet to Neumann operator satisfies $ \|\Gamma_{DN}f\|_{H^r(\R^n; \C^m)}\le C_{r}\|f\|_{H^r_{\nabla}(\R^n; (\C^m)^n)}$ for all $f\in H^r_{\nabla}(\R^n; (\C^m)^n) \cap \dot H^{-1/2}_{\nabla}(\R^n; (\C^m)^n)$. Let $T_{r}$ be the  continuous extension from $H^r_{\nabla}(\R^n; (\C^m)^n)$ into $H^r(\R^n; \C^n)$.  We just showed a uniform estimate for $ \|\Gamma_{DN}a\|_{H^1(\R^n; \C^m)}$ when $a$ is a 2-atom for $H^1_{\nabla}(\R^n;\C^m)$, which are elements in $H^r_{\nabla}(\R^n; (\C^m)^n) \cap \dot H^{-1/2}_{\nabla}(\R^n; (\C^m)^n)$. Hence $T_{r}a=\Gamma_{DN}a$. We can apply  the same interpolation procedure as for (4) of Proposition \ref{lem:hpgrad}. Hence,  for $1<p<r$, we obtain
$T_{r}$ bounded from  $H^p_{\nabla}(\R^n; (\C^m)^n)$ into $H^p(\R^n; \C^m)$. In particular, we obtain  $ \|\Gamma_{DN}f\|_{H^p(\R^n; \C^m)}\le C_{p}\|f\|_{H^p_{\nabla}(\R^n; (\C^m)^n)}$ for all $f\in \mD_{\nabla}(\R^n;(\C^m)^n)$, that is 
$\|\pd_{\nu_{A}}u|_{t=0}\|_{H^p(\R^n; \C^m)}\le C_{p}\|\nabla_{x} u|_{t=0}\|_{H^p_{\nabla}(\R^n; (\C^m)^n)}$ for all energy solutions with smooth Dirichlet data. We conclude using (1) in Theorem
 \ref{thm:boundaryregimprov} to waive the restriction on the data and then Lemma \ref{lem:equiv} again.
 \end{proof}
  
  \begin{rem} When $p<1$, the solvability information is used to obtain the decay for $\alpha$ but in a dual way, not on the solution $u$ attached to $\alpha$. Note that this argument has the flavor of many of the different steps for Theorem 5.2 of  \cite{KP}. But the order in which they are invoked is completely different trying to use a priori estimates as much as possible. In particular,  we avoid the localization technique there and the recourse to solvability of dual Dirichlet problem \textit{per se}. We only use available \textit{a priori} estimates. This last point will be important later. 
\end{rem}

\subsection{Neumann problem}

Slightly modifying  the original approach of \cite{KP}, we say that the Neumann problem $(N^p_{A})$ is solvable if  there exists $C_{p}<\infty$ such that
for any $g\in H^p(\R^n; \C^m)\cap \dot H^{-1/2}(\R^n; \C^m)$ 
the (modulo constants) energy solution $u$ of $\divv A \nabla u=0$ with conormal derivative $\pd_{\nu_{A}}u|_{t=0}=h$ satisfies
$$
\|\tN(\nabla_{A}u)\|_{p}\le C_{p} \|h\|_{H^p(\R^n; \C^m)}.
$$
Here too, there is a difference between solvability and well-posedness in the class where $\|\tN(\nabla_{A}u)\|_{p}<\infty$ given the data $f$.    An explicit example for a real equation in dimension $1+n=2$ in \cite{Ax} shows that there might be solutions not in the energy class, even for very smooth data, while the energy solution does not satisfy this bound.  Note that in 2 dimensions, Neumann and Regularity problems are the same up to taking conjugates. 

Solvability implies  well-posedness of the following restricted  problem: given $h\in H^p(\R^n; \C^m)$, there exists a unique solution $u$ of $\divv A \nabla u=0$ with $
\|\tN(\nabla_{A}u)\|_{p}\le C_{p} \|h\|_{H^p_{\nabla}(\R^n; (\C^m)^n)}$,
$\pd_{\nu_{A}}u|_{t=0}=h$ and such that there exists a sequence of energy solutions $u_{k}$ with 
$\|\tN(\nabla_{A}u- \nabla_{A} u_{k})\|_{p} \to 0$. The constant $C_{p}$ is the one specified by solvability assumption. This follows from density of $H^p(\R^n; \C^m)\cap \dot H^{-1/2}(\R^n; \C^m)$ in $H^p(\R^n; \C^n)$. This fact is left to the reader. 

\begin{thm}\label{thm:extraneu} Assume that $A$ is as specified at the beginning of the section with $\mu_{DG}\in (0,1]$.  Let $p_{DG}= \frac{n}{n+\mu_{DG}}$.  Let $1<r\le 2$. Assume that  the Neumann problem $(N^r_{A})$ is solvable. Then the Neumann problem $(N^p_{A})$ is solvable for $p_{DG}<p<r$.\end{thm}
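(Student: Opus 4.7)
The plan is to mimic the proof of Theorem \ref{thm:extrareg} \emph{mutatis mutandis}, with the roles of Dirichlet and Neumann data swapped. First I would establish the exact analog of Lemma \ref{lem:equiv}: using Theorem \ref{thm:main} in the same way, $(N^p_{A})$ is solvable if and only if
$$
\|\nabla_x u|_{t=0}\|_{H^p_{\nabla}(\R^n;(\C^m)^n)}\le C_{p}\|\pd_{\nu_{A}}u|_{t=0}\|_{H^p(\R^n;\C^m)}
$$
holds for every energy solution. The task then splits into the Hardy range $p_{DG}<p\le 1$ and the Lebesgue range $1<p<r$, and the driver in each case is the Neumann-side duality, namely Theorem \ref{thm:boundaryneu}, in place of Theorem \ref{thm:boundaryreg}.

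For $p_{DG}<p\le 1$, Theorem \ref{thm:boundaryneuimprovhardy} reduces matters to proving a uniform bound on $\|\nabla_x u|_{t=0}\|_{H^p_{\nabla}}$ whenever the Neumann datum $\pd_{\nu_A}u|_{t=0}=a$ is a smooth $2$-atom of $H^p(\R^n;\C^m)$ supported (by scale invariance) in $\Delta_{0}=\Delta(0,1)$. The strategy is to show that $f:=\nabla_{x}u|_{t=0}$ is a (multiple of an) $r$-molecule for $H^p_{\nabla}(\R^n;(\C^m)^n)$, via the usual local $L^r$ bound on $\Delta_{2}$ (coming immediately from $(N^r_{A})$ applied to $a$, since $\|a\|_r\lesssim 1$) and a geometric-decay estimate $\|f\|_{L^r(C_{k})}\lesssim 2^{-k\mu}2^{-nk/r'}$ on annuli $C_{k}=\Delta_{k+1}\setminus \Delta_{k}$. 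To get this decay, I would test against $g\in C_{0}^\infty(\Delta;(\C^m)^n)$ with $\Delta\subset C_{k}$, $4\Delta\cap \Delta_{0}=\emptyset$, and $\|g\|_{r'}\le 1$, and write
$$
\langle f,g\rangle=-\langle u_{0},\divv g\rangle=\langle u_{0},\pd_{\nu_{A^*}}w_{0}\rangle,
$$
where $w$ is the energy solution of $\divv A^*\nabla w=0$ with $\pd_{\nu_{A^*}}w_{0}=-\divv g$ (Lemma \ref{lemmaN}). Because the supports of $a$ and of $\pd_{\nu_{A^*}}w_{0}$ are disjoint, Proposition \ref{prop:localboundaryneu} applies and reduces the task to bounding interior $L^2$ norms of $u$ and $w$ on a shell $\Omega_{+}$ of size $\sim 2^k$ away from $\Delta_{0}$. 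For $u$ I invoke Lemma \ref{lem:sizeNeumann} (with $\|a\|_{1}\lesssim 1$) to get $|u|\lesssim 2^{-k(n-1+\mu)}$ on $\Omega_{+}$; for $w$ I use Theorem \ref{thm:boundaryneu} to deduce from $(N^r_{A})$ that $\|\nabla w_{0}\|_{\dot W^{-1,r'}}\lesssim 1$, then Lemma \ref{lemma:inf} to fix the constant so that $\|w_{0}\|_{r'}\lesssim 1$, and finally Corollary \ref{cor:BL} (applicable since $r'\ge 2$) to conclude $\sup_{t\ge 0}\|w(t,\cdot)\|_{r'}\lesssim 1$. Combining these two sizes in Proposition \ref{prop:localboundaryneu} and chasing the exponents yields exactly the molecular decay $2^{-k\mu}2^{-nk/r'}$.

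For $1<p<r$, the above can be read as a uniform bound on $\Gamma_{ND}a$ in $H^1_{\nabla}$ over smooth 2-atoms of $H^1$ (after taking $p=1$ in the previous step). Interpolation between this endpoint and $(N^r_A)$, via the analog for $H^p$--$H^p_\nabla$ of part (4) of Lemma \ref{lem:hpgrad}, extends $\Gamma_{ND}$ boundedly from $H^p(\R^n;\C^m)$ to $H^p_{\nabla}(\R^n;(\C^m)^n)$ for $1<p<r$. This gives the desired inequality for energy solutions with smooth Neumann data, which Theorem \ref{thm:boundaryneuimprov}(i) then upgrades to arbitrary energy solutions, and the analog of Lemma \ref{lem:equiv} concludes.

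The main obstacle I anticipate is the $w$-estimate: in contrast with the Regularity proof, the dual object here is a solution whose Neumann (not Dirichlet) data has been controlled in $\dot W^{-1,r'}$, and one must convert this control into a uniform $L^{r'}$ bound on $w(t,\cdot)$ in order to feed Proposition \ref{prop:localboundaryneu}. The passage from the one-sided Rellich inequality in $\dot W^{-1,r'}$ to $\sup_t\|w(t,\cdot)\|_{r'}\lesssim 1$ is precisely what the Neumann--Neumann duality Theorem \ref{thm:boundaryneu} combined with the layer potential representation of Corollary \ref{cor:BL} is designed for, and it is the point where the De Giorgi hypothesis on $A^*$ (through the construction of layer potentials) enters in an essential way.
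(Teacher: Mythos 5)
Your proposal is correct and follows essentially the same route as the paper: the Neumann analog of Lemma~\ref{lem:equiv}, reduction to a molecular bound on $\nabla_x u|_{t=0}$ via Theorem~\ref{thm:boundaryneuimprovhardy}, the disjoint-support estimate of Proposition~\ref{prop:localboundaryneu} fed by the $u$-decay of Lemma~\ref{lem:sizeNeumann} and the $w$-estimate obtained from Theorem~\ref{thm:boundaryneu}, Lemma~\ref{lemma:inf}, and Corollary~\ref{cor:BL}, then the interpolation argument for $1<p<r$. You even implicitly correct a typo in the paper, which cites Theorem~\ref{thm:boundaryregimprovhardy} where Theorem~\ref{thm:boundaryneuimprovhardy} is clearly intended.
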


\begin{cor}\label{cor:postiveneu} This theorem applies to the following situations for $A$ in addition to having the assumption at the beginning of the section ($t$-independence, ellipticity and De Giorgi conditions): \begin{enumerate}
  \item  $A$ is  constant  plus $t$-independent $L^\infty$ perturbation
  \item $A$  is hermitian plus $t$-independent $L^\infty$ perturbation
  \item $A$ is block lower-triangular plus $t$-independent $L^\infty$ perturbation
  \item $A$ real (non necessarily symmetric) and scalar if $1+n=2$ plus $t$-independent $L^\infty$ perturbation
\end{enumerate}
\end{cor}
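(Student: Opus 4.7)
The plan is to mirror closely the proof of Corollary \ref{cor:postivereg}. By the stability result of \cite{A}, the De Giorgi condition is preserved under $t$-independent $L^\infty$ perturbations (for $A$, $A^*$, and, by an inspection of the reflection, for $A^\sharp$ and $(A^\sharp)^*$ as well). Consequently the standing assumptions of Section \ref{sec:extra} are satisfied in each of the four items. It then suffices, for each item, to produce some exponent $1<r\le 2$ at which $(N^r_{A})$ is solvable, since Theorem \ref{thm:extraneu} will propagate solvability down to the range $p_{DG}<p<r$.

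For items (1)--(3) the plan is to take $r=2$. The $L^2$ solvability of the Neumann problem is stable under $t$-independent $L^\infty$ perturbations by \cite{AAMc}, so it is enough to verify $(N^2_{A})$ at the unperturbed matrix. For constant $A$ this is classical and can be obtained directly by Fourier transform on $\R^n$. For hermitian $A$, $(N^2_{A})$ is part of the same first-order package established in \cite{AAMc} that yields $(R^2_{A})$ there. For block lower-triangular $A$, $A^*$ is block upper-triangular, so $(R^2_{A^*})$ is provided by \cite{AMcM}; the Neumann counterpart for $A$ itself, $(N^2_{A})$, follows from the parallel first-order analysis of \cite{AMcM} in the Neumann setting, with the block diagonal case reducing to the Kato square root estimate of \cite{AHLMcT}.

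For item (4), in dimension $1+n=2$ I plan to use the standard reduction of the Neumann problem for a real scalar $A$ to a Regularity problem for the $A$-conjugate: given a solution $u$ of $\divv A\nabla u=0$ on $\reu$, a harmonic-type conjugate $v$ exists (up to a constant) with $\nabla v = J\,A\nabla u$, where $J$ is the standard $90^\circ$ rotation on $\R^2$. A direct computation shows $v$ solves $\divv \widetilde A\nabla v=0$ for some real scalar matrix $\widetilde A$ obtained algebraically from $A$ (still $t$-independent, and inheriting the De Giorgi and ellipticity properties), and that the boundary identity $\pd_{\nu_A}u|_{t=0} = -\pd_{x}v|_{t=0}$, as well as $|\nabla_A u|\sim|\nabla_{\widetilde A} v|$ pointwise, hold. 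Hence solvability of $(N^r_{A})$ is equivalent to solvability of $(R^r_{\widetilde A})$, and the latter is furnished by Corollary \ref{cor:postivereg}(4).

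The main obstacle I anticipate is item (3): I must check that the argument of \cite{AMcM} for block upper-triangular matrices transfers to the block lower-triangular setting with the Neumann data rather than the Regularity data. A safety net, should this prove awkward, is to combine $(R^2_{A^*})$ from \cite{AMcM} with the Neumann--Neumann duality of Theorem \ref{thm:boundaryneu} — but the latter yields a Neumann bound only in the $\dot W^{-1,p'}$ scale, so converting it back to an $L^2$ Neumann estimate for $A$ would require an additional self-duality consideration at $p=p'=2$, which is precisely the symmetric point where the argument is cleanest.
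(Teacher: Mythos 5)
Your primary route matches the paper's in all four items: De Giorgi stability under $L^\infty$ perturbation via \cite{A}, $L^2$ solvability of the Neumann problem stable under $t$-independent $L^\infty$ perturbation via \cite{AAMc}, with $(N^2_A)$ verified for constant or hermitian $A$ in \cite{AAMc}, block lower-triangular $A$ in \cite{AMcM} (block diagonal reducing to \cite{AHLMcT}), and the two-dimensional real scalar case from the conjugate-function theory. The paper dispatches item~(4) by citing \cite{KR} and \cite{B} directly; your explicit reduction via the $A$-conjugate $v$ with $\nabla v = J A\nabla u$ to $(R^r_{\widetilde A})$ and then Corollary~\ref{cor:postivereg}(4) is what those references implement under the hood, so this is the same mathematics with the change of variables spelled out.

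One caution on your ``safety net'' for item~(3): it does not actually work, and it is worth understanding why. Theorem~\ref{thm:boundaryneu} is a \emph{Neumann--Neumann} duality: the $(N^p_A)$ Rellich estimate $\|\nabla u_0\|_p\lesssim\|\pd_{\nu_A}u_0\|_p$ is equivalent to the $\dot W^{-1,p'}$ \emph{Neumann} estimate $\|\nabla w_0\|_{\dot W^{-1,p'}}\lesssim\|\pd_{\nu_{A^*}}w_0\|_{\dot W^{-1,p'}}$ for the dual system. Neither side of that equivalence is $(R^2_{A^*})$, which is the opposite-direction inequality $\|\pd_{\nu_{A^*}}w_0\|_2\lesssim\|\nabla w_0\|_2$. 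There is no Regularity--Neumann duality in the paper, and specializing to $p=p'=2$ does not collapse these: $\dot W^{-1,2}=\dot H^{-1}\ne L^2$, so the two estimates live in different scales and point in different directions. The fact that $A^*$ is block upper-triangular and hence $(R^2_{A^*})$ holds is true but unused. Fortunately your primary assertion — that \cite{AMcM} proves $(N^2_A)$ directly for block lower-triangular $A$, via the first-order splitting when $b=0$ — is exactly what the paper invokes (the paper's proof text writes ``block upper-triangular'' there, which is a copy-paste slip from Corollary~\ref{cor:postivereg}; the statement of the present corollary correctly says lower-triangular), so the safety net is not needed.
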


\begin{proof} It suffices to show that $(N^r_{A})$ is solvable in the four items for some $1<r\le 2$.
We know from\cite{A} that De Giorgi assumption is stable under $L^\infty$ perturbations.   From \cite{AAMc}, we also know that $(N^2_{A})$ is stable under $t$-independent $L^\infty$ perturbation of $A$ and is verified for $A$ constant or hermitian (for real symmetric scalar $A$, this was done in KP), while \cite{AMcM} proves $(N^2_{A})$ for $A$  block upper-triangular (the block diagonal case is a direct consequence of \cite{AHLMcT}). Hence, the first three items  satisfy $(N^2_{A})$. The fourth item is shown on combining \cite{KR} and \cite{B} (who also shows $(N^1_{A})$) as $n+1=2$. 
\end{proof}

We note that solvability of the Neumann problems for real (non-symmetric) equations is still open in dimensions $1+n\ge 3$.

\begin{lem} Let $p_{DG}<p\le 2$. Then $(N^p_{A})$ is solvable if and only if there exists $C_{p}<\infty$ such that for any $u\in \mE$ solution of $\divv A  \nabla u=0$, $\|\nabla_{x}u|_{t=0}\|_{H^p(\R^n; (\C^m)^n)}\le C_{p}\|\pd_{\nu_{A}}  u|_{t=0}\|_{H^p_{\nabla}(\R^n; \C^m )}$.
\end{lem}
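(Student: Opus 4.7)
The plan is to mirror, essentially verbatim, the proof of the regularity analogue in Lemma \ref{lem:equiv}, since the structure of the argument is purely formal once Theorem \ref{thm:main} and the trace bound from \cite{HMiMo} are in hand. The two ingredients I will rely on are: (a) for any weak solution $u$ with $\|\tN(\nabla_A u)\|_p<\infty$, the conormal gradient at the boundary lies in $H^p(\R^n;(\C^m)^{1+n})$ with norm controlled by $\|\tN(\nabla_A u)\|_p$, and when $u$ is additionally an energy solution this boundary trace agrees (by the distributional convergence of $\nabla_A u(t,\cdot)$) with the one defined via energy theory; and (b) the a priori non-tangential bound of Theorem \ref{thm:main}, which is valid precisely in the range $p_{DG}<p\le 2$.

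For the forward direction, assume $(N^p_A)$ is solvable and let $u\in\mE$ solve $\divv A\nabla u=0$. There is nothing to prove unless $h:=\pd_{\nu_A}u|_{t=0}$ belongs to $H^p(\R^n;\C^m)$, so assume this. Since $u$ is an energy solution we also have $h\in\dot H^{-1/2}(\R^n;\C^m)$, so $h\in H^p\cap\dot H^{-1/2}$ and the solvability hypothesis gives $\|\tN(\nabla_A u)\|_p\le C_p\|h\|_{H^p}$. Applying the trace bound from \cite{HMiMo} and consistency of the two notions of boundary conormal gradient (valid because $u$ lies in both classes), we obtain $\|\nabla_x u|_{t=0}\|_{H^p_\nabla(\R^n;(\C^m)^n)}\lesssim\|\nabla_A u|_{t=0}\|_{H^p}\lesssim\|\tN(\nabla_A u)\|_p\le C_p\|h\|_{H^p}$, which is the desired inequality.

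For the converse direction, assume the boundary inequality holds and let $h\in H^p(\R^n;\C^m)\cap\dot H^{-1/2}(\R^n;\C^m)$. By Lemma \ref{lemmaN} there is an energy solution $u$, unique modulo constants, with $\pd_{\nu_A}u|_{t=0}=h$. The hypothesis gives $\|\nabla_x u|_{t=0}\|_{H^p_\nabla}\le C_p\|h\|_{H^p}$. Adding this to $\|h\|_{H^p}$ and applying Theorem \ref{thm:main} (which is applicable since $u\in\mE=\mE_{-1/2}$ and $p_{DG}<p\le 2$) yields
\[
\|\tN(\nabla_A u)\|_p\lesssim \|h\|_{H^p}+\|\nabla_x u|_{t=0}\|_{H^p_\nabla}\lesssim \|h\|_{H^p},
\]
so $(N^p_A)$ is solvable.

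There is no real obstacle in this proof; the whole argument is a bookkeeping exercise that packages together the a priori estimate of Theorem \ref{thm:main}, the boundary trace theorem from \cite{HMiMo}, and the Lax--Milgram existence of the energy solution with prescribed Neumann data from Lemma \ref{lemmaN}. The only subtlety worth watching is the compatibility of the distributional limit of $\nabla_A u(t,\cdot)$ with the energy conormal derivative, but this is precisely the observation already recorded at the beginning of this section.
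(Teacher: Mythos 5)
Your proof is correct and takes essentially the same approach as the paper, which simply states ``Same proof as Lemma~\ref{lem:equiv}''; you have unpacked that one-line reference into the explicit forward and converse arguments, relying exactly on the same two ingredients (the trace bound from \cite{HMiMo} with consistency of the two boundary conormal gradients, and the \textit{a priori} estimate of Theorem~\ref{thm:main}) with the roles of the Dirichlet/regularity and Neumann data interchanged.
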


Same proof as Lemma \ref{lem:equiv}.

\begin{proof}
[Proof of Theorem \ref{thm:extraneu}] The case $1<p<r$ is exactly as in the proof of Theorem \ref{thm:extrareg} once we have done the case $p_{DG}<p\le 1$.

Let us assume  $p_{DG}<p\le 1$. By Lemma \ref{lem:equiv} and Theorem \ref{thm:boundaryregimprovhardy}, it suffices to show if 
  $a$  is a 2-atom for $H^p(\R^n; \C^n)$, then we obtain a uniform estimate $\|f\|_{H^p_{\nabla}(\R^n;(\C^m)^n)}\le C$ where $f$ is the tangential derivative of any energy solution $u$ produced by the Neumann datum $a$. By scale invariance of our assumptions, we assume   that $a$ is supported in the surface ball $\Delta(0,1)$.   We shall show that $f$ is a $r$-molecule for $H^p(\R^n;(\C^m)^n)$  with bound independent of $a$.  Hence there is a constant $C$ independent of $a$ such that $\|f\|_{H^p(\R^n;(\C^m)^n}\le C$ as desired. Since $f$ is of a gradient form, it automatically fulfills  the $H^p_{\nabla}(\R^n;(\C^m)^n)$ estimate. 
 
We now prove the $r$-molecule property for $f$. As in the proof of  of Theorem \ref{thm:main2}, 
 set  $\Delta_{k}=\Delta(0,2^k)$ and $C_{k}=\Delta_{k+1}\setminus \Delta_{k}$ for $k\in \N$.
 As before, it  suffices to show that $\|f\|_{L^r(\Delta _{2})} \lesssim 1$ and that $\|f\|_{L^r(C_{k})} \lesssim 2^{-k\mu} 2^{-nk/r'}$ for some $0<\mu<\mu_{DG}$ with $p>\frac{n}{n+\mu}$ with $r'$ the conjugate exponent to $r$.   The local estimate $\|f\|_{L^r(\Delta _{2})} \lesssim 1$ follows from the global bound $\|f\|_{r}\lesssim \|a\|_{r} \lesssim \|a\|_{2} \lesssim 1$ (here we use that the support of $a$ is contained in  $\Delta_{0}=\Delta(0,1)$). 
The main task is therefore to obtain the decay on $C_{k}$. Again it is enough to work on a surface ball $\Delta$ with radius proportional to $2^k$ and with distance to $ \Delta_{0}$ proportional to $2^k$ and with $4\Delta \cap \Delta_{0}=\emptyset$.  Let $g\in C^\infty_{0}(\Delta; (\C^m)^n)$ with $\|g\|_{r'} \le 1$. It suffices to estimate $\langle f, g\rangle$.  Write $f=\nabla_{x} u|_{t=0}$ with $u|_{t=0}$ being   the still unspecified Dirichlet data   as we have not yet chosen the constant of integration.   Let $w$ be one of the energy solutions of $\divv A^*\nabla w=0$ on $\reu$ with $\pd_{\nu_{A^*}}w|_{t=0}=-\divv g$ (Lemma \ref{lemmaN}). We have, therefore, $ \langle f, g\rangle= \langle u|_{t=0}, \pd_{\nu_{A^*}}w|_{t=0}\rangle  $. Using Theorem \ref{thm:boundaryneu}, we deduce from $(N^r_{A})$ that
$$
\|\nabla_{x}w|_{t=0}\|_{\dot W^{-1,r'}} \le C_{r'}\|-\divv g\|_{\dot W^{-1,r'}} \sim \|g\|_{r'}\le 1.$$
Using Lemma \ref{lemma:inf}, we now choose $w$ so that $w|_{t=0}\in L^{r'}$. 
We can deduce from the boundary layer representation of Corollary \ref{cor:BL} that $\sup_{t>0} \|w(t,\cdot)\|_{r'}\lesssim 1$.  Next, 
 invoke \eqref{eq:localboundaryneu}, which tells
$$
|\langle u|_{t=0}, \pd_{\nu_{A^*}}w|_{t=0} \rangle| \le C  2^{-2k}\left({}\int_{\Omega_{+}} |u|^2\right)^{1/2}
\left({}\int_{\Omega_{+}} |w|^2\right)^{1/2},
$$
with $\Omega_{+}$  contained in a box $[0,c2^k]\times 3\Delta$.  H\"older's inequality using $r'\ge 2$ yields
$$
\left({}\int_{\Omega_{+}} |w|^2\right)^{1/2} \le |\Omega_{+}|^{1/2-1/r'}  \left(\int_{0}^{c2^k} \|w(t,\cdot)\|^{r'}\right)^{1/r'} \lesssim 2^{k/r'} 2^{(1+n)k(1/2-1/r')}.
$$
Remark that we have not yet specified the constant of integration for $u$.  We choose it now so as to use the decay estimate from Lemma \ref{lem:sizeNeumann}  to get $|u|\le 2^{-k(n-1+\mu)}$ on $\Omega_{+}$ since $\|a\|_{1}\lesssim 1$.  Working out the powers of $2^k$ we obtain the desired bound for $|\langle f, g \rangle|$.
\end{proof}

\begin{rem}
In the block lower-triangular case, when the upper coefficient  $b$ to be 0, or equivalently that the conormal vector field is proportional to the transversal vector field, one can obtain an $L^2$-molecular decay for the tangential gradient by a direct integration by parts which does not use at all the initial $L^2$ solvability information. This one is only used for the local estimate. This means that the difficulty in the study of Neumann problems lies in the upper coefficient of $A$. It remains to understand its exact role when it is not 0 or when $A$ is not hermitian in dimensions $1+n \ge 3$. \end{rem}

\section{Extrapolation of solvability for Dirichlet problems and other Neumann problems}\label{sec:extraDir}

We gather in this section the needed results to prove extrapolation of Dirichlet problems and of a new type of problems, namely Neumann problems with data in  negative Sobolev spaces. 

It is convenient to introduce the following correspondences of spaces to be read line by line. 

\begin{table}[htdp]
\begin{center}

\begin{tabular}{|c|c|c|c|c|c|}
\hline
exponents & $Y$ &$ Y^{-1}$ & $\mT$ & $  X^{1}$ & $X$ \\
\hline
$1<p,q<\infty$, $q=p'$ & $L^q$ &  $\dot W^{-1, q}$ & $T^q_{2}$ &$ \dot W^{1,p} $& $L^p$ \\
\hline
$\alpha=0$, $p=1$ &  BMO & $\dot {\rm BMO}^{-1}$ & $T^\infty_{2}$ &  $ \dot H^{1,1} $ & $H^1$ \\
\hline
$0<\alpha=n( \frac{1}{p}-1) <1$ & $\dot \Lambda^{\alpha}$ & $\dot \Lambda^{\alpha-1}$ & $T^{\infty}_{{2,\alpha}}$ &$ \dot H^{1,p} $ & $H^p$ \\
\hline
\end{tabular}
\end{center}

\label{default}
\end{table}%

Here $Y$, $Y^{-1}$ are the dual spaces of $X$, $ X^{1}$ respectively. They are spaces on the boundary. Next, $\mT$ are tent spaces on $\reu$. For $1<q\le \infty$, $T^{q}_{2}$ is the  tent space   of \cite{CMS}. For $q=\infty$, this is defined via Carleson measures: 
$$
\iint_{(0,r)\times \Delta}   |f(t,x)|^2\, \frac{dtdx}{t} \le \|f\|_{T^\infty_{2}}^2 |\Delta|.
$$
For $0<\alpha<1$, 
$$
\iint_{(0,r)\times \Delta}   |f(t,x)|^2\, \frac{dtdx}{t} \le \|f\|_{T^\infty_{2,\alpha}}^2 |\Delta|^{1+\frac{2\alpha}{n}}.
$$
Here $\Delta$ are balls  in $\R^n$ and $r$ is the radius of $\Delta$.  

We next turn to equivalence of boundary norms with  interior estimates of tent space nature.

\begin{thm}\label{thm:intDir}
Let $\divv A \nabla $ be a uniformly  elliptic system with $A(x)$ measurable, bounded, $t$-independent, complex coefficients on $\R^{1+n}$ with the strict G\aa rding inequality     on $\mH^0$, namely \eqref{eq:accrassumption}.
Assume that $\divv A \nabla $ and $\divv A^* \nabla $ satisfy the De Giorgi condition and call $0<\mu_{DG}$ the exponent that works for both. Then   for   all  spaces in the table with $2\le q$ and $\alpha<\mu_{DG}$ and for  any weak solution of $Lu=0$ on the upper half-space $\R^{1+n}_{+}, 1+n\ge 2$, in any of the classes $\mE_{s}$, $-1\le s\le 0$, we have
\begin{equation}\label{eq:mainDir}
\| t\nabla u\|_{\mT} \approx \|\pd_{\nu_A}u|_{t=0}\|_{Y^{-1}}+\|\nabla_{x}u|_{t=0}\|_{Y^{-1}}. 
\end{equation}
  \end{thm}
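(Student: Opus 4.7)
My plan is to recast the equivalence as a square-function/Carleson-measure characterization of Hardy-type spaces adapted to the bisectorial operator $\dirac B$ from the first-order formalism of Section~\ref{sec:firstorder}. Since $|\nabla u|$ and $|\nabla_{A}u|$ are pointwise comparable, I would first replace $\|t\nabla u\|_{\mT}$ by $\|t\nabla_{A}u\|_{\mT}$ on the left-hand side of \eqref{eq:mainDir}, and combine the two pieces on the right-hand side into the single conormal gradient at the boundary $F_{0}=\nabla_{A}u|_{t=0}=(\pd_{\nu_{A}}u|_{t=0},\nabla_{x}u|_{t=0})^{\top}$, using that the curl-free constraint on the tangential part is automatic and that $F_{0}\in Y^{-1}$ if and only if each component is. Under the hypothesis $u\in\mE_{s}$ with $s\in[-1,0]$, Proposition~\ref{prop:ext} (or Theorem~\ref{thm:AAM-AA1} if $s=0$) provides the semigroup representation $\nabla_{A}u(t,\cdot)=e^{-t\dirac B}F_{0}$ with $F_{0}\in\dot\mH^{s,+}_{\dirac B}$. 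The statement \eqref{eq:mainDir} is thus equivalent to the abstract quadratic estimate
\[
\|t\,e^{-t\dirac B}F_{0}\|_{\mT}\approx\|F_{0}\|_{Y^{-1}}\qquad\text{for all }F_{0}\in\dot\mH^{s,+}_{\dirac B}.
\]

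The left-hand side is, by construction, a $\dirac B$-adapted tent-space norm. The plan is then to identify it with the classical $Y^{-1}$ norm on the spectral subspace $\dot\mH^{s,+}_{\dirac B}$. For $q=2$ this is contained in Proposition~\ref{prop:ext} with $s=-1$, since by functional calculus $\int_{0}^{\infty} t\,|e^{-t\dirac B}F_{0}|^{2}\,dt$ is comparable to $\|(\dirac B)^{-1}F_{0}\|_{2}^{2}\sim\|F_{0}\|_{\dot H^{-1}}^{2}$. For $2<q<\infty$, it follows from the extension of the holomorphic functional calculus of $\dirac B$ to $L^{q}$-based tent spaces in a neighbourhood of $q=2$ (which the De Giorgi assumption secures) combined with standard $T^q_{2}$ duality and interpolation. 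For the BMO and H\"older endpoints, one needs Carleson-measure, respectively Morrey-type Carleson, characterizations of $\dot{\rm BMO}^{-1}$ and $\dot\Lambda^{\alpha-1}$ through the $\dirac B$-adapted square function.

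The main obstacle will be precisely these two endpoint identifications: they require off-diagonal decay and local H\"older continuity for the kernel of $e^{-t\dirac B}$ at the scale of the De Giorgi exponent, which is also the origin of the restriction $\alpha<\mu_{DG}$. To handle them I would invoke the results of \cite{AS} already advertised in the introduction, where the relevant $\dirac B$-adapted tent-space characterizations of the Hardy, $\dot{\rm BMO}^{-1}$ and $\dot\Lambda^{\alpha-1}$ spaces are established in exactly this setting. The converse direction, controlling the boundary norms by the interior tent-space norm, will then follow from the quadratic-estimate lower bound on $\dot\mH^{s,+}_{\dirac B}$ together with the injectivity of $F_{0}\mapsto e^{-t\dirac B}F_{0}$ on this spectral subspace.
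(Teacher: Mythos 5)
Your plan is in the same spirit as the paper's own proof: both go through the first-order formalism, the semigroup representation $\nabla_{A}u(t,\cdot)=e^{-t\dirac B}F_{0}$ from Theorem~\ref{thm:AAM-AA1}/Proposition~\ref{prop:ext}, and then delegate the hard tent-space estimates to cited work. Where the paper routes the $\lesssim$ direction through the explicit layer-potential splitting of Corollary~\ref{lem:GBLP} together with the single/double layer tent-space bounds of \cite{HMaMo}, you collapse the two pieces into one conormal-gradient quadratic estimate for $e^{-t\dirac B}$. This is the same object in a different guise: $\nabla_{A}\mS_{t}$ and $\nabla_{A}\mD_{t}$ are exactly $e^{-t\dirac B}X_{+}(\dirac B)$ applied to the two slots of $F_{0}$, so your abstract formulation and the paper's layer-potential formulation are equivalent. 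You also both lean on \cite{AS} for the harder parts.

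There are, however, two places where your write-up is shaky enough to count as gaps. First, for $2<q<\infty$ you invoke "extension of the holomorphic functional calculus of $\dirac B$ to $L^{q}$-based tent spaces in a neighbourhood of $q=2$... combined with duality and interpolation." A neighbourhood of $2$ plus duality only gets you a symmetric band around $2$, not the whole half-line $[2,\infty)$; you need the BMO/Carleson endpoint first and then interpolate downward, so the order of your argument needs to be reversed (endpoint first, interpolation second). Second, and more seriously, your last paragraph on the converse inequality is circular as stated: the "quadratic-estimate lower bound on $\dot\mH^{s,+}_{\dirac B}$" in the $Y^{-1}$ scale \emph{is} the converse inequality, and injectivity of $F_{0}\mapsto e^{-t\dirac B}F_{0}$ is a qualitative fact that does not yield a quantitative bound. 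This direction is precisely where the real work (off-diagonal decay, De Giorgi regularity) is, and the paper does not attempt to prove it; it is taken wholesale from \cite{AS}. You should cite \cite{AS} explicitly for the converse, not only for the endpoint identifications, and drop the injectivity language.
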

  
  Again, we do not consider the case $2-\varepsilon<q<2$ which can be handled 
  without the De Giorgi condition (See the forthcoming \cite{AS}). 
  
  \begin{proof}
The inequality $\lesssim$ follows from the generalized boundary layer representation of Corollary \ref{lem:GBLP} together with the estimates proved in \cite{HMaMo} for the single and double layer potentials (again in the case of equations and $1+n\ge 3$ but  with immediate extension to our situation). The converse inequality is a result from  \cite{AS}, where the other direction is proved as well in this generality. 
\end{proof}
  
  \begin{rem}
Remark that in the case  $2<q<\infty$,  the inequality $\gtrsim$ is akin to the inequality (3.9) in \cite{HKMP2}. It is more precise though as it does not contain any non-tangential maximal function. In fact, \cite{AS} will show under the above assumptions that  the non-tangential maximal function of $u$ is controlled in $L^q$ by the  $T^q_{2}$ norm of $t\nabla u$. This was proved for real equations in \cite{HKMP1}. 
\end{rem} 

\subsection{The Dirichlet problem}

Let $Y=Y(\R^n;\C^m)$ be one of the spaces from the above table. We say that the Dirichlet problem $ (D^Y_{A})$ is solvable if  there exists $C_{Y}<\infty$ such that
for any $f\in Y\cap \dot H^{1/2}(\R^n; \C^m)$ 
the energy solution $u$ of $\divv A \nabla u=0$ with Dirichlet data $u|_{t=0}=f $ satisfies 
$$
\| t\nabla u\|_{\mT}\le C_{Y} \|f\|_{Y}.
$$
We remark that we formulate here the Dirichlet problem uniquely in term of the tent space estimate. From the remark above, the non tangential maximal estimate comes as a bonus. 

\begin{cor}\label{cor:Dir} For the spaces $Y$ considered  in Theorem \ref{thm:intDir},  we have that  $ (D^Y_{A})$ is solvable if  and only if  there exists $C_{Y^{-1}}<\infty$ such that
for any $u\in \mE$ solution of $\divv A  \nabla u=0$, $\|\pd_{\nu_{A}}u|_{t=0}\|_{Y^{-1}} \le C_{Y^{-1}}\|\nabla_{x} u|_{t=0}\|_{Y^{-1}}$. 
\end{cor}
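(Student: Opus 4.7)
The plan is to combine the interior-boundary equivalence from Theorem \ref{thm:intDir} with the duality relation $\|\nabla_x f\|_{Y^{-1}} \sim \inf_{c \in \C^m} \|f+c\|_Y$, which is Lemma \ref{lemma:inf} when $Y = L^q$ and follows from the standard characterisations of $\dot{\textrm{BMO}}^{-1}$ and $\dot\Lambda^{\alpha-1}$ (as already recorded around Theorems \ref{thm:boundaryreg2}--\ref{thm:boundaryneu2}) for the remaining spaces. Once these two identifications are in place, both implications follow on handling carefully the constant-of-integration ambiguity, and no serious obstacle is expected.

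For the ``only if'' direction, I would fix an energy solution $u \in \mE$ of $\divv A \nabla u = 0$ and assume without loss that $\|\nabla_x u|_{t=0}\|_{Y^{-1}} < \infty$ (otherwise the claimed Rellich inequality is trivial). By the above duality, one can choose a constant $c \in \C^m$ so that $u|_{t=0}+c \in Y$ with $\|u|_{t=0}+c\|_Y \lesssim \|\nabla_x u|_{t=0}\|_{Y^{-1}}$. Since constants belong to the kernel of $\mE$ (viewed modulo $\C^m$) and do not affect the conormal derivative, $u+c$ is still an energy solution, now with Dirichlet datum $f := u|_{t=0}+c \in Y \cap \dot H^{1/2}$; hence $(D^Y_A)$ applies to $u+c$. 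Combining this with Theorem \ref{thm:intDir} (used for $u$, equivalently for $u+c$) yields
\begin{equation*}
\|\pd_{\nu_A}u|_{t=0}\|_{Y^{-1}}+\|\nabla_x u|_{t=0}\|_{Y^{-1}} \lesssim \|t\nabla u\|_{\mT} \le C_Y\|f\|_Y \lesssim \|\nabla_x u|_{t=0}\|_{Y^{-1}},
\end{equation*}
which gives the desired one-sided Rellich estimate.

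For the converse, I would start from $f \in Y \cap \dot H^{1/2}(\R^n;\C^m)$ and let $u$ be the unique energy solution with $u|_{t=0}=f$ produced by Lemma \ref{lemma2}. The duality relation immediately gives $\|\nabla_x u|_{t=0}\|_{Y^{-1}} \lesssim \|f\|_Y$. Then Theorem \ref{thm:intDir} combined with the assumed Rellich inequality produces
\begin{equation*}
\|t\nabla u\|_{\mT} \lesssim \|\pd_{\nu_A}u|_{t=0}\|_{Y^{-1}}+\|\nabla_x u|_{t=0}\|_{Y^{-1}} \lesssim \|\nabla_x u|_{t=0}\|_{Y^{-1}} \lesssim \|f\|_Y,
\end{equation*}
which is precisely solvability of $(D^Y_A)$. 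The only real bookkeeping is in the first direction, namely verifying that adding the constant $c$ does not disturb membership in $\dot H^{1/2}$ (obvious, since homogeneous Sobolev spaces kill constants) nor in any of the other classes used; since all the function spaces involved are defined modulo constants in a consistent way, no further difficulty arises.
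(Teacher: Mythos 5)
Your proof is correct and follows the same route the paper intends: the paper merely states that the corollary is a direct consequence of Theorem~\ref{thm:intDir}, and your argument supplies exactly the missing details, namely the equivalence $\|\nabla_{x}f\|_{Y^{-1}}\sim\inf_{c}\|f+c\|_{Y}$ (Lemma~\ref{lemma:inf} for $L^q$ and its standard analogues for BMO and $\dot\Lambda^{\alpha}$), the freedom to shift the energy solution by a constant without changing $\nabla_{A}u$, and the observation that the shifted trace remains in $\dot H^{1/2}\cap L^2_{\loc}$ so that the definition of $(D^Y_A)$ can be applied.
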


The proof is a direct consequence of  Theorem \ref{thm:intDir}. As before, the solvability is  reduced to a boundary estimate. We obtain a refined version of the main result of \cite{HKMP2},
{which treats only the cases $X=L^p, Y=L^{p'}, 1<p<2+\varepsilon$, but not the endpoint spaces}. 

\begin{cor}\label{cor:dualDirReg} Consider the spaces $Y$ of  Theorem \ref{thm:intDir} and their preduals $X$. If  $ (R^X_{A})$ is solvable then $(D^Y_{A^*})$ is solvable. The converse holds when $X=L^p$ and $Y=L^q$, $q=p'$ and when $X=H^1$ and $Y=BMO$. \end{cor}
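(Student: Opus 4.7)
The plan is to combine three ingredients already established in the paper: (i) the characterization of solvability of $(R^X_A)$ by a boundary Rellich inequality via Lemma \ref{lem:equiv}; (ii) the characterization of solvability of $(D^Y_{A^*})$ by the dual Rellich inequality via Corollary \ref{cor:Dir}; and (iii) the Rellich duality principle at the boundary, furnished by Theorems \ref{thm:boundaryreg} and \ref{thm:boundaryreg2}. The whole argument stays at the boundary, consistent with the philosophy emphasized throughout the paper.

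For each of the three choices of $X$, I would first verify that Lemma \ref{lem:equiv} applies. Writing $X=H^p$ with $p\in (1,2]$, $p=1$, or $p\in (p_{DG},1)$ depending on the case (in the H\"older regime the restriction $\alpha<\mu_{DG}$ forces $p=n/(n+\alpha)>p_{DG}$), all three situations satisfy $p_{DG}<p\le 2$. Hence solvability of $(R^X_A)$ is equivalent to the one-sided Rellich estimate $\|\pd_{\nu_A}u|_{t=0}\|_X \lesssim \|\nabla_x u|_{t=0}\|_{X^1}$ for all energy solutions $u$ of $\divv A\nabla u=0$. Independently, Corollary \ref{cor:Dir} states that solvability of $(D^Y_{A^*})$ is equivalent to the dual estimate $\|\pd_{\nu_{A^*}}w|_{t=0}\|_{Y^{-1}} \lesssim \|\nabla_x w|_{t=0}\|_{Y^{-1}}$ for all energy solutions $w$ of $\divv A^*\nabla w=0$.

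It then remains to connect these two Rellich inequalities, which is precisely what Theorems \ref{thm:boundaryreg} and \ref{thm:boundaryreg2} do, applied with the roles of $A$ and $A^*$ interchanged (the hypotheses are symmetric in $A$ and $A^*$). In the Lebesgue regime $X=L^p$, $Y=L^{p'}$, $1<p\le 2$, Theorem \ref{thm:boundaryreg} yields the full equivalence, so both implications hold. In the endpoint case $X=H^1$, $Y=BMO$, Theorem \ref{thm:boundaryreg2} likewise provides both directions, since its converse part is explicitly stated at $p=1$. Finally, in the H\"older regime $0<\alpha<1$, Theorem \ref{thm:boundaryreg2} supplies only the implication $(1)\Rightarrow(2)$, which translates to $(R^{H^p}_A)$ solvable implies $(D^{\dot\Lambda^\alpha}_{A^*})$ solvable, with the converse unavailable at this level of generality.

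There is essentially no analytic obstacle: each step is a direct invocation of an already-established equivalence, and the hardest work (the Hardy-space extension, layer potential bounds, and the duality principle itself) has been done in the preceding sections. The only real bookkeeping is to match the table of spaces $(X, X^1, Y, Y^{-1})$ against the dualities appearing in Theorems \ref{thm:boundaryreg} and \ref{thm:boundaryreg2}, and to confirm that the convention ``$(R^X_A)$ versus $(D^Y_{A^*})$'' is exactly compatible with the $A \leftrightarrow A^*$ swap between conditions (1) and (2) of those theorems. Once this identification is made, the corollary follows at once.
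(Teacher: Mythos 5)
Your argument is exactly the paper's proof: reduce $(R^X_A)$ to the Rellich inequality via Lemma \ref{lem:equiv}, reduce $(D^Y_{A^*})$ to the dual Rellich inequality via Corollary \ref{cor:Dir}, and connect the two by the duality principles of Theorems \ref{thm:boundaryreg} (for $1<p<\infty$) and \ref{thm:boundaryreg2} (for $p\le 1$, one-sided except at $p=1$). The only bookkeeping slip is the remark that the duality theorems are ``applied with the roles of $A$ and $A^*$ interchanged'': they are in fact stated with item (1) for $A$ and item (2) for $A^*$, so they apply verbatim; this does not affect the argument.
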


\begin{proof}  The equivalence in the range $X=L^p$, $Y=L^q$ follows from Corollary \ref{cor:Dir},  
Lemma \ref{lem:equiv} and Theorem  \ref{thm:boundaryreg}. The implication in the other cases and the equivalence when $p=1$ follow from  Theorem  \ref{thm:boundaryreg}, Theorem  \ref{thm:boundaryreg2} and Corollary \ref{cor:Dir}. 
\end{proof}

This applies when the conditions of Corollary \ref{cor:postivereg} are satisfied for $A^*$. Details are left to the reader.

Remark that the back and forth proof allows to replace $Y=BMO$ by $Y=VMO$ in the statement.
This fact that the Dirichlet problems for VMO or BMO data are equivalent was also observed in \cite{DKP} for real equations.

To finish we can state the extrapolation result for the Dirichlet problem.  This is only here that we use more assumptions on $A$. 

\begin{thm} Consider an elliptic system with all the assumptions at the beginning of Section \ref{sec:extra}. Let $2\le q<\infty$ and assume $ (D^Y_{A})$ is solvable for $Y=L^q(\R^n;\C^m)$. Then
$ (D^Y_{A})$ is solvable for $Y=L^p(\R^n;\C^m)$, $q<p<\infty$, BMO$(\R^n;\C^m)$,  and $\dot \Lambda^{\alpha}(\R^n;\C^m)$ with $0<\alpha <\mu_{DG}$. 

\end{thm}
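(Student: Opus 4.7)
The plan is to leverage the Regularity--Dirichlet duality of Corollary \ref{cor:dualDirReg} as a two-way bridge at the $L^p$--$L^q$ level and then extrapolate on the Regularity side using Theorem \ref{thm:extrareg}. The observation that makes this possible is that the hypotheses at the beginning of Section \ref{sec:extra} are symmetric in $A$ and $A^*$ (interior De Giorgi for $A$ and $A^*$, plus De Giorgi for the reflections $A^\sharp$ and $(A^\sharp)^*$), so all extrapolation machinery of that section applies equally well to $A^*$ with the same exponent $\mu_{DG}$.

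Concretely, assume $(D^{L^q}_A)$ is solvable for some $2\le q<\infty$. First I would invoke the equivalence part of Corollary \ref{cor:dualDirReg} in the $L^p$--$L^q$ range to deduce that $(R^{L^{q'}}_{A^*})$ is solvable, where $1<q'\le 2$. Next I would apply Theorem \ref{thm:extrareg} to the adjoint operator $A^*$ with $r=q'$, which produces $(R^{H^{p'}}_{A^*})$ solvable for every $p_{DG}<p'<q'$ (with the convention that $H^{p'}=L^{p'}$ for $p'>1$). Finally I would use the forward direction of Corollary \ref{cor:dualDirReg} (applied with $A^*$ in place of $A$, so that $(A^*)^*=A$) to transfer each of these Regularity solvabilities for $A^*$ back into a Dirichlet solvability for $A$ with data in the predual $Y$ of $H^{p'}$, reading off $(X,Y)$ from the correspondence table.

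Unpacking the three rows of the table gives precisely the conclusions of the theorem: $p'\in(1,q')$ yields $(D^{L^p}_A)$ for $p\in(q,\infty)$; the endpoint $p'=1$ yields $(D^{\mathrm{BMO}}_A)$; and $p'\in(p_{DG},1)$ yields $(D^{\dot\Lambda^\alpha}_A)$ with $\alpha=n(1/p'-1)$, so that $\alpha$ sweeps out the full interval $(0,\mu_{DG})$. No serious analytic obstacle remains, since all the heavy lifting lives in Theorem \ref{thm:extrareg} and in the duality principles of Sections \ref{sec:dualp>1}--\ref{sec:rellichp<1}; the proof is really a bookkeeping exercise. The only point requiring a modicum of care is that at the endpoints $\mathrm{BMO}$ and $\dot\Lambda^\alpha$ we only have the one-sided implication from Corollary \ref{cor:dualDirReg}, not an equivalence, but this is precisely the direction that is used.
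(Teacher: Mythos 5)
Your proposal is correct and is exactly the paper's argument, merely unpacked: the paper's entire proof is the one-line remark that the theorem follows by combining Corollary \ref{cor:dualDirReg} with Theorem \ref{thm:extrareg}, and your three-step chain (use the $L^p$--$L^q$ equivalence of Corollary \ref{cor:dualDirReg} to pass to $(R^{L^{q'}}_{A^*})$, extrapolate via Theorem \ref{thm:extrareg} applied to $A^*$, then push back with the forward direction of Corollary \ref{cor:dualDirReg}) is precisely that combination made explicit. Your observations about the $A \leftrightarrow A^*$ symmetry of the hypotheses and about only needing the one-sided implication at the BMO and $\dot\Lambda^\alpha$ endpoints are the right points to flag.
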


\begin{proof} It suffices to combine Corollary \ref{cor:dualDirReg} with Theorem \ref{thm:extrareg}. 
\end{proof}

\subsection{The Neumann problem in negative Sobolev spaces}

Let $Y=Y(\R^n;\C^m)$ be one of the spaces from the above table. We say that the Neumann problem $ (N^{Y^{-1}}_{A})$ is solvable if  there exists $C_{Y^{-1}}<\infty$ such that
for any $f\in Y^{-1}\cap \dot H^{-1/2}(\R^n; \C^m)$ 
the energy solution $u$ of $\divv A \nabla u=0$ with Neumann data $\pd_{\nu_{A}}u|_{t=0}=f $ satisfies 
$$
\| t\nabla u\|_{\mT}\le C_{Y^{-1}} \|f\|_{Y^{-1}}.
$$

\begin{cor}\label{cor:Neu} For the spaces $Y$ considered  in Theorem \ref{thm:intDir},  we have that  $ (N^{Y^{-1}}_{A})$ is solvable if  and only if  there exists $C_{Y^{-1}}<\infty$ such that
for any $u\in \mE$ solution of $\divv A  \nabla u=0$, $\|\nabla_{x} u|_{t=0}\|_{Y^{-1}}   \le C_{Y^{-1}} \|\pd_{\nu_{A}}u|_{t=0}\|_{Y^{-1}}$. 
\end{cor}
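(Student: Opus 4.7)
The plan is to mimic exactly the proof scheme used for Corollary \ref{cor:Dir}, which was stated to be a direct consequence of Theorem \ref{thm:intDir}. The only tool needed is the \textit{a priori} two-sided equivalence
\[
\| t\nabla u\|_{\mT} \approx \|\pd_{\nu_A}u|_{t=0}\|_{Y^{-1}}+\|\nabla_{x}u|_{t=0}\|_{Y^{-1}}
\]
valid for all energy solutions of $\divv A\nabla u=0$, together with the existence result for Neumann data in the energy class (Lemma \ref{lemmaN}). The definition of $(N^{Y^{-1}}_A)$ concerns precisely the tent space norm of $t\nabla u$ against the $Y^{-1}$ norm of the Neumann datum, so all three quantities in the equivalence are tied together naturally.

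For the forward direction ($(N^{Y^{-1}}_A)$ solvable $\Rightarrow$ boundary estimate), let $u\in\mE$ solve $\divv A\nabla u=0$. There is nothing to prove unless $\|\pd_{\nu_A}u|_{t=0}\|_{Y^{-1}}<\infty$, in which case the datum $f:=\pd_{\nu_A}u|_{t=0}$ lies in $Y^{-1}\cap \dot H^{-1/2}(\R^n;\C^m)$, since any energy solution has conormal derivative in $\dot H^{-1/2}$ by Lemma \ref{lemma:abstract Green}. The solvability assumption gives $\|t\nabla u\|_{\mT}\le C_{Y^{-1}}\|f\|_{Y^{-1}}$, and Theorem \ref{thm:intDir} then controls $\|\nabla_x u|_{t=0}\|_{Y^{-1}}$ by $\|t\nabla u\|_{\mT}$, yielding
\[
\|\nabla_x u|_{t=0}\|_{Y^{-1}}\lesssim \|\pd_{\nu_A}u|_{t=0}\|_{Y^{-1}}.
\]

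For the converse, assume the boundary estimate holds. Given $f\in Y^{-1}\cap \dot H^{-1/2}(\R^n;\C^m)$, Lemma \ref{lemmaN} produces an energy solution $u$, unique modulo constants, with $\pd_{\nu_A}u|_{t=0}=f$. The hypothesis gives $\|\nabla_x u|_{t=0}\|_{Y^{-1}}\le C_{Y^{-1}}\|f\|_{Y^{-1}}$, and then Theorem \ref{thm:intDir} gives
\[
\|t\nabla u\|_{\mT}\lesssim \|\pd_{\nu_A}u|_{t=0}\|_{Y^{-1}}+\|\nabla_{x}u|_{t=0}\|_{Y^{-1}}\lesssim \|f\|_{Y^{-1}},
\]
which is solvability of $(N^{Y^{-1}}_A)$.

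There is no real obstacle here once Theorem \ref{thm:intDir} is in hand; the only points that deserve care are (i) checking that the energy class is among the $\mE_s$ classes covered by Theorem \ref{thm:intDir} (it is, with $s=-1/2$), and (ii) making sure that in the endpoint cases $Y=\mathrm{BMO}$ and $Y=\dot\Lambda^\alpha$ one interprets $Y^{-1}$ exactly as in the table and as is used in the statement of Theorem \ref{thm:intDir}, so that the two-sided bound applies. Both verifications are implicit in the setup of Section \ref{sec:extraDir} and require no further work.
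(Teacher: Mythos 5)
Your proof is correct and takes exactly the route the paper intends: the paper's own proof is the one-line remark that the corollary is ``a direct consequence of Theorem \ref{thm:intDir},'' and your write-up simply fills in the two directions of that deduction (using Lemma \ref{lemmaN} for existence/uniqueness modulo constants in the energy class, and noting that $\mE=\mE_{-1/2}$ falls within the scope of Theorem \ref{thm:intDir}). No gaps, and no genuine deviation from the paper's approach.
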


The proof is a direct consequence of  Theorem \ref{thm:intDir}. As before, the solvability is  reduced to a boundary estimate.  

\begin{cor}\label{cor:dualNeuNeu} Consider the spaces $Y$  of Theorem \ref{thm:intDir} and their preduals $X$. If  $ (N^X_{A})$ is solvable then $(N^{Y^{-1}}_{A^*})$ is solvable. The converse holds when $X=L^p$ and $Y^{-1}=\dot W^{-1,q}$, $q=p'$ and when $X=H^1$ and $Y^{-1}=BMO^{-1}$. \end{cor}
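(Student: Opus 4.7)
My plan is to mirror the three-step chain used in the proof of Corollary \ref{cor:dualDirReg}, substituting the Neumann--Neumann boundary duality of Theorems \ref{thm:boundaryneu} and \ref{thm:boundaryneu2} for the Regularity--Dirichlet one.

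First, I would reduce solvability of $(N^X_{A})$ to a boundary Rellich inequality of the form $\|\nabla_{x} u|_{t=0}\|_{X} \lesssim \|\pd_{\nu_{A}} u|_{t=0}\|_{X}$ for every energy solution $u$ of $\divv A \nabla u = 0$. One direction is immediate from the definition of $(N^X_A)$ combined with the bound $\|\nabla_{x} u|_{t=0}\|_{X} \le \|\nabla_{A}u|_{t=0}\|_{X} \lesssim \|\tN(\nabla_{A} u)\|_{X}$, where for $X = H^p$ with $p \le 1$ the last inequality is the $H^p$-trace estimate from \cite{HMiMo} recalled at the start of Section \ref{sec:extra}. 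For the converse, in the ranges $X = L^p$ with $1 < p \le 2$ and $X = H^p$ with $p_{DG} < p \le 1$, which together cover the two distinguished pairs in the statement, this is the Neumann analog of Lemma \ref{lem:equiv} stated just before the proof of Theorem \ref{thm:extraneu}, combined with the \textit{a priori} non-tangential estimate of Theorem \ref{thm:main}.

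Second, I would pass to the adjoint system via the boundary duality. For $(X, Y^{-1}) = (L^p, \dot W^{-1, p'})$ with $1 < p < \infty$, Theorem \ref{thm:boundaryneu} supplies a full equivalence between the $X$-inequality for energy solutions of $\divv A \nabla u = 0$ and the $Y^{-1}$-inequality for energy solutions of $\divv A^{*} \nabla w = 0$. For $(X, Y^{-1}) = (H^p, \dot\Lambda^{\alpha-1})$ with $\tfrac{n}{n+1} < p \le 1$ and $\alpha = n(\tfrac{1}{p}-1)$, Theorem \ref{thm:boundaryneu2} supplies the forward implication unconditionally, and the reverse only at $p = 1$, i.e.\ for the pair $(H^1, \dot {\rm BMO}^{-1})$.

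Third, I would invoke Corollary \ref{cor:Neu}, which identifies solvability of $(N^{Y^{-1}}_{A^{*}})$ with the $Y^{-1}$ Rellich inequality for the adjoint system. Chaining the three steps then yields the forward implication for every pair $(X, Y^{-1})$ in the table and the converse for the two named pairs. The only genuine obstruction lives in the second step at sub-unit exponents: the duality between $H^p$ and $\dot\Lambda^{\alpha}$ for $p < 1$ lacks a dense subspace of smooth test functions playing the role that VMO plays inside BMO, which is exactly why Theorem \ref{thm:boundaryneu2} is not bidirectional there and why the converse assertion of the corollary must be restricted.
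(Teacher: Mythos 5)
Your argument follows exactly the same three-step chain as the paper's proof: reduce $(N^X_A)$ solvability to the boundary Rellich inequality via the Neumann analog of Lemma \ref{lem:equiv} together with Theorem \ref{thm:main}, transfer to the adjoint system by the Neumann--Neumann duality of Theorems \ref{thm:boundaryneu} and \ref{thm:boundaryneu2}, and translate back to solvability of $(N^{Y^{-1}}_{A^*})$ through Corollary \ref{cor:Neu}. Your closing remark about why the converse is restricted (no VMO-type predual of $H^p$ with dense test functions when $p<1$) matches the reason the paper gives inside the proof of Theorem \ref{thm:boundaryneu2}, so the proposal is correct and essentially identical to the paper's.
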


\begin{proof}  The equivalence in the range $X=L^p$, $Y=L^q$ follows from Corollary \ref{cor:Dir},  
Lemma \ref{lem:equiv} and Theorem  \ref{thm:boundaryneu}. The implication in the other cases and the equivalence when $p=1$ follow from  Theorem  \ref{thm:boundaryneu}, Theorem  \ref{thm:boundaryneu2} and Corollary \ref{cor:Dir}. 
\end{proof}

This applies when the conditions of Corollary \ref{cor:postiveneu} are satisfied for $A^*$. Details are left to the reader. 

Remark that the back and forth proof allows to replace $Y=BMO$ by $Y=VMO$ in the statement.

To finish we can state the extrapolation result for the Neumann problem in negative Sobolev spaces.  This is only here that we use more assumptions on $A$. 

\begin{thm} Consider an elliptic system with all the assumptions at the beginning of Section \ref{sec:extra}. Let $2\le q<\infty$ and assume $ (N^{Y^{-1}}_{A})$ is solvable for $Y^{-1}=\dot W^{-1,q}(\R^n;\C^m)$. Then
$ (N^{Y^{-1}}_{A})$ is solvable for $Y^{-1}=\dot W^{-1,p}(\R^n;\C^m)$, $q<p<\infty$, BMO$^{-1}(\R^n;\C^m)$,  and $\dot \Lambda^{\alpha-1}(\R^n;\C^m)$ with $0<\alpha <\mu_{DG}$. 

\end{thm}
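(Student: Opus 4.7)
The plan is to mirror the proof of the preceding Dirichlet extrapolation theorem, combining the Neumann--Neumann duality principle (Corollary \ref{cor:dualNeuNeu}) with the Neumann extrapolation result (Theorem \ref{thm:extraneu}) applied to the adjoint system. The argument has three steps: dualize to pass to the adjoint, extrapolate there in the scale of positive-order spaces, then dualize back.

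First I would use the converse (equivalence) direction of Corollary \ref{cor:dualNeuNeu}: since $Y^{-1} = \dot W^{-1,q}$ corresponds to $X = L^{q'}$ with $q' \in (1,2]$, the hypothesis that $(N^{\dot W^{-1,q}}_A)$ is solvable is equivalent to $(N^{L^{q'}}_{A^*})$ being solvable. At this stage the hypotheses of Section \ref{sec:extra} on $A$ transfer to $A^*$ ($t$-independence, G{\aa}rding inequality, and De Giorgi conditions for $A^*$ and its reflected extension), so Theorem \ref{thm:extraneu} is available for $A^*$ with $r = q'$. Applying it yields solvability of $(N^{p'}_{A^*})$ for all $p_{DG} < p' < q'$; here the notation $(N^{p'}_{A^*})$ covers $L^{p'}$ data when $1 < p' < q'$, $H^1$ data at $p' = 1$, and $H^{p'}$ data for $p_{DG} < p' < 1$.

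Finally I would apply the forward direction of Corollary \ref{cor:dualNeuNeu} to each case in turn to return to the primal problem for $A$. With $X = L^{p'}$ we get $(N^{\dot W^{-1,p}}_A)$ solvable for all $q < p < \infty$; with $X = H^1$ we get $(N^{BMO^{-1}}_A)$; and with $X = H^{p'}$ for $p_{DG} < p' < 1$, setting $\alpha = n(1/p' - 1)$, we get $(N^{\dot\Lambda^{\alpha-1}}_A)$ solvable. The constraint $p_{DG} < p' < 1$ translates exactly to $0 < \alpha < \mu_{DG}$ via $p_{DG} = n/(n+\mu_{DG})$, which matches the claimed range.

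There is no real obstacle; the only point requiring care is verifying that the De Giorgi hypotheses needed to invoke Theorem \ref{thm:extraneu} hold for the adjoint $A^*$, but this is built into the standing assumptions at the beginning of Section \ref{sec:extra} (where De Giorgi is imposed symmetrically on $A$, $A^*$, $A^\sharp$, and $(A^\sharp)^*$). One should also note that the endpoint $p' = 1$ lies strictly inside the extrapolation range $(p_{DG}, q')$ because $p_{DG} < 1$ and $q' > 1$ (using $q < \infty$), so the $BMO^{-1}$ case is reached genuinely by extrapolation and not merely by the hypothesis itself.
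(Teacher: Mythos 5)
Your proposal is correct and is precisely the argument the paper intends: the paper's proof is the one-liner ``combine Corollary \ref{cor:dualNeuNeu} with Theorem \ref{thm:extraneu},'' and your three-step expansion (dualize via the converse of Corollary \ref{cor:dualNeuNeu} to pass to $(N^{L^{q'}}_{A^*})$, extrapolate with Theorem \ref{thm:extraneu} for $A^*$ down to $p_{DG}$, then dualize back via the forward direction) is exactly how it unpacks, including the correct translation $p_{DG}<p'<1 \Leftrightarrow 0<\alpha<\mu_{DG}$.
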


\begin{proof} It suffices to combine Corollary \ref{cor:dualNeuNeu} with Theorem \ref{thm:extraneu}. 
\end{proof}

\bibliographystyle{acm}

\begin{thebibliography}{}

\end{thebibliography}


\begin{thebibliography}{10}

\def\AIF{Ann. Inst. Fourier \ }
\def\RMI{Rev. Math. Iberoamericana\ }
\def\SM{Studia Math.\ }
\def\TAMS{Trans. Amer. Math. Soc.\ }
\def\JFA{J. Funct. Anal.\ }
\def\CRAS{C. R. Acad. Sci. Paris\ }
\def\MRL{Math. Research Letters\ }
\def\CM{Colloquium Math.\ }
\def\CPDE{Comm. in Partial and Diff. Eq.\ }

\bibitem[AAAHK]{AAAHK} 
{\sc Alfonseca M.,  Auscher P., Axelsson A., Hofmann S., and Kim, S.}
\newblock { Analyticity of layer potentials and $L^{2}$ Solvability of boundary value problems for divergence form elliptic
equations with complex $L^{\infty}$ coefficients}, 
\newblock Adv.  Maths. 226 (2011) 4533--4606.


\bibitem[A]{A}
{\sc Auscher, P.}
\newblock Regularity theorems and heat kernel for elliptic operators.
\newblock {\em J. London Math. Soc. (2) 54}, 2 (1996), 284--296.


\bibitem[AA]{AA}
{\sc Auscher, P., and Axelsson, A.}
\newblock Weighted maximal regularity estimates and solvability of non-smooth
  elliptic systems {I}.
\newblock Invent. Math. (2011) 184: 47--115.





\bibitem[AAMc]{AAMc}
{\sc Auscher, P., Axelsson, A., and McIntosh, A.}
\newblock Solvability of elliptic systems with square integrable boundary data.
\newblock {\em Ark. Mat. 48\/} (2010), 253--287.

\bibitem[AHLMcT]{AHLMcT}
{\sc Auscher, P., Hofmann, S., Lacey, M., McIntosh, A., and Tchamitchian, P.}
\newblock The solution of the {K}ato square root problem for second order
  elliptic operators on {${\mathbf R}^n$}.
\newblock {\em Ann. of Math. (2) 156}, 2 (2002), 633--654.



\bibitem[AMcM]{AMcM} {\sc Auscher,  P., M$^{\rm c}$Intosh A. , and Mourgoglou M.}
\ {On $L^2$ solvability of BVPs for elliptic systems}, 
\newblock J. Fourier Anal. Appl. 2013, Volume 19, Issue 3, pp 478-494.

\bibitem[AMcR]{AMcR} 
{\sc Auscher, P., McIntosh, A., and Russ, E.}
\newblock Hardy spaces of differential forms on Riemannian manifolds. 
\newblock {\em J. Geom. Anal.} {\bf 18} 1, (2008), 192-248.

\bibitem[AMcT] {AMcT}
{\sc Auscher, P., McIntosh, A., and Tchamitchian, P.}
\newblock Heat kernels of complex elliptic operators and
applications,  
\newblock {\em \JFA} {\bf 152} (1998), 22--73

\bibitem[ART]{ART} {\sc Auscher, P., Russ, E., and Tchamitchian, P.} 
\newblock Hardy-Sobolev spaces on strongly Lipschitz
domains of $\R^n$, 
\newblock {\em J. Funct. Anal.} 218 (2005), 54--109.


\bibitem[AR]{AR}
{\sc Auscher, P., and Ros\'en, A.}
\newblock Weighted maximal regularity estimates and solvability of non-smooth
  elliptic systems {II}.
\newblock  {\em Analysis and PDE}, Vol. 5 (2012), No. 5, 983--1061.

\bibitem[AS]{AS}
{\sc Auscher, P., and Stahlhut, S.}
\newblock {\textit{A priori} estimates for boundary value elliptic problems}
\newblock In preparation


\bibitem[Ax]{Ax}
{\sc Axelsson, A.}
\newblock Non unique solutions to boundary value problems for non symmetric
  divergence form equations.
\newblock {\em Trans. Amer. Math. Soc. 362\/} (2010), 661--672.




\bibitem[AKMc]{AKMc}
{\sc Axelsson, A., Keith, S., and McIntosh, A.}
\newblock Quadratic estimates and functional calculi of perturbed {D}irac
  operators.
\newblock {\em Invent. Math. 163}, 3 (2006), 455--497.


\bibitem[BB]{BB} 
{\sc Badr, N.,  and Bernicot, F.} 
\newblock Abstract Hardy-Sobolev spaces and Interpolation, 
\newblock  {\em J. Funct. Anal. 259} (2010), no. 5, 1169--1208.

\bibitem[BG]{BG} {\sc Badr, N.,  and  Dafni, G.} 
\newblock An atomic decomposition of the Hajlasz Sobolev space $M^{1}_{1}$ on manifolds,
\newblock  {\em J. Funct. Anal. 259} (2010), no. 6, 1380--1420.


\bibitem[B]{B} {\sc Barton, A.}
\newblock  {Elliptic partial differential equations with complex coefficients}, 
\newblock {\em Mem. Amer. Math. Soc.}, posted on October 24, 2012, PII S 0065-9266(2012)00677-0 (to appear in print).

\bibitem[Be]{Be} 
{\sc Bernal, A.} 
\newblock Some results on complex interpolation of $T^p_{q}$
 spaces, Interpolation spaces and related topics,
 \newblock (Ramat-Gan), Israel Mathematical Conference Proceedings, vol. 5, 1992, pp. 1--10.
 
 \bibitem[Br]{Br} 
 {\sc Brown, R.}
 \newblock The Neumann problem on Lipschitz domains in Hardy spaces of order less than one. \newblock {\em Pacific J. Math. 171} (1995), no. 2, 389?407.

%
%
 
 \bibitem[CDoK]{CDoK}
 {\sc  Cho, S.,  Dong, H., and Kim, S.}
 \newblock Global Estimates for Green's Matrix of Second Order
Parabolic Systems with Application to Elliptic Systems
in Two Dimensional Domains.
\newblock {\em Pot. Anal. 36} (2012), 339--372.

%

\bibitem[CMcM]{CMcM}
{\sc Coifman, R., McIntosh, A., and Meyer, Y.}
 \newblock {L'int\'egrale de Cauchy d\'efinit un op\'erateur born\'e sur
  $L^2$ pour les courbes lipschitziennes}.
\newblock {\em Annals of Math. 116} (1982), 361--387.


\bibitem[CMS]{CMS}
{\sc Coifman, R.~R., Meyer, Y., and Stein, E.~M.}
\newblock Some new function spaces and their applications to harmonic analysis.
\newblock {\em J. Funct. Anal. 62}, 2 (1985), 304--335.


\bibitem[CV]{CV} {\sc Cohn, W. S., Verbitsky, I. E.,} 
\newblock { Factorization of tent spaces and Hankel operators}, 
\newblock {\em J. Funct. Anal. 175} (2000), no. 2, 308--329. 


\bibitem[Da]{Da}
{\sc Dahlberg, B.}
\newblock On the absolute continuity of elliptic measures.
\newblock {\em Amer. J. Math. 108}, 5 (1986), 1119--1138.
%
%



  
 \bibitem[DaK]{DaK}  {\sc Dahlberg, B., and  Kenig, C.} 
 Hardy spaces and the Neumann problem in $L^p$ for
Laplace's equation in Lipschitz domains. Ann. Math. 125, 437--465 (1987).

\bibitem[DeG]{DeG}
{\sc De~Giorgi, E.}
\newblock Sulla differenziabilit\`a e l'analiticit\`a delle estremali degli
  integrali multipli regolari.
\newblock {\em Mem. Accad. Sci. Torino. Cl. Sci. Fis. Mat. Nat. 3}, 3 (1957),
  25--43.

\bibitem[Di]{Di} 
{\sc Dindos, M.}
\newblock Hardy spaces and potential theory on $C^1$ domains in Riemannian manifolds. 
\newblock {Mem. Amer. Math. Soc. 191} (2008), no. 894, vi+78 pp.

\bibitem[DKP]{DKP} {\sc Dindos, M.,  Kenig, C., and Pipher, J.} 
\newblock BMO solvability and the $A_{\infty}$ condition for elliptic operators,
\newblock {\em Journal of Geometric Analysis 21 no.1} (2011), 78--95.
  
  \bibitem[DK]{DK} {\sc Dindos, M., and Kirsch,  J.}
  \newblock{The regularity problem for elliptic operators with boundary data in Hardy-Sobolev space $HS^1$}, 
  \newblock {\em Math. Res. Lett. 19} (2012), no. 3, 699--717. 
  
\bibitem[DoK]{DoK} {\sc Dong, H., and Kim, S.}
\newblock Green's matrices of second order elliptic systems with measurable coefficients in two dimensional domains
\newblock {\em Trans. AMS. Volume 361, Number 6} (2009), 3303--3323.


%
%
%
  
  \bibitem[HKMP1]{HKMP1} {\sc Hofmann, S., Kenig, C., Mayboroda, S.,  and Pipher, J.} {Square function/Non-tangential maximal
estimates and the Dirichlet problem for
non-symmetric elliptic operators,}    arXiv:1202.2405.

\bibitem[HKMP2]{HKMP2} {\sc Hofmann, S., Kenig, C., Mayboroda, S.,  and Pipher, J.} The Regularity problem for second order elliptic operators with complex-valued bounded measurable coefficients,  	arXiv:1301.5209.
  
  \bibitem[HK]{HK} {\sc Hofmann, S., and  Kim, S.}
  \newblock  {The Green function estimates for strongly elliptic systems of second order}, \newblock {\em Manuscripta Math. 124} (2007), no. 2, 139--172.

  
  \bibitem[HMiMo]{HMiMo} {\sc Hofmann, S., Mitrea, M., and Morris, A.}
\newblock { The method of layer potentials in
$L^p$ and endpoint spaces for elliptic operators with $L^\infty$ 
coefficients.}, in preparation.

\bibitem[HMaMo]{HMaMo}  {\sc Hofmann, S., Mayboroda, S., and Mourgoglou, M.}
 \newblock $L^p$ and endpoint solvability results for divergence form elliptic
equations with complex $L^{\infty}$ coefficients, 
in preparation.

\bibitem[HR]{HR} {\sc Hyt\"onen,  T., and Ros\'en, A.}
\newblock  {On the Carleson duality}, 
\newblock {\em Ark. Mat.} (2012), 1--21.

\bibitem[Hu]{Hu} 
{\sc Huang, Y.}
\newblock  Weighted tent spaces with Whitney averages: factorization, interpolation and duality
\newblock  arXiv:1303.5982.

\bibitem[JY]{JY}  
{\sc Jiang, R., and   Yang,  D.}
\newblock New Orlicz-Hardy spaces associated with divergence
form elliptic operators, 
\newblock{\em J. Func. Ana. 258} (2010) 1167--1224.

\bibitem[KM]{KM}
{\sc Kalton, N., and Mitrea, M.}
\newblock Stability results on interpolation scales of quasi-Banach spaces and applications. 
\newblock {\em Trans. Amer. Math. Soc. 350} (1998), no. 10, 3903--3922. 

\bibitem[Ke]{Ke}
{\sc Kenig, C.}
\newblock {\em Harmonic analysis techniques for second order elliptic boundary
  value problems}, vol.~83 of {\em CBMS Regional Conference Series in
  Mathematics}.
\newblock American Mathematical Society, Providence, RI, 1994.

\bibitem[KKPT]{KKPT}
{\sc Kenig, C., Koch, H., Pipher, J., and Toro, T.}
\newblock A new approach to absolute continuity of elliptic measure, with
  applications to non-symmetric equations.
\newblock {\em Adv. Math. 153}, 2 (2000), 231--298.

\bibitem[KP]{KP}
{\sc Kenig, C., and Pipher, J.}
\newblock The {N}eumann problem for elliptic equations with nonsmooth
  coefficients.
\newblock {\em Invent. Math. 113}, 3 (1993), 447--509.

\bibitem[KR]{KR} {\sc Kenig, C., and Rule, D.}
\newblock  {
The regularity and Neumann problem for non-symmetric elliptic operators},
\newblock {\em Trans. Amer. Math. Soc. 361} (2009), no. 1, 125--160.

\bibitem[KS]{KS} {\sc Koskela, P.,  and Saksman, E.}
\newblock Pointwise characterizations of Hardy-Sobolev functions. 
\newblock {\em Math. Res. Lett. 15} (2008), no. 4, 727--744.

\bibitem[LMc]{LMc}   
{\sc Lou, Z., and McIntosh, A.} 
\newblock Hardy spaces of exact forms on $\R^n$. 
\newblock {\em Trans. Am.Math. Soc. 357(4),} (2005),  1469--1496.











\bibitem[MSV]{MSV} 
{\sc Meda, S.,  Sj\"ogren, P., and  Vallarino, M.}
\newblock  On the $H^{1}$ - $L^{1}$ boundedness of operators, 
\newblock {\em Proc. Amer. Math. Soc. 136} (2008), 2921--2931. 

\bibitem[Me]{Me}
{\sc Meyers, N.}
\newblock An {$L^p$}-estimate for the gradient of solutions of second order
  elliptic divergence equations.
\newblock {\em Ann. Scuola Norm. Sup. Pisa. 2}, 17 (1963), 189--206.

\bibitem[Mi]{Mi} 
{\sc  Miyachi, A.} 
\newblock Hardy-Sobolev spaces and maximal functions, 
\newblock {\em J.  Math. Soc. 
 Japan 42 no.1} (1990), 73--90.

\bibitem[Mor]{Mor}
{\sc Morrey, Jr., C.~B.}
\newblock {\em Multiple integrals in the calculus of variations}.
\newblock Die Grundlehren der mathematischen Wissenschaften, Band 130.
  Springer-Verlag New York, Inc., New York, 1966.


\bibitem[Mo]{Mo}
{\sc Moser, J.}
\newblock On {H}arnack's theorem for elliptic differential equations.
\newblock {\em Comm. Pure Appl. Math. 14\/} (1961), 577--591.

\bibitem[N]{N} {\sc Ne\v cas, J.}
\newblock {\it Les m\'ethodes directes en
th\'eorie des \'equations elliptiques. }
\newblock (French) Masson et Cie, Eds.,
Paris; Academia, Editeurs, Prague 1967.

\bibitem[Na]{Na} {\sc Nash, J.}
\newblock  { Continuity of the solutions of parabolic and 
elliptic equations}, {\em Amer. J. Math., 80} (1957), 931-954.

\bibitem[R1]{R1} {\sc Ros\'en, A.}
\newblock {Layer potentials beyond singular integral operators}, 
\newblock arXiv:1210.7582.

\bibitem[R2]{R2} 
{\sc Ros\'en, A.}
\newblock{ Cauchy non-integral formulas}
\newblock {arXiv:1210.7580v1.}

\bibitem[S]{S} {\sc Shen, Z.} 
\newblock A relationship between the Dirichlet and regularity problems for elliptic equations, 
\newblock {\em Math.
Res. Lett. 14 no.2} (2007), 205--213.

%
%

\bibitem[Str]{Str}
{\sc Strichartz, R.} 
\newblock $H^p$ Sobolev spaces, 
\newblock {\em Colloq. Math. LX/LXI} (1990), 129--139.

\bibitem[YZ]{YZ} {\sc Yang, D., and  Zhou, Y.}
\newblock{ A Boundedness Criterion via Atoms
for Linear Operators in Hardy Spaces,}  
\newblock {\em Constr. Approx. 29} (2009), no. 2, 207--218.

\end{thebibliography}

\end{document}